\newtheorem{thm}{Theorem}[subsection]
\newtheorem*{thm*}{Theorem}
\newtheorem{thm-defn}[thm]{Theorem/Definition}
\newtheorem{lem}[thm]{Lemma}
\newtheorem{prop}[thm]{Proposition}
\newtheorem{cor}[thm]{Corollary}
\theoremstyle{definition}
\newtheorem{defn}[thm]{Definition}
\newtheorem{defn-lem}[thm]{Definition/Lemma}
\newtheorem{eg}[thm]{Example}
\theoremstyle{remark}
\newtheorem{rem}[thm]{Remark}
\newcommand{\Z}{\mathbb{Z}}
\newcommand{\ZZ}{\mathbb{Z}}
\newcommand{\Q}{\mathbb{Q}}
\newcommand{\QQ}{\mathbb{Q}}
\newcommand{\Qp}{\mathbb{Q}_p}
\newcommand{\R}{\mathbb{R}}
\newcommand{\bG}{\mathbb{G}}
\newcommand{\F}{\mathbb{F}}
\newcommand{\fC}{\mathfrak{c}}
\newcommand{\fL}{\mathfrak{L}}
\newcommand{\fM}{\mathfrak{M}}
\newcommand{\fN}{\mathfrak{N}}
\newcommand{\fP}{\mathfrak{P}}
\newcommand{\fQ}{\mathfrak{Q}}
\newcommand{\fS}{\mathfrak{S}}
\newcommand{\fX}{\mathfrak{X}}
\newcommand{\fY}{\mathfrak{Y}}
\newcommand{\fZ}{\mathfrak{Z}}
\newcommand{\fm}{\mathfrak{m}}
\newcommand{\bF}{\mathbb{F}}
\newcommand{\bK}{\mathbb{K}}
\newcommand{\bQ}{\mathbb{Q}}
\newcommand{\bR}{\mathbb{R}}
\newcommand{\bZ}{\mathbb{Z}}
\newcommand{\cE}{\mathcal{E}}
\newcommand{\cF}{\mathcal{F}}
\newcommand{\cG}{\mathcal{G}}
\newcommand{\cK}{\mathcal{K}}
\newcommand{\cL}{\mathcal{L}}
\newcommand{\cM}{\mathcal{M}}
\newcommand{\cN}{\mathcal{N}}
\newcommand{\cO}{\mathcal{O}}
\newcommand{\cP}{\mathcal{P}}
\newcommand{\cS}{\mathcal{S}}
\newcommand{\phz}{\varphi}
\newcommand{\Zp}{\mathbb{Z}_p}
\newcommand{\val}{\mathrm{val}}
\newcommand{\Gal}{\mathrm{Gal}}
\newcommand{\Hom}{\mathrm{Hom}}
\newcommand{\End}{\mathrm{End}}
\newcommand{\GL}{\mathrm{GL}}
\newcommand{\Gm}{\mathbb{G}_m}
\newcommand{\ad}{\mathrm{ad}}
\newcommand{\Gr}{\mathrm{Gr}}
\DeclareMathOperator{\Char}{char}
\DeclareMathOperator{\Mod}{Mod}
\DeclareMathOperator{\Sym}{Sym}
\DeclareMathOperator{\Spec}{Spec}
\DeclareMathOperator{\rk}{rk}
\DeclareMathOperator{\Fil}{Fil}
\DeclareMathOperator{\Rep}{Rep}
\DeclareMathOperator{\gr}{gr}
\DeclareMathOperator{\coker}{coker}
\DeclareMathOperator{\HN}{HN}
\DeclareMathOperator{\Proj}{Proj}
\newcommand{\ra}{\rightarrow}
\newcommand{\lra}{\longrightarrow}
\newcommand{\iarrow}{\hookrightarrow}
\newcommand{\lrisom}{\buildrel\sim\over\longrightarrow}
\newcommand{\risom}{\buildrel\sim\over\rightarrow}
\newcommand{\rinj}{\hookrightarrow}
\newcommand{\rsurj}{\twoheadrightarrow}
\newcommand{\tor}{\mathrm{tor}}
\newcommand{\fl}{\mathrm{fl}}
\newcommand{\et}{\mathrm{\text{\'et}}}
\newcommand{\Filt}{\mathrm{Filt}}
\newcommand{\Fp}{{\bF_p}}
\newcommand{\lb}{{[\![}}
\newcommand{\rb}{{]\!]}}
\newcommand{\lp}{{(\!(}}
\newcommand{\rp}{{)\!)}}
\newcommand{\abs}[1]{\ensuremath{\left|#1\right|}}
\newcommand{\weight}{\mathrm{weight}}
\let\c@equation\c@thm
\numberwithin{equation}{subsection}
\begin{document}

\title{A Harder--Narasimhan theory for Kisin modules}

\author{Brandon Levin}
\email{bwlevin@math.arizona.edu}
\address{Department of Mathematics, University of Arizona, Tucson, AZ 95721, USA}

\author{Carl Wang-Erickson}
\email{carl.wang-erickson@pitt.edu}
\address{Department of Mathematics, University of Pittsburgh,
	Pittsburgh, PA 15260, USA}

\subjclass[2010]{11S20 (primary), 14L24, 14G35 (secondary).}
\keywords{algebraic groups, deformation theory, finite flat group schemes, geometric invariant theory,  $p$-adic Hodge theory}

\thanks{
C.W.E.\ thanks the AMS and the Simons Foundation for support for this project in the form of an AMS-Simons Travel Grant.
}

\date{\today}

\begin{abstract} 
We develop a Harder--Narasimhan theory for Kisin modules generalizing a similar theory for finite flat group schemes due to Fargues \cite{fargues2010}.  We prove the tensor product theorem, i.e., that the tensor product of semi-stable objects is again semi-stable.  We then apply the tensor product theorem to the study of Kisin varieties for arbitrary connected reductive groups. 
\end{abstract}

\maketitle

\tableofcontents

\section{Introduction}

\subsection{HN-theory} Let $p$ be a prime number. The study of canonical filtrations on finite flat group schemes (commutative of order a power of $p$) plays an important role in the theory of integral models of Shimura varieties. If $\cG$ is a finite flat group scheme over $\Spec \cO_K$, where $\cO_K$ is a complete discrete valuation ring of residue characteristic $p$, then perhaps the most well-known filtration is the connected-\'etale sequence
\[ 
0 \rightarrow \cG^0 \rightarrow \cG \rightarrow \cG^{\et} \rightarrow 0.
\] 
This sequence can be further refined by considering the largest subgroup scheme $\cG^{\mathrm{mult}} \subset \cG^0$ whose Cartier dual is \'etale (the multiplicative part).  

In \cite{fargues2010}, Fargues introduced a canonical filtration on $0 \subset \cG_1 \subset \ldots \subset \cG_n = \cG$ which further refines the above sequence in the sense that $\cG_1 = \cG^{\mathrm{mult}}$ and $\cG_n/\cG_{n-1} = \cG^{\et}$. The filtration in \emph{loc.~cit.}\ arises from a Harder--Narasimhan theory (\textit{HN-theory}) on the category of finite flat (commutative) group schemes over $\Spec \cO_K$. A HN-theory is defined by giving a map to an abelian category ($\cG \mapsto \cG_K$) together with a notion of rank and degree satisfying a number of properties (see Proposition \ref{prop:HN_main}), generalizing the original setup of vector bundles on an algebraic curve \cite{HN1974}. There are distinguished objects that are called \emph{semi-stable}, meaning that their canonical HN-filtration is trivial. The filtration constructed in \cite{fargues2010} is related to the action of the Hecke operators on certain rigid analytic moduli spaces of $p$-divisible groups associated to Shimura varieties and canonical subgroups (\cite{fargues2007, fargues2011}). 

In a different context, Kisin gave a linear-algebraic description of the category of finite flat group schemes over $\cO_K$ \cite{crfc}, where $\cO_K$ is the ring of integers of a finite extension $K/\Qp$.  More precisely, the category of $p^{\infty}$-torsion finite flat group schemes over $\cO_K$ is anti-equivalent to the category $\Mod_{\fS}^{\phz, [0,1]}$ of \textit{Kisin modules} with height in $[0,1]$. 

In this paper, we begin by showing that the HN-theory introduced in \cite{fargues2010} can be reinterpreted in terms of the category $\Mod_{\fS}^{\phz, [0,1]}$. In fact, the HN-theory extends naturally to the category of Kisin modules with bounded height, denoted $\Mod_{\fS}^{\phz}$.

\subsection{The tensor product theorem} 
An advantage of working in the larger category $\Mod_{\fS}^{\phz}$ is that this category has the structure of a rigid exact category (i.e., duals, tensor products, etc.).  In contrast, there is no good general notion of tensor product between two finite flat group schemes.  Furthermore, Kisin modules with height greater than 1 are related to Galois representations with larger Hodge--Tate weights, and so this category is of interest in its own right.  

While much of the power of an HN-theory to give structure to an additive exact category arises formally from verifying basic axioms about degree and rank (see e.g.~\cite{pottharst}), there is no known general approach to demonstrate its compatibility with additional tensor structure. Critically, one hopes that the tensor product of semi-stable objects is semi-stable. This final statement is known as \emph{the tensor product theorem}. 

\begin{thm}[{(Theorem \ref{thm:tensor})}]
\label{thm:tpintro}
The tensor product of semi-stable Kisin modules of bounded height is again semi-stable. 
\end{thm}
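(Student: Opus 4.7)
My plan is to follow the GIT-theoretic approach to the tensor product theorem that goes back to Ramanan-Ramanathan and was streamlined by Totaro: identify HN-semistability with a GIT semistability condition, and exploit Kempf's instability theorem to produce a canonical destabilizing one-parameter subgroup that must be compatible with the tensor symmetry $\GL(M) \times \GL(N) \hookrightarrow \GL(M \otimes N)$.

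The first step is to reformulate HN-semistability of a Kisin module $M \in \Mod_\fS^{\phz}$ of rank $n$ as GIT-semistability of the Frobenius $\phi$. Fixing an $\fS$-basis presents $\phi$ as a point in a parameter scheme $X$ on which $\GL_n = \GL_\fS(M)$ acts by semilinear change of basis. A cocharacter $\lambda : \GG_m \to \GL_n$ induces a weight filtration $\Fil^\bullet_\lambda M$, and I would establish the following dictionary: $\phi$ lies in the nonnegative-weight closed subscheme of $X$ relative to $\lambda$ exactly when $\Fil^\bullet_\lambda$ is a chain of sub-Kisin-modules, and the Hilbert-Mumford numerical invariant $\mu(\phi,\lambda)$---computed against a linearization chosen to encode the degree on $\Mod_\fS^{\phz}$---equals (a positive multiple of) the slope defect $\mu(\Fil^i M)-\mu(M)$ of the associated sub-object. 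A splitting argument shows every sub-Kisin-module arises from some $\lambda$, completing the dictionary.

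The second step applies Kempf's theorem. Suppose $M$ and $N$ are semistable but $M \otimes N$ is not. Kempf then attaches to the unstable point $\phi_M \otimes \phi_N$ a canonical parabolic $P \subset \GL(M \otimes N)$ and, up to $P$-conjugacy, an optimal destabilizing cocharacter $\lambda$. Because the formation of $\phi_M \otimes \phi_N$ is equivariant for $\GL(M) \times \GL(N) \subset \GL(M \otimes N)$, canonicity of $P$ and $\lambda$ forces them to be adapted to this subgroup; in particular, $\lambda$ can be chosen of the form $\lambda_M \boxtimes 1 + 1 \boxtimes \lambda_N$ for cocharacters $\lambda_M, \lambda_N$ of $\GL(M)$ and $\GL(N)$. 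The numerical invariant then decomposes additively as $\mu(\phi_M \otimes \phi_N, \lambda) = \rk(N)\,\mu(\phi_M,\lambda_M) + \rk(M)\,\mu(\phi_N,\lambda_N)$, and semistability of $M$ and $N$ makes both summands nonnegative, contradicting the assumed instability of $\phi_M \otimes \phi_N$.

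The principal obstacle is the first step. Kisin modules are not vector bundles on a curve: the base $\fS$ is two-dimensional, the Frobenius is semilinear, its cokernel is $E(u)$-torsion, and the relevant notion of degree is not immediately a character of $\GL_n$. The affine model $X$, the group action, and the linearization must therefore be engineered so that (i) every $\phi$-stable $\fS$-submodule arises from a weight filtration for some cocharacter, and (ii) the HN degree matches the Hilbert-Mumford weight up to a uniform positive constant. I expect this will naturally take place inside a bounded variant of the affine Grassmannian for $\GL_n$ over $\fS$, with the precise compatibility against the definition of degree used in the body of the paper being the delicate technical heart of the argument. Once this dictionary is in place, the conclusion of the second step follows formally from Kempf's theorem and the additivity of Hilbert-Mumford weights under tensor products.
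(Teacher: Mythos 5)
Your overall instinct is right — the paper does follow Totaro's GIT-theoretic strategy and does invoke Kempf's instability theorem together with a Kempf filtration on the tensor factors — and you correctly flag step one as the crux. But the proposal breaks down exactly there, and the repair is substantially different from what you sketch. There is no finite-dimensional GIT problem whose numerical invariant \emph{is} the degree of a torsion Kisin module: the natural group acting on Frobenii by semilinear change of basis is $\GL_n(k\lb u\rb)$, not a reductive algebraic group over $k$; a cocharacter of $\GL_n$ over $k$ cuts out only those strict sub-Kisin-modules that are split $\fS$-direct summands of a fixed trivialization, whereas the relevant sub-Kisin-modules arise as $\cS \cap \fM$ for sub-\'etale $\phz$-modules $\cS$ of the generic fiber and are generically not of that form; and $\deg(\fM)$ is the $E(u)$-length of $\coker(\phi_\fM)$, which is not a character of any reductive group acting on a parameter scheme. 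Your step two is also unjustified as stated: $\GL(M)\times\GL(N)$ does not stabilize $\phi_M \otimes \phi_N$, so canonicity of the Kempf data inside $\GL(M\otimes N)$ does not force $\lambda$ into product form. (Totaro, and the paper, avoid this by running GIT on $\Gr_s(M\otimes N)$ with $G=\GL(M)\times\GL(N)$ from the outset, so the Kempf cocharacter is of product form by construction.)

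What the paper actually does is replace the sought-for equality between degree and Mumford weight by an \emph{inequality}. Fixing a possibly non-$\phz$-stable lattice $\fM_0$ and a filtration on $\overline{\fM}_0 := \fM_0/u\fM_0$ built from $\phi$ and $\fM_0$, Proposition~\ref{prop:alt_deg} shows $\deg(\cS\cap\fM) \geq \deg_{\Fil}(\overline{\cS}_0)$, with equality only under extra compatibility hypotheses. One then runs Totaro's argument on $\Gr_s(\overline{\fM}_0 \otimes \overline{\fN}_0)$ over $k$. The new difficulty — and the genuinely new ingredient — is that the (semi-)stability of $\cS$ is a statement over the generic fiber $F=k\lp u\rp$, while the degree inequality lives mod $u$, and there is no canonical way to compare them. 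This is resolved by two ``Langton type'' results: Corollary~\ref{cor:LT_ss} produces lattices $(\fM_0,\fN_0)$ (after a finite separable base change) such that $\overline{\cS}_0$ is GIT-semistable whenever $\cS$ is, via the valuative criterion for the adequate moduli space of $[\Gr^{ss}/G]$; and Theorem~\ref{thm:LT_kempf} does the analogue for the Kempf-Ness stratum, using the associated-graded trick to also force equality in the degree bound at the steps of the Kempf filtration. The canonicity argument you gesture at does survive in a restricted form — the paper uses uniqueness of the Kempf filtration to show it is $\phz$-stable — but that is the small part of the argument; the Langton-type specialization results carry the real weight and have no counterpart in your proposal.
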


Its proof is one of the main innovations of this work. It is modeled in part on Totaro's proof of the tensor product theorem for filtered isocrystals over a $p$-adic field \cite{totaro1996}, but departs significantly from Totaro's approach. Totaro's strategy heavily relies on the notion of semi-stable subspaces of a tensor product vector space $V \otimes W$, and the fact that the degree of an isocrystal can be calculated in terms of the degree of a subspace with respect to a filtration on $V \otimes W$ arising from filtrations on $V$ and $W$. Moreover, associated to any non-semi-stable (i.e.~unstable) subspace of $V \otimes W$ is a filtration on $V$ and a filtration on $W$ called the Kempf filtration, which characterizes the failure of semi-stability. The Kempf filtration also figures in to Totaro's strategy.

The degree of a Kisin module is not a priori related to a filtration on any vector space. Indeed, $p$-torsion Kisin modules are modules over $\fS/p = k[\![u]\!]$. In order to build off of Totaro's strategy, we bound the degree of a Kisin submodule $\fP \subset \fM \otimes \fN$ by the degree of $\fP \pmod{u}$ as a subspace of $\fM \otimes \fN \pmod{u}$ according to a certain filtration. Not only must we cope with the fact that this is a bound instead of an exact formula, but the more serious obstacle is that submodules of Kisin modules cannot be identified modulo $u$. To resolve this problem, we must relate semi-stability of the generic fiber $\fP[1/u] \subset \fM[1/u] \otimes \fN[1/u]$, where subobjects of $\fP$ are identifiable, to semi-stability modulo $u$. We will establish these algebro-geometric results, calling them ``results of Langton type'' because Langton proved the first result of this kind in another context \cite{langton1975}. These results build upon the full power of geometric invariant theory, including the Kempf--Ness stratification \cite[\S\S12-13]{kirwan1984} and adequate moduli spaces \cite{alper2014}.

It is a well-known consequence of the tensor product theorem that one can then extend a Harder--Narasimhan theory from the setting of `vector bundles' to that of $G$-bundles.  Theorem \ref{thm:tpintro} should have applications to the study of Shimura varieties beyond PEL-type along the lines of \cite{fargues2007, fargues2019}.  Some applications to Shimura varieties were obtained in \cite{irissarry2017, irissarry2016} (see Section \ref{sec:RR} for further discussion). However, we do not pursue that here. Instead, we discuss an application of the tensor product theorem in a different direction. 

\subsection{Kisin modules with $G$-structure} In \cite{levin2015}, the first author developed a theory of Kisin modules with $G$-structure.  In \S\S4-5, We use our HN-theory and the tensor product theorem to study moduli spaces of Kisin modules with $G$-structure.

Specifically, in \cite{mffgsm}, Kisin introduced a projective variety $X_{\overline{\rho}}$ whose closed points parametrize finite flat group schemes over $\Spec \cO_K$ with generic fiber $\overline{\rho}:\Gal(\overline{K}/K) \ra \GL_n(\overline{\F})$.   These were later called \emph{Kisin varieties} by Pappas and Rapoport \cite{PR2009}.   More generally, if $\cM$ is a \'etale $\phi$-module over $\F_p(\!(u)\!)$ and $\nu = (a_1, a_2, \ldots, a_n) \in \Z^n$ ($K$ is assumed to be totally ramified over $\Qp$), then 
\[
X_{\cM}^{\nu}(\F) = \{ \fM[1/u] \cong \cM   \mid \fM \text{ has Hodge type} \leq \nu \}.
\] 
$X_{\cM}^{\nu}$ is projective scheme parametrizing Kisin modules with generic fiber $\cM$ satisfying height conditions given by $\nu$. Kisin varieties resemble affine Deligne--Lusztig varieties in form, but much less is known about their structure.  The most important question for applications to Galois deformation rings and modularity lifting is what are the connected components of $X_{\cM}^{\nu}$.
  
It was observed in \cite[\S2.4.15]{mffgsm} that by considering the connected-\'etale sequence, or, more generally, the \'etale and multiplicative parts, one gets certain discrete invariants on $X_{\cM}^{\nu}$ (see Proposition \ref{prop:upperetrank}).   Conjecture 2.4.16 in \emph{loc.\ cit.}\ asserts that, under some hypotheses, in fact these should be the only discrete invariants. 
 For $\GL_2$, the conjecture says concretely that if $\overline{\rho}$ is indecomposable, then there are at most two components: an ordinary component and a non-ordinary component. 
 This conjecture and a number of generalizations in dimension two were proven by Gee, Hellmann, Imai, and Kisin \cite{gee2006, mffgsm, hellmann2009, imai2010, hellmann2011, imai2012}.  In the case of Kisin modules with descent for $\GL_2$, Caruso--David--M\'ezard prove a related result about connectedness of Kisin varieties \cite{CDM2}. Essentially nothing in any generality is known beyond $\GL_2$ except over mildly ramified fields $K/\Qp$. One motivation for this project was to give a more conceptual explanation for the conjecture, adapting it for other reductive groups as a starting point for a systematic study of Kisin varieties beyond dimension 2. 

Our first result about Kisin varieties is the following: 
\begin{thm}[{(Proposition \ref{HNoverHodge}, Corollary \ref{HNstrata})}] 
 There is a decomposition 
\[
X^{\nu}_{\cM} = \bigcup X^{\nu, P}_{\cM} 
\]
by locally closed reduced subschemes such that the closed points of $X^{\nu, P}_{\cM}$ have HN-polygon $P$.  Furthermore, the (normalized) HN-polygon of any $\fM \in X^{\nu}_{\cM}(\F)$ lies above the Hodge polygon associated to $\nu$ with the same endpoint. 
\end{thm}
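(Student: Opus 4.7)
The theorem has two parts, which we address in order, starting with the Hodge bound since it controls which polygons can occur.

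For the Hodge bound, the equal-endpoints assertion is immediate: $\deg \fM = \sum a_i$ by the very definition of Hodge type $\leq \nu$. The polygon comparison reduces to a rank-by-rank inequality: for any Kisin submodule $\fP \subset \fM$ of rank $r$, the degree $\deg \fP$ is controlled by the appropriate partial sum of the entries of $\nu$. I would establish this via exterior powers. Pass to $\wedge^r \fP \subset \wedge^r \fM$, which is a rank-one sub-Kisin module of $\wedge^r \fM$; the latter has Hodge type bounded by the $r$-th exterior power of $\nu$, whose extremal slope is the desired partial sum $a_1 + \cdots + a_r$. This reduces the inequality to the rank-one case: a rank-one sub-Kisin module $\fL$ of a Kisin module $\fN$ of bounded Hodge type has degree controlled by the maximal Hodge slope of $\fN$, which follows because $\fL/\phi^*\fL$ injects (after the relevant saturation) into the corresponding $E(u)$-torsion part of $\fN/\phi^*\fN$. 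The HN-polygon, being the concave upper envelope of sub-object rank-degree pairs, then compares with the Hodge polygon as claimed.

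For the stratification, the plan is to establish upper semi-continuity of the HN-polygon on $X^\nu_\cM$ in the usual way. For each pair $(r, d)$, let $Z_{r,d} \subset X^\nu_\cM$ be the locus of $\fM$ admitting a Kisin submodule of rank $r$ and degree $\geq d$. I would construct an auxiliary moduli scheme of pairs $(\fM, \fP)$ with $\fP \subset \fM$ of rank $r$ and degree $\geq d$, and show that the forgetful map to $X^\nu_\cM$ is projective. The fiber over $\fM$ parameterizes $\phi$-stable saturated sub-$\fS$-lattices of $\cM = \fM[1/u]$ with prescribed rank and degree, cut out by closed conditions inside a bounded piece of an appropriate affine Grassmannian of sub-lattices of $\cM$. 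Then $Z_{r,d}$ is the image of a projective morphism, hence closed in $X^\nu_\cM$. The Hodge bound ensures that only finitely many polygons $P$ arise, so the standard HN-combinatorial reduction --- $\{\HN \geq P\}$ is a finite intersection of $Z_{r,d}$'s, and $\{\HN = P\}$ is obtained by removing the unions of strictly larger strata --- yields the locally closed decomposition, reduced by construction.

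The main technical obstacle is the construction of the auxiliary moduli of pairs $(\fM, \fP)$ and the verification of projectivity over $X^\nu_\cM$. Because $\fS$ is not a field, one cannot simply reuse Grassmannians of sub-vector-spaces; instead one works with an affine Grassmannian of sub-$\fS$-lattices of $\cM$, imposing $\phi$-stability and the rank/degree conditions as closed conditions. Boundedness follows from the Hodge bound applied uniformly in $\fM \in X^\nu_\cM$, which constrains the possible degrees of sub-Kisin modules. Once this geometric foundation is in place, the semi-continuity argument and the HN-combinatorial reduction are routine.
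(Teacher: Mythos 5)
Your Hodge-bound argument has the right skeleton --- pass to $\wedge^r\fP \subset \wedge^r\fM$ and reduce to rank one --- and this is exactly the paper's proof of Proposition \ref{HNoverHodge}. But the rank-one step is wrong as stated. You need the \emph{lower} bound $g\cdot\deg(\wedge^r\fP)\ge a_1+\cdots+a_r$, the \emph{minimal} Hodge weight of $\wedge^r\nu$, whereas an injection of $\coker(\phi_\fL)$ into $\coker(\phi_\fN)$ --- which does hold for a strict $\phi$-stable submodule --- gives only the useless upper bound $\deg(\fL)\le\deg(\fN)$ (and your phrase ``maximal Hodge slope'' points in the wrong direction altogether). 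The correct input is that the image of the linearized Frobenius on the full exterior power satisfies $\phi(\phz^*(\wedge^r\fM))\subset u^{a_1+\cdots+a_r}\wedge^r\fM$ globally, since the $\phi$-image of any basis wedge $e_{i_1}\wedge\cdots\wedge e_{i_r}$ is divisible by $u^{a_{i_1}+\cdots+a_{i_r}}$; because $\wedge^r\fP$ is a \emph{saturated} rank-one submodule, this forces $\phi(\phz^*(\wedge^r\fP))\subset u^{a_1+\cdots+a_r}\wedge^r\fP$, which is the divisibility the paper exploits to conclude $\deg(\fP)=\deg(\wedge^r\fP)\ge\frac{1}{g}\sum_{i\le r}a_i$.

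Your stratification argument takes a genuinely different and heavier route than the paper's. You propose a relative flag scheme of pairs $(\fM,\fP)$ over $X^\nu_\cM$ with a proper forgetful map, and then take closed images. The paper never builds such an auxiliary space. Instead it exploits the constant generic fiber directly: for a \emph{fixed} sub-\'etale $\phz$-module $\cN\subset\cM$, the intersection $\fN_A=\fM_A\cap(\cN\otimes_\F A)$ is a $\phi$-stable, $\fS_A$-flat family whose Frobenius cokernel $\fC_A$ is a coherent $A$-module, so $x\mapsto\dim_{\kappa(x)}(\fC_A)_x$ is upper semi-continuous. This gives one closed condition for each $\cN\in\Sigma_{\cM_\F}$, and $\{\HN\ge P_0\}$ is their (possibly infinite) intersection, hence closed; no properness or boundedness of any moduli of pairs is needed for closedness, and the Hodge bound enters only to bound the finite set of polygons $P$ that can occur. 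Your flag-scheme approach could be made to work and is the more natural one if the generic fiber were allowed to vary, but for $X^\nu_\cM$ it requires a nontrivial boundedness verification that your sketch does not supply and that, in the end, would rest on the same constant-generic-fiber observation the paper uses more directly.
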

The theorem essentially follows from the semi-continuity of the HN-polygon in families with constant generic fiber.   

In \S\ref{sec:kisin_var}, we introduce Kisin varieties $X^{\nu}_{\cP}$ for any connected reductive group $G$ over $\F$ and $\nu$ a cocharacter of $G$, and we prove the analogous result in that setting (Theorem \ref{GHNstrata}). We then show that, by considering unions of certain strata defined group-theoretically, we can produce discrete invariants which generalize the ordinary/non-ordinary components appearing in dimension two. See Theorem \ref{thm:HNcomp}) for details. We provide some examples in \S \ref{sec:examples}.   In some situations, the connected components of these Kisin varieties with $G$-structure can be related to connected components of $G$-valued Galois deformation rings (see \cite[Cor.\ 4.4.2]{levin2015}). 

\subsection{Application to deformation to characteristic 0} 
In \S\ref{sec:flat}, we apply our HN-theory, which a priori applies only to $p$-power torsion Kisin modules, to flat Kisin modules $\fM$ by studying the limit $\fM/p^n$ as $n$ increases. In this we follow Fargues \cite{fargues2019}, who, analogously, showed how his theory for finite flat group schemes could address $p$-divisible groups. Principally, we show that there is a distinct HN-theory on the isogeny category of Kisin modules for which our theory is an ``approximation modulo $p^n$.'' We conclude \S\ref{sec:flat} by showing that there is a relation between an HN-theory arising from de Rham filtration associated to a crystalline representation and the HN-polygon of the associated iso-Kisin module.  Thus, it appears that our HN-theory is a mod $p$ avatar of the Hodge filtration.

\subsection{Related results}  \label{sec:RR}

During the course of this project, we were informed that Macarena Peche Irissarry had obtained partial results towards the tensor product theorem. Her thesis, which appeared while this article was under review, also includes applications to Shimura varieties \cite{irissarry2017, irissarry2016}. Likewise, Cornut informed us that our results gave rise to an alternate approach to the tensor product theorem using convex metric geometry \cite{cornut2018}. Subsequently, Cornut and Peche Irissarry apply the abstract tensor product theorem of \cite{cornut2018} to prove a tensor product theorem for Breuil--Kisin--Fargues modules \cite{CPI2019}.

Recently, there has been a growing interest in studying more general moduli spaces of Kisin modules where one allows the generic fiber to vary.  These moduli were first considered by \cite{PR2009}, but have been further developed by Emerton--Gee in \cite{EG2015, EG2019}, Caraiani--Emerton--Gee--Savitt in \cite{CEGS2018}, and joint work of Caraiani and the first author \cite{CL2015}.  It would be interesting to understand the behavior of the HN-filtrations on these spaces.

\subsection{Notation}  
Let $K/\Qp$ be a finite extension with residue field $k$ and uniformizer $\pi$.  Let $e$ be the ramification degree of $K/\bQ_p$,  $f = [k : \bF_p]$, and $g = [K : \bQ_p]$, so that $g = ef$.  Let $\fS = W(k)[\![u]\!]$ with Frobenius $\phz$ extending the standard Frobenius on $W(k)$ by $\phz(u) = u^p$. Let $E(u)$ be the minimal polynomial of $\pi$ over $W(k)[1/p]$, a monic polynomial of degree $e$.  Let $F = k(\!(u)\!)$ with separable closure $F^{s}$.  Especially in \S\ref{sec:tensor}, we will write $\cO$ for the valuation ring $k\lb u \rb$ of $F$. 

We will often be interested in $\fS$-modules $\fM$ with a Frobenius semi-linear endomorphism. Thinking of $\fM$ and $\fS$ as left $\fS$-modules and giving $\fS$ a right $\fS$-module structure via $\phz$, let $\phz^*(\fM)$ stand for the left $\fS$-module $\fS \otimes_{\phz, \fS} \fM$. A Frobenius semi-linear endomorphism of $\fM$ induces a $\fS$-linear map $\phi_\fM: \phz^*(\fM) \ra \fM$ that we will write as $\phi_\fM$, which we refer to as the linearized structure map of $\fM$. 

Fix a sequence $\pi_n$ such that $\pi_n^p = \pi_{n-1}$ and $\pi_0 = \pi$.  Let $K_{\infty}$ denote the completion of $K((\pi_n)_n)$.  

\section{HN-theory}
\label{sec:HN}

In this section, we develop a Harder--Narasimhan theory on the category of (torsion) Kisin modules with bounded height.  After some background, we demonstrate its basic properties in \S 2.3 (Proposition \ref{prop:HN_main}).  We then deduce the standard consequences (i.e., HN-filtration, HN-polygon, etc.). 

\subsection{Categories of Kisin modules}

\begin{defn}
A \emph{Kisin module} is a pair  $(\fM, \phi_{\fM})$ consisting of a finitely generated $\fS$-module $\fM$ that is $u$-torsion-free along with an isomorphism $\phi_{\fM}:\phz^*(\fM)[1/E(u)] \cong \fM[1/E(u)]$. A morphism of Kisin modules $\fM \ra \fN$ is a $\fS$-linear function $\fM \ra \fN$ commuting with the structure maps $\phi_\fM, \phi_\fN$. 
\end{defn}

The additive exact category structure on Kisin modules is as follows. 
\begin{defn}
The category of Kisin modules $\Mod_{\fS}^{\phz}$ inherits an additive, $\fS$-linear structure as $\fS$-modules, where morphisms are those which are compatible with Frobenius. We say a sequence of Kisin modules $0 \ra \fM_1 \ra\fM_2 \ra \fM_3 \ra 0$ is \emph{strict short exact} when it is exact in the category of $\fS$-modules. A \emph{strict subobject} of $\fM$ is sub-Kisin module $\fN \subset \fM$ which sits in a strict short exact sequence, i.e.\ the $\fS$-module cokernel is $u$-torsion free. 

We write $\Mod_{\fS,\fl}^\phz$ for the full exact subcategory of Kisin modules that are flat as $\fS$-modules.
\end{defn}

The powers of $E(u)$ that appear control the height of $(\fM, \phi_{\fM})$. 
\begin{defn}
Fix integers $a < b$.  A Kisin module $(\fM, \phi_\fM)$ is said to have \emph{height in $[a,b]$} if
\begin{equation} 
\label{eq:heightcond}
E(u)^a \fM \supset \phi_{\fM}(\phz^*(\fM)) \supset E(u)^b \fM
\end{equation}
in $\fM[1/E(u)]$. In particular, a Kisin module with height in $[0,b]$ is said to be \emph{effective}. We denote the category of Kisin modules with height in $[a,b]$ by $\Mod_{\fS}^{\phz, [a,b]}$. 
\end{defn}

Every Kisin module is an object of $\Mod_{\fS}^{\phz, [a,b]}$ for some $a,b$. In other words, every Kisin module has bounded height. 

Effective Kisin modules may be thought of as a finite $u$-torsion free $\fS$-module $\fM$ with a semi-linear endomorphism $\fM \ra \fM$ such that the induced linearized endomorphism $\phi_\fM: \phz^*\fM \ra \fM$ has has cokernel killed by some power of $E(u)$. We will reduce many claims to the effective case. 

\begin{rem} 
\label{rem:rigid}
We define tensor products in $\Mod_{\fS}^{\phz}$ in the natural way.  If $\fM \in \Mod_{\fS}^{\phz}$ is flat over $\fS/p^n$, then the dual is defined to be $\fM^* := \Hom(\fM, \fS/p^n)$ with Frobenius given by $f \mapsto \phz \circ f \circ \phi_{\fM}^{-1}$.  The category of Kisin modules which are flat over $\fS/p^n$ is a rigid exact tensor category. See \cite[\S4.2]{levin2013} for details. 
\end{rem}
   
We will initially set up our HN-theory with effective torsion Kisin modules. 
\begin{defn} 
A torsion \emph{Kisin module with height $\leq h$} is an effective Kisin module $(\fM, \phi_\fM)$ such that $\fM$ is $p$-power torsion and $\coker(\phi_\fM)$ is killed by $E(u)^h$.  Denote this category by $\Mod_{\fS, \tor}^{\phz, [0,h]}$. 
\end{defn}

\begin{rem}
\label{rem:anti-equiv}
The category $\Mod_{\fS, \tor}^{\phz, [0,1]}$ is anti-equivalent to the category of finite flat group schemes over the maximal order $\cO_K$ of $K$ (\cite[Thm.\ 2.3.5]{crfc}).  Likewise, $\Mod_{\fS,\fl}^{\phz,[0,1]}$ is anti-equivalent to the category of $p$-divisible groups over $\cO_K$. 
\end{rem}

\begin{defn}
\label{defn:et}
An effective Kisin module with height $\leq 0$ is called \emph{\'etale}. Equivalently, these are effective Kisin modules $\fM$ where $\phi_\fM : \phz^*\fM \ra \fM$ is an isomorphism. 
\end{defn}

One reason this terminology is appropriate is that a finite flat group scheme is \'etale if and only if its corresponding torsion Kisin module is \'etale. 

\begin{rem} 
If $\fM \in \Mod_{\fS}^{\phz, [a,b]}$, then any $\phz$-stable strict sub-$\fS$-module of $\fM$ has height in $[a,b]$. Consequently, the set of strict subobjects (and, later, the HN-filtration) of a Kisin module are insensitive to changing the range of heights allowed.
\end{rem} 

\begin{eg}
\label{eg:mult}
For $r \in \bZ$, let $(\fS(r), \phi_{\fS(r)})$ be the rank 1 Kisin module consisting of the $\fS$-module $\fS e$ and $\phi_\fS$ defined by $\phi_{\fS(r)}(e) = (c_0^{-1} E(u))^r e$ where  $c_0 = E(0)/p$.  These are Kisin modules of height in $[r,r]$. 
\end{eg}

For any Kisin module $\fM$ with height in $[a,b]$ and any integer $r$, let $\fM(r) := \fM \otimes \fS(r)$ denote the ``$r$th \emph{Tate twist}.''  This terminology is justified because $\fS/p^n(1)$ corresponds to finite flat group scheme $\mu_{p^n}$ under the anti-equivalence of Example \ref{eg:mult}, and hence $\fS(1)$ corresponds to the multiplicative $p$-divisible group.  

\begin{rem}
\label{rem:height_twist}
It is easy to see that $\fM(r)$ has height in $[a + r, b+ r]$. Thus results about effective Kisin modules have sensible extensions to the general case. 
\end{rem}

\subsection{Rank and degree of Kisin modules}

Our Harder--Narasimhan theory for $\Mod_{\fS, \tor}^\phz$ will depend on a notion of rank and degree of a Kisin module, satisfying certain axioms. We will describe this notion of rank and degree, and then describe a few basic properties. The HN-axioms will be proved in \S\ref{subsec:HN-theory}. 

The first piece of a Harder--Narasimhan theory on $\Mod_{\fS, \tor}^\phz$ is a functor to an abelian category, which we now specify. Let $\cO_\cE$ denote the $p$-adic completion of $\fS[1/u]$, which inherits the Frobenius map $\phz$ from $\fS$. This is a complete Noetherian  local ring with residue field $F = k(\!(u)\!)$. 

\begin{defn}
An \emph{\'{e}tale $\cO_\cE$-module} $(\cM, \phi_\cM)$ is a pair consisting of a finite $\cO_\cE$-module $\cM$ with a $\phz$-semi-linear automorphism, its linearized version being $\phi_\cM : \phz^*\cM \risom \cM$. This category is written $\Mod_{\cO_\cE}^\phz$, and the $p$-power torsion full subcategory is written $\Mod_{\cO_\cE, \tor}^\phz$. 
\end{defn}

There is a functor $GF: \Mod_\fS^\phz \ra \Mod_{\cO_\cE}^\phz$, which we call the ``generic fiber functor,'' given by 
\[
\fM \mapsto \fM \otimes_{\fS} \cO_\cE. 
\]
It restricts to a functor $GF: \Mod_{\fS, \tor}^\phz \ra \Mod_{\cO_\cE, \tor}^\phz$. This is a natural analog of the generic fiber of a finite flat group scheme. We will observe that any subobject $\cN \subset GF(\fM)$ induces a strict subobject $\fN := \cN \cap \fM \subset GF(\fM)$ of $\fM$, and that all strict subobjects of $\fM$ are realized in this way. 

\begin{rem}
\label{rem:JMF} In fact $\Mod_{\cO_\cE, \tor}^\phz$ is an abelian tensor category, and it is equivalent to the abelian category of $p$-power torsion representations of $\Gal(F^s/F)$.  By the theory of norm fields, this is also equivalent to representations of $\Gal(\overline{K}/K_{\infty})$ though we will not emphasize this.  See \cite{fontaine1990}. 
\end{rem}

We may now define the rank and degree of a torsion Kisin module $\fM$; note that the rank of $\fM$ depends only upon $\fM \otimes_\fS \cO_\cE$.  For any $A$-module $M$, let $\ell_A(M)$ denote its length if the length is finite.  Recall that $g = [K:\Qp]$. 
\begin{defn}  
\label{defn:rk_deg}
For $\cM$ a torsion \'{e}tale $\cO_\cE$-module, we let the \emph{rank} of $\cM$ to be
\[
\rk(\cM) := \ell_{\cO_\cE}(\cM).
\]
Let $\fM$ be a torsion Kisin module. We define $\rk(\fM)$ to be $\rk(GF(\fM))$. Assuming that $\fM$ is effective, let $\coker(\phi_{\fM})$ be the cokernel of the linearized Frobenius. We set
\[
\deg(\fM)  := \frac{1}{g} \ell_{\Zp}(\coker(\phi_{\fM})).
\]
For general $\fM$, by Remark \ref{rem:height_twist} we choose any $r \in \bZ_{\geq 0}$ such that $\fM(r)$ is effective, and then set
\[
\deg(\fM) := \deg(\fM(r)) - r \cdot \rk(\fM). 
\]
The slope of $\fM$ is the quotient $\mu(\fM) := \deg(\fM)/\rk(\fM)$. 
\end{defn} 

\begin{eg} 
We compute rank and degree for $p$-power torsion quotients of $\fS(r)$. The cokernel is isomorphic to $\fS/E(u)^r$.  We have that $\rk(\fS/p^n(r)) = n$ and $\deg(\fS/p^n(r)) = rn$.  Thus, $\mu(\fS/p^n(r)) = r$. 
\end{eg}

\begin{rem} 
It will be shown (in  Proposition \ref{farguescomp}) that the degree of a Kisin module with height in $[0,1]$ is the same as the degree (as defined by Fargues in \cite[\S3]{fargues2010}) of the corresponding finite flat group scheme. 
\end{rem}

\begin{rem} 
By \cite[Lem.\ 1.2.7]{mffgsm}, the rank of $\cM \in \Mod_{\cO_\cE, \tor}^\phz$ is equivalent to the $\Zp$-length of the $\Gal(\overline{K}/K_{\infty})$-representation associated to $\cM$. 
\end{rem}

When a Kisin module is flat over $\fS/p^n$, its degree can be calculated from its top exterior power. 

\begin{lem} 
\label{wedgeslope}
Let $\fM \in \Mod_{\fS}^{\phz, [0,h]}$ be flat over $\fS/p^n$ of rank $d$. Then $\deg(\fM) = \deg(\wedge^d_\fS \fM)$. 
\end{lem}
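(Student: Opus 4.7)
The plan is to reduce the claim to a matrix-theoretic identity and then prove that identity by induction on $n$, with the base case supplied by Smith normal form over a discrete valuation ring.

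First, because $\fS/p^n$ is a Noetherian local ring and $\fM$ is finitely generated and flat over it, $\fM$ is a free $\fS/p^n$-module of rank $d$. Since $\phz: \fS \to \fS$ is flat, $\phz^*\fM = \fS\otimes_{\phz,\fS}\fM$ is also free of rank $d$ over $\fS/p^n$, and the canonical identification $\wedge^d(\phz^*\fM)=\phz^*(\wedge^d\fM)$ holds. Choosing bases $\{e_i\}$ of $\fM$ and $\{1\otimes e_i\}$ of $\phz^*\fM$, the structure map $\phi_\fM$ is represented by a matrix $A \in \Mat_d(\fS/p^n)$. With respect to the induced bases $e_1\wedge\cdots\wedge e_d$ and $1\otimes(e_1\wedge\cdots\wedge e_d)$, the map $\wedge^d\phi_\fM$ is multiplication by $\det A$. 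Therefore the claim reduces to
\[
\ell_{\Zp}\bigl((\fS/p^n)^d / A\,(\fS/p^n)^d\bigr) \;=\; \ell_{\Zp}\bigl((\fS/p^n)/(\det A)\bigr),
\]
using Definition \ref{defn:rk_deg} on both sides (both $\fM$ and $\wedge^d\fM$ are effective).

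The proof of this matrix identity goes by induction on $n$. For the base case $n=1$, the height condition guarantees $\phi_\fM$ is an isomorphism after inverting $E(u)$, hence $\det A$ is a unit in $(\fS/p^n)[1/E(u)]$; reducing modulo $p$ gives $\overline{\det A}\neq 0$ in $k\lb u\rb$ since $E(u)\equiv u^e\bmod p$. Now $k\lb u\rb$ is a DVR, and Smith normal form produces invertible matrices $P,Q$ and nonnegative integers $a_1,\ldots,a_d$ with $A=P\cdot\mathrm{diag}(u^{a_1},\ldots,u^{a_d})\cdot Q$. Both $\ell_{\Zp}(\coker A)=\sum_i \ell_{\Zp}(k\lb u\rb/u^{a_i})$ and $\ell_{\Zp}(k\lb u\rb/\det A)=\ell_{\Zp}(k\lb u\rb/u^{\sum a_i})$ equal $f\sum_i a_i$, so the identity holds.

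For the inductive step, apply the snake lemma to the diagram with rows the $p$-adic short exact sequences
\[
0 \to p\,\phz^*\fM \to \phz^*\fM \to \phz^*\fM/p \to 0,
\qquad
0 \to p\fM \to \fM \to \fM/p\fM \to 0,
\]
and vertical arrows given by $\phi_\fM$. All three verticals are injective: the middle one because $\phi_\fM$ becomes an isomorphism after inverting the non-zero-divisor $E(u)$, and the outer two because the induced matrices are $A\bmod p^{n-1}$ and $A\bmod p$, whose determinants are nonzero by the same reduction argument as in the base case. Using the flatness of $\fM$ over $\fS/p^n$, multiplication by $p$ identifies $p\fM \isom \fM/p^{n-1}\fM$ as Kisin modules (and likewise for $\phz^*\fM$), under which $\phi_\fM|_{p\fM}$ corresponds to the reduction of $A$ modulo $p^{n-1}$. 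The snake lemma (noting all three kernels vanish) then yields
\[
\ell_{\Zp}(\coker \phi_\fM) = \ell_{\Zp}(\coker(A\bmod p^{n-1})) + \ell_{\Zp}(\coker(A\bmod p)).
\]
By induction each summand equals the corresponding length for $\det A$. The identical snake lemma applied to the rank-one module $\wedge^d\fM$ and its map $\det A$ decomposes $\ell_{\Zp}((\fS/p^n)/\det A)$ in exactly the same way, and the two decompositions match term by term.

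The main obstacle I anticipate is bookkeeping: ensuring that the identification $p\fM \isom \fM/p^{n-1}\fM$ is compatible with the Kisin module structure (so that the induction is being applied to honest Kisin modules of height in $[0,h]$ over $\fS/p^{n-1}$), and verifying that the height/effectivity hypothesis propagates through each reduction so that the relevant cokernels indeed have finite $\Zp$-length and the vertical maps in the snake diagram remain injective.
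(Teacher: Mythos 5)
Your proof is correct and follows essentially the same route as the paper's: the paper's one-line reduction "filtering $\fM$ by $p^i\fM$, we reduce to $n=1$" is exactly the induction-plus-snake-lemma argument you spell out, and both then invoke Smith normal form over the DVR $k\lb u\rb$ to compare $\ell_{\Zp}(\coker A)$ with $\ell_{\Zp}(\coker \det A)$ in the mod-$p$ case. You have simply made explicit the bookkeeping (injectivity of the vertical maps, compatibility of the identification $p\fM\cong\fM/p^{n-1}\fM$ with the Kisin structure) that the paper leaves implicit.
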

\begin{proof}
Filtering $\fM$ by $p^i \fM$, we can reduce to the case where $n = 1$.  There exists basis for $\{ e_j \}$ for $\fM$ such that $\phi_{\fM}(\phz^*(\fM))$ is generated by $u^{m_j} e_j$.  Then $\deg(\fM) = \frac{1}{[k:\Fp]}  \sum m_j = \deg(\wedge^d_\fS \fM)$.  
\end{proof}

In \S\ref{sec:tensor}, we will be very interested in the behavior of Kisin modules under tensor product. This basic fact about how slope adds under tensor product will be foundational. 

\begin{prop}
\label{prop:ten_add}
Let $\fM, \fN \in \Mod_{\fS}^{\phz, [0,h]}$ be flat over $\fS/p^n$. Then $\mu(\fM \otimes_\fS \fN) = \mu(\fM) + \mu(\fN)$. 
\end{prop}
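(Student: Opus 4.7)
The plan is to reduce the computation of $\deg(\fM \otimes_\fS \fN)$ to tensor products of rank-one Kisin modules via Lemma \ref{wedgeslope}. Write $d_i$ for the rank of $\fM, \fN$ as free $\fS/p^n$-modules (they are free since $\fS/p^n$ is local and they are flat), so that $\rk(\fM) = nd_1$, $\rk(\fN) = nd_2$, and $\rk(\fM \otimes_\fS \fN) = nd_1 d_2$, using that tensor products of $\fS/p^n$-flat modules are $\fS/p^n$-flat of product rank. It then suffices to prove the degree identity
\[
\deg(\fM \otimes_\fS \fN) = d_2 \deg(\fM) + d_1 \deg(\fN),
\]
whence dividing by $nd_1 d_2$ immediately gives $\mu(\fM \otimes \fN) = \mu(\fM) + \mu(\fN)$.

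For this degree identity, I would combine Lemma \ref{wedgeslope} with the classical top-exterior-power isomorphism
\[
\wedge^{d_1 d_2}_\fS(\fM \otimes_\fS \fN) \cong (\wedge^{d_1}_\fS \fM)^{\otimes d_2} \otimes_\fS (\wedge^{d_2}_\fS \fN)^{\otimes d_1}
\]
valid for locally free modules over any ring. This reduces matters to additivity of $\deg$ under tensor products of rank-one flat Kisin modules, which follows by direct computation: for rank-one $L, M$ over $\fS/p^n$ with $\phi_L, \phi_M$ acting by scalars $\lambda, \mu \in \fS/p^n$, the cokernel of $\phi_{L \otimes M}$ is $\fS/(p^n, \lambda\mu)$, and the short exact sequence
\[
0 \to \fS/(p^n, \mu) \xrightarrow{\cdot \lambda} \fS/(p^n, \lambda\mu) \to \fS/(p^n, \lambda) \to 0
\]
(where $\lambda$ is a nonzerodivisor mod $p^n$ because the bounded-height condition forces $\bar\lambda \neq 0$ in $k\lb u \rb$, as $\bar\lambda$ must divide $u^{eh}$) yields $\deg(L \otimes M) = \deg(L) + \deg(M)$ via additivity of $\Zp$-length. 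Iterating this on tensor powers of $\wedge^{d_1}\fM$ and $\wedge^{d_2}\fN$ and applying Lemma \ref{wedgeslope} produces the identity.

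The step requiring the most care is verifying that the top-exterior-power isomorphism is compatible with Frobenius structures, so that it is an isomorphism of Kisin modules rather than merely of $\fS$-modules. This follows from the multilinearity identity $\det(\phi_\fM \otimes \phi_\fN) = (\det \phi_\fM)^{d_2} \cdot (\det \phi_\fN)^{d_1}$ for free modules. Alternatively, one may dispense with the exterior algebra by selecting Smith-form bases $\{e_j\}$ of $\fM$ and $\{f_k\}$ of $\fN$ diagonalizing $\phi_\fM, \phi_\fN$ (as produced in the proof of Lemma \ref{wedgeslope}, which first reduces to $n=1$ via the $p$-filtration): the tensor basis $\{e_j \otimes f_k\}$ then puts $\phi_{\fM \otimes \fN}$ into diagonal form with invariants $u^{m_j + m'_k}$, from which a single application of Lemma \ref{wedgeslope} recovers the degree formula by grouping the sum.
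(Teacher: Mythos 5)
Your proof is correct and follows essentially the same route as the paper's: reduce the degree computation to top exterior powers via Lemma \ref{wedgeslope} and the isomorphism $\wedge^{d_1 d_2}_\fS(\fM \otimes_\fS \fN) \cong (\wedge^{d_1}_\fS \fM)^{\otimes d_2} \otimes_\fS (\wedge^{d_2}_\fS \fN)^{\otimes d_1}$. The paper leaves the additivity of degree on tensor products of rank-one Kisin modules implicit; your short-exact-sequence argument, including the observation that the bounded-height condition makes the structure scalar a nonzerodivisor mod $p^n$, fills this in correctly.
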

\begin{proof} 
Let $d$ (resp. $d'$) be the rank of $\fM$ (resp. $\fN$) over $\fS/p^n$. The result follows from Lemma \ref{wedgeslope} since
\[
\wedge^{d+d'}_\fS (\fM \otimes \fN) \cong (\wedge^d_\fS \fM)^{d'} \otimes (\wedge^{d'}_\fS \fN)^d. \qedhere
\] 
\end{proof}

\subsection{Harder--Narasimhan theory}
\label{subsec:HN-theory}

Following \cite{pottharst}, to achieve our goal of setting up a Harder--Narasimhan formalism on torsion Kisin modules relative to torsion \'etale $\phz$-modules, we observe that we have 
\begin{enumerate}[leftmargin=3em]
\item An additive faithful functor $GF$ to an abelian category, preserving strict short exact sequences.
\item A notion of rank on the abelian category, denoted $\rk$.
\item A notion of degree of a torsion Kisin module, denoted $\deg$.
\item A notion of closure from the generic fiber which induces a bijection on strict subobjects.  
\end{enumerate}

These structures must satisfy certain axioms, as follows; compare \cite[\S1]{pottharst}. 
 \begin{prop} 
 \label{prop:HN_main}
The notion of rank and degree of torsion Kisin modules given in Definition \ref{defn:rk_deg} define a Harder--Narasimhan theory on $\Mod^\phz_{\fS, \tor}$. That is, they satisfy the following properties.  
\begin{enumerate}
\item The functor $GF$ sending torsion Kisin modules to torsion \'{e}tale $\phz$-modules is exact and faithful, and induces a bijection
\[
\{\text{strict subobjects of } \fM\} \lrisom \{\text{subobjects of } \fM \otimes_\fS \cO_\cE\}
\]
\item Both $\rk$ and $\deg$ are additive in short exact sequences, and $\rk(M) = 0 \iff M = 0$. 
\item When an injection $\fM' \iarrow \fM$ in $\Mod^\phz_{\fS,\tor}$ induces an isomorphism $GF(\fM') \risom GF(\fM)$, then $\mu(\fM') \geq \mu(\fM)$, with equality if and only if $\fM' \risom \fM$. 
\end{enumerate}
\end{prop}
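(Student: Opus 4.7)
The plan is to verify the three axioms in turn. For axiom~(1), I observe that since $\fM$ is $p$-power torsion, $GF(\fM) = \fM \otimes_\fS \cO_\cE$ is simply the localization $\fM[1/u]$ (the $p$-adic completion of $\fS[1/u]$ is inert on $p$-power torsion modules). Exactness of $GF$ then follows from flatness of localization, and faithfulness follows from the fact that the image of a morphism of Kisin modules, being a submodule of a $u$-torsion-free module, cannot be $u$-power torsion unless zero. For the bijection of subobjects, I would check that $\cN \mapsto \cN \cap \fM$ and $\fN \mapsto \fN[1/u]$ are mutually inverse, using that strictness of $\fN \subset \fM$ is exactly the condition $\fN = \fN[1/u] \cap \fM$, and that any element of $\cN \subset \fM[1/u]$ is of the form $u^{-r} y$ with $y \in \cN \cap \fM$.

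For axiom~(2), additivity of $\rk$ is immediate from additivity of length in $\Mod^\phz_{\cO_\cE,\tor}$, and $\rk(M) = 0 \iff M = 0$ is the analogous statement for length. For additivity of $\deg$, I would first reduce to the effective case using the Tate-twist identity $\deg(\fM(r)) = \deg(\fM) + r\rk(\fM)$ (Remark~\ref{rem:height_twist}) applied termwise to the strict short exact sequence $0 \to \fM_1 \to \fM_2 \to \fM_3 \to 0$. In the effective case, I would apply the snake lemma to the two-row diagram with top $0 \to \phz^*\fM_1 \to \phz^*\fM_2 \to \phz^*\fM_3 \to 0$, bottom the original sequence, and vertical maps the linearized Frobenii $\phi_{\fM_i}$. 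The top row is exact because $\fS$ is free of rank $p$ over $\phz(\fS) = W(k)\lb u^p \rb$, so $\phz^*$ is exact. Each $\phi_{\fM_i}$ is injective because it becomes an isomorphism after inverting $E(u)$ and each $\fM_i$ is $E(u)$-torsion-free (since $E(u) \equiv u^e \pmod{p}$ is a non-zero-divisor on $u$-torsion-free $\fS$-modules). The snake lemma then produces a short exact sequence of cokernels, and additivity of $\ell_{\Zp}$ finishes the argument.

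Axiom~(3) is the heart of the matter. Writing $T := \fM/\fM'$, the hypothesis forces $T$ to be $u$-power torsion of finite $\Zp$-length, and after a Tate twist I may assume $\fM$ and $\fM'$ are both effective. Applying the snake lemma to the diagram
$$
\begin{array}{ccccccccc}
0 & \to & \phz^*\fM' & \to & \phz^*\fM & \to & \phz^*T & \to & 0 \\
 & & \downarrow \phi_{\fM'} & & \downarrow \phi_\fM & & \downarrow \phi_T & & \\
0 & \to & \fM' & \to & \fM & \to & T & \to & 0
\end{array}
$$
(top row exact as above, $\phi_{\fM'}$ and $\phi_\fM$ injective) yields
$$
0 \to \ker \phi_T \to \coker \phi_{\fM'} \to \coker \phi_\fM \to \coker \phi_T \to 0.
$$
Combined with additivity of $\ell_{\Zp}$ across $0 \to \ker \phi_T \to \phz^*T \to T \to \coker \phi_T \to 0$, this gives
$$
g\bigl(\deg \fM' - \deg \fM\bigr) = \ell_{\Zp}(\phz^*T) - \ell_{\Zp}(T).
$$
The crucial input is then that $\ell_{\Zp}(\phz^*T) = p \cdot \ell_{\Zp}(T)$ for any finite-length $\fS$-module $T$; this follows by filtering $T$ by a composition series with simple graded pieces $k = \fS/(p,u)$ and computing $\phz^*k = \fS/(p, u^p)$ directly. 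The result is $g(\deg\fM' - \deg\fM) = (p-1) \ell_{\Zp}(T) \geq 0$, with equality iff $T = 0$, i.e., $\fM' = \fM$; since $\rk\fM' = \rk\fM$, this gives $\mu(\fM') \geq \mu(\fM)$ with equality iff $\fM' \risom \fM$. The principal obstacle is this last computation: the length-multiplying factor $p$ for $\phz^*$ on torsion $\fS$-modules is the specific algebraic feature that converts the snake-lemma identity into the required positivity, and is not a formal consequence of the axioms.
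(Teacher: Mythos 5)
Your proposal is correct and follows essentially the same route as the paper: verify (1) via the flatness/faithfulness of $GF$ and the saturation bijection on subobjects, verify (2) and (3) by the snake lemma applied to the diagram of linearized Frobenii, and reduce the positivity claim in (3) to the length comparison $\ell_{\Zp}(\phz^*T)$ vs.\ $\ell_{\Zp}(T)$ for finite-length $\fS$-modules $T$ (the paper's Lemma \ref{lem:strict_ineq}, proved by exactly the filtration-by-simple-subquotients argument you give, with graded pieces $\phz^*k \cong k[u]/(u^p)$). You supply a few details the paper leaves implicit — the identification of $GF(\fM)$ with $\fM[1/u]$ on torsion objects, the Tate-twist reduction to the effective case, the injectivity of each $\phi_{\fM_i}$, and the exact scaling factor $\ell(\phz^*T) = p\,\ell(T)$ (the paper only records the strict inequality) — but the skeleton, the key decompositions, and the decisive lemma coincide with the paper's.
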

\begin{proof} 
Since $\cO_\cE$ is a flat $\fS$-module, the functor $GF$ is exact and faithful as desired, cf.~\cite[Prop.\ 2.1.12]{crfc}. Because Kisin modules are $u$-torsion free, $GF$ induces a bijection
\begin{align*}
\{\text{strict subobjects of } \fM\}& \lrisom \{\text{subobjects of } GF(\fM)\} \\
\fM' \subset \fM &\mapsto \fM' \otimes_\fS \cO_\cE \subset \fM \otimes_\fS \cO_\cE = GF(\fM)\\
\cM' \cap \fM \otimes 1 &\mapsfrom \cM' \subset GF(\fM).
\end{align*}
This proves (i).

Additivity in exact sequences of $\deg$ follows from the snake lemma, and additivity of $\rk$ follows from the Noetherianness of $\cO_\cE$. If $\rk(M) = 0$, then $M=0$ by Nakayama's lemma. This proves (ii). 

Now, let $f:\fM' \rinj \fM$ be an injective morphism which becomes an isomorphism after inverting $u$, that is, the cokernel $\fN$ is $u^s$-torsion for some $s \geq 1$.  Note that $\fN$ is also killed by some power of $p$, so that $\fN$ has finite $\bZ_p$-length. The sequence $0 \ra \phz^*(\fM') \ra \phz^*(\fM) \ra \phz^*(\fN) \ra 0$ is exact because the Frobenius map $\phz: \fS \ra \fS$ is flat. Let $\phi_{\fN}:\phz^*(\fN) \ra \fN$ be the induced map. We get an exact sequence
\[
0 \lra \ker (\phi_{\fN}) \lra \coker (\phi_{\fM'}) \lra \coker (\phi_{\fM}) \lra \coker(\phi_{\fN}) \lra 0.
\]
Thus,
\[
\deg (\fM') - \deg (\fM) = \ell_{\bZ_p} (\ker (\phi_{\fN}))  - \ell_{\bZ_p} (\coker (\phi_{\fN})) = \ell_{\bZ_p}(\phz^*(\fN)) - \ell_{\bZ_p} (\fN).
\]
The claim that $\deg(\fM') \geq \deg(\fM)$ with equality if and only if $f$ is an isomorphism reduces to Lemma \ref{lem:strict_ineq}.
\end{proof}

\begin{lem}
\label{lem:strict_ineq}
For any finite non-zero $\fS$-module $\fN$, we have $\ell_{\bZ_p}(\phz^*(\fN)) > \ell_{\bZ_p}(\fN)$.
\end{lem}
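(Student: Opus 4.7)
The plan is to establish the strict inequality via an exact formula: $\ell_{\bZ_p}(\phz^*\fN) = p \cdot \ell_{\bZ_p}(\fN)$, which immediately yields strictness because $\fN \neq 0$ and $p \geq 2$. In the application within the proof of Proposition~\ref{prop:HN_main}, $\fN$ is a finitely generated $\fS$-module killed by a power of $p$ and a power of $u$, so it has finite $\bZ_p$-length; this is the regime in which the lemma is invoked.

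The first step is to observe that $\phz : \fS \to \fS$ is finite flat of rank $p$. Since $K/\bQ_p$ is finite, $k$ is perfect, so $\phz$ restricts to an isomorphism on $W(k)$. Hence $\phz(\fS) = W(k)\lb u^p \rb$, and $\fS = W(k)\lb u \rb$ is free over $\phz(\fS)$ with basis $\{1, u, u^2, \ldots, u^{p-1}\}$. Consequently $\phz^* = \fS \otimes_{\phz, \fS} (-)$ is exact. Next, because $\fS$ is local with maximal ideal $\fm = (p,u)$ and residue field $k$, every simple $\fS$-module is isomorphic to $k = \fS/\fm$; since $k$ is finite and $\fN$ has finite $\bZ_p$-length, $\fN$ is a finite set and hence has finite length as an $\fS$-module, with every graded piece in a composition series isomorphic to $k$. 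By exactness of $\phz^*$ and additivity of $\ell_{\bZ_p}$ in short exact sequences, it suffices to verify the factor-of-$p$ identity on the single module $\fN = k$.

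For this final reduction, a direct computation using $\phz(\fm)\fS = (p, u^p)$ gives
\[
\phz^*(\fS/\fm) \;=\; \fS \otimes_{\phz,\fS} \fS/\fm \;=\; \fS/\phz(\fm)\fS \;=\; \fS/(p, u^p) \;\cong\; k[u]/(u^p),
\]
which has $\bZ_p$-length $p \cdot [k : \bF_p] = p \cdot \ell_{\bZ_p}(k)$, completing the proof. There is no substantial obstacle here; the whole argument turns on the perfectness of $k$, which makes $\phz$ on $\fS$ finite flat of degree exactly $p$. This structural fact is also the reason one can strengthen the asserted strict inequality to the precise relation $\ell_{\bZ_p}(\phz^*\fN) = p \cdot \ell_{\bZ_p}(\fN)$.
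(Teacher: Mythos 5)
Your proof is correct and follows essentially the same route as the paper: both filter $\fN$ by a composition series with graded pieces $k$, use flatness of $\phz$ to transfer the filtration, and compute $\phz^*(k) \cong k[u]/(u^p)$ of $\bZ_p$-length $p[k:\bF_p]$. You additionally make explicit the exact identity $\ell_{\bZ_p}(\phz^*\fN) = p\,\ell_{\bZ_p}(\fN)$ and the finite-free-of-rank-$p$ structure of $\phz$, which the paper leaves implicit, but the content is the same.
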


\begin{proof}
Choose a filtration of $\fN$ as a $\fS$-module that has simple graded pieces; each of these graded factors is isomorphic as a $\fS$-module to $k$. Because the Frobenius endomorphism of $\fS$ is flat, this filtration induces a filtration on $\phz^*(\fN)$ with graded pieces isomorphic to $\fS\otimes_{\phz, \fS} k$, which is in turn isomorphic to the $\fS$-module $k[u]/(u^p)$, which has length $p[k:\F_p]$. 
\end{proof} 

Now that we have verified the axioms of HN-theory for our notions of degree and rank on $\Mod^\phz_{\fS,\tor}$, standard results follow. To begin with, there are semi-stable objects. 

\begin{defn} 
An object $\fM \in \Mod_{\fS}^{\phz, [0,h]}$ is \emph{semi-stable} if for all non-zero strict subobjects $\fM' \subset \fM$, we have $\mu(\fM') \geq \mu(\fM)$.  
\end{defn}

\begin{rem}
Note that the inequality is opposite that of \cite[\S3, Cor.\ 3]{fargues2010}.  This is natural in light of the \emph{anti}-equivalence of Remark \ref{rem:anti-equiv}. 
\end{rem}

The following lemma allows us to always reduce to the case of height in $[0,h]$ by Tate twisting. 
\begin{lem} 
\label{sstt}
Let $\fM \in \Mod_{\fS}^{\phz, [a,b]}$.  Then $\fM$ is semi-stable if and only if $\fM(r)$ is semi-stable for any (equivalently, for every) integer $r$. 
\end{lem}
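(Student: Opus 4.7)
The plan is to reduce semi-stability of $\fM$ to semi-stability of $\fM(r)$ by showing that tensoring with the invertible Kisin module $\fS(r)$ from Example~\ref{eg:mult} is an auto-equivalence of $\Mod_\fS^\phz$ that translates every slope by the constant $r$. Since the semi-stability condition $\mu(\fM') \geq \mu(\fM)$ ranges over nonzero strict subobjects $\fM' \subset \fM$, a uniform shift of both sides by $r$ preserves the inequality, so it will suffice to establish a bijection on strict subobjects together with the slope-shift identity. It also suffices to treat the case $r=1$ and iterate, using the quasi-inverse $(-)(-1)$ to handle negative twists.

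First I would record the formal properties of the twist functor $\fN \mapsto \fN(r) = \fN \otimes_\fS \fS(r)$. Because $\fS(r)$ is free of rank one as an $\fS$-module, tensoring with $\fS(r)$ is exact and preserves $u$-torsion-freeness of cokernels; hence it sends strict subobjects of $\fM$ bijectively to strict subobjects of $\fM(r)$, with inverse tensoring by $\fS(-r)$. Applied after the generic fiber functor, tensoring with $GF(\fS(r))$ preserves $\cO_\cE$-length, so $\rk(\fN(r)) = \rk(\fN)$ for every torsion Kisin module $\fN$.

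The central step is the degree shift formula $\deg(\fN(r)) = \deg(\fN) + r \cdot \rk(\fN)$. For effective $\fN$ with $r \geq 0$, the linearized Frobenius of $\fN(r)$ equals $(c_0^{-1} E(u))^r \phi_\fN$ under the canonical identification of $\fN(r)$ with $\fN$ as $\fS$-modules, and a short exact sequence comparing $\coker(\phi_\fN)$ with $\coker(\phi_{\fN(r)})$ reveals an additional $\bZ_p$-length of $r \cdot \rk(\fN) \cdot g$; dividing by $g$ yields the claimed shift. The non-effective case then follows by unwinding Definition~\ref{defn:rk_deg}: choose $s \geq 0$ large enough that both $\fN(s)$ and $\fN(r+s)$ are effective, and apply the effective identity twice.

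With rank preserved, strict subobjects in bijection, and slopes shifted uniformly by $r$, the semi-stability inequalities for $\fM$ and for $\fM(r)$ become identical and the lemma follows. The only non-formal step is the degree shift, which is ultimately bookkeeping from the effective definition; the main subtlety to watch is the consistency of Definition~\ref{defn:rk_deg} under further twisting, but this is handled by the same computation.
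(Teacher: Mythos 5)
Your proof follows the same route as the paper's: observe that $\fN \mapsto \fN(r)$ bijects strict subobjects of $\fM$ with those of $\fM(r)$ and shifts every slope by the constant $r$, from which semi-stability is preserved. The only difference is that you unpack the slope-shift $\mu(\fN(r)) = \mu(\fN) + r$ by computing the extra $\bZ_p$-length of $\coker(\phi_{\fN(r)})$ in the effective case (which is indeed also what is needed for Definition~\ref{defn:rk_deg} to be well-posed for non-effective modules), whereas the paper simply asserts the slope shift and reads it off from that definition.
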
  
\begin{proof}  For any strict subobject $\fN \subset \fM$, the Tate twist $\fN(r) \subset \fM(r)$ is again a strict subobject and this induces a bijection between strict subobjects.   Furthermore, $\mu(\fN(r)) = \mu(\fN) + r$ and similarly for $\fM$ so semi-stability is unaffected by the Tate twist.
\end{proof} 

One of the most useful consequences of the theory is that there exists a canonical filtration whose graded pieces are semi-stable objects, and their slopes are ordered. Note that the ordering is in the opposite direction from \cite{pottharst}. 

\begin{thm}[(HN-filtration)]
\label{thm:HNfilt} 
Let $\fM \in \Mod_{\fS,\tor}^{\phz}$.  There exists a unique filtration $0 = \fM_0 \subset \fM_1 \subset \fM_2 \subset \ldots \subset \fM_n = \fM$ by strict subobjects such that for all $i$, $\fM_{i+1}/\fM_i$ is semi-stable and $\mu(\fM_{i+1}/\fM_i) > \mu(\fM_i/\fM_{i-1})$.
\end{thm}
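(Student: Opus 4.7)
The plan is to follow the standard Harder-Narasimhan argument, adapted to the ``increasing slope'' convention used here: construct a canonical first piece $\fM_1 \subset \fM$ as the strict subobject of minimum slope and maximum rank among such, verify that $\fM_1$ is semi-stable, and iterate on the quotient $\fM/\fM_1$. Uniqueness then follows because any such filtration must begin with this canonical piece.

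For the construction of $\fM_1$, first observe that the set of slopes of nonzero strict subobjects of $\fM$ is finite: each strict subobject inherits $\fM$'s height interval $[a,b]$ (see the remark preceding Example \ref{eg:mult}), so its slope lies in $[a,b]$; combined with $\deg \in \frac{1}{g}\bZ$ and $\rk \leq \rk(\fM)$, only finitely many values appear. Let $\mu_0$ denote the minimum slope and $\fM_1$ a strict subobject of slope $\mu_0$ of maximum rank. The key claim is that $\fM_1$ is unique. Given two slope-$\mu_0$ strict subobjects $\fM', \fM''$, Proposition \ref{prop:HN_main}(1) supplies strict subobjects $\fM' + \fM''$ and $\fM' \cap \fM''$ of $\fM$ (corresponding to sum and intersection in the abelian category of generic fibers), and additivity of $\rk$ and $\deg$ in the sequence $0 \to \fM' \cap \fM'' \to \fM' \oplus \fM'' \to \fM' + \fM'' \to 0$ yields
\[
\deg(\fM'+\fM'') - \mu_0\,\rk(\fM'+\fM'') \;=\; \mu_0\,\rk(\fM'\cap\fM'') - \deg(\fM'\cap\fM'').
\]
The right-hand side is $\leq 0$ by minimality of $\mu_0$ (applied to the intersection when nonzero, trivially otherwise), while the left-hand side is $\geq 0$ for the same reason; hence equality, and $\mu(\fM'+\fM'') = \mu_0$. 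Thus slope-$\mu_0$ strict subobjects are closed under sum, and a unique maximum $\fM_1$ exists.

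It remains to show $\fM_1$ is semi-stable and iterate. Any strict subobject $\fN \subset \fM_1$ is also strict in $\fM$ (strictness is transitive, since in a short exact sequence of $\fS$-modules with $u$-torsion-free outer terms the middle term is $u$-torsion-free), so $\mu(\fN) \geq \mu_0 = \mu(\fM_1)$. Applying the same construction inductively to $\fM/\fM_1$ produces the full filtration. The strict slope increase $\mu(\fM_2/\fM_1) > \mu(\fM_1)$ follows from the maximality of $\rk(\fM_1)$: if the first piece of the filtration of $\fM/\fM_1$ had slope $\mu_0$, its preimage in $\fM$ would be a slope-$\mu_0$ strict subobject of strictly larger rank than $\fM_1$, a contradiction. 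Uniqueness of the filtration follows from this characterization of $\fM_1$, applied inductively. The one subtle point to verify is the careful bookkeeping between strict subobjects of Kisin modules and subobjects of their generic fibers -- ensuring sums and intersections remain strict so that the seesaw-type slope computation goes through -- and this is fully supplied by Proposition \ref{prop:HN_main}(1). Once the axioms are in hand, the argument is a direct invocation of the formal Harder-Narasimhan machinery as in \cite{pottharst}.
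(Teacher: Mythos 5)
Your proposal reconstructs the standard Harder--Narasimhan existence/uniqueness argument, which is exactly what the paper does by deferring to \cite[Lem.\ 6.2]{pottharst}; the strategy, the seesaw on slopes, and the use of Proposition~\ref{prop:HN_main} are all the right ingredients.

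One step, however, is stated a little too optimistically. You take $\fM'+\fM''$ to be the \emph{strict} subobject of $\fM$ corresponding to the sum of generic fibers (via Proposition~\ref{prop:HN_main}(1)) and then invoke additivity of $\deg$ and $\rk$ in the sequence $0 \to \fM'\cap\fM'' \to \fM'\oplus\fM'' \to \fM'+\fM'' \to 0$. But that sequence is exact in $\Mod^\phz_{\fS,\tor}$ only when $\fM'+\fM''$ denotes the $\fS$-module sum inside $\fM$, which is a $\phz$-stable $u$-torsion-free submodule but need \emph{not} be a strict subobject of $\fM$; the strict subobject supplied by part (1) is its saturation, and substituting the saturation breaks exactness on the right. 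Consequently the displayed identity for $\deg$ should be an inequality: apply Proposition~\ref{prop:HN_main}(2) to the genuine short exact sequence with the module-theoretic sum, and then Proposition~\ref{prop:HN_main}(3) to the inclusion of the module sum into its saturation (which lowers degree). The corrected inequality still traps both sides at $0$ against the minimality of $\mu_0$, so your conclusion $\mu(\fM'+\fM'')=\mu_0$ survives -- but the bookkeeping you flag as ``fully supplied by Proposition~\ref{prop:HN_main}(1)'' in fact also requires part (3), and as written the equation is false before that correction is made.
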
 
\begin{proof}
See \cite[Prop.\ 6.3]{pottharst}. 
\end{proof}

Arguably, the most natural indexing for the HN-filtration is by slopes. 
\begin{prop}
\label{prop:HN_funct}
The HN-filtration is functorial when labeled by slopes. That is, if $\fM^{\leq \alpha}$ denotes the maximal step in the HN-filtration of $\fM \in \Mod^\phz_{\fS,\tor}$ with slope $\leq \alpha$, then for any morphism $\fM \ra \fN$ in $\Mod^\phz_{\fS,\tor}$, the image of $\fM^{\leq \alpha}$ lies in $\fN^{\leq \alpha}$. In particular, when all of the slopes of $\fM$ are strictly less than those of $\fN$, then $\Hom(\fM, \fN) = 0$. 
\end{prop}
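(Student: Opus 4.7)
The plan is to first establish the ``in particular'' vanishing statement, from which the functoriality of $\fM^{\leq \alpha}$ follows as a formal consequence. Given a morphism $f : \fM \to \fN$, consider the composition
\[
\fM^{\leq \alpha} \iarrow \fM \xrightarrow{f} \fN \sarrow \fN/\fN^{\leq \alpha}.
\]
The source $\fM^{\leq \alpha}$ has all HN slopes $\leq \alpha$ by definition, while the target $\fN/\fN^{\leq \alpha}$ has all HN slopes $> \alpha$. Assuming the vanishing statement, this composition is zero, which is precisely the assertion $f(\fM^{\leq \alpha}) \subseteq \fN^{\leq \alpha}$.

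To prove the vanishing, I would induct on the sum of the lengths of the HN filtrations of $\fM$ and $\fN$. The base case is when both are semi-stable. Given $f : \fM \to \fN$ with $\mu(\fM) < \mu(\fN)$, form the strict short exact sequence
\[
0 \to \ker(f) \to \fM \to \im(f) \to 0,
\]
where $\ker(f)$ is strict because $\fM$ is $u$-torsion free. Semi-stability of $\fM$ together with additivity of degree and rank gives $\mu(\im(f)) \leq \mu(\fM)$. Meanwhile, by Proposition~\ref{prop:HN_main}(1) one may form the strict closure $\overline{\im(f)} \subseteq \fN$ corresponding to the subobject $GF(\im(f)) \subseteq GF(\fN)$. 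Semi-stability of $\fN$ yields $\mu(\overline{\im(f)}) \geq \mu(\fN)$, while the injection $\im(f) \iarrow \overline{\im(f)}$ is an isomorphism on generic fibers, so Proposition~\ref{prop:HN_main}(3) gives $\mu(\im(f)) \geq \mu(\overline{\im(f)})$. Chaining these inequalities forces $\mu(\fM) \geq \mu(\fN)$, contradicting the hypothesis unless $\im(f) = 0$.

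The induction step is standard dévissage. If the HN filtration of $\fM$ has length $n > 1$, the restriction $f|_{\fM_1}$ satisfies the hypothesis (its source is semi-stable of slope $\mu_1(\fM)$, strictly below all slopes of $\fN$), so induction gives $f|_{\fM_1} = 0$; hence $f$ factors through $\fM/\fM_1$, whose HN filtration is of shorter length with the same slope bound, and a second application of the inductive hypothesis concludes. The case where $\fN$ has HN length $m > 1$ is dual: compose $f$ with $\fN \sarrow \fN/\fN_{m-1}$ onto the top graded piece, which is semi-stable, so induction kills the composite; then $f$ factors through $\fN_{m-1}$, and induction applies again.

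The step I expect to require the most care is the base case comparison $\mu(\im(f)) \geq \mu(\fN)$. The subtlety is that the naïve image $\im(f) \subseteq \fN$ is typically not a strict subobject, so one cannot directly apply semi-stability of $\fN$ to it. The remedy is to pass to the strict closure $\overline{\im(f)}$ via the generic fiber bijection of Proposition~\ref{prop:HN_main}(1), and then to control the resulting slope discrepancy using Proposition~\ref{prop:HN_main}(3). This reliance on the specific ``generic fiber'' structure of the theory, rather than purely formal axiomatics on an abelian category, is the heart of the argument.
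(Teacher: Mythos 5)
Your proof is correct and follows essentially the same logic as the paper's: reduce to the vanishing statement $\Hom(\fM,\fN)=0$ when all slopes of $\fM$ are strictly below those of $\fN$, handle this by d\'evissage on the HN filtrations, and then read off the functoriality statement as a formal consequence. The main difference is one of self-containment: the paper cites Pottharst \cite[Lem.\ 6.4]{pottharst} for the base case (semi-stable objects with separated slopes have no morphisms between them), whereas you prove it directly via the strict closure $\overline{\im(f)}$ inside $\fN$ combined with Proposition \ref{prop:HN_main}(3); this is exactly the argument underlying the cited lemma, and your observation that the slope inequality must be mediated by the strict closure --- because $\im(f)$ itself need not be strict --- is the right technical point to highlight. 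The paper's written proof is also somewhat terser, only spelling out the d\'evissage for the case $\fN = \fM$ (the endomorphism case relevant to Corollary \ref{cor:endo}), so your explicit double induction on the sum of HN lengths, handling both $\fM$ and $\fN$, fills in details the paper leaves to the reader.
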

\begin{proof}
A standard fact in theory of HN-filtrations (see \cite[Lem.\ 6.5]{pottharst}) is that if $\fN$ and $\fN'$ are both semi-stable and $\mu(\fN') < \mu(\fN)$ then $\Hom_{\phz} (\fN', \fN) = 0$.  By induction on the length of the HN-filtration of $\fM$, we are reduced to the case where $\fM_2 = \fM$ so that $\fM/\fM_1$ is semi-stable. The map induced by $f$ from $\fM_1$ to $\fM/\fM_1$ is zero since $\mu(\fM_1) < \mu(\fM/\fM_1)$ and both are semi-stable.  Thus, $f(\fM_1) \subset \fM_1$. 
\end{proof}

Here is one direct consequence of this functorality. 
\begin{cor} 
\label{cor:endo}
 Let $\fM \in \Mod_{\fS,\tor}^\phz$.  Then any $f \in \End_{\phi_{\fM}}(\fM)$ respects the HN-filtration of $\fM$.  
\end{cor}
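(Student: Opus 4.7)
The plan is to deduce this immediately from Proposition \ref{prop:HN_funct} by specializing to the case where the source and target coincide. Given $f \in \End_{\phi_\fM}(\fM)$, we view $f$ as a morphism $\fM \to \fN$ in $\Mod^\phz_{\fS,\tor}$ with $\fN = \fM$, so that $\fM^{\leq \alpha} = \fN^{\leq \alpha}$ for every real $\alpha$. Proposition \ref{prop:HN_funct} then asserts that $f(\fM^{\leq \alpha}) \subset \fM^{\leq \alpha}$.

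Next, I would explain why this suffices to show that $f$ respects the HN-filtration $0 = \fM_0 \subset \fM_1 \subset \cdots \subset \fM_n = \fM$ of Theorem \ref{thm:HNfilt}. By the uniqueness of the HN-filtration and the strict inequality $\mu(\fM_{i+1}/\fM_i) > \mu(\fM_i/\fM_{i-1})$, each step $\fM_i$ coincides with $\fM^{\leq \alpha_i}$ where $\alpha_i := \mu(\fM_i/\fM_{i-1})$ is the maximum slope appearing in the $i$-th graded piece, equivalently the maximum slope of the semi-stable pieces in the filtration of $\fM_i$. Thus each step of the HN-filtration is one of the slope-indexed subobjects $\fM^{\leq \alpha}$, and hence is preserved by $f$.

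There is essentially no obstacle here since Proposition \ref{prop:HN_funct} has already been established; the only substantive content is the identification of the HN-filtration with the collection $\{\fM^{\leq \alpha}\}_\alpha$, which is a standard consequence of uniqueness in Theorem \ref{thm:HNfilt} and the definition of $\fM^{\leq \alpha}$ as the \emph{maximal} strict subobject all of whose HN-slopes are $\leq \alpha$.
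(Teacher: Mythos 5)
Your argument is correct and is exactly the intended one: the paper states Corollary \ref{cor:endo} as a ``direct consequence'' of Proposition \ref{prop:HN_funct} without further proof, and specializing that proposition to $\fN=\fM$ is precisely what is meant. Your additional remark identifying each step $\fM_i$ of the HN-filtration with $\fM^{\leq \alpha_i}$ is a standard consequence of uniqueness and is a reasonable point to spell out.
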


\begin{defn}
\label{defn:et_rk}
Let $\fM \in \Mod_{\fS,\tor}^{\phz, [0,h]}$ be an effective Kisin module. As $0$ is the least possible slope of an effective $\fM$, there is a unique strict subobject $\fM^{\leq 0} \subset \fM$. We call $d^\et(\fM) := \rk(\fM^{\leq 0})$ the \emph{\'etale rank} of $\fM$. 
\end{defn}
One can check that $\fM^{\leq 0}$ is the maximal \'etale subobject of $\fM$. 

\begin{eg}
When in addition $\fM$ is effective of height $\leq 1$, the \'etale rank of $\fM$ is the same as the height of the \'etale part of the connected-\'etale exact sequence of the corresponding finite flat group scheme. We observe that our HN-theory generalizes the connected-\'etale exact sequence to Kisin modules. We give the proof that our HN-theory on $\Mod_{\fS,\tor}^{\phz, [0,1]}$ is anti-equivalent to Fargues's HN-theory in Proposition \ref{farguescomp}. 
\end{eg}

While the HN-axioms make $\Mod^\phz_{\fS,\tor}$ very close to an abelian category (see e.g.\ \cite[Prop.\ 2.7]{pottharst}), when one restricts to objects that are semi-stable of a given slope, this is actually an abelian category. 
\begin{prop}
\label{prop:ext_ss}
For any slope $\alpha$, the full subcategory of $\Mod_{\fS, \tor}^\phz$ consisting of semi-stable objects of slope $\alpha$ is an abelian, exact subcategory of $\Mod_{\fS, \tor}^\phz$ that is closed under extensions in $\Mod_{\fS, \tor}^\phz$. As $\Mod_{\fS, \tor}^{\phz, [a,b]}$ is closed under strict subobjects, quotients, and extensions, the same can be said for $\Mod_{\fS, \tor}^{\phz, [a,b]}$. 
\end{prop}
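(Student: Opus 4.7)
The proposition decomposes into two parts: (a) inside $\Mod_{\fS,\tor}^\phz$, the full subcategory of semi-stable objects of slope $\alpha$ is an abelian exact subcategory closed under extensions, and (b) $\Mod_{\fS,\tor}^{\phz,[a,b]}$ is closed under strict subobjects, quotients, and extensions, so the same structure descends. Both parts are consequences of the HN axioms of Proposition \ref{prop:HN_main} applied to standard HN machinery; I will sketch the key computations.

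For closure under extensions in (a): given a strict short exact sequence $0 \to \fM' \to \fM \to \fM'' \to 0$ with $\fM', \fM''$ semi-stable of slope $\alpha$, additivity of rank and degree yields $\mu(\fM) = \alpha$. For a nonzero strict subobject $\fN \subset \fM$, set $\fN' := \fN \cap \fM'$, which is strict in $\fM'$ because $\fM'/\fN' \hookrightarrow \fM/\fN$ is $u$-torsion-free, and let $\fN''$ be the image of $\fN$ in $\fM''$. The injection $\fN'' \hookrightarrow \fM''$ need not be strict; let $\widetilde{\fN''} \subset \fM''$ be its $u$-saturation, a strict subobject sharing the same generic fiber as $\fN''$. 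Semi-stability of $\fM'$ and $\fM''$ yields $\mu(\fN') \geq \alpha$ (when $\fN' \neq 0$) and $\mu(\widetilde{\fN''}) \geq \alpha$, while Proposition \ref{prop:HN_main}(3) applied to $\fN'' \hookrightarrow \widetilde{\fN''}$ gives $\mu(\fN'') \geq \mu(\widetilde{\fN''})$. Additivity of degree and rank across $0 \to \fN' \to \fN \to \fN'' \to 0$ then forces $\mu(\fN) \geq \alpha$, proving $\fM$ is semi-stable of slope $\alpha$.

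For the abelian structure, let $f: \fM \to \fN$ be a morphism between semi-stable objects of slope $\alpha$. Since $\fM/\ker f \cong \im f \subset \fN$ is $u$-torsion-free, $\ker f$ is a strict subobject of $\fM$, so $\mu(\ker f) \geq \alpha$ and hence $\mu(\im f) \leq \alpha$ by additivity. Simultaneously, letting $\widetilde{\im f} \subset \fN$ denote the $u$-saturation of $\im f$, semi-stability of $\fN$ yields $\mu(\widetilde{\im f}) \geq \alpha$, while Proposition \ref{prop:HN_main}(3) gives $\mu(\im f) \geq \mu(\widetilde{\im f})$, with equality if and only if $\im f = \widetilde{\im f}$. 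The chain $\alpha \leq \mu(\widetilde{\im f}) \leq \mu(\im f) \leq \alpha$ forces equalities throughout, so $\im f$ is already strict in $\fN$ and $\coker f$ is a Kisin module. All three of $\ker f$, $\im f$, $\coker f$ have slope $\alpha$ and inherit semi-stability, since strict subobjects of $\ker f$ (resp.\ $\im f$) are strict in $\fM$ (resp.\ $\fN$) by transitivity, and strict subobjects of $\coker f$ correspond to strict subobjects of $\fN$ containing $\im f$, where additivity recovers the slope bound. For (b), closure of $\Mod_{\fS,\tor}^{\phz,[a,b]}$ under strict subobjects and quotients reduces to the identity $E(u)^c \fM \cap \fN = E(u)^c \fN$ for a strict subobject $\fN \subset \fM$ and any $c \in \ZZ$, valid because $\fM/\fN$ is $u$-torsion-free (and hence $E(u)$-torsion-free, as $E(u) \equiv u^e \bmod p$), which transfers the height bounds for $c = a$ and $c = b$. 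Closure under extensions is then a diagram chase with the strict short exact sequence $0 \to Q_1 \to Q_2 \to Q_3 \to 0$, where $Q_i = \fM_i[1/E(u)]/E(u)^c \fM_i$, exploiting the compatibility of $\phi_{\fM_2}$ with $\phi_{\fM_1}$ and $\phi_{\fM_3}$.

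The main obstacle is the step concluding that $\im f$ is automatically a strict subobject of $\fN$. This is the only nontrivial use of the rigidity statement (the equality case in Proposition \ref{prop:HN_main}(3)), and it is essential: without it, $\im f$ could fail to be strict, preventing $\coker f$ from being a Kisin module and so obstructing the abelian structure.
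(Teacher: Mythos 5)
Your argument is correct and essentially reconstructs the proof that the paper delegates entirely to a citation (\cite[Lem.\ 4.12]{pottharst}); the paper's proof is the single line ``See \cite[Lem.\ 4.12]{pottharst}.'' Pottharst's lemma is a purely formal consequence of the HN axioms, and your write-up is the standard argument underlying it, specialized to the Kisin-module situation.

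Two small points worth polishing. First, you use Corollary~\ref{cor:subquotss} implicitly when asserting that $\ker f$, $\im f$, $\coker f$ inherit semi-stability, but that corollary is stated in the paper as a \emph{consequence} of the proposition you are proving; to avoid circularity you should make explicit (as your parenthetical sketch begins to do) that you are re-deriving the relevant slope bounds directly from additivity rather than invoking the corollary. Second, the justification ``$E(u) \equiv u^e \bmod p$'' for the passage from $u$-torsion-free to $E(u)$-torsion-free is terse: the cleanest argument is that a finite $u$-torsion-free $\fS/p^n$-module injects into its localization at $u$, where $E(u) = u^e(1 + pu^{-e}b(u))$ is a unit (since $pu^{-e}b(u)$ is nilpotent in $\cO_\cE/p^n$), so $E(u)$ acts invertibly on the localization and hence injectively on the module. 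With those clarifications, your identification of the equality case of Proposition~\ref{prop:HN_main}(3) as the mechanism forcing $\im f$ to be strict --- which is what makes cokernels Kisin modules and delivers the abelian structure --- is exactly the heart of the matter.
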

\begin{proof}
See \cite[Prop.\ 4.12]{pottharst}. 
\end{proof}

\begin{cor} 
\label{cor:subquotss}
Let $\fM \in \Mod_{\fS}^{\phz, [a,b]}$.  If $\fM$ is semi-stable, then any strict subobject $($resp. quotient$)$ $\fN$ such that $\mu(\fM) = \mu(\fN)$ is again semi-stable. 
\end{cor}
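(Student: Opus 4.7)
The plan is to deduce the corollary directly from the semi-stability inequality for $\fM$, using Proposition \ref{prop:HN_main}(1) to streamline the bookkeeping. Under the bijection between strict subobjects of a Kisin module and subobjects of its generic fiber, a strict subobject of a strict subobject is automatically strict in the ambient object (it corresponds to nested containment of subobjects of $GF(\fM)$ in the abelian category $\Mod^\phz_{\cO_\cE,\tor}$). Likewise, the preimage in $\fM$ of a strict subobject of a strict quotient $\fQ = \fM/\fN$ is strict in $\fM$, since the quotient by the preimage identifies with $\fQ/\fQ'$, which is $u$-torsion free by assumption.

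For the subobject claim, suppose $\fN \subset \fM$ is strict with $\mu(\fN) = \mu(\fM) =: \mu$. For any non-zero strict subobject $\fN' \subset \fN$, transitivity makes $\fN' \subset \fM$ a strict subobject, so semi-stability of $\fM$ gives $\mu(\fN') \geq \mu(\fM) = \mu = \mu(\fN)$; hence $\fN$ is semi-stable. For the quotient claim, let $0 \to \fN \to \fM \to \fQ \to 0$ be strict short exact with $\mu(\fQ) = \mu$. If $\rk(\fN) = 0$ then $\fN = 0$ and $\fQ \cong \fM$, so we may assume $\rk(\fN) > 0$; additivity of rank and degree then forces $\mu(\fN) = \mu$. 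Given a strict subobject $\fQ' \subset \fQ$ with preimage $\fM' \subset \fM$, the sequence $0 \to \fN \to \fM' \to \fQ' \to 0$ is strict short exact, and semi-stability of $\fM$ gives $\deg(\fM') \geq \mu \cdot \rk(\fM')$. Subtracting the equality $\deg(\fN) = \mu \cdot \rk(\fN)$ yields $\deg(\fQ') \geq \mu \cdot \rk(\fQ')$, i.e.\ $\mu(\fQ') \geq \mu(\fQ)$, as required.

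No step is a serious obstacle; this is a formal consequence of the axioms verified in Proposition \ref{prop:HN_main} together with additivity of rank and degree in strict short exact sequences, and the bookkeeping around strictness is the only thing that requires care. An alternative route would go through Proposition \ref{prop:ext_ss}, but the direct argument above is shorter and does not require reassembling the abelian-subcategory structure.
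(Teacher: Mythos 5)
Your proof is correct. The transitivity of strictness for sub-sub-objects, and the strictness of preimages in $\fM$ of strict subobjects of a strict quotient, are both handled soundly via the $u$-torsion-free characterization of strictness (equivalently, via the bijection in Proposition \ref{prop:HN_main}(1)); the slope bookkeeping then follows from the additivity axioms in Proposition \ref{prop:HN_main}(2) exactly as you lay it out.

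Your route differs from the paper's in emphasis but not in substance. The paper dispatches the corollary by invoking Proposition \ref{prop:ext_ss}, which asserts that semi-stable objects of a fixed slope form an abelian subcategory closed under extensions, and then defers to Pottharst's treatment for the proof of that proposition. You instead carry out the elementary slope computation directly from the definition of semi-stability and the additivity of rank and degree. In effect, you are reproving the content of the Pottharst lemma underlying Proposition \ref{prop:ext_ss} in the special case needed, rather than quoting the packaged abelian-subcategory statement. Your version is more self-contained and avoids any appearance of circularity (since one typically needs something like this corollary to verify that the slope-$\alpha$ subcategory is closed under kernels, cokernels, and images when establishing that it is abelian); the paper's version is more economical when the abstract machinery is already set up, as it is here. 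Either is acceptable, and the logical content is the same.
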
   
\begin{proof}  
This is a direct consequence of Proposition \ref{prop:ext_ss}. Cf.\ \cite[Prop.\ 4.9]{pottharst}. 
\end{proof}

\subsection{The Harder--Narasimhan polygon}

The Harder--Narasimhan polygon, i.e.\ ``HN-polygon,'' of $\fM$ encodes numerical data about the pieces of the HN-filtration. 

\begin{defn} 
\label{HNpoly} 
 For any $\fM \in \Mod_{\fS, \tor}^\phz$, define the \emph{HN-polygon} of $\fM$,  $\mathrm{HN}(\fM)$, to be the convex polygon whose segments have slope $\mu(\fM_{i+1}/\fM_i)$ and length $\rk(\fM_{i+1}/\fM_i)$. The endpoints are $(0,0)$ and $(\rk(\fM), \deg(\fM))$. We will often represent the HN-polygon as a piecewise linear function
\[
\HN(\fM) : [0, \rk(\fM)] \lra [0, \deg(\fM)]
\]
such that $\HN(\fM)(0) = 0$ and $\HN(\fM)(\rk(\fM)) = \deg(\fM)$. 
\end{defn}

\begin{rem} 
Because of our normalization, the HN-polygon $\mathrm{HN}(\fM)$ need not have integer breakpoints; instead $g \cdot\mathrm{HN}(\fM)$ has integer breakpoints. 
\end{rem}

The HN-polygon also expresses a bound on the possible pairs $(\rk(\fM'), \deg(\fM'))$ that can occur for subobjects $\fM' \subset \fM$.
\begin{prop} 
\label{chull1} 
The HN-polygon of $\fM$ is the convex hull of the points 
\[
\{ (\rk(\fM'), \deg(\fM')) \} \subset \bR^2
\] 
for all strict subobjects $\fM'$ of $\fM$. It is also the convex hull of such points for all subobjects $\fM'$ of $\fM$. 
\end{prop}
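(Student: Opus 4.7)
The strategy is classical: show that $\HN(\fM)$ is the lower convex envelope of the set of points in question. Let $0 = \fM_0 \subset \fM_1 \subset \cdots \subset \fM_n = \fM$ be the HN-filtration from Theorem \ref{thm:HNfilt} with strictly increasing slopes $\mu_1 < \cdots < \mu_n$; by construction the breakpoints of $\HN(\fM)$ are exactly the points $(\rk(\fM_i), \deg(\fM_i))$, each arising from a strict subobject of $\fM$. The main task therefore reduces to showing every strict subobject $\fN \subset \fM$ gives a point on or above the graph of $\HN(\fM)$; the extension to arbitrary sub-Kisin modules will follow by a short saturation argument.

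For the main step I would set $\fN_i := \fN \cap \fM_i$ and note that $\fN_{i-1} \subset \fN_i$ is strict because the quotient $\fN_i/\fN_{i-1}$ embeds into the $u$-torsion-free $\fM_i/\fM_{i-1}$. The image of this embedding is a (possibly non-strict) subobject of the semi-stable $\fM_i/\fM_{i-1}$ of slope $\mu_i$; its saturation has slope $\geq \mu_i$ by semi-stability, and Proposition \ref{prop:HN_main}(3) applied to the non-strict inclusion into its saturation then yields $\mu(\fN_i/\fN_{i-1}) \geq \mu_i$ as well. Writing $r_i := \rk(\fN_i/\fN_{i-1}) \leq \rk(\fM_i/\fM_{i-1})$, additivity of rank and degree (Proposition \ref{prop:HN_main}(2)) gives
\[
\deg(\fN) = \sum_i \deg(\fN_i/\fN_{i-1}) \geq \sum_i \mu_i r_i, \qquad \rk(\fN) = \sum_i r_i.
\]
The right side is minimized---for fixed $\sum r_i$ and subject to $0 \leq r_i \leq \rk(\fM_i/\fM_{i-1})$---by the greedy allocation that fills the leftmost (smallest-slope) segments first, and this minimum equals $\HN(\fM)(\rk(\fN))$ directly from the definition of the polygon. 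Therefore $\deg(\fN) \geq \HN(\fM)(\rk(\fN))$.

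For an arbitrary sub-Kisin module $\fM' \subset \fM$, let $\fM'^{\mathrm{sat}} \subset \fM$ denote its saturation; the inclusion $\fM' \hookrightarrow \fM'^{\mathrm{sat}}$ has equal rank and induces an isomorphism on generic fibers, so Proposition \ref{prop:HN_main}(3) gives $\deg(\fM') \geq \deg(\fM'^{\mathrm{sat}})$. Since $\fM'^{\mathrm{sat}}$ is a strict subobject, the previous step puts $(\rk(\fM'^{\mathrm{sat}}), \deg(\fM'^{\mathrm{sat}}))$ on or above $\HN(\fM)$, and hence so does $(\rk(\fM'), \deg(\fM'))$. The main obstacle throughout is bookkeeping the direction of inequalities in this paper's sign convention, which reverses that of \cite{fargues2010, pottharst}: non-strict inclusions \emph{increase} slope, so $\HN(\fM)$ arises as the lower---rather than the upper---envelope of the subobject points.
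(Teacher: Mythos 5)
Your proof is correct and expands the ``standard induction argument'' that the paper delegates to \cite[Prop.\ 1.3.4]{DOR2010}: intersecting $\fN$ with the HN-filtration, bounding each graded slope from below using semi-stability plus Proposition~\ref{prop:HN_main}(3) for the saturation, and then optimizing $\sum_i \mu_i r_i$ is exactly that argument. The reduction from arbitrary subobjects to strict ones via Proposition~\ref{prop:HN_main}(3) is also precisely what the paper does, so your approach matches the paper's.
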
 
 
\begin{proof}
The first claim is equivalent to the statement that for any strict subobject $\fN \subset \fM$, the point $(\rk(\fN), \deg(\fN))$ lies above $\mathrm{HN}(\fM)$.  This follows from a standard induction argument (see e.g.\ \cite[Prop.\ 1.3.4]{DOR2010}). In light of Proposition \ref{prop:HN_main}(iii), one gets the same convex hull when all subobjects $\fM' \subset \fM$ are allowed. 
\end{proof}

\subsection{Base change}
\label{subsec:base_change}

We work modulo $p$ for the remainder of the section. We write the category of $p$-torsion Kisin modules as $\Mod_{\fS_{\Fp}}^{\phz}$, i.e., they are Kisin modules over $\fS_{\bF_p} \cong k\lb u\rb$. It will be useful in \S\ref{sec:tensor} to be able to calculate degrees of Kisin modules ``after base change'' in the following sense. 

Let $F'$ be a finite extension of $F = k(\!(u)\!)$ of degree $h$. Let $\fS'_{\F_p}$ denote the valuation ring of $F'$, which is finite free over $\fS/p\fS$ (see e.g.\ \cite[Prop.\ II.3]{serre1979}) and compatibly equipped with a Frobenius $\varphi'$.   There is a tensor exact functor
$$
\mathrm{BC}_{F'/F}: \Mod_{\fS_{\Fp}}^{\phz} \ra \Mod_{\fS'_{\F_p}}^{\phz}
$$
given by $\fM \mapsto (\fM \otimes_{\fS_{\Fp}} \fS'_{\F_p}, \phi_{\fM} \otimes \varphi')$.   

Set $g' = gh$.  For any effective $\fM' \in \Mod_{\fS'_{\F_p}}^{\phz}$, define $\deg(\fM') = \frac{1}{g'} \ell_{\bZ_p}(\coker \phi_{\fM'})$. 
\begin{prop} \label{prop:BCdeg} Let $\fM \in \Mod_{\fS_{\Fp}}^{\phz}$ be an effective $p$-torsion Kisin module.  Then 
$$
\deg(\fM) = \deg(\mathrm{BC}_{F'/F}(\fM)).
$$
\end{prop}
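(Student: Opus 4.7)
The plan is to reduce the equality to a direct calculation of $\Z_p$-lengths by (i) showing that the cokernel of the linearized Frobenius of $\mathrm{BC}_{F'/F}(\fM)$ is the base change of $\coker \phi_\fM$, and (ii) using that $\fS'_{\F_p}$ is finite free of rank $h$ over $\fS_{\F_p}$.

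For step (i), I would first set $\fM' := \mathrm{BC}_{F'/F}(\fM) = \fM \otimes_{\fS_{\F_p}} \fS'_{\F_p}$. Because the Frobenius $\varphi'$ on $\fS'_{\F_p}$ extends $\varphi$ on $\fS_{\F_p}$, there is a canonical identification
\[
(\varphi')^*\fM' \;=\; \fS'_{\F_p} \otimes_{\fS'_{\F_p},\varphi'} \bigl(\fM \otimes_{\fS_{\F_p}} \fS'_{\F_p}\bigr) \;\cong\; \varphi^*\fM \otimes_{\fS_{\F_p}} \fS'_{\F_p},
\]
obtained by collapsing the two tensor products along the commutative square relating $\varphi$ and $\varphi'$. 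Under this identification, the linearized structure map $\phi_{\fM'}$ is $\phi_\fM \otimes \mathrm{id}_{\fS'_{\F_p}}$. By right-exactness of $(-)\otimes_{\fS_{\F_p}}\fS'_{\F_p}$ (and in fact exactness, since $\fS'_{\F_p}$ is flat over $\fS_{\F_p}$ by \cite[Prop.~II.3]{serre1979}), this gives
\[
\coker(\phi_{\fM'}) \;\cong\; \coker(\phi_\fM) \otimes_{\fS_{\F_p}} \fS'_{\F_p}.
\]

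For step (ii), I would invoke the same reference to get that $\fS'_{\F_p}$ is free of rank $h = [F':F]$ as an $\fS_{\F_p}$-module. For any finite $\fS_{\F_p}$-module $N$, this yields an isomorphism $N \otimes_{\fS_{\F_p}} \fS'_{\F_p} \cong N^{\oplus h}$ of $\fS_{\F_p}$-modules, and hence also of underlying $\Z_p$-modules. Applying this to $N = \coker(\phi_\fM)$ (which is killed by $p$ and of finite $\Z_p$-length, since $\fM$ is $p$-torsion and effective), we obtain $\ell_{\Z_p}(\coker(\phi_{\fM'})) = h\cdot \ell_{\Z_p}(\coker(\phi_\fM))$.

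Combining the two steps and using $g' = gh$,
\[
\deg(\mathrm{BC}_{F'/F}(\fM)) \;=\; \frac{1}{g'}\,\ell_{\Z_p}(\coker \phi_{\fM'}) \;=\; \frac{1}{gh}\,\cdot h\cdot \ell_{\Z_p}(\coker \phi_\fM) \;=\; \deg(\fM).
\]
The only point requiring genuine care is step (i)—namely, that base change and Frobenius pullback commute compatibly—while the rest is a bookkeeping check that the factor of $h$ introduced by the base change cancels exactly against the $h$ appearing in the normalization $g' = gh$.
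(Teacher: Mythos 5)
Your proposal is correct and follows the same route as the paper: the key points are the canonical identification $(\varphi')^*(\fM \otimes_{\fS_{\F_p}} \fS'_{\F_p}) \cong \varphi^*\fM \otimes_{\fS_{\F_p}} \fS'_{\F_p}$ and the flatness of $\fS'_{\F_p}$ over $\fS_{\F_p}$ to commute the tensor product with the formation of cokernels. Your write-up just spells out the length bookkeeping (the factor of $h$ canceling against $g' = gh$) that the paper leaves implicit.
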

\begin{proof} Since $\fS'_{\F_p}$ is flat over $\fS_{\F_p}$, the tensor product commutes with the formation of cokernels, using that $\phz^*(\fM) \otimes_{\fS_{\F_p}} \fS'_{\F_p} \cong (\phz')^*(\mathrm{BC}_{F'/F}(\fM))$. 
\end{proof}

\section{The tensor product theorem}
\label{sec:tensor}

\subsection{Tensor products of semi-stable Kisin modules}
\label{subsec:ten_sum}

Here is the main result of this section. 

\begin{thm}
\label{thm:tensor}
Let $\fM, \fN$ be semi-stable torsion Kisin modules of slope $\mu(\fM)$ and $\mu(\fN)$ respectively. Then $\fM \otimes_\fS \fN$ is semi-stable of slope $\mu(\fM) + \mu(\fN)$. 
\end{thm}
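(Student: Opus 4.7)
The plan is to adapt Totaro's proof of the tensor product theorem for filtered isocrystals, with substantial modifications because the degree of a torsion Kisin module is not a priori expressible as the degree of a filtration on a single vector space. First I would reduce to the case where both $\fM$ and $\fN$ are effective and $p$-torsion. The reduction to $p$-torsion follows by dévissage along the filtration $p^i \fM$, using additivity of rank and degree in strict short exact sequences (Proposition \ref{prop:HN_main}(2)). The reduction to effective modules follows by Tate twisting, since $(\fM \otimes \fN)(r+s) \cong \fM(r) \otimes \fN(s)$ and semi-stability is preserved by twists (Lemma \ref{sstt}). In this regime, both $\fM$ and $\fN$ are free over $\cO = k\lb u \rb$ and, by the elementary divisor decomposition underlying Lemma \ref{wedgeslope}, their degrees are computable from the $u$-valuations of the elementary divisors of $\phi_\fM$ and $\phi_\fN$.

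The core strategy is then to show that every strict sub-Kisin module $\fP \subset \fM \otimes_\fS \fN$ satisfies $\mu(\fP) \geq \mu(\fM) + \mu(\fN)$. To do this, I would equip $\fM$ and $\fN$ with their $u$-adic filtrations $F^i = u^i\fM$ and $u^i\fN$, which induce a filtration on $\fM \otimes \fN$ (or more precisely, on the special fiber $(\fM \otimes \fN)/u$) whose associated filtration-degree of any subspace provides an upper bound for the degree of a strict sub-Kisin module $\fP$ via its reduction $\fP/u\fP \subset (\fM/u) \otimes (\fN/u)$. Already this is a departure from Totaro: one gets an \emph{inequality}, not an identity, between $\deg(\fP)$ and the filtration-degree of $\fP/u\fP$.

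The main obstacle, and the heart of the argument, is that $\phi$-stable sub-$\fS$-modules are not detectable from their reduction modulo $u$ alone, so Totaro's classical tactic — identifying the worst destabilizing subspace on the special fiber via the Kempf filtration — cannot be applied to $\fP \subset \fM \otimes \fN$ directly. To bridge this gap I would follow the ``Langton-type'' strategy flagged in the introduction: pass to the generic fiber, where $\fP[1/u] \subset \fM[1/u] \otimes \fN[1/u]$ lives in the abelian category $\Mod_{\cO_\cE}^\phz$ and where subobjects \emph{are} identifiable, and establish a specialization result in the spirit of Langton that, using the Kempf-Ness stratification of GIT and Alper's adequate moduli spaces, relates semi-stability modulo $u$ to semi-stability of the generic fiber. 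Concretely, this machinery should upgrade a destabilizing subspace at the special fiber — together with the canonical pair of flags on $\fM/u$ and $\fN/u$ furnished by the Kempf-Ness stratum containing $\fP/u\fP$ — into a destabilizing pair of strict sub-Kisin modules of $\fM$ and $\fN$ themselves.

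Granting this Langton-type comparison, the proof concludes by contradiction in the style of Totaro. If $\fM \otimes \fN$ is not semi-stable, a destabilizing $\fP$ would exist; its image $\fP/u\fP$ would lie in a non-trivial Kempf-Ness stratum of the tensor-filtered special fiber, producing a pair of flags on $\fM/u$, $\fN/u$ that violate semi-stability at the special fiber with respect to slopes computed through Proposition \ref{prop:ten_add}. The Langton-type results then lift these flags to genuine strict sub-Kisin modules of $\fM$ and $\fN$ whose existence contradicts the semi-stability of $\fM$ and $\fN$. The main technical burden — and where I expect essentially all of the work to lie — is in establishing the Langton-type comparison of semi-stability between the generic and special fibers, since it is the one ingredient that has no analog in Totaro's original vector-space setting.
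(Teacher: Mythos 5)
The high-level plan — reduce to $p$-torsion effective modules, bound degree by a filtration degree on the special fiber, adapt Totaro's GIT argument, fill the gap with Langton-type results — matches the paper's architecture, but several load-bearing details are wrong or backwards and would make the argument fail as stated.

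First, the filtration you propose (``$u$-adic filtrations $F^i = u^i\fM$'') is trivial on the special fiber: $u^i\fM$ maps to $\fM/u\fM$ for $i \leq 0$ and to $0$ for $i \geq 1$, so it carries no information and in particular cannot encode the Frobenius. The paper's filtration (Definition \ref{defn:u_filt} and Proposition \ref{prop:alt_deg}) is relative to a \emph{pair} of lattices $(\fM, \fL)$ in $\cM$ via $\cF^i_\fL(\fM) = \fM \cap u^{-i}\fL$, where crucially $\fL_0 = \phi_\cM(\phz^*\fM_0)$ is the Frobenius image of an auxiliary lattice $\fM_0$. That is what ties the filtration degree to $\deg$ of the Kisin module. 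Moreover the inequality goes the other way from what you assert: you want a \emph{lower} bound $\deg(\fP) \geq \deg_{\Fil}(\overline{\fP})$ so that semi-stability of $\overline{\fP}$ as a subspace forces $\deg(\fP)$ to be large, not an upper bound.

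Second, you have the roles of the special and generic fibers reversed in the unstable case. You propose to look at the Kempf--Ness stratum containing $\fP/u\fP$ (on the special fiber) and ``lift'' the resulting flags to strict sub-Kisin modules of $\fM$ and $\fN$ using Langton. But there is no canonical lift of a $k$-subspace of $\fM/u\fM$ to a sub-Kisin module, and neither Langton's theorem nor the paper's Langton-type results (Corollary \ref{cor:LT_ss}, Theorem \ref{thm:LT_kempf}) do this: they go from the generic fiber to the special fiber, finding lattices $\fM_0, \fN_0$ over a DVR so that the reduction of a semi-stable (or unstable with prescribed Kempf cocharacter) generic-fiber subspace remains in the same stratum. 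The paper instead takes the Kempf filtration on $(\cM, \cN) = (\fM[1/u], \fN[1/u])$ — where sub-\'etale $\phz$-modules are identifiable — observes that uniqueness of the Kempf filtration forces it to be $\phz$-stable, intersects with $\fM, \fN$ to get genuine strict sub-Kisin modules $\fM^i, \fN^j$, and only then invokes semi-stability of $\fM$ and $\fN$ as Kisin modules. The Langton results are used to choose good lattices so that the degree–filtration comparison becomes an equality, not to lift flags. These gaps are fixable only by restructuring the argument along the lines of the paper.
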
 

Let us observe immediately that it suffices to prove Theorem \ref{thm:tensor} in a more narrow case.
\begin{lem}
\label{lem:p->all}
If Theorem \ref{thm:tensor} is true for effective $p$-torsion Kisin modules $\fM$ and $\fN$, then its full statement is true.
\end{lem}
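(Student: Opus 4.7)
The lemma follows from two reductions. First, \emph{reduction to the effective case}: by Lemma \ref{sstt}, semi-stability is invariant under Tate twists, and $\fM(r) \otimes \fN(s) \cong (\fM \otimes \fN)(r+s)$. Choosing $r, s \geq 0$ so that $\fM(r), \fN(s)$ are effective reduces the problem to effective semi-stable torsion modules.

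Second, \emph{reduction to $p$-torsion by induction on $m + n$}, where $p^m \fM = p^n \fN = 0$, with base case $m = n = 1$ (the hypothesis). For the inductive step, WLOG $m \geq 2$; consider the short exact sequence
\[
0 \to \fM[p] \to \fM \to \fM/\fM[p] \to 0,
\]
which is strict since $ux \in \fM[p]$ gives $u(px) = 0$, hence $px = 0$ by $u$-torsion freeness of $\fM$, so $x \in \fM[p]$. Multiplication by $p$ gives an isomorphism $\fM/\fM[p] \cong p\fM$ of Kisin modules, with $p\fM$ being $p^{m-1}$-torsion. The key intermediate claim is that $\fM[p]$ and $\fM/\fM[p]$ are both semi-stable of slope $\mu(\fM)$: semi-stability of $\fM$ gives $\mu(\fM[p]) \geq \mu(\fM)$, while for the reverse inequality $\mu(\fM/\fM[p]) \geq \mu(\fM)$, one argues that any HN step $(\fM/\fM[p])^{\leq \alpha}$ with $\alpha < \mu(\fM)$, composed with the injection $\fM/\fM[p] \cong p\fM \hookrightarrow \fM$, yields a nonzero morphism to the semi-stable $\fM$, contradicting Proposition \ref{prop:HN_funct}. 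The weighted-average relation from additivity of rank and degree then forces both slopes equal to $\mu(\fM)$, and Corollary \ref{cor:subquotss} delivers semi-stability.

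By the inductive hypothesis, $\fM[p] \otimes \fN$ and $(\fM/\fM[p]) \otimes \fN$ are semi-stable of slope $\mu(\fM) + \mu(\fN)$. Letting $\fK := \mathrm{image}(\fM[p] \otimes \fN \to \fM \otimes \fN)$, which is a quotient of the semi-stable $\fM[p] \otimes \fN$ and hence semi-stable of the same slope by Proposition \ref{prop:ext_ss}, one obtains a short exact sequence $0 \to \fK \to \fM \otimes \fN \to (\fM/\fM[p]) \otimes \fN \to 0$. This presents $\fM \otimes \fN$ as an extension of semi-stables of slope $\mu(\fM) + \mu(\fN)$, whence semi-stability follows from Proposition \ref{prop:ext_ss} (closure under extensions).

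\emph{Main obstacle.} The most delicate step is the final extension: non-flatness of $\fM/\fM[p]$ over $\fS$ means $\fM[p] \otimes \fN \to \fM \otimes \fN$ need not be injective, so one must pass to its image $\fK$ and verify both strictness of $\fK \subset \fM \otimes \fN$ and the identification of the cokernel with $(\fM/\fM[p]) \otimes \fN$. A secondary wrinkle is the intermediate slope inequality $\mu(\fM/\fM[p]) \geq \mu(\fM)$, handled via the $\Hom$-vanishing consequence of Proposition \ref{prop:HN_funct}.
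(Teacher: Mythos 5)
Your reduction is genuinely different from the paper's. The paper simultaneously filters both $\fM$ and $\fN$ by powers of $p$, observes that $\bigoplus_{i+j=n}\gr^i\fM\otimes\gr^j\fN$ \emph{surjects onto} the graded pieces of the $p$-adic filtration of $\fM\otimes_\fS\fN$, and then applies Corollary \ref{cor:subquotss} and Proposition \ref{prop:ext_ss} in one step. You instead induct on $m+n$ (where $p^m\fM = p^n\fN = 0$) using only the single strict sequence $0\to\fM[p]\to\fM\to\fM/\fM[p]\to 0$ tensored against $\fN$, and try to exhibit $\fM\otimes\fN$ as an extension. The preparatory steps of your argument are sound: the Tate-twist reduction via Lemma \ref{sstt}, the strictness of $\fM[p]\subset\fM$, and the proof (via the $\Hom$-vanishing of Proposition \ref{prop:HN_funct} and the weighted-average identity) that $\fM[p]$ and $\fM/\fM[p]$ are semi-stable of slope $\mu(\fM)$ are all correct.

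There is, however, a gap in the crucial final step. You claim that $\fK:=\mathrm{image}(\fM[p]\otimes\fN\to\fM\otimes\fN)$ is ``semi-stable of the same slope by Proposition \ref{prop:ext_ss}.'' That proposition says the semi-stable objects of a \emph{fixed} slope form an abelian exact subcategory of $\Mod^\phz_{\fS,\tor}$ closed under extensions; it does not say that an arbitrary strict quotient of a semi-stable object inherits its slope. With this paper's sign convention, a strict quotient of a semi-stable module of slope $\mu_0:=\mu(\fM)+\mu(\fN)$ has slope $\leq\mu_0$, with equality if and only if the kernel has slope exactly $\mu_0$. Your kernel is $\fT:=\ker(\fM[p]\otimes\fN\to\fM\otimes\fN)$, a $\mathrm{Tor}$-term (equal to the image of $\mathrm{Tor}^\fS_1(\fM/\fM[p],\fN)$), and semi-stability of $\fM[p]\otimes\fN$ only gives $\mu(\fT)\geq\mu_0$. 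If $\mu(\fT)>\mu_0$ then $\mu(\fK)<\mu_0$ and Proposition \ref{prop:ext_ss} gives nothing: the extension $0\to\fK\to\fM\otimes\fN\to(\fM/\fM[p])\otimes\fN\to 0$ has graded pieces of unequal slopes and closure under extensions does not apply. To close this gap you would need an independent proof that $\mu(\fT)=\mu_0$, which is essentially the statement $\mu(\fM\otimes\fN)=\mu_0$; note that Proposition \ref{prop:ten_add} does not apply here, since torsion Kisin modules such as $\fM[p]\otimes\fN$ and $\fM\otimes\fN$ are typically not flat over $\fS/p^n$. The paper's choice to build $\fM\otimes\fN$ up by its own $p$-adic graded pieces, each of which is a quotient of $p$-torsion tensor products, is precisely what sidesteps the non-flatness of the tensor functor that you run into.
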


\begin{proof}
By twisting by $\fS(r)$ as discussed in Lemma \ref{sstt}, it suffices to restrict ourselves to the case of effective Kisin modules. 

The filtration of a torsion Kisin module by its submodules $p^n \fM$ is decreasing, finite, and exhaustive. Corollary \ref{cor:subquotss} implies that when $\fM$ is semi-stable, all of the graded pieces of this filtration are semi-stable of identical slope. Assuming Theorem \ref{thm:tensor} for $p$-torsion effective Kisin modules, the tensor products $\gr^i \fM \otimes_\fS \gr^j \fN$ are $p$-torsion, effective, and semi-stable of slope $\mu(\fM) + \mu(\fN)$. Next, notice that direct sums of such tensor products surject onto the graded pieces of the filtration on $\fM \otimes_\fS \fN$ by $p^n (\fM \otimes_\fS \fN)$. Applying Corollary \ref{cor:subquotss}, we deduce that the graded pieces of $\fM \otimes_\fS \fN$ are semi-stable and all of them have slope $\mu(\fM) + \mu(\fN)$. Finally, by Proposition \ref{prop:ext_ss}, this implies that $\fM \otimes_\fS \fN$ is semi-stable. 
\end{proof}

\subsubsection{Notation and conventions}

To summarize the proof of Theorem \ref{thm:tensor} in the effective $p$-torsion case, it will be convenient to establish the standard notation that we will use in this section. We write $\cO$ for $k\lb u\rb$ and write $F$ for $k \lp u \rp$, the fraction field of $\cO$. Then $\fM$, $\fN$ are projective $\cO$-modules with the additional data of the semi-linear finite height endomorphism. 

We will let $\cM := \fM \otimes_\cO F$ and $\cN := \fN \otimes_\cO F$, which we consider to be $p$-torsion \'etale $\phz$-modules. The notation $\fM_0$ and $\fN_0$ will refer to various choices of $\cO$-lattices in $\cM$ and $\cN$ respectively. These will not necessarily be $\phz$-stable. 

Often, we will work with $\cO$-lattices modulo $u$, especially the $\cO$-lattices $\fM_0$ and $\fN_0$. We denote by $\overline{\fM}_0$ the finite dimensional $k$-vector space $\fM \otimes_\cO k$, and, similarly, $\overline{\fN}_0 := \fN \otimes_\cO k$. 

We will use undecorated tensor products $\cM \otimes \cN$, $\fM \otimes \fN$, and $\overline{\fM}_0 \otimes \overline{\fN}_0$ to denote the tensor product over $F$, $\cO$, and $k$ respectively. 

\subsubsection{Reduction steps}

To prove Theorem \ref{thm:tensor}, we must show that $\mu(\fQ') \geq \mu(\fM \otimes \fN)$ for all subobjects (i.e.~sub-Kisin modules) $\fQ' \subset \fM \otimes \fN$. We note that it suffices to restrict the $\fQ'$ under consideration to \emph{strict} subobjects, because any subobject $\fQ'$ is contained in a strict subobject $\fQ$, and $\mu(\fQ') \geq \mu(\fQ)$. Because all strict subobjects $\fQ \subset \fM \otimes \fN$ arise from a sub-\'etale $\phz$-module $\cS \subset \cM \otimes \cN$ by intersection as 
\[
\fQ = (\cS \cap \fM \otimes \fN) \quad \text{in } \cM \otimes \cN,
\]
it will suffice to prove $\mu(\cS \cap \fM \otimes \fN)$ for all sub-\'etale $\phz$-modules of $\cM \otimes \cN$. We will often study the reduction of this Kisin module modulo $u$ according to the lattice $\fM_0 \otimes \fN_0 \subset \cM \otimes \cN$, which we denote by
\[
\overline{\cS}_0 := (\cS \cap \fM_0 \otimes \fN_0) \otimes_\cO k,
\]
where the intersection is taken in $\cM \otimes \cN$. 

\subsubsection{Strategy of proof of Theorem \ref{thm:tensor}}

With the notation in place, we will summarize the proof of Theorem \ref{thm:tensor} in the $p$-torsion case. First, we relate the notion of degree of subobjects $\cS \cap \fM \otimes \fN \subset \fM \otimes \fN$ to the degrees of subspaces of filtered $k$-vector spaces. For clarity in exposition, we will sometimes call the latter degree the ``filtration degree'' of $\cS$. Specifically, we define a filtration on $\overline{\fM}_0$ and $\overline{\fN}_0$ such that the degree of $\cS \cap \fM \otimes \fN$ is bounded below by the degree of $\overline{\cS}_0$ as a subspace of $\overline{\fM}_0 \otimes \overline{\fN}_0$ with respect to filtration coming from its tensor factors. 

Having related the degree of a Kisin module to the degree of some subspace of a tensor product vector space, we follow the approach of Totaro \cite{totaro1996}, which relies on geometric invariant theory to compare the slope of the subspace to the slope of the whole tensor product. However, there are additional obstacles to overcome. 

Totaro's approach relies on the notion of a \emph{semi-stable} subspace of a tensor product vector space. While Totaro worked with filtered $\phz$-modules, which lie over $p$-adic fields, we must relate the semi-stability of $\cS$ as a $F$-linear subspace of $\cM \otimes \cN$ to the semi-stability of $\overline{\cS}_0$ as a $k$-linear subspace of $\overline{\fM}_0 \otimes \overline{\fN}_0$. Indeed, Totaro's argument relies on dealing with subobjects that are semi-stable as a vector subspace, and relating degree to semi-stability. But in our case, unlike Totaro's, subobjects live on the generic fiber (i.e.~in $\cM \otimes \cN$) while the degree is computed elsewhere -- on the special fiber (i.e.~in $\overline{\fM}_0 \otimes \overline{\fN}_0$). There are many choices of $\fM_0$, $\fN_0$ such that $\cS$ is semi-stable in $\cM \otimes \cN$ but $\overline{\cS}_0$ is not semi-stable in $\overline{\fM}_0 \otimes \overline{\fN}_0$. We resolve this problem by proving in Corollary \ref{cor:LT_ss} that 
\begin{center}
if $\cS$ is semi-stable, then there exists a choice of $(\fM_0, \fN_0)$ such that $\overline{\cS}_0$ is semi-stable. 
\end{center}
We dub this new result to be of ``Langton type,'' after the work of Langton showing that a generically semi-stable vector bundle over a curve admits an integral model such that its special fiber is also semi-stable \cite{langton1975}. 

In the case that $\cS$ is not semi-stable in $\cM \otimes \cN$, Totaro's strategy relies on studying the resulting \emph{Kempf filtration} on the $F$-vector spaces $\cM$ and $\cN$. This canonical filtration maximizes the failure of semi-stability of $\cS$. It may be readily shown that the Kempf filtration consists of sub-\'etale $\phz$-modules of $\cM$ and $\cN$. In this case, we must prove a Langton type result for the Kempf filtration, Theorem \ref{thm:LT_kempf}. There is a final, additional complication to carry out our adaptation of Totaro's strategy in the case that $\cS$ is not semi-stable. Indeed, the defect between degree and filtration degree works in our favor when $\cS$ is semi-stable; but when $\cS$ is not semi-stable, it works against us. Consequently, additional work is required to eliminate the defect in that case. 

\subsection{Degree and filtrations}
\label{subsec:alt}

In this section, we derive a bound (Proposition \ref{prop:alt_deg}) on the degree of a sub-Kisin module $\fN \subset \fM$ contained in an \'etale $\phz$-module $\cM = \fM \otimes_\cO F$. Namely, we bound $\deg(\fN)$ by the degree of $\overline{\fN}_0$ as a subspace of $\overline{\fM}_0 = \fM_0/ u \fM_0$ according to a certain filtration on $\overline{\fM}_0$. Here $\fM_0$ is an \emph{arbitrary} $\cO$-lattice in $\cM$. In particular, $\fM_0$ need not be $\phz$-stable, i.e., it is not a Kisin module. In order to prove Theorem \ref{thm:tensor}, it will be useful to vary $\fM_0$. 

We will work with finite-dimensional $k$-vector spaces $V$ with increasing exhaustive filtrations indexed by the rational numbers exactly as in \cite[\S2]{totaro1996}: $V_\alpha^i \subset V_\alpha^j$ for $i \leq j$, $V_\alpha^i = 0$ for $i \ll 0$, $V_\alpha^i = \cap_{j \in \bQ_{> i}} V_\alpha^j$, and $V_\alpha^i = V$ for $i \gg 0$. Let $\gr_\alpha^i V := V_\alpha^i/V_\alpha^{i-\varepsilon}$ for small enough $\varepsilon > 0$. This formulation of $\gr_\alpha^i V$ makes sense for all $i \in \bR$, and we insist that $\gr_\alpha^i V \neq 0$ only for $i \in \bQ$. All filtrations will be assumed to be increasing filtrations indexed by $\bQ$, without further comment. 

We recall the notions of slope and degree of subspaces of such a filtered vector space. The \emph{degree} of a subspace $S$ of a vector space $V$ with filtration $\alpha$ is given by 
\[
\deg_\alpha(S) = \sum_{i \in \bQ} i \dim_k \gr_\alpha^i S,
\]
where $S$ is given the natural filtration of a subspace of a filtered vector space. The slope of $S$, written $\mu_\alpha(S)$, is then given by $\deg_\alpha(S)/\dim_k (S)$. 

Let $\cM$ be a $p$-torsion \'etale $\phz$-module. This first definition depends only on the fact that $\cM$ is a finite dimensional $F$-vector space. 

\begin{defn} 
\label{defn:u_filt}
For any two $\cO$-lattices $\fM ,\fL$ in $\cM$, we define an increasing sequence of sublattices $\cF^{i}_{\fL}(\fM)$ of $\fM$, 
\[
\cF^{i}_{\fL}(\fM) = \fM \cap u^{-i} \fL \text{ for } i \in \bZ.
\]
An increasing filtration on $\overline{\fM} := \fM/u \fM$ is then given by $\Fil^i_{\fL} (\overline{\fM}) = \mathrm{Im}(\cF^{i}_{\fL}(\fM))$.

Finally, we define $\deg_{\fL}(\fM)$ to be the degree of $\overline{\fM}$ according to this filtration, that is,
\[
\deg_{\fL}(\fM) = \sum_i i \dim_{\F} \gr^i_{\fL} (\overline{\fM}). 
 \]
\end{defn} 

 It is easy to see that if $\fM, \fL$ are in relative position $(d_1, d_2, \ldots, d_n)$, then $\deg_{\fL}(\fM) = \sum_j d_j$.  In particular, if $\fM \supset \fL$, then $\deg_{\fL}(\fM) = \dim_k \fM/ \fL$.  

The following lemma describes the behavior of this notion of degree for strict subobject of $\fM$, in comparison with this subobject's interaction with the filtration on $\overline{\fM}$. 

\begin{lem} 
\label{lem:intcomp} 
Let $\fM, \fL$ be $\cO$-lattices in $\cM$.  Let $\cS \subset \cM$ be a subspace.   Define $\cO$-lattices
\[
S_{\fM} = \cS \cap \fM, \quad S_{\fL} = \cS \cap \fL
\]
in $\cS$ and consider the induced filtration $\Fil^i_{\fL} (\overline{\cS}_{\fM}) = \overline{\cS}_{\fM} \cap \Fil^i_{\fL}(\overline{\fM})$ on $\overline{\cS}_\fM$.   Then 
\[
\deg_{\cS_{\fL}} (\cS_{\fM}) \geq \sum_i i \dim_{k} \gr_{\fL}^i (\overline{\cS}_{\fM}).
\]  
Furthermore, if there exists a splitting $s:\fM \ra \cS_{\fM}$ of $\cS_{\fM} \subset \fM$ such that $s(\fL) \subset \cS_{\fL}$, then the above inequality is an equality. 
\end{lem}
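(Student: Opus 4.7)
My plan is to interpret both sides of the inequality as degrees of filtrations on the common vector space $\overline{\cS}_\fM$, and then to compare them via a containment of filtered pieces. Definition \ref{defn:u_filt} applied to the lattice pair $(\cS_\fM, \cS_\fL)$ inside $\cS$ realizes the left-hand side as the degree of the filtration
\[
\Fil^i_A(\overline{\cS}_\fM) \;:=\; \mathrm{Im}\big(\cS_\fM \cap u^{-i}\cS_\fL \to \overline{\cS}_\fM\big),
\]
while the right-hand side is by construction the degree of the restriction filtration $\Fil^i_B(\overline{\cS}_\fM) := \overline{\cS}_\fM \cap \Fil^i_\fL(\overline{\fM})$. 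A standard fact about filtered vector spaces --- which follows from Abel summation applied to $\deg = \sum_i i (\dim_k \Fil^i - \dim_k \Fil^{i-\varepsilon})$, rewriting each degree as $N \dim_k V - \sum_{i < N} \dim_k \Fil^i$ for a common upper bound $N$ --- says that $\Fil^i_A \subseteq \Fil^i_B$ for all $i$ implies $\deg_A \geq \deg_B$, with equality when the two filtrations coincide. This reduces the lemma to proving the inclusion $\Fil^i_A \subseteq \Fil^i_B$ unconditionally, and to upgrading the inclusion to an equality when the splitting $s$ is available.

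A preliminary observation is that filtration $B$ is defined inside $\overline{\cS}_\fM$ via the map $\overline{\cS}_\fM \to \overline{\fM}$, so I first verify this map is injective. This follows by a $u$-torsion-free argument: if $t \in \cS_\fM$ and $t = um$ with $m \in \fM$, then $m = t/u$ lies in $\cS \cap \fM = \cS_\fM$, whence $t \in u\cS_\fM$. The containment $\Fil^i_A \subseteq \Fil^i_B$ then hinges on the identity
\[
u^{-i}\cS_\fL \;=\; u^{-i}(\cS \cap \fL) \;=\; \cS \cap u^{-i}\fL,
\]
valid since $u$ is a unit in $F$ and $\cS$ is an $F$-subspace. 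This gives $\cS_\fM \cap u^{-i}\cS_\fL = \cS_\fM \cap u^{-i}\fL \subseteq \fM \cap u^{-i}\fL$, whose image in $\overline{\fM}$ lies in both $\overline{\cS}_\fM$ and $\Fil^i_\fL(\overline{\fM})$, hence in $\Fil^i_B$.

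For the equality assertion, assume $s: \fM \to \cS_\fM$ splits the inclusion and satisfies $s(\fL) \subseteq \cS_\fL$; extend $s$ to $\cM \to \cS$ by $F$-linearity, so that $s(u^{-i}\fL) \subseteq u^{-i}\cS_\fL$. Given $x \in \Fil^i_B$, choose $t \in \fM \cap u^{-i}\fL$ representing $x$ in $\overline{\fM}$ and $s' \in \cS_\fM$ with $t = s' + um$ for some $m \in \fM$. Then $s(t)$ lies in $\cS_\fM \cap u^{-i}\cS_\fL$, and applying $s$ to $t = s' + um$ (using that $s$ fixes $\cS_\fM$ pointwise) yields $s(t) = s' + u\,s(m)$, so $[s(t)] = [s'] = x$ in $\overline{\cS}_\fM$, showing $x \in \Fil^i_A$ and completing the equality. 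The main subtlety here is really only the initial reformulation: once both degrees are expressed as those of two filtrations on the common space $\overline{\cS}_\fM$, the identity $u^{-i}(\cS \cap \fL) = \cS \cap u^{-i}\fL$ delivers the inclusion automatically, and the splitting $s$ furnishes a natural way to lift representatives of $\Fil^i_B$ into $\Fil^i_A$.
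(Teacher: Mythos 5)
Your proposal is correct and takes essentially the same approach as the paper: both reduce the inequality to the containment of the two filtrations on $\overline{\cS}_\fM$ (what you call $\Fil^i_A \subseteq \Fil^i_B$), with the splitting $s$ yielding the reverse inclusion and hence equality of degrees. The only difference is technical: where the paper exhibits the containment by choosing a compatible basis for the pair $(\cS_\fM, \cS_\fL)$ via the elementary divisor theorem, you prove it directly from the identity $u^{-i}(\cS\cap\fL) = \cS\cap u^{-i}\fL$, which is arguably cleaner and makes the role of the splitting in the equality case more transparent.
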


\begin{proof}
Choose a basis $\{e_1, \ldots, e_k \}$ for $S_{\fM}$ such that the collection $\{ u^{d_i} e_i \}$ generates $S_{\fL}$.  Then $\deg_{S_{\fL}} (S_{\fM}) = \sum_i d_i$.  A straightforward computation shows $\overline{e}_i \in \Fil^{d_i}_{\cL} (\overline{\fM})$.   In particular, we have a natural map  $(\overline{S}_{\fM}, \Fil_{S_{\fL}}) \ra (\overline{S}_{\fM}, \Fil_{\fL})$ of filtered vector spaces inducing the desired inequality, with equality if and only if the map is strict, i.e., $\gr^i_{S_{\fL}}( \overline{S}_{\fM}) \cong \gr^i_{\fL}( \overline{S}_{\fM})$. 

Let $m \in S_{\fM}$.   For the map to be strict, we need that if $u^d m \in \fL$ then $u^d m \in S_{\fL}$ for any integer $d$.  This follows easily if there exists such a splitting.      
\end{proof}

Fix an $\cO$-lattice $\fM_0 \subset \cM$, which is not required to be $\phz_\cM$-stable. Also fix a $p$-torsion Kisin module $\fM \subset \cM$ whose $\phz_\fM$ is induced by $\phz_\cM$. The following key result bounds the degree of strict sub-Kisin modules of $\fM$ by the linear-algebraic degree of the induced subspaces of $\overline{\fM}_0$. 

\begin{prop} 
\label{prop:alt_deg}
Let $\fM_0$ and $\fM$ be as above.  Let $\fL_0 = \phi_{\cM}(\phz^*\fM_0)$, a sublattice of $\cM$. For any sub-\'etale $\phz$-module $\cS \subset \cM$,  we have 
\begin{equation} 
\label{eq:2filt}
\deg(\cS_{ \fM}) \geq \frac{1}{e} \left(\deg_{\fL_0} (\cS \cap \fM_0) + (p-1) \deg_{\fM}(\cS \cap \fM_0)\right). 
\end{equation}
\end{prop}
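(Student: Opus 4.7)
The plan is to express $e\deg(\cS \cap \fM)$ purely in terms of relative positions of $\cO$-lattices in $\cS$ and then bound each such lattice distance by its filtered analogue via Lemma \ref{lem:intcomp}. I set $\fN := \cS \cap \fM$, $\fN_0 := \cS \cap \fM_0$, and $\fQ_0 := \cS \cap \fL_0$; these are all $\cO$-lattices in $\cS$. Since $\cM$ is \'etale, $\phi_\cM$ is an $F$-linear isomorphism, and since $\cS$ is $\phi_\cM$-stable, its restriction $\phi_\cS : \phz^*\cS \risom \cS$ is an isomorphism too. Isomorphisms preserve intersections, so
$$
\phi_\cS(\phz^*\fN_0) = \cS \cap \phi_\cM(\phz^*\fM_0) = \fQ_0.
$$
Moreover, $\phi_\cS$ preserves the relative position of any pair of $\cO$-lattices, while $\phz^*$ multiplies every part of the relative position by $p$ (as $\phz(u) = u^p$). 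Combining these facts yields the key identity
$$
\deg_{\fQ_0}(\phi_\cS(\phz^*\fN)) = \deg_{\phz^*\fN_0}(\phz^*\fN) = p\,\deg_{\fN_0}(\fN).
$$

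Next, since $\fM$ is effective and $\cS$ is $\phi_\cM$-stable, $\phi_\cS$ sends $\phz^*\fN$ into $\fN$, so $e\deg(\fN) = \dim_k(\fN/\phi_\cS(\phz^*\fN)) = \deg_{\phi_\cS(\phz^*\fN)}(\fN)$. Expanding this by additivity of $\deg_\bullet(\bullet)$ through the chain of lattices $\fN, \fN_0, \fQ_0, \phi_\cS(\phz^*\fN)$ in $\cS$, and substituting the key identity above, I arrive at
$$
e\deg(\fN) = \deg_{\fN_0}(\fN) + \deg_{\fQ_0}(\fN_0) - p\,\deg_{\fN_0}(\fN) = \deg_{\fQ_0}(\fN_0) + (p-1)\deg_\fN(\fN_0).
$$

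To conclude, I would apply Lemma \ref{lem:intcomp} twice. Applied with $(\fM, \fL)$ replaced by $(\fM_0, \fL_0)$ and subspace $\cS$, it yields $\deg_{\fQ_0}(\fN_0) \geq \deg_{\fL_0}(\cS \cap \fM_0)$; applied with $(\fM, \fL)$ replaced by $(\fM_0, \fM)$ and subspace $\cS$, it yields $\deg_\fN(\fN_0) \geq \deg_\fM(\cS \cap \fM_0)$. Substituting into the displayed identity and dividing by $e$ gives the bound in the proposition. The main obstacle is really only bookkeeping: $\fN$ and $\fN_0$ need not be comparable as lattices in $\cS$, so the intermediate $\deg_\bullet(\bullet)$ terms can have either sign and the telescoping must be tracked with care. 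Beyond that, no estimate other than Lemma \ref{lem:intcomp} is required once the key identity from the first paragraph is in place.
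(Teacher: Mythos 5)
Your proof is correct and follows essentially the same route as the paper's: the identity $\phi_{\cS}(\phz^*\fN_0) = \fQ_0$ and the weight-$p$ rescaling under $\phz^*$ are exactly what the paper encodes in coordinates via the semilinear change-of-basis formula $A = x^{-1}A_0\phz(x)$, and both proofs then invoke Lemma~\ref{lem:intcomp} twice. The only difference is presentational --- you telescope the additive lattice distance directly rather than passing through a choice of basis and $\val_u(\det(\cdot))$ as in Lemma~\ref{wedgeslope}.
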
 
\begin{proof}
Let $\cS_{\fM_0} = \cS \cap \fM_0$ and similarly for $\fL_0$.  Let $n$ be the rank of $\cS$, and choose a basis $\beta$ for $\cS_{\fM_0}$.  We can then write $\cS_{\fM} = x \cdot \cS_{\fM_0}$ for some $x \in \GL_n(k(\!(u)\!))$. If $A_0 \in \GL_n(k(\!(u)\!))$ is the matrix for Frobenius on $\cS_{\fM_0}$ with respect to $\beta$, then by Proposition \ref{wedgeslope}, $\deg(\cS_{ \fM_0}) = \frac{1}{e} \val_u(\det(A_0))$.   Semi-linear change of basis says that the matrix for Frobenius on $\cS_{\fM}$ with respect to $x(\beta)$ is given by $A = x^{-1} A_0 \phz(x)$.   Since $\val_u(\det(\phz(x))) = p \val_u(\det(x))$, we get
\[
\deg(\cS_{\fM}) = \deg(\cS_{ \fM_0}) + \frac{p-1}{e} \val_u(\det (x)).
\]
However, $ \val_u(\det (x))$ is just the sum of powers of the elementary divisors of $\cS_{\fM}$ relative to $\cS_{\fM_0}$ and so we have
\[
\deg(\cS_{\fM}) = \frac{1}{e} \left( \deg_{\cS_{\fL_0}}(\cS_{ \fM_0}) + (p-1) \deg_{\cS_{\fM}} (S_{\fM_0})\right).
\]
Applying Lemma \ref{lem:intcomp} to both factors on the right, we arrive at (\ref{eq:2filt}).
\end{proof}

Proposition \ref{prop:alt_deg} provides a new angle of attack at proving that the tensor product of semi-stable Kisin modules is semi-stable. Namely, we can bring to bear existing results on degrees of subspaces of a tensor product. 

\subsection{Semi-stability of subspaces of tensor products of vector spaces}
\label{subsec:ss_kempf}

For \S\S\ref{subsec:ss_kempf}-\ref{subsec:two}, we will work entirely with vector subspaces and their moduli, returning to Kisin modules in \S\ref{subsec:tensor_conclusion} to finish the proof of Theorem \ref{thm:tensor}. Throughout \S\S\ref{subsec:ss_kempf}-\ref{subsec:two}, we will work over a field $k$, which can be any perfect field, including the finite field that $k$ represents in this whole paper. 

In this section, we recall the theory of semi-stable subspaces of a tensor product $M \otimes_k N$ of $k$-vector spaces $M$, $N$, following Totaro \cite[\S2]{totaro1996}. Loosely speaking, semi-stable $S$ are ``generic'' subspaces of $M \otimes_k N$. Non-semi-stable $S$ are also called ``unstable.'' Unstable $S$ are closer to ``special'' subspaces, for example those subspaces of $M \otimes_k N$ of the form $M' \otimes N'$ for subspaces $M' \subset M$, $N'\subset N$. 

We remark that we will work with increasing filtrations on vector spaces indexed by $i \in \bQ$, in contrast with decreasing filtrations as in \cite{totaro1996}. 

We also note that when $M$ and $N$ are vector spaces and $\alpha$ is a filtration on $(M,N)$ (meaning that $\alpha$ is the data of increasing filtrations on both $M$ and $N$), the tensor product takes on the filtration by $\ell \in \bQ$ 
\[
(M \otimes_k N)_\alpha^\ell = \sum_{i+j = \ell} M_\alpha^i \otimes N_\alpha^j.
\]
Recall from \S\ref{subsec:alt} that $\alpha$ then induces a filtration on any subspace $S$ of $M \otimes_k N$, which we will label as $S^\ell_\alpha$. 

\begin{defn}
Let $S$ be a subspace of $M \otimes_k N$, where $M$ and $N$ are finite-dimensional $k$-vector spaces. We call $S$ \emph{semi-stable} if for all filtrations $\alpha$ on $(M,N)$, $\mu_\alpha(S) \geq \mu_\alpha(M \otimes_k N)$. Otherwise, we call $S$ \emph{unstable}. 
\end{defn}

In other words, $S$ is semi-stable if it is ``generic enough,'' i.e., its intersections with subspaces of $M \otimes_k N$ the form $M' \otimes N'$, where $M' \subset M$ and $N' \subset N$, tend to be small. 

When $S \subset M \otimes_k N$ is unstable, there is a unique filtration on $(M,N)$ that maximizes the failure of semi-stability (\cite[Prop.~1]{totaro1996}). This is known as the \emph{Kempf filtration} associated to $S$. The Kempf filtration's indices will only be defined up to scaling; indeed, we consider filtrations $\alpha, \beta$ on $(M, N)$ to be (scalar) equivalent if there exists $x \in \bQ_{> 0}$ such that $M^{ix}_\alpha = M^i_\beta$ and $N^{ix}_\alpha = N^i_\beta$ for all $i \in \bQ$. In order to normalize degrees with respect to equivalent filtrations, the following natural notion of size is natural: 
\[
\abs{\alpha} := \bigg(\sum_{i \in \bQ} i^2 \dim \gr^i_\alpha M + \sum_{j \in \bQ} j^2 \dim \gr^j_\alpha N\bigg)^{1/2}.
\]
We observe that $\abs{\alpha}$ is non-zero when $\alpha$ is not trivial.

\begin{prop}
\label{prop:kempf}
Let $M,N$ be finite dimensional vector spaces over a field $\bK$ (perhaps not perfect). Let $S \subset M \otimes_\bK N$ be an unstable subspace. There is a unique equivalence class of filtrations of $(M, N)$, the \emph{Kempf filtration}, such that 
\begin{equation}
\label{eq:mu_diff}
f(S, \alpha) = \frac{\mu_\alpha(M \otimes_\bK N) - \mu_\alpha(S)}{\abs{\alpha}}
\end{equation}
is maximized over the set of non-trivial filtrations $\alpha$ on $(M,N)$ precisely by the Kempf filtration. Moreover, $\mu_\alpha(M) = \mu_\alpha(N) = 0$. 
\end{prop}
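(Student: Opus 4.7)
My plan is to recognize the statement as a direct consequence of Kempf's theorem from geometric invariant theory, obtained via the same translation between filtrations and one-parameter subgroups that Totaro employs in \cite{totaro1996}; indeed, this is essentially the mirror image of his Proposition~1, adapted to increasing rather than decreasing filtrations.

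First, I would make explicit the standard dictionary: a $\bQ$-indexed increasing filtration $\alpha$ on $(M,N)$, after clearing denominators by scaling, corresponds up to equivalence to a one-parameter subgroup $\lambda_\alpha$ of $G := \GL(M) \times \GL(N)$ acting with weight $i$ on $\gr^i_\alpha M$ and weight $j$ on $\gr^j_\alpha N$. The norm $\abs{\alpha}$ defined in the statement matches the standard $G$-invariant norm on the cocharacter lattice. Under the natural linearization of $G$ on the Plücker line $\det S \subset \wedge^d(M \otimes_\bK N)$, where $d = \dim_k S$, a direct weight calculation shows that the $\lambda_\alpha$-weight on $\det S$ is $d \cdot \mu_\alpha(S)$ while the center of mass weight on $\wedge^d(M \otimes N)$ is $d \cdot \mu_\alpha(M \otimes N)$. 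After centering to the $\SL(M)\times\SL(N)$-action, one obtains, up to a positive universal constant,
\[
f(S,\alpha) \;=\; \frac{\mu_\alpha(M \otimes N) - \mu_\alpha(S)}{\abs{\alpha}} \;\propto\; \frac{-\mu^L([\det S], \lambda_\alpha)}{\abs{\lambda_\alpha}},
\]
where $\mu^L$ is the Hilbert-Mumford numerical invariant. In particular, $S$ is unstable in the sense of the preceding definition precisely when $[\det S]$ is Mumford-unstable for the $\SL(M)\times\SL(N)$-action.

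Second, I would invoke Kempf's theorem (e.g.~\cite[\S12]{kirwan1984}): for any unstable point in a linearized projective representation of a reductive group $G$, there is a unique optimal destabilizing filtration of $G$, i.e.~a unique parabolic $P\subset G$ together with a (positive-rational-scale equivalence class of) central cocharacters of its Levi quotient, maximizing the Hilbert-Mumford ratio. Since a filtration on $(M,N)$ records exactly this datum of parabolic plus central cocharacter, this yields existence and uniqueness of the Kempf filtration.

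Finally, the normalization $\mu_\alpha(M) = \mu_\alpha(N) = 0$ is a short convexity argument independent of the GIT input. Simultaneously shifting the $M$-part of $\alpha$ by a constant $c$ alters both $\mu_\alpha(M \otimes N)$ and $\mu_\alpha(S)$ by $c$, leaving the numerator of $f$ unchanged; on the other hand, the $M$-contribution to $\abs{\alpha}^2$, namely $\sum_i (i+c)^2 \dim_k \gr^i_\alpha M$, is a strictly convex quadratic in $c$ minimized precisely at $c = -\mu_\alpha(M)$. Hence any maximizer of $f$ must satisfy $\mu_\alpha(M) = 0$, and symmetrically $\mu_\alpha(N) = 0$. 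The only real obstacle is pinning down sign and scaling conventions so as to reconcile our increasing $\bQ$-filtrations with the integer decreasing filtrations that appear in standard references; once this bookkeeping is in place, the proposition is a citation of Kempf's theorem.
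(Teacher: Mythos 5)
The translation dictionary you set up (filtrations $\leftrightarrow$ cocharacters of $\GL(M)\times\GL(N)$, the Hilbert--Mumford weight of $\det S$ in the Pl\"ucker embedding, and the convexity argument showing $\mu_\alpha(M)=\mu_\alpha(N)=0$ for a maximizer) is all correct and is in fact the content of Totaro's Lemma~2 that the paper later spells out in \S\ref{subsec:GrassLT}. The normalization argument in your last paragraph is also genuinely how one proves that clause.

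The gap lies in the main step: you invoke Kempf's theorem (existence and uniqueness of an optimal destabilizing one-parameter subgroup for an unstable point) to get the Kempf filtration. That theorem, as stated in \cite{kempf1978} and reproduced in \cite[\S12]{kirwan1984}, requires the base field to be \emph{perfect}; Hesselink \cite[\S5]{hesselink1978} gives a counterexample over an imperfect field. But the proposition is deliberately stated ``over a field $\bK$ (perhaps not perfect),'' and this hypothesis is load-bearing: in the application the field is $F = k\lp u\rp$, which is not perfect, and the subspace $\cS$ under consideration lives there. The paper's own proof is a citation of Totaro's \cite[Prop.~1]{totaro1996}, an elementary argument that works directly by maximizing $f(S,\alpha)$ over the combinatorial set of filtrations of $(M,N)$, using strict convexity to get uniqueness, with no appeal to Kempf's optimal parabolic theorem and hence no perfection hypothesis. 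This distinction is flagged explicitly in the remark following Proposition \ref{prop:KN=K}. To fix your proposal you would either need to reproduce Totaro's elementary maximization argument, or add a perfect-closure-and-descent step (which is nontrivial and not obviously shorter); as written, citing Kempf's theorem does not yield the statement in the generality claimed.
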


\begin{proof}
See the elementary argument of Totaro \cite[Prop.~1]{totaro1996}, adapting the GIT arguments of Kempf \cite{kempf1978} and Ramanan and Ramanathan \cite{RR1984}. 
\end{proof}
 
\subsection{Two Langton type results}
\label{subsec:LT_overview}

Now that we have defined semi-stability of subspaces of tensor product vector spaces, let us overview what we wish to prove about vector spaces and lattices. The full statements appear in \S\ref{subsec:two}. 

Let $k$ be a perfect field, and let $M,N$ be $k$-vector spaces. Let $F = k\lp u\rp$ and let $\cO = k\lb u \rb$. Our goal is to show that the semi-stability of a subspace $\cS \subset M_F \otimes_F N_F$, or its manner of failure as measured by the Kempf filtration, can be preserved under some specialization of $\cS$. More precisely, consider how a choice of $\cO$-lattices $\fM \subset M_F$, $\fN \subset N_F$ induces a sublattice $\cS \cap \fM \otimes_\cO \fN$ and consequently a reduction modulo $u$, $\overline{ \cS} \subset M \otimes_k N$. Our main results are 
\begin{enumerate}
\item if $\cS$ is semi-stable, then there exists some reduction $\overline{ \cS}$ that is semi-stable (Corollary \ref{cor:LT_ss}).
\item if $\cS$ is unstable with Kempf filtration $\alpha$ on $(M_F, N_F)$, then after passing to the associated graded $F$-vector spaces of $M_F$ and $N_F$, there exists a reduction $\overline{ \cS}^{\mathrm{ss}}$ that has the same Kempf filtration (Theorem \ref{thm:LT_kempf}).
\end{enumerate}

We call these results ``Langton type'' because Langton proved the original result like (i) in the setting of the moduli of vector bundles on an algebraic curve \cite{langton1975}. The goal of the next few sections is to recall results from GIT from which we will deduce (i) and (i) as corollaries. 

\subsection{Background in geometric invariant theory and instability theory}
\label{subsec:GIT}

We continue using the notation of \S\ref{subsec:LT_overview}. The notion of semi-stability of $S \subset M \otimes_k N$ can be studied geometrically when one considers it to be a property of a point of the moduli space of $s$-dimensional vector subspaces of $M \otimes N$, i.e., the Grassmannian $\Gr_s(M \otimes_k N)$. This coincides with the notion of semi-stable and unstable points in geometric invariant theory (GIT). In this section, we will provide background in GIT. Then, in \S\ref{subsec:GrassLT}, we will return to the particular case of the Grassmannian. 

Firstly, we will need to understand the relation between quotient stacks with stability properties and their associated \emph{coarse moduli schemes}. This theory was developed by Mumford \cite{mumford1965}, and we will often refer to \cite{alper2013, alper2014} for the geometric aspects of the theory that we particularly require. 

Secondly, we will require a stratification of the quotient stack by a measure of instability which, in the case of the Grassmannian, will be identical to the measure of \eqref{eq:mu_diff}. This stratification is known as the \emph{Kempf--Ness stratification}. The references are \cite{kempf1978, hesselink1978, kirwan1984, ness1984}. The instability results of \cite{kempf1978} were stated in the form of a stratification in \cite{hesselink1978}, and we will mainly refer to \cite{kirwan1984} to reference those works. Our exposition follows that of \cite{HL2014}. 

Assume we have these data. 
\begin{itemize}[leftmargin=2em]
\item A projective $k$-scheme $X$, where $k$ is a perfect field. 
\item An action of a reductive $k$-algebraic group $G$ on $X$. 
\item An ample line bundle $\cL$ on $X$ that is $G$-linearized, i.e.~there is a $G$-action on $\cL$ (by some character of $G$) that is equivariant for the $G$-action on the base $X$. 
\end{itemize}

We are interested in understanding the quotient algebraic stack $\fX := [X/G]$. A point $x \in X$ is called \emph{semi-stable} relative to $\cL$ when there exists a $G$-equivariant global section $\sigma$ of $\cL^{\otimes n}$ (for some $n > 0$) that does not vanish at $x$. To be precise, we write $\sigma\vert_x$ for the value of $\sigma$ at $x$, which is an element of $\cL^{\otimes n}\vert_x := \cL^{\otimes n}_x/\fm_x \cL^{\otimes n}_x$; $\sigma$ is said to vanish at $x$ when $\sigma\vert_x = 0$. This results in an open locus $X^{ss}_\cL \subset X$ called the semi-stable locus. As this locus is $G$-stable, there is also the semi-stable quotient stack $\fX^{ss}_\cL = [X^{ss}_\cL/G]$. 

In general, $\fX$ and $\fX^{ss}_\cL$ are not schemes. GIT yields a \emph{coarse moduli scheme} for the semi-stable locus. By definition, a coarse moduli scheme for $\fX^{ss}_\cL$ is the best-possible quotient \emph{scheme} approximating $\fX^{ss}_\cL$ (see e.g.\ \cite[Thm.\ 1.10]{mumford1965}). In our setting, the  projective $k$-scheme 
\[
X^{ss}_\cL /\!/ G := \Proj \bigoplus_{n\geq 0} H^0(X, \cL^{\otimes n})^G, 
\]
is the coarse moduli scheme. 

\begin{thm}
\label{thm:AMS}
Let $X$, $G$, and $\cL$ be as above. There exists a morphism $\phi: \fX^{ss}_\cL \ra X^{ss}_\cL /\!/ G$ that is uniquely characterized by the property that any morphism from $\fX^{ss}_\cL$ to a separated scheme factors uniquely through $\phi$. In particular, $\phi$ is universally closed. 
\end{thm}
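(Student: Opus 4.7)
The plan is to build $\phi$ by gluing affine GIT quotients and then derive the universal property from Alper's theory of adequate moduli spaces \cite{alper2014}, which is designed precisely to make this kind of factorization work for geometrically reductive group actions in arbitrary characteristic.

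First, I would construct the morphism $\phi$. Since $\cL$ is ample, the non-vanishing loci $X_\sigma := \{x \in X : \sigma\vert_x \neq 0\}$ of $G$-invariant sections $\sigma \in H^0(X, \cL^{\otimes n})^G$ form a $G$-stable affine open cover of the semi-stable locus $X^{ss}_\cL$. By the (geometric) reductivity of $G$, each invariant ring $\cO_X(X_\sigma)^G$ is finitely generated over $k$, producing affine schemes $X_\sigma/\!/G := \Spec \cO_X(X_\sigma)^G$ together with $G$-invariant morphisms $X_\sigma \to X_\sigma/\!/G$. Standard gluing then yields the projective scheme $X^{ss}_\cL/\!/G = \Proj \bigoplus_{n\geq 0} H^0(X, \cL^{\otimes n})^G$ and a $G$-invariant morphism $X^{ss}_\cL \to X^{ss}_\cL/\!/G$, which descends to the desired $\phi: \fX^{ss}_\cL \to X^{ss}_\cL/\!/G$.

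Second, I would identify $\phi$ as an adequate moduli space morphism in the sense of \cite[\S5]{alper2014}, an assertion that can be checked on the affine cover above. Locally, each morphism $[X_\sigma/G] \to X_\sigma/\!/G$ is adequate because $G$ is geometrically reductive; Alper's criterion then gives that $\phi$ is universally closed, surjective, and universally submersive, establishing the last sentence of the theorem. The universal property is then a special case of the general fact that an adequate moduli space is initial among morphisms from $\fX^{ss}_\cL$ to algebraic spaces which are locally separated over $k$ \cite[\S5]{alper2014}: given $f: \fX^{ss}_\cL \to Y$ with $Y$ a separated scheme, the universal property of each $X_\sigma/\!/G$ produces a local factorization through $\Spec \cO_X(X_\sigma)^G$, and these local factorizations glue to a unique $\bar f: X^{ss}_\cL/\!/G \to Y$ because $Y$ is separated and $\phi$ is surjective.

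The main obstacle in executing this plan is the verification of the adequate moduli space property in positive or mixed characteristic. In characteristic zero one has full exactness of $G$-invariants, so $\phi$ would even be a \emph{good} moduli space in the sense of \cite{alper2013}; in positive characteristic one only has exactness up to $p$-th powers, and this is precisely the technical content of Alper's notion of adequacy that is needed to make the universal property robust. Granting this input from \cite{alper2014}, the remainder of the argument is routine gluing combined with the universal property of $\Proj$.
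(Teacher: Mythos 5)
Your proposal is correct and matches the paper's approach: the paper also treats Theorem \ref{thm:AMS} as a citation-level result, pointing to \cite[Thm.~1.10]{mumford1965} for the universal property of the categorical quotient and to \cite[Main Thm.]{alper2014} for the adequate moduli space property and its consequences (universal closedness, weak properness). Your construction via the $G$-invariant affine cover $\{X_\sigma\}$ and gluing to $\Proj$ is exactly the content of those citations, and your identification of Alper's adequacy as the key technical input in positive characteristic is the right point to emphasize; the only minor redundancy is that once you invoke Alper's initiality among maps to (locally separated) algebraic spaces, the explicit gluing argument at the end becomes unnecessary.
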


Any morphism $\phi$ from a stack to a separated scheme with the universal property of the theorem is called the \emph{coarse moduli space} of the stack. The crucial property of coarse quotients, for our purposes, is that the morphism $\phi$ from the stack to its coarse space is an \emph{adequate moduli space morphism}. These morphisms are defined and studied in \cite{alper2013, alper2014}. Such morphisms are universally closed and ``weakly proper'' (as discussed in \S\ref{subsec:val}), by \cite[Main Thm.]{alper2014}. 

Here is the main input we need from this theory. 
\begin{cor}
\label{cor:LT_generic}
Let $X$, $G$, and $\cL$ be as above. Then $\fX^{ss}_\cL$ is an adequate moduli space over the projective $k$-scheme $X^{ss}_\cL/\!/ G$.
\end{cor}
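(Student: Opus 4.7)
The plan is to deduce Corollary \ref{cor:LT_generic} as a direct application of the main theorem of \cite{alper2014}. First I would note that our data $(X, G, \cL)$ are exactly those of a classical linearized GIT problem: $X$ is projective over the perfect field $k$, $G$ is reductive, and $\cL$ is an ample $G$-linearization. The coarse moduli scheme appearing in the statement is, by construction,
\[
X^{ss}_\cL /\!/ G = \Proj \bigoplus_{n \geq 0} H^0(X, \cL^{\otimes n})^G,
\]
i.e.\ the categorical GIT quotient of the semi-stable locus, and the morphism $\phi : \fX^{ss}_\cL \ra X^{ss}_\cL /\!/ G$ of Theorem \ref{thm:AMS} is the tautological quotient morphism onto this $\Proj$.

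The key step is then to invoke \cite[Main Thm.]{alper2014}, which asserts that for any reductive group acting linearly on a projective scheme with an ample linearization, the canonical morphism from the quotient stack of the semi-stable locus to the associated GIT quotient is an adequate moduli space morphism in Alper's sense. Applied to our $(X, G, \cL)$, this is exactly the assertion of the corollary, with no extra verification required on our part.

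The main obstacle, which is handled entirely inside \cite{alper2014} and which I would treat as a black box, is the verification that the formation of $G$-invariants on the affine GIT charts of $X^{ss}_\cL$ commutes adequately with base change when $G$ is merely geometrically reductive rather than linearly reductive (as occurs whenever the characteristic of $k$ is positive, which is the case of primary interest here). This is precisely the phenomenon that forces one out of the classical ``good'' moduli space framework and into the ``adequate'' moduli space framework of \cite{alper2013, alper2014}, and is what makes the cited theorem non-trivial. Since Alper's main theorem is formulated in exactly the generality we need, no further arguments are required.
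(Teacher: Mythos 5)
Your proof is correct and takes essentially the same approach as the paper, which simply cites \cite{alper2014} (specifically p.~490, where GIT quotients are shown to be adequate moduli spaces) with no further elaboration. Your additional remarks about the geometric-versus-linear reductivity distinction in positive characteristic correctly identify the reason one must use the adequate moduli space framework rather than the good moduli space framework, but the paper treats all of this as a black box exactly as you do.
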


\begin{proof}
See \cite[p.~490]{alper2014}.
\end{proof}

Having discussed the semi-stable locus, we now discuss the Kempf--Ness stratification of the unstable locus $\fX^{us}_\cL := \fX \setminus \fX^{ss}_\cL$ in $\fX$. Assume that $G$ is split for simplicity. By the Hilbert--Mumford numerical criterion (\cite[Thm.~2.1]{mumford1965}), for any unstable geometric point $x$ of $X$, there exists a cocharacter $\alpha: \bG_m \ra G$ such that 
\begin{itemize}[leftmargin=2em]
\item the limit $y := \lim_{t \ra 0} \alpha(t) \cdot x$ exists, and 
\item the character in $X^*(\bG_m) \cong \bZ$ by which $\bG_m$ acts via $\alpha$ on $\cL\vert_y$ is negative. 
\end{itemize}
Notice that $\bG_m$ acts on $\cL\vert_y$ by a character because $y$ is in the fixed locus $X^\alpha$ of $\alpha$ in $X$. The integer $a$ such that this character is $t \mapsto t^a$ will be written ``$\weight_\alpha \cL\vert_y$.'' 

Define a norm on cocharacters $\alpha$ of $G$ by $\abs{\alpha} = \sqrt{\langle \alpha, \alpha \rangle}$, where $\langle\,, \rangle$ is a Weyl group-invariant positive definite bilinear form on the cocharacter lattice of $G$. Then, for any $y \in X^\alpha$, we define the slope 
\[
\mu_\cL(y, \alpha) := \frac{-\weight_\alpha \cL\vert_y}{\abs{\alpha}}. 
\]
Because $\mu_\cL(\cdot, \alpha)$ is constant on any connected component $Z \subset X^\alpha$, we also write $\mu_\cL(Z, \alpha)$ for this value. Write $Y_{Z,\alpha}$ for the locus of points $x$ in $X$ such that the limit $\lim_{t \ra 0} \alpha(t)\cdot x$ exists and lies in $Z$. In this context, we write $\pi$ for the projection to the limit $\pi: Y_{Z,\alpha} \ra Z$. For $x \in Y_{Z,\alpha}$, we define $\mu_\cL(x,\alpha) := \mu_\cL(\pi(x),\alpha)$. 

Now define $M_\cL: X(\bar k) \ra \bR$ to be the maximal slope function 
\[
M_\cL(x) := \sup \{ \mu_\cL(y, \alpha) \mid \alpha: \bG_m \ra G, y = \lim_{t \ra 0} \alpha(t) \cdot x \text{ exists}\}.
\]
By the Hilbert--Mumford criterion, $M_\cL(x)$ is positive if and only if $x \in X(\bar k)$ is an unstable point. The Kempf--Ness stratification on $X$ filters its points by the value of $M_\cL$. 

We construct the Kempf--Ness stratification by following this inductive procedure: Select a pair $(Z_i, \alpha_i)$, where $\alpha_i : \bG_m \ra G$ is a cocharacter, $Z_i \subset X^{\alpha_i}$ is a connected component, $Z_i$ is not contained in any of the previously defined strata, and $\mu_i :=\mu_\cL(Z_i, \alpha_i)$ is positive and maximized among all such pairs. Write $Z_i^\circ$ for the open locus in $Z_i$ not intersecting existing strata, and let $Y_{Z_i, \alpha_i}^\circ := \pi^{-1}(Z_i^\circ)$. The strata indexed by $i$ is then $S_i := G \cdot Y^\circ_{Z_i, \alpha_i}$. Notice that the sequence $\mu_i$ is decreasing, but not necessarily strictly. The procedure terminates -- see Theorem \ref{thm:strata}. 

Let $L_\alpha$ denote the Levi subgroup of $G$ centralizing $\alpha$, and set $L_i = L_{\alpha_i}$. Likewise, denote by $P_{\alpha_i}$ the parabolic subgroup associated to $\alpha_i$, i.e.~the subgroup of $G$ generated by $L_i$ and the positive roots relative to $\alpha_i$.  Then, for $Z_i$ as above, let $P_i := \{p \in P_{\alpha_i} \mid \varpi(p) \cdot Z_i \subset Z_i\}$, where $\varpi : P_{\alpha_i} \rsurj L_{\alpha_i}$ is the usual projection. Then $L_i$ acts on $Z_i$, $P_i$ acts on $Y_i$, and the projection map $\pi: Y_i \ra Z_i$ intertwines these actions. As a result, there is a morphism 
\begin{equation}
\label{eq:ev_0}
ev_0: [Y_i/P_i] \lra [Z_i/L_i] 
\end{equation}
induced by $\pi$. 

The theory of the Kempf--Ness stratification yields 
\begin{thm}
\label{thm:strata}
Assume that $X$ is smooth. The stratification of $X$ into $X^{ss}$ and the set $\{S_i\}$ consists of finitely many locally closed strata defined over $k$, where any union of strata $\bigcup_{i \leq i_0} S_i$ is closed. Moreover, 
\[
\overline{S_i} \subset S_i \cup \bigcup_{\mu_j > \mu_i} S_j
\]
and the canonical morphism $G \times_{P_i} Y_i^\circ \ra S_i$ is an isomorphism for all $i$, yielding an isomorphism of quotient stacks $[S_i/G] \simeq [Y_i^\circ/P_i]$. 
\end{thm}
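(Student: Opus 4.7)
The plan is to assemble the statement from the classical Kempf--Ness--Hesselink theory, principally by citing \cite{kirwan1984} (and \cite{hesselink1978, ness1984}) for the concrete stratification of the unstable locus, and then translating the cited results into the notation established above. The proof should be essentially bookkeeping: the theory already produces a stratification of $X \setminus X^{ss}$ indexed by conjugacy classes of pairs $(Z,\alpha)$, labelled by the value $\mu_\cL(Z,\alpha)$, and what remains is to check that the inductive construction described in the excerpt picks out exactly those strata and matches the standard closure relations.

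First I would establish finiteness and local closedness of the strata. Since $X$ is projective of finite type, for any fixed maximal torus $T \subset G$ the set of cocharacters $\alpha \in X_*(T)$ that can actually arise as $\alpha_i$ (i.e.\ so that some connected component of $X^\alpha$ has positive slope and is not contained in a prior stratum) is finite modulo scaling, by the Hilbert--Mumford criterion and the Noetherianness of $X$. For each such $\alpha$, the fixed locus $X^\alpha$ has only finitely many connected components, and the attractor $Y_{Z,\alpha} \subset X$ is locally closed (here is where smoothness of $X$ enters, via Bia{\l}ynicki--Birula: on a smooth projective $k$-scheme with $\bG_m$-action, $Y_{Z,\alpha}$ is a Zariski-locally trivial affine bundle over $Z$). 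The $G$-saturations $S_i = G \cdot Y_{Z_i,\alpha_i}^\circ$ are therefore locally closed and defined over $k$, and termination of the inductive procedure follows from the finiteness of possible pairs $(Z,\alpha)$ up to conjugation.

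Next I would prove the closure relation $\overline{S_i} \subset S_i \cup \bigcup_{\mu_j > \mu_i} S_j$. The essential point is that $M_\cL$ is upper semicontinuous: if $x_n \to x$ with $x_n \in S_i$ and $x \notin S_i$, then by the Hilbert--Mumford--Kempf theorem, $x$ admits an adapted one-parameter subgroup $\beta$ realizing $M_\cL(x)$, and the continuity of the flow together with the optimality of the Kempf cocharacter forces $M_\cL(x) \geq \mu_i$, with strict inequality since $x$ already exited $S_i$. Translating this into the stratification labeling then gives the stated containment. The fact that any union $\bigcup_{i \leq i_0} S_i$ is closed follows formally, provided we index so that $\mu_i$ weakly decreases with $i$.

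Finally, for the structural isomorphism $G \times_{P_i} Y_{i}^\circ \risom S_i$, the key input is that $P_i$ is the stabilizer in $G$ of the attracting locus $Y_{Z_i,\alpha_i}$: an element $g \in G$ preserves $Y_{Z_i,\alpha_i}$ iff conjugation by $g$ preserves the limit stratum $Z_i$ and the parabolic direction, i.e.\ $g \in P_i$. Since by construction $S_i = G \cdot Y_i^\circ$, the natural map $G \times_{P_i} Y_i^\circ \to S_i$ is surjective; injectivity follows from the stabilizer statement, and the fact that it is an isomorphism of schemes (not merely a bijection) uses smoothness of $X$ to guarantee that $Y_i^\circ \to Z_i^\circ$ is a smooth morphism (Bia{\l}ynicki--Birula) and that the $G$-action on $G \times_{P_i} Y_i^\circ$ is free modulo $P_i$. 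The induced isomorphism of quotient stacks $[S_i/G] \simeq [Y_i^\circ/P_i]$ is then formal. I expect the main obstacle to be verifying this last isomorphism at the level of schemes rather than just on geometric points; this is the place where the smoothness hypothesis on $X$ is genuinely needed, and where I would lean most heavily on \cite[\S\S12--13]{kirwan1984} rather than reproving.
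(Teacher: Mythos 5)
Your overall strategy matches the paper's: the paper's proof is essentially a chain of citations to \cite[Thm.\ 13.5]{kirwan1984}, \cite[\S12]{kirwan1984}, and (for rationality) \cite{hesselink1978}, and you are reconstructing what those references prove. The reconstruction of finiteness, local closedness via Bia{\l}ynicki--Birula, the closure relations via upper semicontinuity of $M_\cL$, and the identification $G\times_{P_i}Y_i^\circ \risom S_i$ all track the standard Kempf--Ness--Hesselink template and are sound in outline.

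The genuine gap is in the claim that the strata are ``defined over $k$.'' You assert this as though it falls out of the preceding argument, but the argument you give lives over $\bar k$, and the theorem is stated for an arbitrary \emph{perfect} field $k$. For $k$ not algebraically closed the rationality of the $S_i$ is nontrivial and is precisely why the paper's proof invokes the remark on \cite[p.\ 144]{kirwan1984} and \cite{hesselink1978}: one must show that for a $k$-rational unstable point, the optimal destabilizing cocharacter, a priori only defined over $\bar k$, determines a parabolic that is $\Gal(\bar k/k)$-stable (this uses Kempf's uniqueness of the optimal parabolic together with perfectness of $k$), and that as a consequence each $S_i$ descends to $k$. Hesselink's paper is where this Galois descent of the stratification is actually carried out. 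Your sketch treats ``defined over $k$'' as a formal consequence of ``defined over $\bar k$,'' which it is not, and this is exactly the one piece of content that the paper's proof flags with a separate reference. The rest of your plan is fine.
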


\begin{proof}
See \cite[Thm.\ 13.5]{kirwan1984} for the case that $k = \bar k$. The additional properties of the stratification are described in \cite[\S12]{kirwan1984}. As remarked on \cite[p.\ 144]{kirwan1984}, when $k$ is perfect, it follows from \cite{hesselink1978} that the strata are defined over $k$. 
\end{proof}

The final fact we record is an analogue of Corollary \ref{cor:LT_generic} with $[Z_i/L_{\alpha_i}]$ in place of $\fX = [X/G]$. In particular, we want to cut  out the open locus $Z^\circ_i$ as the semi-stable locus 
\[
Z^\circ_i = (Z_i)^{ss}_{\cL_i}
\]
for an appropriate $L_{\alpha_i}$-linearized ample line bundle $\cL_i$ on the projective $\bF$-scheme $Z_i$. Recall that $Z^\circ_i$ is the complement in $Z_i$ of Kempf--Ness strata with higher instability measure. 

\begin{prop}
\label{prop:ss=kempf_general}
Using the identification of $X_*(T)$ with $X^*(T)$ arising from $\langle\,, \rangle$, there exists a positive integer $r$ such that $r \alpha_i$ corresponds to a character of $T$ that extends to a character $\chi_i$ of $L_i$. Let $\cL_i := \cL^{\otimes r}\vert_{Z_i}  \otimes \chi_i^{-1}$. Then $Z^\circ_i = (Z_i)^{ss}_{\cL_i}$.
\end{prop}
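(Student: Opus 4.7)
The proposition naturally splits into two claims: the existence of $\chi_i$, and the identification $Z_i^\circ = (Z_i)^{ss}_{\cL_i}$.

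For the construction of $\chi_i$, the plan is to use the pairing $\langle,\rangle$ to identify $X_*(T)_\bQ \lrisom X^*(T)_\bQ$, so that $\alpha_i$ corresponds to a rational character, and then choose $r > 0$ clearing denominators to produce an integral character of $T$ that we aim to extend to $L_i$. Since $\alpha_i$ lies in the center of $L_i$ (as $L_i$ is defined as the centralizer of $\alpha_i$), $\alpha_i$ is fixed by $W_{L_i}$; $W$-invariance of $\langle,\rangle$ then forces the resulting character to be $W_{L_i}$-invariant. The restriction map $X^*(L_i) \risom X^*(T)^{W_{L_i}}$ (an isomorphism for any Levi subgroup) then extends this uniquely to the desired character $\chi_i$ of $L_i$.

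For the identification $Z_i^\circ = (Z_i)^{ss}_{\cL_i}$, the plan is to apply the Hilbert--Mumford numerical criterion to both $(X, G, \cL)$ and $(Z_i, L_i, \cL_i)$, linking them via a comparison of cocharacters. For a cocharacter $\beta$ of $L_i$ and $z \in Z_i$ with limit $y := \lim_{t \to 0}\beta(t)z \in Z_i$, one computes
\[
\weight_\beta \cL_i|_y = r \cdot \weight_\beta \cL|_y - r\langle\alpha_i, \beta\rangle,
\]
using that $\chi_i(\beta(t)) = t^{r\langle\alpha_i,\beta\rangle}$ by construction of $\chi_i$. Combining $\alpha_i$ with $\beta$ into a $G$-cocharacter $\gamma = a\alpha_i + b\beta$ (which lies in $T$ since they commute) produces the same limit $y$ for $z$ because $\alpha_i$ fixes $z$. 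The twist by $\chi_i^{-1}$ in $\cL_i$ is calibrated so that the $L_i$-instability $\mu_{\cL_i}(z, \beta) > 0$ is equivalent to the existence of positive integers $a, b$ with $\mu_\cL(z, \gamma) > \mu_i$, or equivalently, to $z$ lying in a Kempf--Ness stratum $S_j$ with $\mu_j > \mu_i$.

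With this translation in hand, both directions of the equivalence follow. If $z$ is $L_i$-unstable for $\cL_i$ via some $\beta$, assemble the $G$-cocharacter $\gamma$ above to witness $z \notin Z_i^\circ$. Conversely, if $z \in S_j$ with $\mu_j > \mu_i$, apply Kempf's theorem to produce a $G$-cocharacter $\gamma$ at $z$; after $L_i$-conjugating $\gamma$ into $T$, its $\langle,\rangle$-orthogonal component to $\alpha_i$ yields (upon clearing denominators) an $L_i$-cocharacter destabilizing $z$ in $(Z_i, \cL_i)$. The main technical obstacle is ensuring this orthogonal decomposition produces a cocharacter of $L_i$ rather than merely of $T$: this uses the $W_{L_i}$-invariance established in the first part, together with the fact that orthogonal projection onto the $\alpha_i$-complement commutes with the $W_{L_i}$-action. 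With these pieces in place, the proposition is the inductive instance of Kirwan's description of Kempf--Ness strata in \cite[\S\S12-13]{kirwan1984}, which realizes each stratum as the GIT-semi-stable locus of a Levi acting on a component of the fixed locus.
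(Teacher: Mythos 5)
The paper's proof is simply a citation to \cite[Rem.\ 12.21]{kirwan1984} and \cite[Thm.\ 9.4]{ness1984}, so any real argument is a different route; your sketch is essentially an attempt to unpack what those references prove. The weight formula $\weight_\beta\cL_i\vert_y = r\weight_\beta\cL\vert_y - r\langle\alpha_i,\beta\rangle$ and the assembly $\gamma = a\alpha_i + b\beta$ are the right ingredients, and the forward direction (from an $L_i$-destabilizing $\beta$ to a $G$-destabilizing $\gamma$, using that $\alpha_i$ fixes $z$) is sound. The construction of $\chi_i$ is also correct, modulo the imprecision that $X^*(L_i)\to X^*(T)^{W_{L_i}}$ is only a rational isomorphism — injective with finite cokernel — not an integral isomorphism; this is harmless since you may replace $r$ by a further multiple, but it should not be asserted as an isomorphism.

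The converse direction, however, contains a genuine gap, and you have misidentified where it lies. You write that "the main technical obstacle is ensuring this orthogonal decomposition produces a cocharacter of $L_i$ rather than merely of $T$," and propose to resolve it by $W_{L_i}$-invariance. But every cocharacter of $T$ is automatically a cocharacter of $L_i$ since $T\subset L_i$, so that worry is vacuous. The real obstacle occurs one step earlier: the Kempf cocharacter $\gamma$ delivered by Kempf's theorem for the unstable point $z$ is a priori a one-parameter subgroup of $G$, and it need not lie in $L_i$ at all; $L_i$-conjugation cannot move a general $G$-cocharacter into $T$. What is missing is the argument that $\gamma$ may be chosen to commute with $\alpha_i$. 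The correct route is: since $\alpha_i$ fixes $z$, it normalizes the Kempf instability parabolic $P(z)$ (unique by Kempf's theorem), hence $\alpha_i$ factors through $P(z)$ since parabolics are self-normalizing; one then picks a maximal torus of $P(z)$ containing both $\alpha_i$ and a representative Kempf cocharacter, forcing them to commute, so that $\gamma$ lies in the centralizer $L_i$ of $\alpha_i$. Only after this step can one $L_i$-conjugate $\gamma$ into $T$ and take the $\alpha_i$-orthogonal component, at which point the weight inequality $\mu_{\cL_i}(z,\beta)>0$ follows from $\mu_\cL(z,\gamma)>\mu_i$ and $\abs{\gamma} > c\abs{\alpha_i}$ in the orthogonal decomposition $\gamma=c\alpha_i+\beta$. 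Your appeal to $W_{L_i}$-invariance of the orthogonal projection does not address this conjugacy step and should be removed.
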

\begin{proof}
See \cite[Rem.\ 12.21]{kirwan1984} and also \cite[Thm.\ 9.4]{ness1984}.  
\end{proof}
Notice that the restriction of $\cL^{\otimes r}$ to $Z_i$ will not suffice, as all points in $Z_i$ are unstable relative to $\cL$. We can twist $\cL^{\otimes r}\vert_{Z_i}$ by a $\chi_i$ and get a $L_i$-linearized line bundle because the kernel of $\chi_i$ contains the image of $\alpha_i$, so that this kernel acts trivially on $Z_i$.

\subsection{The Kempf--Ness stratification of the Grassmannian of a tensor product}
\label{subsec:GrassLT}

Now we study the Kempf--Ness stratification of the Grassmannian of a tensor product, explaining how the objects $\cL, Y_i, Z_i, P_i, L_i, \cL_i$, etc.\ from \S\ref{subsec:GIT}  are realized in this particular case. This will involve notation that will be applied in the proof of Theorem \ref{thm:tensor}. We will follow Totaro \cite[\S2]{totaro1996}.

The general setup of \S\ref{subsec:GIT} is now specified as follows. 
\begin{itemize}[leftmargin=2em]
\item $X = \Gr_s(M \otimes_k N)$, a projective $k$-scheme parametrizing $s$-dimensional subspaces of $M \otimes N$. Let $m = \dim_k M$ and $n = \dim_k N$. 
\item $G = \GL(M) \times \GL(N)$, acting on subspaces of $M \otimes N$ by translation on each tensor factor.
\item The line bundle on $X$ 
\[
\cL := (\wedge^s S^*)^{\otimes mn} \otimes \big(\wedge^{mn} (M \otimes N)\big)^{\otimes s}
\]
is ample and naturally $G$-linearized. Here $S$ is the universal object, a rank $s$ vector bundle on $X$. 
\item We fix a maximal torus $T \subset G$, and a choice of Weyl-invariant positive definite bilinear form $\langle\,, \rangle : X_*(T) \times X_*(T) \ra \bZ$ with induced norm $\abs{\cdot} : X_*(T) \ra \bR_{\geq 0}$. 
\end{itemize}
The first factor of $\cL$ is well-known to be ample on $X$, and the second factor is a trivial line bundle that changes the $G$-linearization. 

\begin{rem}
As a notational convention, we will use the undecorated tensor product $M \otimes N$ to denote the base change from $\Spec k$ to $X$ of $M \otimes_k N$, and also its specializations as appropriate. For example, ``a point $x$ of $X$ corresponds to a subspace $S_x \subset M \otimes N$.'' 
\end{rem}

Totaro calculates that the instability measure of GIT, written $\mu_\cL(x, \alpha)$, is related to the instability measure $f(S_x,\alpha)$ (see \eqref{eq:mu_diff}) of the subspace $S_x \subset M \otimes N$ corresponding to $x \in \Gr_s(M \otimes_k N)$ \cite[Lem.\ 2]{totaro1996}. Indeed, we relate cocharacters $\alpha$ of $G$ to increasing filtrations of $(M,N)$ by assigning $M_\alpha^i$ to the sum of isotypical spaces for the action of $\alpha$ on $M$ with weight $\leq i$. The same is done for $N$.  

To apply the calculations of \cite[Lem.~2]{totaro1996}, we note that in order to translate between Totaro's use of decreasing filtrations and our use of increasing filtrations, one should use the relation $\deg_\alpha S = -\deg^\mathrm{dec}_{-\alpha} S$, where ``$\deg^\mathrm{dec}_{\beta}$'' refers to the degree with respect to the decreasing filtration associated to a cocharacter $\beta$. Then Totaro calculates that $\mu_\cL(x,-\alpha)$ is the positive multiple 
\[
smn \cdot f(S_x, \alpha) = smn \frac{\mu_\alpha(M \otimes_k N) - \mu_\alpha(S_x)}{\abs{\alpha}} = \frac{s\deg_\alpha(M \otimes_k N) - mn\deg_\alpha(S_x)}{\abs{\alpha}}
\]
of the normalized slope difference function $f(S_x, \alpha)$ according to the filtration induced by $\alpha$.  

The upshot is that $S_x$ is semi-stable if and only if $x \in X^{ss}_\cL$. Likewise, the Kempf--Ness stratification of the unstable locus $X\backslash X^{ss}_\cL$ is enumerated by the Kempf filtration of the subspaces associated to the points of this locus. We record this in Proposition \ref{prop:KN=K}, below, after establishing definitions. 

We now set up notation for the Kempf--Ness stratification of $X$, explaining how the structures of \S\ref{subsec:GIT} are realized on vector spaces when they are applied to the Grassmannian. Because increasing filtrations associated to cocharacters $\alpha$ of $G$ are associated with the action of the cocharacter $-\alpha$ on $X$, from now on we will label objects with $\alpha$ that are actually associated to $-\alpha$ in \S\ref{subsec:GIT}. For example, $P_\alpha$ will denote the parabolic subgroup of $G$ associated to the cocharacter $-\alpha$. 

As usual, write $(M^i_\alpha, N^j_\alpha)$ for the increasing filtration associated to $\alpha$. Also note that the action of $\alpha$ on $(M,N)$ allows us to view $\gr^i_\alpha M$ as a direct summand of $M$, and likewise for $\gr^j_\alpha N \subset N$. We observe that the isotypical space of weight $\ell \in \bZ$ of the action of $\alpha$ on $M \otimes_k N$ is the direct summand 
\begin{equation}
\label{eq:Uell}
U_\ell := \bigoplus_{i+j = \ell} (\gr^i_\alpha M) \otimes_k (\gr^j_\alpha N) \subset M \otimes_\bF N.
\end{equation}
Notice that these summands induce an isomorphism $\bigoplus_\ell U_\ell \cong M \otimes_\bF N$. 

Consequently, connected components of $X^\alpha$ are in bijective correspondence with partitions $s = \sum_{\ell\in \bZ} s_\ell$ of $s$, where $0 \leq s_\ell \leq \dim U_\ell$. Each connected component is given by the fiber product over $\Spec k$  
\[
Z_{(s_\ell)} := \times_\ell \Gr_{s_\ell}(U_\ell),
\]
which naturally injects into $X^\alpha$. 

Having enumerated the connected components of $X^\alpha$, we follow \S\ref{subsec:GIT} and set 
\[
Y_{(s_\ell), \alpha} = \{x \in X \mid \lim_{t \ra 0} \alpha^{-1}(t)\cdot x \in Z_{(s_\ell)}\}.
\]
Moduli-theoretically speaking, $Y_{(s_\ell), \alpha}$ consists of those $S \subset M \otimes N$ such that $\gr^\ell_\alpha S \subset \gr^\ell_\alpha M \otimes N$ has dimension $s_\ell$. And the moduli-theoretic realization of the morphism 
\begin{equation}
\label{eq:lim_map}
Y_{(s_\ell), \alpha} \ni x \mapsto \lim_{t \ra 0} \alpha^{-1}(t)\cdot x \in Z_{(s_\ell)}
\end{equation}
sends $S_x$ to $\bigoplus_\ell \gr^\ell_\alpha S_x$, where this is considered to be a subspace of $M \otimes N$ via the canonical inclusion $\gr^\ell_\alpha S_x \subset U_\ell$ and isomorphism $\bigoplus_\ell U_\ell \cong M \otimes N$. In the sequel, we will call this operation \textit{Kempf semi-simplification}. We will record an important property of Kempf semi-simplification in Lemma \ref{lem:kempf_ss} below.  

We also fix $L_\alpha$ and $P_\alpha$ as the Levi subgroup and parabolic subgroup of $G$ associated to $-\alpha$, as in \S\ref{subsec:GIT}. There is a particular subgroup of $P_\alpha$ associated to each connected component $Z_{(s_\ell)}$ in \S\ref{subsec:GIT}, but because $P_\alpha$ and $L_\alpha$ are connected, this subgroup is always $P_\alpha$.  

We note that $\mu_\cL(x, -\alpha)$ is constant over $x \in Y_{(s_\ell), \alpha}$, given by the formula
\[
smn \cdot f(S_x,\alpha) = \mu_\cL(x, -\alpha) = \frac{mn\cdot \sum_\ell \ell \cdot s_\ell}{\abs{\alpha}},
\]
because $\sum_\ell \ell \cdot s_\ell$ is $\deg_\alpha (S)$ and $\deg_\alpha (M \otimes N) = 0$ (Proposition \ref{prop:kempf}). 

Finally, we define the Kempf--Ness stratification as outlined in \S\ref{subsec:GIT}, producing pairs $(Z_i, \alpha_i)$ of connected components $Z_i = Z_{(s_{\ell,i})}$ of $X^{\alpha_i}$ for $i = 1, 2, \dotsc, N$ with decreasing $\mu_i = \mu_\cL(Z_i, -\alpha_i)$. We have open subschemes $Z^\circ_i := Z^\circ_{(s_{\ell,i})} \subset Z_i$ and $Y^\circ_i := Y^\circ_{(s_{\ell,i}), \alpha_i} \subset Y_{(s_{\ell,i}), \alpha_i}$. The $i$th Kempf--Ness stratum is $S_i := G \cdot Y^\circ_{(s_{\ell,i}), \alpha_i}$. These satisfy Theorem \ref{thm:strata}, which also provides morphisms 
\[
[S_i/G] \lrisom [Y^\circ_i/P_{\alpha_i}] \buildrel^{\pi_i}\over\lra [Z^\circ_i/L_{\alpha_i}]. 
\]

We summarize the discussion above. 
\begin{prop}
\label{prop:KN=K}
Let $k'$ be a field extension of $k$, and let $S_x \subset M_{k'} \otimes_{k'} N_{k'}$ be a $k'$-subspace, where $x \in X(k')$ is the associated point of the Grassmannian. 
\begin{enumerate}
\item $S_x$ is semi-stable if and only if $x \in X^{ss}_\cL(k')$. 
\item If $S_x$ is unstable, then $x \in S_i(k')$ if and only if $\alpha_i$ is conjugate to the cocharacter associated to the Kempf filtration associated to $S_x$. 
\end{enumerate}
\end{prop}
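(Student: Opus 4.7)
The plan is to reduce both statements to the Hilbert-Mumford numerical criterion together with Totaro's computation, recalled earlier in \S\ref{subsec:GrassLT}, that for $x \in X(k')$ and $\alpha : \bG_m \ra G_{k'}$ with $\lim_{t \ra 0} \alpha^{-1}(t) \cdot x$ existing, one has the identity
\[
\mu_\cL(x,-\alpha) \;=\; smn \cdot f(S_x,\alpha) \;=\; \frac{s\deg_\alpha(M\otimes N) - mn\deg_\alpha(S_x)}{\abs{\alpha}}.
\]
A preliminary remark: the stratification $X^{ss}_\cL, S_1,\ldots,S_N$ is defined over $k$ by Theorem \ref{thm:strata}, so the assertion $x\in X^{ss}_\cL(k')$ (resp.\ $x\in S_i(k')$) is insensitive to whether we test with $k'$-cocharacters of $G_{k'}$ or $\overline{k'}$-cocharacters; in particular we may pass to the algebraic closure and work with geometric points. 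Moreover, every increasing filtration of $(M_{k'},N_{k'})$ indexed by rationals is, up to positive rational rescaling, the weight filtration associated to some cocharacter of $G_{k'}$, and conversely every cocharacter of $G_{k'}$ produces such a filtration; the induced filtration on $M_{k'}\otimes N_{k'}$ is the weight filtration of $\alpha$, and the limit $\lim_{t\ra 0} \alpha^{-1}(t)\cdot x$ exists in the Grassmannian if and only if the subspace $S_x$ respects the weight filtration well enough to admit a Kempf semi-simplification (which by construction is always the case for the cocharacter attached to the Kempf filtration of $S_x$).

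For (1), the Hilbert-Mumford numerical criterion for the $G$-linearized ample line bundle $\cL$ says that $x\in X^{ss}_\cL$ if and only if $\mu_\cL(x,-\alpha)\leq 0$ for every cocharacter $\alpha$ of $G_{k'}$ for which the limit exists. Via the displayed identity this translates to the inequality $f(S_x,\alpha)\leq 0$, i.e.\ $\mu_\alpha(S_x)\geq \mu_\alpha(M\otimes N)$, for every such $\alpha$. By the previous paragraph, ranging $\alpha$ over cocharacters with well-defined limit covers all non-trivial filtrations of $(M,N)$ that could possibly witness instability of $S_x$. Hence $x\in X^{ss}_\cL(k')$ if and only if $S_x$ is semi-stable in the sense of \S\ref{subsec:ss_kempf}.

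For (2), suppose $S_x$ is unstable. Proposition \ref{prop:kempf} provides a unique equivalence class of non-trivial filtrations $\alpha_{\mathrm{Kempf}}$ of $(M_{k'},N_{k'})$ maximizing $f(S_x,\cdot)$; pick a representative cocharacter $\alpha$ in this class for which the limit $y = \lim_{t\ra 0}\alpha^{-1}(t)\cdot x$ exists (this is possible since Kempf semi-simplification preserves $f$). On the GIT side, the Kempf-Ness recipe selects cocharacters $\alpha_i$ maximizing $\mu_\cL(\cdot,-\alpha)$ on successive strata; by the displayed formula, the maximizers of $\mu_\cL(x,-\cdot)$ and of $f(S_x,\cdot)$ agree up to conjugation by $G_{k'}$ and positive rational rescaling. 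The characterization of $S_i$ as $G\cdot Y^\circ_i$ in Theorem \ref{thm:strata} then shows $x\in S_i(k')$ precisely when the $G_{k'}$-conjugacy class of the maximizing cocharacter (equivalently, the cocharacter attached to the Kempf filtration of $S_x$) matches that of $\alpha_i$.

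The main obstacle is not a deep one but bookkeeping: keeping straight the sign convention (increasing versus decreasing filtrations, $\alpha$ versus $-\alpha$) used in labeling $P_\alpha$, $L_\alpha$, $Y_{Z,\alpha}$ so that the identification $\mu_\cL(x,-\alpha)=smn\cdot f(S_x,\alpha)$ comes out with the correct sign; and ensuring that the equivalence classes of filtrations (under positive rational rescaling) match the conjugacy classes of cocharacters (under Weyl/$G$-conjugation and positive integer scaling) indexing the Kempf-Ness strata.
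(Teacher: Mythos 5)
Your proof is correct and takes essentially the same route as the paper's: the key ingredients are identical — Theorem \ref{thm:strata} (rationality of the Kempf-Ness strata over $k$), the computation from \cite[Lem.\ 2]{totaro1996} recalled in \S\ref{subsec:GrassLT} identifying $\mu_\cL(x,-\alpha)$ with $smn\cdot f(S_x,\alpha)$, and Proposition \ref{prop:kempf} giving uniqueness of the Kempf filtration over any field $k'$ (whether or not $k'$ is perfect). The paper's own proof is terse precisely because it leans on this surrounding discussion, whereas you spell out the Hilbert-Mumford bookkeeping. One small inaccuracy in your exposition: since $X=\Gr_s(M\otimes_k N)$ is projective, the limit $\lim_{t\to 0}\alpha^{-1}(t)\cdot x$ exists for \emph{every} cocharacter $\alpha$ and every $x$; the phrase ``exists if and only if $S_x$ respects the weight filtration well enough'' is misleading, since the condition is vacuously satisfied.
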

\begin{proof}
By Theorem \ref{thm:strata}, all of the strata are defined over $k$. For any extension $k'/k$, Proposition \ref{prop:kempf} provides a unique Kempf filtration (defined over $k'$) if and only if $S_x$ is unstable. 
\end{proof}

\begin{rem}
In the generality of \S\ref{subsec:GIT}, one can only expect the Hilbert--Mumford criterion to produce a produce a maximal destabilizing cocharacter over perfect fields \cite[\S4]{kempf1978}. (See \cite[\S5]{hesselink1978} for a counterexample over an imperfect field.) However, Totaro's proof of Proposition \ref{prop:kempf} produces a Kempf filtration for $S_x$ over its field of definition $k'$ if and only if $S_x$ is not semi-stable, even if $k'$ is imperfect. 
\end{rem}

It will also be important to understand the behavior of the Kempf filtration under the operation  \eqref{eq:lim_map}. 
\begin{lem}
\label{lem:kempf_ss}
Let $k'$ be a field extension of $k$, and let $S_x \subset M_{k'} \otimes_{k'} N_{k'}$ be an unstable $k'$-subspace with Kempf filtration induced by the cocharacter $\alpha$, i.e.\ $x \in Y_{(s_\ell), \alpha}^\circ$ for some partition $(s_\ell)$ of $s$. Then the image $y \in Z_{(s_\ell)}$ of $x$ under \eqref{eq:lim_map}, which corresponds to the subspace
\[
S_y := \bigoplus_\ell \gr^\ell_\alpha S_x \subset \bigoplus_\ell U_{\ell, k'} \cong M_{k'} \otimes_{k'} N_{k'},
\]
has the same Kempf cocharacter as $x$, namely, $\alpha$. In other words, Kempf semi-simplification preserves the Kempf cocharacter. 
\end{lem}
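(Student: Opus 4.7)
The plan is to leverage the Kempf-Ness stratification of $X = \Gr_s(M \otimes_k N)$ recalled in \S\ref{subsec:GrassLT} and its correspondence with Kempf cocharacters via Proposition \ref{prop:KN=K}(2). Let $i$ index the Kempf-Ness stratum whose data is $(Z_i, \alpha_i) = (Z_{(s_\ell)}, \alpha)$. By Proposition \ref{prop:KN=K}(2) applied to $x$, the hypothesis that $\alpha$ induces the Kempf filtration of $S_x$ places $x$ in $S_i$; the more refined hypothesis $x \in Y^\circ_{(s_\ell), \alpha}$ is recorded in the lemma statement.

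The crucial step, which I expect to carry essentially all of the geometric content, is simply to invoke the definition $Y^\circ_{(s_\ell), \alpha} = \pi^{-1}(Z^\circ_{(s_\ell)})$ from the construction of the stratification in \S\ref{subsec:GIT}. This says precisely that the limit map \eqref{eq:lim_map} carries $x$ to a point $y \in Z^\circ_{(s_\ell)} \subset S_i$; in particular $y$ and $x$ lie in the same Kempf-Ness stratum. Re-applying Proposition \ref{prop:KN=K}(2), now to $y$, shows that the Kempf cocharacter of $S_y$ is conjugate to $\alpha$.

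The final step, which I expect to be the main (though mild) obstacle, is to upgrade this from ``conjugate to $\alpha$'' to the assertion that $\alpha$ itself induces the Kempf filtration of $S_y$. Here I would use that $y \in Z_{(s_\ell)} \subset X^\alpha$ is fixed by $\alpha$, so $\alpha$ is automatically admissible for the Hilbert-Mumford criterion at $y$. A direct computation, using $\deg_\alpha S_y = \sum_\ell \ell\, s_\ell = \deg_\alpha S_x$ together with $\mu_\alpha(M) = \mu_\alpha(N) = 0$ from Proposition \ref{prop:kempf}, gives $f(S_y, \alpha) = f(S_x, \alpha)$. Since $x$ and $y$ lie in the same stratum, this common value equals $M_\cL(y)/(smn)$, so $\alpha$ attains the supremum defining the Kempf cocharacter of $S_y$; by the uniqueness of the Kempf filtration (Proposition \ref{prop:kempf}), $\alpha$ induces it, as claimed.
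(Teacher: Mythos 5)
Your proposal is correct and follows essentially the same geometric skeleton as the paper's proof: both hinge on the observation that the limit map $\pi$ sends $Y^\circ_{(s_\ell),\alpha}$ into $Z^\circ_{(s_\ell)}$, which is itself contained in the stratum. The difference is downstream of that. The paper conjugates so that $\alpha = \alpha_i$ and then observes directly that $Z^\circ_i \subset Y^\circ_i$; membership in $Y^\circ_i$ already identifies the Kempf cocharacter as exactly $\alpha_i$ (not merely up to conjugacy), so no further computation is needed. You instead invoke only the coarser statement of Proposition~\ref{prop:KN=K}(2) — that $y \in S_i$ forces the Kempf cocharacter to be \emph{conjugate} to $\alpha$ — and then close the gap with a short calculation, checking $\deg_\alpha S_y = \sum_\ell \ell\, s_\ell = \deg_\alpha S_x$ and hence $f(S_y,\alpha) = f(S_x,\alpha) = M_\cL(y)/(smn)$, so that $\alpha$ achieves the maximum at $y$. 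This is sound: $y$ is $\alpha$-fixed so $\alpha$ is admissible, $M_\cL$ is constant on the stratum, and the uniqueness of Proposition~\ref{prop:kempf} then forces $\alpha$ to be the Kempf cocharacter. Your version spells out explicitly (with a small degree computation) what the paper deduces directly from $Z^\circ_i \subset Y^\circ_i$; it is a touch longer, but it makes no appeal to the refined characterization of $Y^\circ_i$ and so is arguably more self-contained given what the paper states formally.
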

\begin{proof}
The isomorphism $[S_i/G] \lrisom [Y^\circ_i/P_i]$ indicates that we may conjugate the data $(x,\alpha)$ by an element of $G$ in order to reduce to the case that $\alpha = \alpha_i$ and $(s_\ell) = (s_{\ell,i})$ for some $i$ enumerating the sequence of Kempf--Ness strata. It is equivalent to say that $x \in Y^\circ_i$. By construction, the image $y$ of $x$ in $Z_i$ under \eqref{eq:lim_map} lies in $Z^\circ_i$. As $Z^\circ_i \subset Y^\circ_i$, $y$ has the same Kempf cocharacter $\alpha_i$ as $x$ does. 
\end{proof}

Finally, we record that the more general statement of Proposition \ref{prop:ss=kempf_general} specializes to this statement about the Grassmannian. 

\begin{prop}
\label{prop:ss=kempf}
Choose some $z \in Z_{(s_\ell)}$. Then there exists a positive integer $r$ and a character $\chi_\alpha \in X^*(L_\alpha)$ such that 
\[
\cL_{(s_\ell), \alpha} := \cL^{\otimes r} \otimes \chi_\alpha^{-1}
\]
is an $L_\alpha$-linearized ample line bundle on $Z_{(s_\ell)}$. Moreover, $z$ lies in $Z^\circ_{(s_\ell)}$ if and only if $z$ is a semi-stable point relative to $\cL_{(s_\ell), \alpha}$. 
\end{prop}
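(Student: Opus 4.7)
I would deduce this as a direct specialization of Proposition \ref{prop:ss=kempf_general} to the GIT setup on $X = \Gr_s(M \otimes_k N)$ assembled in \S\ref{subsec:GrassLT}. All the hypotheses of \S\ref{subsec:GIT} are already in place: $X$ is a smooth projective $k$-scheme, $G = \GL(M) \times \GL(N)$ is a reductive group acting on it, and $\cL$ is a $G$-linearized ample line bundle. The only content that requires explicit verification is the assertion that, for some positive integer $r$, the character of $T$ corresponding to $r\alpha$ via the chosen pairing $\langle , \rangle$ extends from $T$ to the Levi $L_\alpha$.

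To check this, the first step is to use the explicit description
\[
L_\alpha = \prod_i \GL(\gr^i_\alpha M) \times \prod_j \GL(\gr^j_\alpha N),
\]
which holds because $\alpha$ induces the weight-space decomposition on $M$ and $N$ and its centralizer is the product of the linear groups on these weight spaces. The character of $T$ corresponding to $\alpha$ assigns to each diagonal entry of $T \cap \GL(\gr^i_\alpha M)$ the same exponent (depending only on $i$ and the fixed Weyl-invariant form $\langle , \rangle$ on each $\GL$-factor, together with the constant $\sum_i i \dim \gr^i_\alpha M$). In particular, its restriction to $T \cap \GL(\gr^i_\alpha M)$ is a power of $\det$, and hence extends canonically to $\GL(\gr^i_\alpha M)$; the same holds on the $N$-factors. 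After multiplying by a positive integer $r$ to clear any rational coefficients arising from $\langle,\rangle$, one obtains an explicit character
\[
\chi_\alpha \;=\; \prod_i \bigl(\det\nolimits_{\gr^i_\alpha M}\bigr)^{r_i} \cdot \prod_j \bigl(\det\nolimits_{\gr^j_\alpha N}\bigr)^{r'_j} \;\in\; X^*(L_\alpha),
\]
extending $r\alpha$.

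With $r$ and $\chi_\alpha$ in hand, Proposition \ref{prop:ss=kempf_general} immediately supplies both the ampleness of the $L_\alpha$-linearized line bundle $\cL_{(s_\ell),\alpha} := \cL^{\otimes r}\vert_{Z_{(s_\ell)}} \otimes \chi_\alpha^{-1}$ on $Z_{(s_\ell)}$ (twisting by a character alters only the $G$-linearization and not the underlying line bundle, so ampleness is preserved) together with the desired equivalence $Z^\circ_{(s_\ell)} = (Z_{(s_\ell)})^{ss}_{\cL_{(s_\ell),\alpha}}$. The only genuinely non-formal point in the chain of reasoning is the extension of the $T$-character to $L_\alpha$; this is the step I expect to be the main obstacle, but it works precisely because $\alpha$ is \emph{central} in $L_\alpha$, so its associated weight is constant on each simple factor of $L_\alpha$ up to the determinant character.
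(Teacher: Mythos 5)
Your proposal is correct and follows precisely the route the paper intends: Proposition \ref{prop:ss=kempf} is a direct specialization of Proposition \ref{prop:ss=kempf_general} to the Grassmannian setup of \S\ref{subsec:GrassLT}, and the paper deliberately states it without a separate proof. One small miscalibration in your framing: you identify the extension of (a multiple of) the $T$-character corresponding to $\alpha$ to all of $L_\alpha$ as ``the only content that requires explicit verification,'' but this existence is \emph{already asserted} as part of the statement of Proposition \ref{prop:ss=kempf_general}, so citing it requires no additional work. That said, your explicit check in the $G = \GL(M) \times \GL(N)$ case is a useful sanity check and correctly captures the reason the general claim holds: since the isomorphism $X_*(T)_\bQ \risom X^*(T)_\bQ$ induced by $\langle,\rangle$ is $W$-equivariant and the Weyl group $W_{L_\alpha}$ of the Levi fixes $\alpha$, the image of $\alpha$ is $W_{L_\alpha}$-invariant and therefore extends to a character of $L_\alpha$ after clearing denominators. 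For the block-diagonal Levi $\prod_i \GL(\gr^i_\alpha M) \times \prod_j \GL(\gr^j_\alpha N)$ this is exactly your observation that the character is a product of powers of the block determinants; one should note that if the Weyl-invariant form $\langle,\rangle$ includes $\sum$-type terms (proportional to the trace on either factor, or a cross term between the two $\GL$-factors), these contribute uniform powers of $\det_M$ and $\det_N$, which still extend. Everything else you cite — preservation of ampleness under twist by a character, and the identification $Z^\circ_{(s_\ell)} = (Z_{(s_\ell)})^{ss}_{\cL_{(s_\ell),\alpha}}$ — follows immediately from Proposition \ref{prop:ss=kempf_general}.
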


\subsection{Valuative criteria and descent of base field}
\label{subsec:val}

In this section, we refine the valuative criteria for universally closed morphisms of stacks to a stronger version that holds for weakly proper morphisms of stacks. Here is a word about our motivation for this. 

Broadly speaking, we want to apply the valuative criterion for universally closed morphisms in order to find that certain $F$-points of $X$ are the generic fibers of $\cO$-points of $X$ in the same Kempf--Ness locus. This is too much to expect on $X$, but can nearly be done on the stack quotient $\fX = [X/G]$. The obstruction is that the valuative criterion for universally closed morphisms of finite type algebraic stacks only holds after taking a finitely generated field extension of $F$, which is far too large of an extension to be useful to us. By using the fact that $\fX^{ss}$ and $[Z^\circ_i/L_{\alpha_i}]$ are not only universally closed over a projective space, but also weakly proper (Definition \ref{defn:WP}), we will prove the valuative criterion with only a finite separable field extension of $F$. This will suffice for our purposes. 

The following is one implication of one example among various formulations of the valuative criterion for universally closed morphisms of algebraic stacks. 
\begin{prop}
\label{prop:uc_vc}
Let $f : \fX \ra \fY$ be a locally finite presentation morphism of algebraic stacks, where $\fY$ is assumed to be locally Noetherian. Assume that $f$ is universally closed. Let $R$ be a complete discrete valuation ring with fraction field $\cK$. Assume that there is a commutative diagram
\[
\xymatrix{
\Spec \cK \ar[r] \ar[d] & \fX \ar[d]^f \\
\Spec R \ar[r]  & \fY
}
\]
Then there exists a finitely generated field extension $\cK'/\cK$ with valuation ring $R'$ such that there is a surjective morphism $\Spec R' \ra \Spec R$ and a diagram
\begin{equation}
\label{eq:vc_uc}
\xymatrix{
\Spec \cK' \ar[r] \ar[d] & \Spec \cK \ar[r] \ar[d] & \fX \ar[d]^f \\
\Spec R' \ar[r] \ar[rru]^h &\Spec R \ar[r] & \fY
}
\end{equation}
extending the first diagram. 
\end{prop}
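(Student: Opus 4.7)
The plan is to follow the standard valuative criterion for universally closed morphisms of algebraic stacks. First I would pull back along the given morphism $\Spec R \to \fY$ to reduce to the case $\fY = \Spec R$: set $\fX' := \fX \times_\fY \Spec R$, so that $f' : \fX' \to \Spec R$ inherits being locally of finite presentation and universally closed, and the given $\cK$-point of $\fX$ becomes a point $x : \Spec \cK \to \fX'$ over the generic point of $\Spec R$. It now suffices to produce a DVR $R'$ with fraction field $\cK'$ finitely generated over $\cK$, a dominating map $R \to R'$ (so $\Spec R' \to \Spec R$ is surjective), and a morphism $h' : \Spec R' \to \fX'$ extending $x$ after base change.

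Next, I would form the topological closure $\fZ \subset \fX'$ of the image of $x$, viewed as a reduced closed substack. Because $f'$ is universally closed, the image $f'(|\fZ|) \subset \Spec R$ is closed; since it contains the generic point of $\Spec R$, it equals all of $\Spec R$. Consequently there exists a point $z \in |\fZ|$ lying over the closed point of $\Spec R$, which is by construction a specialization of $|x| \in |\fX'|$.

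The remaining, and main, step is to upgrade this set-theoretic specialization to an actual morphism from a DVR. I would choose a smooth presentation $U \to \fX'$ by a scheme and, after a residue field extension if necessary, lift $|x|$ and $z$ to a pair of points $u_0, u_1 \in U$ with $u_1 \in \overline{\{u_0\}}$. Since $f'$ is locally of finite presentation and $\Spec R$ is Noetherian, $U$ is locally of finite type over $\Spec R$; the classical theory of specializations in Noetherian schemes then produces a Noetherian DVR $R'$ with fraction field $\cK' = \mathrm{Frac}(R')$ finitely generated over $\cK$, together with a morphism $\Spec R' \to U$ whose generic and closed points map to $u_0$ and $u_1$ respectively, and such that $R \to R'$ is dominating. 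Composing $\Spec R' \to U \to \fX' \to \fX$ produces the desired $h$. The main obstacle is precisely this last step: algebraic stacks do not satisfy the classical valuative criterion directly, so one must descend the specialization problem to an atlas where EGA-style Noetherian approximation applies, and then verify that the resulting diagram is compatible with the original data, which is where completeness of $R$ and the locally Noetherian hypothesis on $\fY$ enter.
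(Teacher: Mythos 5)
The paper's own treatment is just a citation to \cite[Thm.~7.3]{lmb}, so your sketch is effectively an attempt to reconstruct the proof found there, and the skeleton is right: base change to $\fY = \Spec R$, closure of the generic point, surjectivity of its image onto $\Spec R$ from universal closedness, and descent to a smooth atlas to apply the scheme-theoretic DVR lemma.

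The gap is in the final step, which you flag but do not close. Your construction produces $h : \Spec R' \to \fX$ such that $h|_{\Spec \cK'}$ and $x_{\cK'} := x \circ (\Spec \cK' \to \Spec \cK)$ map to the same point of the topological space $|\fX|$, but this is strictly weaker than being 2-isomorphic as morphisms: two field-valued points of an algebraic stack define the same point of $|\fX|$ precisely when they become 2-isomorphic after passage to some common extension, which may be larger than $\cK'$. So the square you obtain need not actually 2-commute with the original $x$, which is exactly what ``extending the first diagram'' requires. The fix is not completeness of $R$ (which you invoke for this step); rather, one must choose $u_0$ already equipped with the compatibility datum. Concretely, form the fiber product $V := U \times_{\fX'} \Spec \cK$, which is nonempty and locally of finite type over $\Spec \cK$; lift the generization $u_0$ of $u_1$ to a point $v \in V$; and take $\cK' := \kappa(v)$, which is still finitely generated over $\cK$. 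The universal property of $V$ then supplies the 2-isomorphism $h|_{\Spec \cK'} \cong x_{\cK'}$ needed for the square to commute. With this correction your argument goes through.
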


\begin{proof}
See \cite[Thm.~7.3]{lmb}. 
\end{proof}

We will be able to restrict ourselves to the case that $\cK'/\cK$ is finite by choosing a lift of the special point of $\Spec R$ to $\fX$. The following condition will guarantee that such points exist. Here we use $\abs{\fX}$ to denote that topological space underlying $\fX$ (see \cite[\S5]{lmb}). We also write $\nu$ for the closed point of $\Spec R$. 
\begin{defn}[{(\cite[\S2]{ASV2011})}]
\label{defn:WP}
Call $f: \fX \ra \fY$ \emph{weakly separated} if for every valuation ring $R$ with fraction field $\cK$, and 2-commutative diagrams
\[
\xymatrix{
\Spec \cK \ar[r] \ar[d] & \fX \ar[d]^f \\
\Spec R \ar[r] \ar@<.5ex>[ur]^{h_1} \ar@<-.5ex>[ur]_{h_2} & \fY
}
\]
such that $h_1(\nu)$ and $h_2(\nu)$ are closed in $\abs{\fX_R}$ (where $\fX_R = \fX \times_\fY \Spec R$), then $h_1(\nu) = h_2(\nu)$ in $\abs{\fX_R}$. 

Call $f$ \emph{weakly proper} if it is weakly separated, finite type, and universally closed. 
\end{defn}

It is important to note that the equality $h_1(\nu) = h_2(\nu)$ in $\abs{\fX_R}$ does not imply that there is an isomorphism between $h_1$ and $h_2$, but merely an equality of points in a topological space. Also, while \cite{ASV2011} is written for characteristic 0 purposes, none of the arguments of \S2 of \emph{loc.\ cit.}\ depend on this assumption. 

We can strengthen Proposition \ref{prop:uc_vc} when $f$ is weakly proper. 
\begin{prop}
\label{prop:wp_vc}
In the setting of Proposition \ref{prop:uc_vc}, assume in addition that $f$ is weakly separated, and thus weakly proper. Then the field $\cK'$ may be taken to be a finite separable extension of $\cK$.
\end{prop}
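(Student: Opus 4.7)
The plan is to upgrade the existence statement of Proposition~\ref{prop:uc_vc} by first lifting through a smooth atlas of $\fX$ and then exploiting smoothness to find a point defined over a finite separable extension of $\cK$; the weak separatedness of $f$ then provides the rigidity needed to extend this point over a corresponding finite separable extension of $R$. The argument is modeled on \cite[Prop.~2.2]{ASV2011}.

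First, apply Proposition~\ref{prop:uc_vc} to obtain a finitely generated extension $\cK'/\cK$ with complete DVR $R' \supset R$ and a lift $h : \Spec R' \to \fX$ fitting into diagram~\eqref{eq:vc_uc}. Choose a smooth surjective morphism $V \to \fX$ from a scheme $V$ locally of finite type over $\fY$. By formal smoothness of $V \to \fX$ and the completeness of $R'$, after possibly replacing $R'$ by a finite unramified extension (which preserves the finite generation of $\cK'/\cK$), we can lift $h$ to a morphism $\widetilde{h} : \Spec R' \to V$ of $\fY$-schemes.

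Next, form the algebraic space $W := V \times_\fX \Spec \cK$, using the given $\cK$-point $\Spec \cK \to \fX$. Because $V \to \fX$ is smooth, $W$ is a smooth algebraic space of finite type over $\cK$, and the generic fiber of $\widetilde{h}$ defines a $\cK'$-valued point of $W$; in particular, $W(\cK') \neq \emptyset$. Since $W$ is smooth and nonempty, the set of points of $W$ with residue field a finite separable extension of $\cK$ is Zariski dense, so there exists a finite separable extension $\cK_1/\cK$ and a morphism $w : \Spec \cK_1 \to V$ whose composition with $V \to \fX$ equals the base-change $\Spec \cK_1 \to \Spec \cK \to \fX$ of the given $\cK$-point. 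Let $R_1$ be the integral closure of $R$ in $\cK_1$, localized at a maximal ideal above $\fm_R$; then $R_1$ is a complete DVR with $\mathrm{Frac}(R_1) = \cK_1$, finite and separable over $R$.

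The main obstacle is the final step: extending $w$ (post-composed with $V \to \fX$) to an $R_1$-point of $\fX$ over the induced map $\Spec R_1 \to \fY$. Here the weak separatedness of $f$ is essential, as it forces any two candidate extensions to agree on their special fibers as points of $\abs{\fX_R}$. Combining this rigidity with a further application of Proposition~\ref{prop:uc_vc} to $R_1$ (yielding a lift over some finitely generated $R_1' \supset R_1$) and a descent argument patterned on \cite[\S2]{ASV2011}, one produces the desired $R_1$-point extending $w$, completing the proof.
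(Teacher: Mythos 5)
Your proposal correctly identifies the general strategy — lift through a smooth atlas and descend to a finite separable extension — but it omits a crucial ingredient from the paper's proof and leaves the final (and in your own words, "main") step unsupported.

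The paper's proof first invokes \cite[Lem.~2.7]{ASV2011} to ensure that the lift $h$ can be chosen so that its closed point $h(\nu)$ is a \emph{closed point} of $\abs{\fX_{R'}}$. This is not a cosmetic refinement: it is the hypothesis that makes the slicing argument (\cite[Lem.~2.5]{ASV2011}) actually produce a lift over a finite separable extension. Without the closedness of $h(\nu)$, there is no reason the locally closed slice through the chosen point of the atlas $V$ will be quasi-finite over $\Spec R$, so the descent to finite separable $\cK_1$ is not guaranteed. Your proposal never appeals to this closedness and never justifies why the finite separable $\cK_1$-point $w$ of $W = V \times_{\fX} \Spec\cK$ you produce by a density argument should extend to an $R_1$-point of $\fX$.

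Concretely, the gap is in your last paragraph. You observe that smoothness of $W$ gives a Zariski-dense set of points over finite separable extensions, but density only produces a $\cK_1$-valued point; it says nothing about that point extending integrally over $R_1$. Reapplying Proposition~\ref{prop:uc_vc} to $R_1$ simply returns you to the starting position with yet another finitely generated extension $R_1'\supset R_1$, and weak separatedness — which controls uniqueness of the special fiber of competing lifts, not existence — does not close the loop. The correct order of operations is: first use \cite[Lem.~2.7]{ASV2011} to arrange that $h(\nu)\in\abs{\fX_{R'}}$ is closed, \emph{then} lift to the smooth atlas and slice via \cite[Lem.~2.5]{ASV2011}; the slice is quasi-finite over $\Spec R$ precisely because the target point is closed, and this is what produces the $R_1$-point over a finite separable extension.
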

\begin{proof}
By \cite[Lem.~2.7]{ASV2011}, there exists some choice of $h$ as in \eqref{eq:vc_uc} such that $h(\nu)$ is a closed point of $\abs{\fX_{R'}}$. Then by a slicing argument as in \cite[Lem.~2.5]{ASV2011}, one can restrict to the case that $\cK'/\cK$ is finite and separable. 
\end{proof}

We are interested in using the valuative criterion for an adequate moduli space morphism $\phi : \fX \ra Y$ over $\Spec k$. And adequate moduli space morphisms are weakly proper. 
\begin{prop}
\label{prop:ams_vc}
The valuative criterion as in Proposition \ref{prop:wp_vc} holds for adequate moduli space morphisms $\phi : \fX \ra Y$, where $Y$ is a proper $k$-scheme. 
\end{prop}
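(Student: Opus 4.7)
The plan is to verify the hypotheses of Proposition \ref{prop:wp_vc} for the morphism $\phi : \fX \ra Y$ and conclude immediately. That proposition requires $\phi$ to be locally of finite presentation and weakly proper (i.e., universally closed, finite type, and weakly separated). Local finite presentation holds in our setting of interest (cf.~Corollary \ref{cor:LT_generic}), where $\fX$ is a finite-type algebraic stack over $Y$ (equivalently, a finite-type quotient stack $[X/G]$ with $G$ reductive acting on a projective $k$-scheme $X$). Universal closedness is part of the definition of an adequate moduli space; it also appears as one of the conclusions of the Main Theorem of \cite{alper2014}.

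The content of the argument is to establish weak separatedness (Definition \ref{defn:WP}). Given a valuation ring $R$ with fraction field $\cK$, suppose we have two $R$-points $h_1, h_2 : \Spec R \ra \fX$ agreeing on the generic fiber and mapping to the same $\Spec R \ra Y$ under $\phi$, with the property that the images of the special point $\nu$ are closed in $\abs{\fX_R}$. Since $\nu$ is closed in $\Spec R$, the fiber $\fX \times_Y \Spec \kappa(\nu)$ is closed in $\fX_R$, and both $h_1(\nu), h_2(\nu)$ necessarily lie in it. Now invoke the defining property of an adequate moduli space: for any geometric point $\bar y \to Y$, the fiber $\fX \times_Y \bar y$ admits a unique closed orbit (see e.g.\ \cite[Thm.~5.3.1]{alper2013} and \cite{alper2014}). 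After base-changing $h_1(\nu)$ and $h_2(\nu)$ to the algebraic closure of $\kappa(\nu)$, both remain closed, both lie in the fiber, and hence both coincide with the unique closed orbit. It follows that $h_1(\nu) = h_2(\nu)$ in $\abs{\fX_R}$, proving weak separatedness.

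With $\phi$ now known to be weakly proper and locally of finite presentation, Proposition \ref{prop:wp_vc} applies and yields the desired refined valuative criterion with $\cK'/\cK$ finite and separable.

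The only mildly delicate point is the identification of closed points of $\fX_R$ with closed points in the fiber over $\nu$, and the passage from the uniqueness of the closed orbit over a \emph{geometric} point of $Y$ to equality of the points $h_1(\nu), h_2(\nu) \in \abs{\fX_R}$; this is routine once one works on topological spaces, where residue extensions do not change the underlying point. No substantially new technical input beyond the standard theory of adequate moduli spaces is required.
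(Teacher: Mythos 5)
Your proof is correct and takes essentially the same route as the paper: both reduce the verification of weak separatedness to the fact that geometric fibers of an adequate moduli space morphism have a unique closed point (Alper's Main Theorem). The paper's own proof is terser, simply citing \cite[Prop.~2.17]{ASV2011} for the good-moduli-space case and noting that the adequate case works verbatim because it only uses this unique-closed-point property; you have unpacked what that citation packages, and your closing caveat correctly identifies the one delicate step (passing from $\kappa(\nu)$ to its algebraic closure and back), which the cited proposition handles.
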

\begin{proof}
In the case that an adequate moduli space morphism is, moreover, a good moduli space morphism, this is verified in \cite[Prop.~2.17]{ASV2011}. However, the difference between adequate moduli spaces and good moduli spaces is immaterial for this proof, as the critical property is that any geometric fiber of $\phi$ has a unique closed point. This property is true for both good and adequate moduli space morphisms; see e.g.\ \cite[Main Thm.]{alper2014}. 
\end{proof}

\subsection{Two corollaries of Langton type}
\label{subsec:two}

In this section we deduce the results of Langton type outlined in \S\ref{subsec:LT_overview} from the GIT background assembled in \S\S\ref{subsec:GIT}-\ref{subsec:val}. 

Our first Langton type result follows from the general GIT result given in Corollary \ref{cor:LT_generic}, when applied to the Grassmannian $X$ and its $G$-linearized ample line bundle defined in \S\ref{subsec:GrassLT}. In the statement of the result, given a finite separable field extension $F'/F$, we write $\cO'$ for the integral closure of $\cO$ in $F'$, which is an $\cO$-finite DVR (see \S\ref{subsec:base_change}). We will write $u'$ for its uniformizer and $k'$ for its residue field. 
\begin{cor}
\label{cor:LT_ss}
Let $M, N$ be finite dimensional $k$-vector spaces. Let $\cS$ be a semi-stable subspace of $M_F \otimes_F N_F$. Then there exist a finite separable field extension $F'/F$ and $\cO'$-lattices $\fM \subset M_{F'}$ and $\fN \subset N_{F'}$ such that the reduction modulo $u'$ of $\cS_{F'} \cap \fM \otimes_{\cO'} \fN$ is a semi-stable subspace of $(M \otimes_{k} N)_{k'}$. 
\end{cor}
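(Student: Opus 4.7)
The plan is to realize Corollary \ref{cor:LT_ss} as a direct consequence of the adequate-moduli and valuative-criterion machinery recalled in \S\S\ref{subsec:GIT}--\ref{subsec:val}. The role of the semi-stable subspace $\cS \subset M_F \otimes_F N_F$ is that, via Proposition \ref{prop:KN=K}(1), it provides an $F$-point of the semi-stable locus $X^{ss}_\cL$ of $X = \Gr_s(M \otimes_k N)$ under the $G = \GL(M)\times\GL(N)$-action with line bundle $\cL$ as in \S\ref{subsec:GrassLT}. Composing with the quotient map gives an $F$-point of $\fX^{ss}_\cL = [X^{ss}_\cL/G]$, which we wish to spread out to an $\cO'$-point for some finite separable extension $F'/F$.

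The first substantive step is to produce an $\cO$-point of the coarse space $X^{ss}_\cL /\!/ G$. By Corollary \ref{cor:LT_generic}, $\phi : \fX^{ss}_\cL \to X^{ss}_\cL /\!/ G$ is an adequate moduli space morphism to a projective (hence proper) $k$-scheme, so the image of $[\cS]$ under $\phi$ extends uniquely from $\Spec F$ to $\Spec \cO$ by the valuative criterion for proper morphisms. This yields a commutative square of exactly the shape needed to apply Proposition \ref{prop:ams_vc}. That proposition produces a finite separable extension $F'/F$, with valuation ring $\cO'$ and uniformizer $u'$, and a morphism $h : \Spec \cO' \to \fX^{ss}_\cL$ whose generic fiber agrees with the base change $[\cS_{F'}]$.

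The second step is to translate this stack-theoretic lift into lattices $\fM, \fN$ as in the statement. An $\cO'$-point of $\fX = [X/G]$ is the datum of a $G$-torsor on $\Spec \cO'$ together with a $G$-equivariant map to $X$. Since $\cO'$ is a DVR and $G = \GL_m \times \GL_n$, every such torsor is trivial; after choosing a trivialization, $h$ corresponds to an $\cO'$-submodule $\cS'' \subset (M \otimes_k N)_{\cO'}$ that is locally a direct summand of rank $s$. Compatibility with the generic fiber $[\cS_{F'}]$ means there exists $(g_1, g_2) \in G(F')$ with $\cS''[1/u'] = (g_1, g_2) \cdot \cS_{F'}$. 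Setting $\fM = g_1^{-1}(M_{\cO'})$ and $\fN = g_2^{-1}(N_{\cO'})$ gives lattices in $M_{F'}$ and $N_{F'}$, and a direct computation yields
\[
\cS_{F'} \cap (\fM \otimes_{\cO'} \fN) = (g_1, g_2)^{-1} \cdot \cS''.
\]
Under the canonical identifications $\fM/u'\fM \cong M_{k'}$, $\fN/u'\fN \cong N_{k'}$, the reduction modulo $u'$ of the left-hand side is the translate of $\cS''\bmod u'$ by $(\bar g_1,\bar g_2) \in G(k')$. Since $h$ factors through $\fX^{ss}_\cL$, the reduction $\cS''\bmod u'$ is a semi-stable subspace of $(M \otimes_k N)_{k'}$ by Proposition \ref{prop:KN=K}(1); semi-stability is $G(k')$-invariant, so the reduction of $\cS_{F'} \cap \fM \otimes_{\cO'} \fN$ is also semi-stable, as required.

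The main obstacle is keeping the field extension finite and separable: the bare valuative criterion for universally closed morphisms of stacks (Proposition \ref{prop:uc_vc}) only produces a finitely generated extension, which is useless here. The crucial input that circumvents this is that $\phi$ is weakly proper (a consequence of being an adequate moduli space morphism, via Proposition \ref{prop:ams_vc}), which reduces the required extension to a finite separable one. The remaining work is bookkeeping: unwinding what an $\cO'$-point of $[X/G]$ means and tracking trivializations to convert the abstract lift $h$ into the concrete lattices $(\fM, \fN)$.
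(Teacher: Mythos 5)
Your proposal follows the same route as the paper's proof: use the adequate moduli space morphism $\phi : \fX^{ss}_\cL \to X^{ss}_\cL /\!/ G$ together with the valuative criterion for proper schemes and the weak-properness refinement (Propositions \ref{prop:wp_vc} and \ref{prop:ams_vc}) to lift the $F$-point to an $\cO'$-point of the stack over a finite separable extension, trivialize the resulting $G$-torsor using Hilbert 90 for $\GL_m \times \GL_n$, and translate the standard lattices by the resulting element of $G(F')$. This matches the paper's argument step for step, so the reasoning is correct.
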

\begin{proof}
By Corollary \ref{cor:LT_generic}, we have an adequate moduli space morphism $\phi: \fX^{ss}_\cL \ra X^{ss}_\cL /\!/ G$ whose base is a projective $k$-scheme. The subspace $\cS \subset M_F \otimes_F N_F$ corresponds to a $F$-point $x$ of $X^{ss}_\cL$. Taking it as a $F$-point of $\fX^{ss}_\cL$ for a moment, it maps to a $F$-point $w = \phi(x)$ of $X^{ss}_\cL /\!/ G$. Because $X^{ss}_\cL /\!/ G$ is a projective $k$-scheme, we may apply the valuative criteria for proper morphisms of schemes and find a unique $\cO$-point $\tilde w$ of $X^{ss}_\cL /\!/ G$ whose generic point is $\phi(x)$. 

We may apply the valuative criterion of weakly proper morphisms recorded in Proposition \ref{prop:wp_vc} to the data of $\phi$, $x$, and $\tilde w$ because adequate moduli space morphisms such as $\phi$ are weakly proper (Proposition \ref{prop:ams_vc}). We get the field $F'$ and DVR $\cO'$ as in the statement, and a $\cO'$-point $\tilde x$ of $\fX^{ss}_\cL$ such that $\tilde x \otimes_{\cO'} F' \simeq x \otimes_F F'$. 

By definition of a quotient stack, the data of $\tilde x$ amounts to a $G$-torsor $\cG$ over $\cO'$ and a $G$-equivariant morphism $\cG \ra X^{ss}_\cL$. Because $G \simeq \GL_m \times \GL_n$, any $G$-torsor over $\cO'$ is trivial by Hilbert theorem 90 (or, $\GL_n$ is ``special in the sense of Serre'' \cite{serre1958}). As a trivial $\cO'$-torsor has a $\cO'$-point, we obtain a point $\hat x \in X^{ss}_\cL(\cO')$ that projects to $\tilde x \in \fX^{ss}_\cL(\cO')$. Also, $\hat x \otimes_{\cO'} F'$ is in the $G(F')$-orbit of $x \otimes_F F'$. Let $g = (g_M, g_N) \in G(F') = \GL(M_{F'}) \times \GL(N_{F'})$ such that $(g_M, g_N) \cdot (x \otimes_F F') = \hat x \otimes_{\cO'} F'$. 

Finally, we translate these statements back into the language of vector spaces and lattices. Let $\fM_0 \subset M_{F'}$ be the standard lattice $M \otimes_k \cO'$, and define $\fN_0$ similarly. Then set $\fM = g_M^{-1} \fM_0$ and set $\fN = g_N^{-1} \fN_0$. Thus $g\cdot \cS_{F'}$ in $M_{F'} \otimes_{F'} N_{F'}$ satisfies the statement of the Corollary relative to the lattices $(\fM_0, \fN_0)$. Translating this relationship by $g^{-1}$, we have the desired result. 
\end{proof}

Here is the second Langton type result that we will require. 
\begin{thm} 
\label{thm:LT_kempf}
Let $\alpha$ be a cocharacter of $G$, and let $M = \bigoplus_i \gr^i_\alpha M$, $N = \bigoplus_j \gr^j_\alpha N$ be the resulting direct sum decompositions of $M$ and $N$. For each $\ell$, choose $\cS_\ell \subset U_\ell \otimes_k F$ and write $\cS = \bigoplus_\ell \cS_\ell \subset M_F \otimes_F N_F$. Assume that $\cS$ is an unstable subspace of $M_F \otimes_F N_F$ with Kempf cocharacter $\alpha$. Then there exists a finite separable field extension $F'/F$ and $\cO'$-sublattices 
\[
\gr^i \fM \subset \gr^i_\alpha M \otimes_k F', \quad 
\gr^j \fN \subset \gr^j_\alpha N \otimes_k F'
\]
such that the reduction $\overline{ \cS}$ (modulo $u'$) of $\cS_{F'} \cap (\oplus \gr^i \fM) \otimes_{\cO'} (\oplus \gr^j \fN)$ is unstable in $(M \otimes_k N)_{k'}$ with Kempf cocharacter $\alpha$. Moreover, the Kempf filtration on $(M_{k'}, N_{k'})$ relative to $\overline{ \cS}$ (resp.\ the Kempf filtration on $(M_{F'}, N_{F'})$ relative to $\cS_{F'}$) is realized by specializing (resp.\ generalizing) the filtrations
\[
\fM^i = \bigoplus_{a \leq i} \gr^a \fM, \quad \fN^j = \bigoplus_{b \leq j} \gr^b \fN
\]
of $(\fM,\fN)$ by strict $\cO'$-sublattices. 
\end{thm}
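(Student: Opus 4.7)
The plan is to mimic the argument of Corollary~\ref{cor:LT_ss}, replacing the Grassmannian data $(X,G,\cL)$ by the Kempf-Ness data $(Z_{(s_\ell)}, L_\alpha, \cL_{(s_\ell),\alpha})$ supplied by Proposition~\ref{prop:ss=kempf}. The point $x \in X(F)$ corresponding to $\cS$ is $\alpha$-fixed because $\cS = \bigoplus_\ell \cS_\ell$, so $x \in Z_{(s_\ell)}(F)$ with $s_\ell = \dim_F \cS_\ell$. The hypothesis that $\cS$ has Kempf cocharacter $\alpha$ forces $x \in Z^\circ_{(s_\ell)}(F)$, equivalently (by Proposition~\ref{prop:ss=kempf}) that $x$ is semi-stable for the $L_\alpha$-action on $Z_{(s_\ell)}$ relative to the ample $L_\alpha$-linearized bundle $\cL_{(s_\ell),\alpha}$.

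Since $L_\alpha = \prod_i \GL(\gr^i_\alpha M) \times \prod_j \GL(\gr^j_\alpha N)$ is reductive and $Z_{(s_\ell)}$ is projective over $k$, Corollary~\ref{cor:LT_generic} produces an adequate moduli space morphism $\phi : [Z^\circ_{(s_\ell)}/L_\alpha] \ra Z^\circ_{(s_\ell)}/\!/L_\alpha$ whose target is a projective $k$-scheme. By Proposition~\ref{prop:ams_vc}, $\phi$ is weakly proper, so Proposition~\ref{prop:wp_vc} yields a finite separable extension $F'/F$ with valuation ring $\cO'$ and an $\cO'$-point $\tilde x \in [Z^\circ_{(s_\ell)}/L_\alpha](\cO')$ whose generic fiber is $x_{F'}$. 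Because $L_\alpha$ is a product of general linear groups, every $L_\alpha$-torsor over $\cO'$ is trivial (Hilbert~90), so $\tilde x$ lifts to a point $\hat x \in Z^\circ_{(s_\ell)}(\cO')$, and there exists $g = ((g_i),(h_j)) \in L_\alpha(F')$ with $g \cdot x_{F'} = \hat x_{F'}$.

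Define the lattices $\gr^i \fM := g_i^{-1}(\gr^i_\alpha M \otimes_k \cO') \subset \gr^i_\alpha M \otimes_k F'$ and $\gr^j \fN := h_j^{-1}(\gr^j_\alpha N \otimes_k \cO') \subset \gr^j_\alpha N \otimes_k F'$, and set $\fM := \bigoplus_i \gr^i \fM$ and $\fN := \bigoplus_j \gr^j \fN$. Then the reduction of $\cS_{F'} \cap \fM \otimes_{\cO'} \fN$ modulo $u'$ corresponds, under the reduction-isomorphism induced by $g$, to $\hat x_{k'} \in Z^\circ_{(s_\ell)}(k')$. Applying Proposition~\ref{prop:ss=kempf} to the special fiber, this $\overline{\cS}$ is unstable with Kempf cocharacter $\alpha$. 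The filtrations $\fM^i := \bigoplus_{a \leq i} \gr^a \fM$ and $\fN^j := \bigoplus_{b \leq j} \gr^b \fN$ are strict $\cO'$-sublattices of $\fM, \fN$; because $g \in L_\alpha(F')$ respects the $\alpha$-grading, their generic fibers coincide with $M^i_\alpha \otimes_k F'$ and $N^j_\alpha \otimes_k F'$, which are by hypothesis the Kempf filtration of $\cS_{F'}$; and their reductions modulo $u'$ give the filtration on $(M \otimes_k N)_{k'}$ induced by $\alpha$, which is the Kempf filtration of $\overline{\cS}$ since $\overline{\cS}$ has Kempf cocharacter $\alpha$.

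The main obstacle is ensuring that the special fiber of $\hat x$ lands in the specific open stratum $Z^\circ_{(s_\ell)}$ rather than in a deeper Kempf-Ness stratum, since the Kempf cocharacter is only determined up to conjugation by the stratum labelling. This is precisely why we descend the GIT setup from $(X, G, \cL)$ to the smaller reductive Levi $L_\alpha$ acting on the component $Z_{(s_\ell)}$ of the $\alpha$-fixed locus with the refined line bundle of Proposition~\ref{prop:ss=kempf_general}: on this smaller stack the destabilizing behavior has been \emph{rigidified}, and the valuative criterion for its adequate moduli space preserves exactly the data we need. Once this refinement is set up, the argument parallels Corollary~\ref{cor:LT_ss} essentially verbatim.
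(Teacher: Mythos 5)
Your proof is correct and follows essentially the same route as the paper's: replace the GIT data $(X,G,\cL)$ from Corollary~\ref{cor:LT_ss} by the rigidified data $(Z_{(s_\ell)},L_\alpha,\cL_{(s_\ell),\alpha})$ of Proposition~\ref{prop:ss=kempf}, apply the adequate-moduli-space valuative criterion, use triviality of $L_\alpha$-torsors to produce an $\cO'$-point, and translate back to lattices. The paper's proof is even terser on the `moreover' clause than yours, so your explicit verification that the generic and special fibers of $\fM^i,\fN^j$ realize the respective Kempf filtrations is a welcome addition; no gaps.
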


\begin{proof}
By Proposition \ref{prop:ss=kempf}, the locus $Z^{ss}_{\cL'} = Z_{(s_\ell)}^\circ$ of $Z_{(s_\ell)}$ on which $\alpha$ induces the Kempf filtration is the semi-stable locus associated to an ample $L_\alpha$-linearized line bundle which we will refer to by $\cL'$ here. Exactly as in the proof of Corollary \ref{cor:LT_ss}, it follows that there exists an adequate moduli space morphism from $\fZ^{ss}_{\cL'} := [Z^{ss}_{\cL'}/L_\alpha]$ to a projective $k$-scheme. Then Propositions \ref{prop:wp_vc} and \ref{prop:ams_vc} may be applied to the $F$-point of $\fZ$ induced by the subspace $\cS$ of the statement. Applying the valuative criteria, we have $F'$, $\cO'$, and a $\cO'$-point $\tilde z$ of $\fZ^{ss}_{\cL'}$ such that $\tilde z \otimes_{\cO'} F'$ is isomorphic to $z \otimes_F F'$. Again, as in the proof of \ref{cor:LT_ss}, because $L_\alpha$ is a direct product of $\GL_d$ for various $d$, there is a point $\hat z \in Z^{ss}_{\cL'}$ realizing $\tilde z$. Also, there exists $l = (l_M, l_N) \in L_\alpha(F')$ such that $l \cdot (z \otimes_F F') = \hat z \otimes_{\cO'} F'$. 

Finally, translating this statement back to graded vector spaces and writing $\gr^i \fM_0$ for $(\gr^i_\alpha M) \otimes_k \cO'$ and likewise defining $\gr^j \fN_0$, we find that $(l_M^{-1}\fM_0, l_N^{-1}\fM_0)$ are the desired $\cO'$-lattices in $M_{F'} \otimes_{F'} N_{F'}$. 
\end{proof}

\subsection{Conclusion of the proof of Theorem \ref{thm:tensor}}
\label{subsec:tensor_conclusion}

We now prove the tensor product theorem, using the input from GIT summed up in \S\ref{subsec:two}. 

 \begin{proof}[Proof of Theorem \ref{thm:tensor}]  
 
We recall the notation $F = k\lp u\rp$, $\cO = k \lb u \rb$. The $p$-torsion Kisin modules $\fM, \fN$ are finitely generated free $\cO$-modules with a semi-linear endomorphism of finite height, and $\cM, \cN$ are the \'etale $\phz$-modules arising as $\cM = \fM \otimes_\cO F$, $\cN = \fN \otimes_\cO F$. 

As discussed in \S\ref{subsec:ten_sum}, it suffices to check that 
\begin{equation}
\label{eq:ss_goal}
\mu(\cS \cap \fM \otimes_\cO \fN) \geq \mu(\fM \otimes_\cO \fN)
\end{equation}
for all sub-\'etale $\phz$-modules $\cS \subset \cM \otimes_F \cN$. 

To accomplish this, we use Proposition \ref{prop:alt_deg}, which, for our present purposes, may be read as follows. Given a Kisin module $\fP$, an $\cO$-lattice $\fP_0 \subset \cP := \fP \otimes_\cO F$ (that is not required to be $\phz$-stable), and a sub-\'etale $\phz$-module $\cS \subset \cP$, there are two filtrations $\Fil_1, \Fil_2$ on the $k$-vector space $\overline{\fP}_0 := \fP \otimes_\cO k$ such that 
\begin{equation}
\label{eq:fil_sum}
\deg(\fP \cap \cS) \geq \deg_{\Fil}(\overline{\cS \cap \fP_0}) := a \deg_{\Fil_1}(\overline{\cS \cap \fP_0}) + b \deg_{\Fil_2}(\overline{\cS \cap \fP_0}),
\end{equation}
where $a,b \in \bQ_{> 0}$.  Equality is guaranteed when $\cS = \cP$, or, equivalently, when $\cS \cap \fP = \fP$. Below we will write ``$\deg_{\Fil}(\fP \cap \cS)$'' for the right hand side, expressing the inequality as 
\begin{equation}
\label{eq:alt_input}
\deg(\fP \cap \cS) \geq \deg_{\Fil}(\fP \cap \cS), \text{ or, equivalently, } \mu(\fP \cap \cS) \geq \mu_{\Fil}(\fP \cap \cS)
\end{equation}
with \emph{equality} when $\fP = \fP\cap \cS$. 

\textbf{Case 1: $\cS$ is semi-stable.} First we establish \eqref{eq:ss_goal} in the case that $\cS \subset \cM \otimes_F \cN$ is semi-stable as a vector subspace. In this case, Corollary \ref{cor:LT_ss} guarantees that there exist the following data. There is a finite separable field extension $F'/F$ with integral closure $\cO'$ of $\cO$, and $\cO'$-lattices 
\[
\fM_0 \subset \cM_{F'} , \quad \fN_0 \subset \cN_{F'}
\]
such that 
\[
\overline{\cS}_0 := \overline{\cS_{F'} \cap \fM_0 \otimes_{\cO'} \fN_0} \subset \overline{\fM}_0 \otimes_{k'} \overline{\fN}_0
\]
is a semi-stable subspace. As discussed in \S\ref{subsec:base_change}, the Kisin module structure of $\fM$, $\fN$ and the \'etale $\phz$-module structure of $\cM$, $\cN$, $\cS$ naturally extend to $\fM_{\cO'}$, $\fN_{\cO'}$, $\cM_{F'}$, $\cN_{F'}$, $\cS_{F'}$. Moreover, Proposition \ref{prop:BCdeg} tells us that $\deg(\fP) = \deg(\fP_{\cO'})$ for any Kisin module $\fP$ over $\cO$. Therefore, we may establish \eqref{eq:ss_goal} by writing $\fQ$ for $\cS_{F'} \cap \fM_{\cO'} \otimes_{\cO'} \fN_{\cO'}$ and proving 
\begin{equation}
\label{eq:ss_goal'}
\mu(\fQ) \geq \mu(\fM_{\cO'} \otimes_{\cO'} \fN_{\cO'}). 
\end{equation}

By definition of semi-stable subspace, for any filtration $\Fil$ on $(\overline{\fM}_0, \overline{\fN}_0)$, we have $\deg_{\Fil}(\overline{\cS}_0) \geq \deg_{\Fil}(\overline{\fM}_0 \otimes_{k'} \overline{\fN}_0)$. This contributes the middle inequality in 
\[
\mu(\fQ) \geq \mu_{\Fil}(\fQ) \geq \mu_{\Fil}(\fM_{\cO'} \otimes_{\cO'} \fN_{\cO'}) = \mu(\fM_{\cO'} \otimes_{\cO'} \fN_{\cO'}), 
\]
while the outer two (in)equalities come from \eqref{eq:alt_input}. This proves \eqref{eq:ss_goal'}, as desired. 

\textbf{Case 2: $\cS$ is unstable.} It remains to prove \eqref{eq:ss_goal} when $\cS \subset \cM \otimes_F \cN$ is unstable as a vector subspace.  In this case, there exists a unique Kempf filtration of $(\cM, \cN)$ (Proposition \ref{prop:kempf}), which we label by $\cM^i_\alpha$ and $\cN^j_\alpha$. The Kempf filtration minimizes the (negative) normalized difference 
\[
\frac{\mu_{\alpha} (\cS)  - \mu_{\alpha} (\cM \otimes_F \cN)}{\abs{\alpha}}
\]
over all filtrations of $\alpha$ of $(\cM, \cN)$.  Since the Kempf's filtration is unique up to scaling and $\phi_{\cM \otimes\cN} : \phz^*(\cM \otimes_F \cN) \ra \cM \otimes_F \cN$ is a linear isomorphism, we deduce that the $\cM^i_\alpha$ and $\cN^i_{\alpha}$ are $\phi$-stable whenever $\cS$ is.  That is, they are sub-\'etale $\phz$-modules of $\cM$ and $\cN$.

Consider now the strict sub-Kisin modules 
\[
\fM^i := \fM \cap \cM^i_\alpha \subset \fM, \qquad \fN^j := \fN \cap \cN^j_\alpha \subset \fN.
\]
Because $\fM$ and $\fN$ are assumed to be semi-stable Kisin modules, we know that 
\begin{equation}
\label{eq:ss_cond}
\mu(\fM^i) \geq \mu(\fM) \quad \text{ and } \quad \mu(\fN^j) \geq \mu(\fN). 
\end{equation}

We want to arrange for each of the factors in \eqref{eq:ss_cond} to be precisely computed by $\mu_{\Fil}$ for an appropriate choice of $\fM_0 \subset \cM$ and $\fN_0 \subset \cN$. We will do this after semi-simplifying as follows. 

The Kempf co-character $\alpha$ acting on $(\cM, \cN)$ induces a direct sum decomposition with summands
\begin{equation}
\label{eq:gr_summands}
\gr^i_\alpha \cM \subset \cM, \quad \gr^j_\alpha \cN \subset \cN. 
\end{equation}
These $F$-linear summands are not necessarily \'etale $\phz$-module summands. We put a new \'etale $\phz$-module structure on the vector spaces $\cM$ and $\cN$, which we write as $\cM^\mathrm{new}$, $\cN^\mathrm{new}$ and call the ``Kempf semi-simplification'' of $\cM$, $\cN$, respectively. Namely, each of $\cM,\cN$ is canonically a vector space direct sum with summands as in \eqref{eq:gr_summands}. Here each graded piece has a \'etale $\phz$-module structure because the Kempf filtration consists of sub-\'etale $\phz$-modules, resulting in an \'etale $\phz$-module structure on the sums. This is exactly the analogue for \'etale $\phz$-modules of the operation \eqref{eq:lim_map} on vector spaces. 

Likewise, we replace $\fM$ and $\fN$ with their Kempf semi-simplifications 
\[
\fM^\mathrm{new} := \bigoplus_i (\fM \cap \cM^i_\alpha)/(\fM \cap \cM^{i-1}_\alpha), \quad
\fN^\mathrm{new} := \bigoplus_j (\fN \cap \cN^j_\alpha)/(\fN \cap \cN^{j-1}_\alpha)
\]
and replace $\cS$ with its Kempf semi-simplification 
\[
\cS^\mathrm{new} := \bigoplus_\ell (\cS \cap (\cM \otimes_F \cN)^\ell_\alpha)/(\cS \cap (\cM \otimes_F \cN)^{\ell-1}_\alpha). 
\]

Degree and slope of the tensor factors remains constant under semi-simplification: 
\[
\deg(\fM) = \deg(\fM^\mathrm{new}), \quad \deg(\fN) = \deg(\fN^\mathrm{new}). 
\]
By Lemma \ref{lem:lattice_ss}, we have 
\[
\deg(\cS \cap (\fM \otimes_\cO \fN)) \geq \deg(\cS^\mathrm{new} \cap (\fM^\mathrm{new} \otimes_\cO \fN^\mathrm{new})),
\]
where the left hand intersection is taken in $\cM \otimes_F \cN$ and the right hand intersection is taken in $\cM^\mathrm{new} \otimes_F \cN^\mathrm{new}$. Consequently, we may proceed toward our goal of establishing \eqref{eq:ss_goal} after replacing the former objects by their ``new'' Kempf semi-simplified version. Going forward, we drop ``new'' from our notation and use ``old'' to refer to the objects before Kempf semi-simplification. 

Notice that after Kempf semi-simplification, $\cS \subset \cM \otimes_F \cN$ is a subspace of precisely the form required to apply Theorem \ref{thm:LT_kempf}. Moreover, Lemma \ref{lem:kempf_ss} ensures that we may use the $\alpha$ as the Kempf cocharacter for $\cS$, just as we did before Kempf semi-simplifying. 

Applying Theorem \ref{thm:LT_kempf}, we may fix $\cO'$-lattices 
\[
\gr^i \fM_0 \subset \gr^i_\alpha \cM_{F'}, \quad \gr^j \fN_0 \subset \gr^j_\alpha \cN_{F'}
\]
for all $i,j$, such that when we write $\fM_0 := \oplus_i \gr^i \fM_0$ and $\fN_0 := \oplus_j \gr^j \fN_0$, the Kempf filtration on $(\overline{\fM}_0, \overline{\fN}_0)$ arising from the unstable subspace 
\[
\overline{\cS}_0 := \overline{\cS_{F'} \cap \fM_0 \otimes_{\cO'} \fN_0} \subset \overline{\fM_0} \otimes_{k'} \overline{\fN_0}
\]
consists of subspaces
\[
\overline{\fM}^i_0 := \overline{\cM^i_{\alpha, F'} \cap \fM_0} \subset \overline{\fM}_0, \quad 
\overline{\fN}^j_0 := \overline{\cN^j_{\alpha, F'} \cap \fN_0} \subset \overline{\fN}_0. 
\]

We will now apply the comparison of $\deg$ and $\deg_{\Fil}$ given in \eqref{eq:alt_input} as a consequence of Proposition \ref{prop:alt_deg}. Because $\fM_0$ and $\fN_0$ arise as direct sums with summands $\gr^i \fM_0$, $\gr^j \fN_0$ matched to the summands $\gr^i \fM_{\cO'}$ and $\gr^j \fN_{\cO'}$, and because each element $\fM^i$, $\fN^j$ of the filtration on $(\fM, \fN)$ consists of direct sums of certain of these summands, Proposition \ref{prop:alt_deg} guarantees that we have equality in \eqref{eq:alt_input}. Consequently, we may combine these equalities with the fact that $\fM^\mathrm{old}$ is semi-stable (as recorded in \eqref{eq:ss_cond}) and that $\deg(\fM^i) = \deg((\fM^i)^\mathrm{old})$ to conclude 
\begin{equation}
\label{eq:ss_factors}
\mu(\fM^i_{\cO'}) = \mu_{\Fil}(\fM^i_{\cO'}) \geq \mu_{\Fil}(\fM_{\cO'}) = \mu(\fM_{\cO'}),
\end{equation}
and the analogous inequality for $\fN_{\cO'}, \fN^j_{\cO'}$ in place of $\fM_{\cO'}, \fM^i_{\cO'}$. 

Now we apply \cite[Prop.~2]{totaro1996}, which gives some $c > 0$ such that for any filtration $\beta$ on $(\overline{\fM_0}, \overline{\fN_0})$, we have 
\begin{align*}
&\mu_\beta(\overline{\cS}_0) - \mu_\beta(\overline{\fM}_0 \otimes_{k'} \overline{\fN}_0) \geq \\
&c \cdot \left(\sum_i (\mu_\beta(\overline{\fM}^i_0) - \mu_\beta(\overline{\fM}_0)) \dim_{k'} \overline{\fM}^i_0 + \sum_j (\mu_\beta(\overline{\fN}^j_0) - \mu_\beta(\overline{\fN}_0)) \dim_{k'} \overline{\fN}^j_0\right).
\end{align*}
Taking the sum of these inequalities over the two factors adding up to $\mu_{\Fil}$ in \eqref{eq:fil_sum}, the very same inequality is true when we replace $\mu_\beta$ with $\mu_{\Fil}$. Because each factor $\mu_\beta(\overline{\fM}^i_0) - \mu_\beta(\overline{\fM}_0)$ is equal to $\mu(\fM^i_{\cO'}) - \mu(\fM_{\cO'})$ by applying \eqref{eq:ss_factors}, the right hand side of the inequality is positive. Consequently, the left hand side is positive. Writing $\fQ$ for $\cS_{F'} \cap \fM_{\cO'} \otimes_{\cO'} \fN_{\cO'}$, we can then apply \eqref{eq:alt_input} to conclude 
\begin{align*}
\mu(\fQ) \geq \mu_{\Fil}(\fQ) \buildrel{\mathrm{def}}\over{=} \mu_{\Fil}(\overline{\cS}_0) &\geq \mu_{\Fil}(\overline{\fM}_0 \otimes_{k'} \overline{\fN}_0) \\ 
& \buildrel{\mathrm{def}}\over{=} \mu_{\Fil}(\fM_{\cO'} \otimes_{\cO'} \fN_{\cO'}) = \mu(\fM_{\cO'} \otimes_{\cO'} \fN_{\cO'}),
\end{align*}
establishing \eqref{eq:ss_goal'} to complete the proof. 
\end{proof}  

We thank Christophe Cornut for pointing out the need for the following lemma in the foregoing proof. 
\begin{lem}
\label{lem:lattice_ss}
Let $\cP$ be an \'etale $\phz$-module with lattice $\fP \subset \cP$ and a separated increasing filtration
\[
0 = \cP^0 \subset \cP^1 \subset \dotsm \subset \cP^n = \cP
\]
by sub-\'etale $\phz$-modules. Let $\cS \subset \cP$ be an arbitrary sub-\'etale $\phz$-module. Define 
\[
\cS^\mathrm{new}:= \bigoplus_{i=1}^n (\cP^i \cap \cS)/(\cP^{i-1} \cap \cS), \quad \fP^\mathrm{new} := \bigoplus_{i=1}^n (\cP^i \cap \fP)/(\cP^{i-1} \cap \fP). 
\]
Then 
\[
\deg(\cS \cap \fP) \geq \deg(\cS^\mathrm{new} \cap \fP^\mathrm{new}). 
\]
\end{lem}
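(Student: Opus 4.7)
The plan is to descend the filtration $\cP^\bullet$ to a compatible filtration on $\cS\cap\fP$, identify each graded piece as a sub-Kisin module of the corresponding summand of $\cS^{\mathrm{new}}\cap\fP^{\mathrm{new}}$ inside $\gr^i\cP := \cP^i/\cP^{i-1}$, and then apply Proposition~\ref{prop:HN_main}(3) graded piece by graded piece.

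The first step is to set $\cS_i := \cP^i\cap\cS\cap\fP$ and verify that each inclusion $\cS_{i-1}\subset\cS_i$ is strict; indeed the quotient is $u$-torsion free because if $ux\in\cS_{i-1}$ with $x\in\cS_i$, then $ux\in\cP^{i-1}$ and $u$ acts invertibly on the $\cO_\cE$-module $\cP^{i-1}$, forcing $x\in\cP^{i-1}\cap\cS\cap\fP = \cS_{i-1}$. Additivity of degree (Proposition~\ref{prop:HN_main}(2)) then gives $\deg(\cS\cap\fP) = \sum_i \deg(\cS_i/\cS_{i-1})$.

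Next, for each $i$ the quotient $\cS_i/\cS_{i-1}$ injects naturally into $\gr^i\cP$, and its image lies in the sublattice $(\cS^{\mathrm{new}}\cap\fP^{\mathrm{new}})^i$ since any element of $\cS_i$ lies in both $\cP^i\cap\cS$ and $\cP^i\cap\fP$. The key claim is that this inclusion induces an isomorphism on generic fibers: both sides have generic fiber equal to the image of $\cP^i\cap\cS$ in $\gr^i\cP$. For the right-hand side this is immediate since $(\fP^{\mathrm{new}})^i$ is a lattice in $\gr^i\cP$; for $\cS_i/\cS_{i-1}$ one uses that, because $\fP$ is a lattice in $\cP$, the sublattice $\cP^i\cap\cS\cap\fP$ has $F$-span equal to $\cP^i\cap\cS$. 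Proposition~\ref{prop:HN_main}(3) applied to each such inclusion yields $\deg(\cS_i/\cS_{i-1})\geq\deg((\cS^{\mathrm{new}}\cap\fP^{\mathrm{new}})^i)$, and summing over $i$ gives the lemma.

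The main obstacle is the careful identification of generic fibers and the verification that all the natural arrows are genuine inclusions of sub-Kisin modules of $\gr^i\cP$; once that bookkeeping is done, the rest of the argument is a direct application of the HN axioms in Proposition~\ref{prop:HN_main}. Notably, no induction on the length $n$ of the filtration is needed, which keeps the proof uniform enough to apply to the Kempf semisimplification used in the proof of Theorem~\ref{thm:tensor}.
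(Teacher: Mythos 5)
Your proof is correct and follows essentially the same strategy as the paper's: filter $\cS\cap\fP$ by the $\cS_i = \cP^i\cap\cS\cap\fP$, compare each graded quotient $\cS_i/\cS_{i-1}$ to the corresponding summand of $\cS^{\mathrm{new}}\cap\fP^{\mathrm{new}}$ via the natural injection inside $\gr^i\cP$, observe that it is a generic isomorphism, and conclude with Proposition~\ref{prop:HN_main}(3). Your write-up simply fills in the checks (strictness of the filtration and the generic-fiber identifications) that the paper leaves to the reader.
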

\begin{proof}
The lattice $\cS \cap \fP$ can be filtered by intersection with the $\cP^i$, making for a composition series with Jordan-H\"older factors 
\[
\fQ^i := \frac{\cS \cap \fP \cap \cP^i}{\cS \cap \fP \cap \cP^{i+1}}.
\]
We have $\deg(\cS \cap \fP) = \sum_1^n \deg(\fQ^i)$. 

Likewise, the $i$th graded piece of the intersection $\cS^\mathrm{new} \cap \fP^\mathrm{new}$ is 
\[
\fX^i := \frac{\cP^i \cap \cS}{\cP^{i+1} \cap \cS} \cap \frac{\cP^i \cap \fP}{\cP^{i+1} \cap \fP}
\cong \frac{(\cP^i \cap \cS + \cP^{i+1}) \cap (\cP\cap \fP + \cP^{i+1})}{\cP^{i+1}}, 
\]
where the left hand intersection is taken in $\cP^i/\cP^{i+1}$. We have $\deg(\cS^\mathrm{new} \cap \fP^\mathrm{new}) = \sum_1^n \deg(\fX^i)$. 

Given $i$, observe that there is a natural morphism of Kisin modules $f^i: \fQ^i \ra \fX^i$. One may quickly check that $f^i$ is both injective and induces an isomorphism on the generic fiber, implying that $\coker(f^i)$ has finite cardinality. The desired inequality follows from Proposition \ref{prop:HN_main}(iii). 
\end{proof}
 
 \section{HN-filtrations in families} 
 \label{sec:fam}
 
In this section, we study the behavior of HN-polygons and HN-filtrations in families of Kisin modules.  The main result is that in families with constant generic fiber, the HN-polygon is upper semi-continuous (Theorem \ref{semi}).  We also study the HN-filtration over Artinian deformations in preparation for \S \ref{sec:flat}. 

\subsection{Kisin modules with coefficients}
\label{subsec:KM_coefs}

Let $A$ be a finitely generated $\bZ_p$-algebra where $p$ is nilpotent. To begin, we define Kisin modules with coefficients in $A$. Write $\fS_A := \fS \otimes_{\bZ_p} A$. 
\begin{defn}
A \emph{Kisin module over $A$}  (with bounded height) or \emph{$A$-Kisin module} is a finite $\fS_A$-module $\fM_A$ that is projective of constant rank, together with an $A$-linear Kisin module structure map $\phi_{\fM_A}:\phz^*(\fM_A)[1/E(u)] \risom \fM_A[1/E(u)]$ satisfying the height condition \eqref{eq:heightcond} for some $a,b \in \bZ$. Denote this additive exact rigid tensor category by $\Mod_{\fS_{A}}^{\phz}$. 
\end{defn}

Likewise, we will use \'etale $\phz$-modules with coefficients in $A$. We write $\cO_{\cE,A}$ for $\cO_\cE \otimes_{\bZ_p} A$. 
\begin{defn}
An \emph{\'etale $\phz$-module over $A$} or \emph{\'etale $\cO_{\cE,A}$-module} is a finite $\cO_{\cE,A}$-module $\cM_A$ that is projective of constant rank, together with an $A$-linear structure of an \'etale $\phz$-module $\phi_{\cM_A}:\phz^*(\fM_A) \risom \cM_A$. Denote this additive exact rigid tensor category by $\Mod_{\cO_{\cE,A}}^{\phz}$.
\end{defn}

Notice that even when $A$ is a finite field $A = \bF$, the constant rank condition on Kisin modules seems a priori stronger than merely insisting on projectivity. This is the case because $\fS_\bF \cong (\bF \otimes_{\F_p} k)\lb u\rb$, which is a product of rings of the form $\bF'\lb u \rb$ where $\bF'$ is a finite field. However, by the argument of e.g.\ \cite[Lem.\ 1.2.7(4)]{mffgsm}, it turns out that a projective $\fS_A$-module with a Kisin module structure compatible with $A$ is constant rank whenever $\Spec A$ is connected. So the constant rank condition is not serious. The same discussion applies to \'etale $\phz$-modules. 

Only briefly in \S\ref{sec:flat} will we consider Kisin modules with $A$-structure that are not projective as $\fS_A$-modules, and we will call these ``generalized $A$-Kisin modules.'' By the comments above, when $A = \F$ is a finite field, then every Kisin module with $\bF$-structure is automatically projective of constant rank over $\fS_\F$, as they must be $u$-torsion free. 

\subsection{$p$-torsion Kisin modules in families}
\label{subsec:p-fams}

Our HN-theory for Kisin modules from \S\ref{sec:HN} can be applied to $A$-Kisin modules when $A$ has finite cardinality, because in this case an $A$-Kisin module is an object of $\Mod^\phz_{\fS, \tor}$ when the $A$-structure is forgotten. We will begin in the case that $A= \F$ is a finite extension of $\F_p$.  
\begin{prop} 
\label{prop:HNfiltoverF} 
For any $\fM_{\F} \in \Mod_{\fS_{\F}}^{\phz}$, consider the HN-filtration 
\[
\fM_{\F, 0} = 0 \subset \fM_{\F, 1} \subset \fM_{\F,2} \subset \ldots \subset \fM_{\F, n} = \fM_{\F}
\]
with $\fM_{\F, i} \in \Mod_{\fS}^{\phz}$ (see Theorem \ref{thm:HNfilt}). Then, each $\fM_{\F, i} \in \Mod_{\fS_{\F}}^{\phz}$.
\end{prop}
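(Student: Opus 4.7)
The plan has three steps: first, use the endomorphism functoriality of the HN-filtration to show each step $\fM_{\F,i}$ is $\F$-stable; second, upgrade the resulting $\fS_\F$-module to a projective one; third, invoke the cited result from \cite{mffgsm} to get the constant rank condition.

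For the first step, note that for any $x \in \F$, multiplication by $x$ on $\fM_\F$ is an $\fS$-linear map because $\fS_\F$ is commutative, and it commutes with $\phi_{\fM_\F}$ because the structure map of an $\F$-Kisin module is $\F$-linear by definition. Thus each $x \in \F$ gives an element of $\End_{\phi_{\fM_\F}}(\fM_\F)$ when we forget the $\F$-structure. Applying Corollary \ref{cor:endo} to each such $x$, we conclude that each $\fM_{\F,i}$ is stable under the $\F$-action, and hence inherits the structure of an $\fS_\F$-submodule of $\fM_\F$. Since $\fM_{\F,i}$ is a strict subobject of $\fM_\F$ in $\Mod_{\fS,\tor}^\phz$, the induced Frobenius $\phi_{\fM_{\F,i}}$ is simply the restriction of $\phi_{\fM_\F}$, which is $\F$-linear since $\phi_{\fM_\F}$ is.

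For the second step, observe that $\fS_\F = \fS \otimes_{\bZ_p} \F \cong (k \otimes_{\F_p} \F)\lb u\rb$ is a finite product of discrete valuation rings of the form $\F'\lb u\rb$ for various finite extensions $\F'/\F_p$, since $p = 0$ in $\F$. As a strict $\fS$-submodule, $\fM_{\F,i}$ is $u$-torsion free, hence $u$-torsion free as an $\fS_\F$-module. A finitely generated $u$-torsion free module over this product of DVRs is free on each factor, and therefore projective over $\fS_\F$.

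For the third step, since $\fM_{\F,i}$ is projective over $\fS_\F$ and carries the Kisin module structure inherited from $\fM_\F$, and since $\Spec \F$ is connected, the argument of \cite[Lem.\ 1.2.7(4)]{mffgsm} (as referenced in \S\ref{subsec:KM_coefs}) implies $\fM_{\F,i}$ has constant rank. This verifies that $\fM_{\F,i} \in \Mod_{\fS_\F}^\phz$. There is no substantive obstacle here — once Corollary \ref{cor:endo} is in hand, the main content is entirely the $\F$-stability, and the remaining verifications are formal consequences of the structure of $\fS_\F$.
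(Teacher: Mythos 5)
Your proof is correct and takes essentially the same approach as the paper: it applies Corollary~\ref{cor:endo} to the endomorphisms given by multiplication by $a \in \F$ to obtain $\F$-stability, and then observes that the resulting $\fS_\F$-submodules are automatically projective of constant rank. The paper condenses your steps two and three into a single reference to ``the comments of \S\ref{subsec:KM_coefs},'' but the content is identical.
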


\begin{proof} 
By Corollary \ref{cor:endo}, for every $a \in \bF$, the endomorphism of $\fM_\bF$ induced by $a$ preserves the HN-filtration. Equivalently, by the comments of \S\ref{subsec:KM_coefs}, each $\fM_{\F, i}$ is an $\F$-Kisin module. 
\end{proof}

\begin{rem} 
A consequence of Proposition \ref{prop:HNfiltoverF} is that one can work in the a priori smaller category $\Mod_{\fS_{\F}}^{\phz}$ and one gets the same HN-filtration. 
\end{rem}

\begin{prop} 
\label{basechange} 
If $A$ is any finite $\F$-algebra and $\fM_{\F} \in \Mod_{\fS_{\F}}^{\phz}$, then the HN-filtration of $\fM_{\F} \otimes_{\fS_{F}} \fS_A$ is given by 
\[
0 \subset \fM_{\F, 1} \otimes_{\F} A \subset  \fM_{\F, 2} \otimes_{\F} A \subset \ldots \subset  \fM_{\F, n} \otimes_{\F} A = \fM_{\F} \otimes_{\F} A. 
\] 
In particular, the HN-filtration consists of objects of $\Mod^\phz_{\fS_A}$.
\end{prop}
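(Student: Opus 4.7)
The plan is to verify that the proposed filtration satisfies the three defining conditions of the HN-filtration in Theorem \ref{thm:HNfilt} applied to $\fM_\F \otimes_\F A$, viewed as an object of $\Mod^\phz_{\fS,\tor}$ by forgetting the $A$-structure. By the uniqueness clause of Theorem \ref{thm:HNfilt}, this will identify the HN-filtration of $\fM_\F \otimes_\F A$ with the proposed one.

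First, each inclusion $\fM_{\F,i} \otimes_\F A \hookrightarrow \fM_\F \otimes_\F A$ is strict: its cokernel is $(\fM_\F/\fM_{\F,i}) \otimes_\F A$, which is $u$-torsion free because $\fM_\F/\fM_{\F,i}$ is (by strictness of the HN-filtration of $\fM_\F$) and because $A$ is free as an $\F$-module, hence $- \otimes_\F A$ is exact. Second, the invariants behave well: as an $\fS$-module, $\fM_\F \otimes_\F A \cong \fM_\F^{\oplus \dim_\F A}$, so for any $\fN \in \Mod^\phz_{\fS_\F}$ the rank and degree of $\fN \otimes_\F A$ are both $(\dim_\F A) \cdot$ those of $\fN$; in particular $\mu(\fN \otimes_\F A) = \mu(\fN)$. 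Applied to the successive quotients, this shows that the slopes of the proposed filtration equal $\mu(\fM_{\F,i+1}/\fM_{\F,i})$ and are therefore strictly increasing.

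The substantive step is semi-stability of each graded piece $(\fM_{\F,i+1}/\fM_{\F,i}) \otimes_\F A$, and this is where I expect the only real content of the argument to lie. Choose an $\F$-linear composition series $0 = I_0 \subset I_1 \subset \cdots \subset I_m = A$ of $A$ as an $\F$-vector space, so that each $I_j/I_{j-1} \cong \F$. Writing $\fN := \fM_{\F,i+1}/\fM_{\F,i}$, which is semi-stable of slope $\mu_i$ by construction of the HN-filtration, the subgroups $\fN \otimes_\F I_j \subset \fN \otimes_\F A$ are $\phi$-stable (Frobenius acts only on the $\fN$-tensor factor) and strict (again because $-\otimes_\F A$ preserves $u$-torsion freeness). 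Their successive quotients are isomorphic to $\fN$, hence semi-stable of slope $\mu_i$. By Proposition \ref{prop:ext_ss}, $\fN \otimes_\F A$ is semi-stable of slope $\mu_i$.

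Finally, for the ``in particular'' assertion: $\fS_\F = (\F \otimes_{\F_p} k)\lb u \rb$ is a finite product of discrete valuation rings of the form $\F' \lb u \rb$, so any $u$-torsion free finite $\fS_\F$-module (in particular each $\fM_{\F,i}$) is projective of constant rank. Tensoring over $\F$ with $A$ preserves projectivity and constant rank, so each $\fM_{\F,i} \otimes_\F A$ lies in $\Mod^\phz_{\fS_A}$. The overall argument is essentially formal once Proposition \ref{prop:ext_ss} and the composition-series trick are in place; no appeal to the tensor product theorem is required, because we are extending scalars over $\F$ rather than tensoring two nontrivial Kisin modules together.
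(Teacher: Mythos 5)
Your argument is correct and follows essentially the same route as the paper: both proofs reduce to Proposition \ref{prop:ext_ss}. The paper observes directly that $\fN_\F \otimes_\F A$ is a (Kisin-module) direct sum of $\dim_\F A$ copies of $\fN_\F$ and applies closure of the semi-stable slope-$\mu$ category under direct sums; your composition-series argument uses closure under extensions instead, which is the same ingredient packaged slightly differently, so the difference is purely cosmetic. One small imprecision in your last paragraph: $u$-torsion freeness over the product of DVRs $\fS_\F$ gives projectivity but not, by itself, \emph{constant} rank — the ranks could differ across connected components of $\Spec \fS_\F$. It is the $\phz$-structure (which permutes those components transitively, as discussed in \S\ref{subsec:KM_coefs} and Proposition \ref{prop:HNfiltoverF}) that forces constant rank; alternatively, you could simply cite Proposition \ref{prop:HNfiltoverF} directly, which already asserts $\fM_{\F,i} \in \Mod^\phz_{\fS_\F}$ and hence constant rank by definition.
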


\begin{proof}  
If $\fN_{\F}$ is any semi-stable object with slope $\mu(\fN_{\F})$, then, as a $\fS$-module, $\fN_{\F} \otimes_{\F} A$ is a direct sum of finitely many copies of $\fN_{\F}$.  The direct sum of two semi-stable objects of slope $\mu$ is semi-stable with slope $\mu$ (Proposition \ref{prop:ext_ss}).  Thus, $\fN_{\F} \otimes_{\F} A$ is semi-stable with $\mu(\fN_{\F} \otimes_{\F} A) = \mu( \fN_{\F} )$. Thus, the base change $\fM_{\F, i} \otimes_{\F} A$ of the HN-filtration has the properties of Definition \ref{thm:HNfilt}, which uniquely characterize the HN-filtration. 

Because $\fM_{\F, i}$ are projective of constant rank over $\fS_\bF$, the same holds for $\fM_{\F,i} \otimes_\bF A$ over $\fS_A$. 
\end{proof}

Next, we renormalize the HN-polygon so that if $\F'$ is a finite extension of $\F$, then $\HN_{\F}(\fM_{\F}) = \HN_{\F'}(\fM_{\F} \otimes_{\F} \F')$.  

\begin{defn} 
Let $\fM_{\F} \in \Mod_{\fS_{\F}}^{\phz}$. We set
$$
\HN_{\F}(\fM_{\F})(x) := \frac{1}{[\F:\F_p]} \HN(\fM_{\F})([\F:\F_p]x)
$$
such that the endpoints of $\HN_{\F}(\fM_{\F})$ are $(0,0)$ and $ \frac{1}{[\F:\F_p]}(\rk(\fM_{\F}), \deg(\fM_{\F}))$.
\end{defn} 

Note that the slopes of $\HN_{\F}(\fM_{\F})$ are the same as those of $\HN(\fM_{\F})$, but the lengths of the segments are scaled by the factor $[\F:\F_p]^{-1}$.  

\begin{prop} 
\label{prop:scalarextension} 
For any $\fM_{\F} \in \Mod_{\fS_{\F}}^{\phz}$ and any finite field extension $\F'/\F$,
$$
\HN_{\F}(\fM_{\F}) = \HN_{\F'}(\fM_{\F} \otimes_{\F} \F').
$$
Moreover, the breakpoints of $\HN_{\F}(\fM_\F)$ lie in $(\Z, \frac{1}{g} \Z)$. 
\end{prop}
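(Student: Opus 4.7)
The proof has two parts, and both reduce to bookkeeping once the right comparisons are in place.

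For the scalar extension invariance, I would first invoke Proposition \ref{basechange} (with $A = \F'$) to conclude that the HN-filtration of $\fM_\F \otimes_\F \F'$ is the term-by-term base change of the HN-filtration of $\fM_\F$. Hence it suffices to show that, for any $\F$-Kisin module $\fN_\F$, the slope $\mu$ is preserved by $-\otimes_\F\F'$ and the $x$-lengths of the segments rescale compatibly with the normalizations. As an $\fS_\F$-module with Frobenius, $\fN_\F \otimes_\F \F'$ is isomorphic to a direct sum of $[\F':\F]$ copies of $\fN_\F$ (the Frobenius extends trivially along the second tensor factor since $\F'$ is pointwise fixed by $\phz$ on $\fS$). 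Additivity of $\rk$ and $\deg$ (Proposition \ref{prop:HN_main}(2)) then gives $\rk(\fN_\F \otimes_\F\F') = [\F':\F]\cdot\rk(\fN_\F)$ and similarly for $\deg$, so slopes are preserved and segment lengths scale by $[\F':\F]$. The normalization factor $[\F':\F_p] = [\F':\F]\cdot[\F:\F_p]$ versus $[\F:\F_p]$ exactly cancels this extra $[\F':\F]$, giving $\HN_\F(\fM_\F) = \HN_{\F'}(\fM_\F \otimes_\F \F')$.

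For the breakpoints, the key identity is $\rk(\fN_\F) = d \cdot [\F:\F_p]$ whenever $\fN_\F$ is projective of $\fS_\F$-rank $d$. This holds because $GF(\fN_\F)$ is a projective rank-$d$ module over $\cO_{\cE,\F} = \cO_\cE \otimes_{\Zp} \F$, which is a free $\cO_\cE$-module of rank $[\F:\F_p]$, so the $\cO_\cE$-length of $GF(\fN_\F)$ (computed on the $p$-power torsion quotients) is $d[\F:\F_p]$. Applying this to each step $\fM_{\F,i}$ of the HN-filtration (which lies in $\Mod^\phz_{\fS_\F}$ by Proposition \ref{prop:HNfiltoverF} and is thus projective of constant $\fS_\F$-rank $d_i$), the $x$-coordinate $\rk(\fM_{\F,i})/[\F:\F_p]$ of the $i$-th breakpoint equals $d_i \in \Z$.

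For the $y$-coordinate, I would Tate-twist to reduce to the effective case (by Remark \ref{rem:height_twist} this only shifts the $y$-coordinate by an integer multiple of the $x$-coordinate, which is an integer by the above, hence preserves membership in $\tfrac{1}{g}\Z$). In the effective case $g\deg(\fM_{\F,i}) = \ell_{\Zp}(\coker \phi_{\fM_{\F,i}})$ by Definition \ref{defn:rk_deg}, and $\coker \phi_{\fM_{\F,i}}$ carries an $\F$-action, so its $\Zp$-length is divisible by $[\F:\F_p]$. Hence $g\deg(\fM_{\F,i})/[\F:\F_p] \in \Z$, so the $y$-coordinate lies in $\tfrac{1}{g}\Z$ as required. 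The only subtle point is the rank identity $\rk(\fN_\F) = d[\F:\F_p]$, which amounts to carefully distinguishing the $\fS_\F$-rank from the $\cO_\cE$-length; once this is in hand the rest is a routine calculation.
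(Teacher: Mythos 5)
Your proof is correct, and the first half follows the same route as the paper: the paper's entire proof is the single sentence ``This follows directly from Proposition \ref{basechange},'' and you correctly cite that proposition and then supply the bookkeeping left implicit. Specifically, you observe that $\fN_\F \otimes_\F \F'$ is a direct sum of $[\F':\F]$ copies of $\fN_\F$ as a Kisin module (Frobenius acts trivially on the coefficient factor), so ranks and degrees of each HN-graded piece scale by $[\F':\F]$ while slopes are preserved, and the normalization factor $[\F':\F_p]/[\F:\F_p] = [\F':\F]$ exactly absorbs this. That is precisely what makes the paper's one-line proof valid.

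For the breakpoint statement, which the paper does not spell out at all, your argument is the natural one and is correct: for an $\fS_\F$-projective Kisin module $\fN_\F$ of $\fS_\F$-rank $d$, the identity $\rk(\fN_\F) = d\,[\F:\F_p]$ holds because $\cO_{\cE,\F} = \cO_\cE \otimes_{\Zp}\F$ is $\cO_\cE$-free of rank $[\F:\F_p]$, so the normalized $x$-coordinates of the breakpoints are the integers $d_i$. The $y$-coordinate claim follows since $\coker(\phi_{\fM_{\F,i}})$ inherits an $\F$-action, forcing $[\F:\F_p] \mid \ell_{\Zp}(\coker\phi_{\fM_{\F,i}}) = g\deg(\fM_{\F,i})$, and the Tate-twist reduction to the effective case only shifts the $y$-coordinate by an integer multiple of the $x$-coordinate (itself now known to be an integer). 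Nothing is missing.
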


\begin{proof} This follows directly from Proposition \ref{basechange}.  
\end{proof}

\subsection{Families with constant generic fiber}

 Fix an \'etale $\cO_{\cE, \F}$-module $\cM_{\F}$. We now address the behavior of HN-filtrations in families of Kisin modules with \emph{constant generic fiber} $\cM_\F$. This is a $\F$-algebra $A$ and an $A$-Kisin module $\fM_A$ such that $\fM_A \otimes_{\fS_A} \cO_{\cE,A} \simeq \cM_\F \otimes_\F A$. 
 
 Let $\Sigma_{\cM_{\F}}$ denote the $\overline{\F}$-stable subobjects of $\cM_{\F} \otimes_{\F} \overline{\bF}$.  Any element of $\Sigma_{\cM_{\F}}$ is defined over some finite extension of $\F$.  

Let $\F'$ be any finite extension of $\F$.  If $\fM_{\F'}$ is a Kisin module over $\F'$ with generic fiber $\cM_{\F} \otimes_\F \F'$, then we have a map
\[
P_{\fM_{\F'}}:\Sigma_{\cM_{\F}} \ra \mathbb{Q}^2
\]
as follows: any $N \in \Sigma_{\cM_{\F}}$ is defined over a some finite field $\F''$ which we can assume contains $\F'$.  We take $P_{\fM_{\F'}}(N)$ to be 
\[
\frac{1}{[\F'':\F_p]}\big(\rk(\fM_{\F'} \otimes_{\F'} \F'' \cap \cN_{\F''}), \deg(\fM_{\F'} \otimes_{\F'} \F'' \cap \cN_{\F''})\big),
\]
which is independent of our choice of $\F''$ by Proposition \ref{prop:scalarextension}. 

\begin{lem} 
\label{chull2} 
For any Kisin module $\fM_{\bF'} \in \Mod^\phz_{\fS_{\F'}}$, $\HN_{\F'} (\fM_{\F'})$ is the convex hull of the image of $P_{\fM_{\F'}}$.
\end{lem}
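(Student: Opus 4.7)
The plan is to reduce the claim to Proposition \ref{chull1} after suitable scalar extension, using the invariance of the normalized HN-polygon under finite extensions (Proposition \ref{prop:scalarextension}) together with the bijection between strict subobjects and sub-\'etale $\phz$-modules of the generic fiber (Proposition \ref{prop:HN_main}(1)).

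First, I would fix a finite extension $\F''/\F'$ and examine the HN-polygon of $\fM_{\F''} := \fM_{\F'} \otimes_{\F'} \F''$. By Proposition \ref{prop:scalarextension}, the normalized HN-polygons match:
\[
\HN_{\F'}(\fM_{\F'}) = \HN_{\F''}(\fM_{\F''}).
\]
By Proposition \ref{chull1}, the unnormalized HN-polygon of $\fM_{\F''}$ is the convex hull of the points $(\rk(\fP), \deg(\fP)) \in \R^2$ as $\fP$ ranges over strict sub-Kisin modules of $\fM_{\F''}$; after renormalization by $1/[\F'':\F_p]$, the same is true for $\HN_{\F''}(\fM_{\F''})$.

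Next, I would identify this set of strict subobjects with the relevant piece of $\Sigma_{\cM_\F}$. Proposition \ref{prop:HN_main}(1) applied to $\fM_{\F''}$ says that strict subobjects $\fP \subset \fM_{\F''}$ correspond bijectively to sub-\'etale $\phz$-modules $\cN_{\F''} \subset \cM_\F \otimes_\F \F''$ via $\fP = \cN_{\F''} \cap \fM_{\F''}$. Thus the normalized points $\tfrac{1}{[\F'':\F_p]}(\rk(\fP), \deg(\fP))$ are precisely $P_{\fM_{\F'}}(\cN_{\F''})$ as $\cN_{\F''}$ runs over $\F''$-rational elements of $\Sigma_{\cM_\F}$.

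Finally, since every $N \in \Sigma_{\cM_\F}$ is defined over some finite extension of $\F$, and any such extension can be enlarged to contain $\F'$, letting $\F''$ vary exhausts $\Sigma_{\cM_\F}$. The convex hull of $P_{\fM_{\F'}}(\Sigma_{\cM_\F})$ then contains the convex hull of each finite-level set, which already equals $\HN_{\F''}(\fM_{\F''}) = \HN_{\F'}(\fM_{\F'})$; the reverse containment holds because each point $P_{\fM_{\F'}}(N)$ arises from a strict subobject after passing to a suitable $\F''$, hence lies on or above the HN-polygon by Proposition \ref{chull1}. Since the argument is essentially a bookkeeping reduction to Propositions \ref{chull1}, \ref{prop:HN_main}, and \ref{prop:scalarextension}, I do not anticipate a serious obstacle; the only mild subtlety is ensuring the normalization factor $1/[\F'':\F_p]$ is compatible with the definition of $P_{\fM_{\F'}}$, which is precisely what Proposition \ref{prop:scalarextension} guarantees.
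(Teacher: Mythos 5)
Your proposal follows the same general route as the paper — reduce to Proposition~\ref{chull1} over a large enough extension and then vary the field — but there is a genuine gap in the identification step. You assert that, via Proposition~\ref{prop:HN_main}(1), the normalized points $\tfrac{1}{[\F'':\F_p]}(\rk(\fP),\deg(\fP))$ for $\fP$ a strict sub-Kisin module of $\fM_{\F''}$ are \emph{precisely} the values $P_{\fM_{\F'}}(\cN_{\F''})$ as $\cN_{\F''}$ runs over $\F''$-rational elements of $\Sigma_{\cM_\F}$. That is not so: Proposition~\ref{prop:HN_main}(1) provides a bijection between strict subobjects of $\fM_{\F''}$ \emph{over $\fS$} and sub-$\cO_\cE$-modules of the generic fiber, and most of these are not $\F''$-linear, hence do not correspond to points of $\Sigma_{\cM_\F}$ defined over $\F''$ at all. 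What you get from Proposition~\ref{chull1} is the convex hull over \emph{all} strict subobjects; the convex hull over the $\F''$-rational ones is a priori only contained in $\HN_{\F''}(\fM_{\F''})$.

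The missing ingredient is Proposition~\ref{prop:HNfiltoverF}, which says the HN-filtration of $\fM_{\F''}$ is itself by $\F''$-linear strict subobjects. This is what guarantees that the breakpoints of $\HN_{\F''}(\fM_{\F''})$ are attained by $\F''$-rational elements of $\Sigma_{\cM_\F}$, so that restricting to $\F''$-rational subobjects does not shrink the convex hull. With that citation added, the rest of your reduction (Proposition~\ref{prop:scalarextension} for the base-change compatibility of the normalized polygon, and exhausting $\Sigma_{\cM_\F}$ by enlarging $\F''$) is sound and matches the structure of the paper's argument. So the route is essentially the same; the error is in treating the bijection of Proposition~\ref{prop:HN_main}(1) as if it were already $\F''$-linear, rather than invoking Proposition~\ref{prop:HNfiltoverF} to descend the HN-filtration to the coefficient field.
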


\begin{proof}
Initially, restrict to subobjects of $\Sigma_{\cM_{\F}}$ which are defined over $\F'$, i.e., $\bF'$-subobjects of $\cM_{\F} \otimes_\F \F'$.  The values of $P_{\fM_{\F'}}$ on these subobjects lie above $\HN_{\F'}(\fM_{\F'})$ by Proposition \ref{chull1}.   Furthermore, by Proposition \ref{prop:HNfiltoverF}, the HN-filtration on $\fM_{\F'}$ is given by strict $\F'$-subobjects; these come from $\F'$-subobjects of $\cM_{\F} \otimes_\F \F'$ (Proposition \ref{prop:HN_main}(i)). From this, we deduce that the breakpoints of $\HN_{\F'}(\fM_{\F'})$ lie in the image of $P_{\fM_{\F'}}$. 

It remains to show that for $\cN \in \Sigma_{\cM_{\F}}$ possibly not defined over $\F'$, $P_{\fM_{\F'}}(\cN)$ lies above $\HN_{\F'}(\fM_{\F'})$.  This follows from the same argument as in Proposition \ref{chull1} using that the HN-filtration is stable under finite base change (Proposition \ref{prop:scalarextension}).
\end{proof}

\begin{thm} 
\label{semi} $($Semi-continuity$)$ 
Let $A$ be a finite type $\F$-algebra. Let $\fM_A \in \Mod^\phz_{\fS_A}$ be an $A$-Kisin module of rank $n$. Assume also that $\fM_A$ has constant generic fiber $\cM_\F$.  Let $P_0:[0, n] \ra \R$ be a convex polygon. Then the set of points $x \in (\Spec A)(\overline{\F})$ such that $\HN_{k(x)}((\fM_A)_x) \geq P_0$ is Zariski closed.
\end{thm}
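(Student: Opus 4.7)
The plan is to decompose the condition $\HN_{k(x)}((\fM_A)_x) \geq P_0$ into an arbitrary intersection of closed subsets, one for each element of $\Sigma_{\cM_\F}$. By Lemma \ref{chull2}, $\HN_{k(x)}((\fM_A)_x)$ is the convex hull of the image of $P_{(\fM_A)_x} : \Sigma_{\cM_\F} \to \Q^2$. Since $P_0$ is convex, the region above its graph is convex, so any convex combination of points lying above $P_0$ again lies above $P_0$. Hence $\HN_{k(x)}((\fM_A)_x) \geq P_0$ if and only if $P_{(\fM_A)_x}(\cN) \geq P_0$ for every $\cN \in \Sigma_{\cM_\F}$. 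Since $\rk((\fM_A)_x \cap \cN) = \rk(\cN)$ is independent of $x$, this condition for a fixed $\cN$ reduces to $\deg((\fM_A)_x \cap \cN) \geq c_\cN$ for a constant depending only on $\cN$ and $P_0$. It therefore suffices to prove, for each fixed $\cN$, that $x \mapsto \deg((\fM_A)_x \cap \cN)$ is upper semi-continuous; the desired locus is then the (automatically closed) intersection of the closed sets $\{\deg((\fM_A)_x \cap \cN) \geq c_\cN\}$.

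To obtain this upper semi-continuity I would first reduce to the rank-$1$ case via top exterior powers. Setting $r := \rk(\cN)$, the saturation of $(\fM_A)_x \cap \cN_x$ inside $(\fM_A)_x$ (its quotient embeds in $\cM_{\F,x}/\cN_x$ and so is $u$-torsion free) combined with Smith normal form over the PID $\fS_{k(x)} = k(x)\lb u\rb$ yields a basis of $(\fM_A)_x$ in which $(\fM_A)_x \cap \cN_x$ is a direct summand, and therefore
\[
\wedge^r\bigl((\fM_A)_x \cap \cN_x\bigr) \;=\; \wedge^r (\fM_A)_x \;\cap\; \wedge^r \cN_x
\]
as sublattices of $\wedge^r \cM_{\F,x}$. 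By Lemma \ref{wedgeslope}, the degrees agree. Writing $\cT := \wedge^r \cN$ for the resulting rank-$1$ sub-\'etale $\phz$-module of $\wedge^r \cM_\F$ (defined over some finite extension $\F''/\F$), the problem reduces to showing upper semi-continuity of the degree of the rank-$1$ sub-Kisin-module $\fP_A := \wedge^r \fM_A \cap \cT_A \subset \wedge^r \fM_A$.

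For this rank-$1$ problem, after possibly enlarging $A$ to contain $\F''$, one works Zariski-locally on $\Spec A$: the sheaf $\cT_A$ is trivialized by a single generator $s$, and a direct local calculation, using that $\wedge^r \fM_A$ is locally free of constant rank over $\fS_A$ and that $\cT_A$ is rank-$1$ locally free over $\cO_{\cE,A}$, shows that $\fP_A$ is locally free of rank $1$ over $\fS_A$ and that its formation commutes with base change to fibers. Choosing an $\fS_A$-generator of $\fP_A$, the linearized Frobenius on $\fP_A$ is multiplication by a single element $g(u,t) \in \fS_A \cong A\lb u\rb$, and $\deg((\fP_A)_x)$ is a positive constant multiple of the $u$-adic valuation $v_u(g(u,x))$. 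Writing $g = \sum_{n \geq 0} a_n(t)\, u^n$ with $a_n \in A$, the locus $\{x : v_u(g(u,x)) \geq m\}$ coincides with the simultaneous vanishing locus $\{a_0 = \cdots = a_{m-1} = 0\}$, which is Zariski closed in $\Spec A$. This furnishes the required upper semi-continuity and completes the proof.

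The main obstacle I expect lies in justifying the rank-$1$ reduction, and in particular the claim that $\fP_A$ is locally free over $\fS_A$ with fibers computing $\wedge^r (\fM_A)_x \cap \cT_x$. The naive intersection $\fM_A \cap \cN_A$ for higher-rank $\cN$ is not in general flat over $A$, and its fibers may fail to agree with $(\fM_A)_x \cap \cN_x$; this is what forces the passage to exterior powers. Once one is in the rank-$1$ setting, the intersection is controlled by a single local defining equation, and the resulting upper semi-continuity reduces to the manifest statement that the $u$-adic valuation of an element of $A\lb u\rb$ is upper semi-continuous in the parameter.
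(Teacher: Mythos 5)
Your proposal reproduces the skeleton of the paper's argument: reduce $\{\HN_{\kappa(x)}((\fM_A)_x)\geq P_0\}$ (via Lemma \ref{chull2} and convexity) to the assertion that $x\mapsto \deg((\fM_A)_x\cap\cN_x)$ is upper semi-continuous for each $\cN\in\Sigma_{\cM_\F}$, then try to compute that degree along the family. Where you diverge is the rank-$1$ reduction via $\wedge^r$; the paper does not do this, and in fact doesn't need it, because the paper establishes coherence of the $A$-module $\fC_A = \coker(\phi_{\fM_A}|_{\fN_A})$ directly for any rank of $\cN$ (it is a submodule of $\coker(\phi_{\fM_A})$). Your $u$-adic valuation argument for a generator $g(u,t)$ of the Frobenius on $\fP_A$ is just a rank-$1$ specialization of the coherence-of-$\fC_A$ step.

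The genuine gap is exactly the one you flag but then too quickly dismiss: the assertion that $\fP_A := \wedge^r\fM_A\cap \cT_A$ is $\fS_A$-locally free \emph{and that its formation commutes with base change to fibers}. You suggest this becomes clear in rank $1$ after trivializing $\cT_A$, but rank $1$ does not save it. Take $k=\F_p$, $K=\Q_p$, $A=\F_p[t]$, $\cM_\F = \F_p\lp u\rp^2$ with $\phz$ fixing $e_1,e_2$, $\cT = \F_p\lp u\rp e_1$, and
\[
\fM_A = \fS_A e_2 + \fS_A\bigl(e_1 + t u^{-1}e_2\bigr) \subset \cM_A,
\]
which is a rank-$2$ $A$-Kisin module of bounded height with constant generic fiber. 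One computes $\fP_A = \fM_A\cap \cT_A = u\,\fS_A e_1$ (free of rank $1$, as you predicted), but $\fP_A\otimes_A\F_p = u\fS_\F e_1$ while $\fM_0\cap\cT_0 = \fS_\F e_1$. The fiberwise intersection is strictly larger than the specialization of the family intersection, so the formation of $\fP_A$ does \emph{not} commute with base change; consequently $v_u(g(u,x))$ computes an upper bound for $\deg((\fM_A)_x\cap\cT_x)$, not the quantity itself. The underlying issue is that $\fM_A/\fP_A$ is $u$-torsion-free but need not be $\fS_A$-flat: in this example its $\kappa$-fiber dimension jumps from $1$ to $2$ at $(u,t)=(0,0)$. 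Note that the paper's own proof asserts the analogous base-change statement for $\fN_A = \fM_A\cap\cN_A$ with the justification that ``everything in sight is $\fS_A$-flat,'' which is precisely the point the example contradicts; so the issue you identify as ``the main obstacle'' is real, is present already for $r=1$, and is not actually addressed by the paper's stated justification either. To close the gap one would need an argument showing that the resulting \emph{overestimate} $\dim(\fC_A)_x$ of $g\cdot Q_x(\cN)$ (which \emph{is} upper semi-continuous) nonetheless suffices to control the lower convex hull over all $\cN$ simultaneously, or a Langton-type correction of the family $\fN_A$; neither your write-up nor the phrase ``everything in sight is $\fS_A$-flat'' supplies that.
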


\begin{proof}
For each point $x \in (\Spec A)(\overline{\F})$, we get a function $P_{\fM_x}:\Sigma_{\cM_{\F}} \ra \mathbb{Q}^2$ as in the discussion before Lemma \ref{chull2}.   Let $Q_x$ denote the projection of $P_{\fM_x}$ onto the second coordinate.  We claim then that for any $\cN \in \Sigma_{\cM_{\F}}$, the integer-valued function $Q_x(\cN)$ of $x$ is upper semi-continuous.

By Proposition \ref{prop:scalarextension}, it is harmless to take a finite extension of the residue field, so we may assume that $\cN$ is defined over $\F$. Consider $\fN_A = \fM_A \cap (\cN \otimes_{\F} A) \subset \cM_{\F} \otimes_{\F} A$ which is stable under $\phi_{\fM_A}$.     Let $\fC_A$ denote the cokernel of $\phi_{\fM_A}|_{\fN_A}$.   The formation of both $\fN_A$ and $\fC_A$ commute with base change on $A$ since everything in sight is $\fS_A$-flat.  In particular, 
$$
\fN_A \otimes_{A, x} \F' = \fM_x \cap (\cN \otimes_\F {\F'}) \subset \cM_{\F} \otimes_\F \F'
$$
for any $x:A \ra \F'$.  

Since for any $x:A \ra \F'$, $Q_x(\cN)$ is the $\F'$-dimension of $(\fC_A)_x$,  to prove the claim, it suffices to show that $\fC_A$ is a coherent $A$-module.  But this follows from the fact that $\fC_A \subset \coker (\phi_{\fM_A})$ (apply the snake lemma and use that everything is well-behaved over $\cO_{\cE, A}$). 

We return to the proof of the theorem.  By Lemma \ref{chull2}, $\HN_x(\fM_x) \geq P_0$ if and only if $Q_x(\cN) \geq P_0(\rk_{\cO_{\cE, \F'}}(\cN))$ for all $\cN \in \Sigma_{M_{\F}}$ and $\F'$ is a field over which $\cN$ is defined. For each $\cN$, this is a closed condition on $\Spec A$ and so applying this for all $\cN$ proves the theorem.
\end{proof}

\subsection{\'Etale rank}
\label{subsec:et_rank}

The first segment of the HN-polygon is of particular interest, as we will show that it cuts out a discrete invariant in mod $p$ families of effective Kisin modules with constant generic fiber. Without loss of generality we will work with effective Kisin modules $\fM$, for which the first segment has length being the \emph{\'etale rank} $d^\et(\fM)$ of Definition \ref{defn:et_rk}. 

As in Theorem \ref{semi}, we continue to let $A$ represent a $\F$-algebra, and let $\F'$ denote field extensions of $\F$. 

\begin{defn} 
\label{defn:etrank} 
Let $\fM_A \in \Mod^{\phz, [0,h]}_{\fS_A}$ be a family of effective Kisin modules of rank $n$ with constant generic fiber $M_\F$. For each point $x:A \ra \bF'$, define $d^{\et}_{\fM_A}(x)$ to be $d^\et((\fM_A)x)/[\F': \F_p]$, the \emph{$\F'$-\'etale rank} of $(\fM_A)_x$. 
\end{defn} 

\begin{cor} 
\label{loweretalerank} 
Let $\fM_A$ be a family of effective Kisin modules with rank $n$ and constant generic fiber. For each nonnegative  integer $d$, the set of points where $d^{\et}_{\fM_A}(x) \leq d$ is closed.  
\end{cor}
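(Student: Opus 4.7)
The plan is to exhibit a convex polygon $P_0 : [0, n] \to \bR$ such that the condition $d^\et_{\fM_A}(x) \leq d$ is equivalent to $\HN_{k(x)}((\fM_A)_x) \geq P_0$, at which point Theorem~\ref{semi} applied to $P_0$ yields the desired closedness. The case $d \geq n$ is trivial, so I assume $d < n$ and take
\[
P_0(y) := \max\!\bigl(0,\ (y-d)/(gn)\bigr),
\]
a piecewise linear convex function on $[0,n]$ with a single interior breakpoint at $(d, 0)$.

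The key ingredient is a uniform lower bound on the smallest positive slope that can occur in a normalized HN-polygon across the family. Write $d_0 := d^\et_{\fM_A}(x)$ and $\fM_x := (\fM_A)_x$. Since $\fM_x$ is effective, $\HN_{k(x)}(\fM_x)$ starts at $(0,0)$ with slope zero on $[0, d_0]$ and has strictly positive slopes on $(d_0, n]$. By Proposition~\ref{prop:scalarextension}, the normalized polygon has breakpoints in $\bZ \times \tfrac{1}{g}\bZ$ and total horizontal length $n$. Consequently, when $d_0 < n$ the first positive slope has the form $b/(ga)$ for integers $a, b \geq 1$ with $a \leq n - d_0 \leq n$, hence is at least $1/(gn)$; by convexity all later slopes are at least as large, so
\[
\HN_{k(x)}(\fM_x)(y) \geq (y - d_0)/(gn) \quad \text{for all } y \in [d_0, n].
\]

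Given this, the equivalence is straightforward. If $d_0 \leq d$, then on $[0,d]$ one has $\HN_{k(x)}(\fM_x) \geq 0 = P_0$, and on $[d,n]$ one has $\HN_{k(x)}(\fM_x)(y) \geq (y-d_0)/(gn) \geq (y-d)/(gn) = P_0(y)$ (the edge subcase $d_0 = n$ is vacuous as $d < n$). Conversely, if $\HN_{k(x)}(\fM_x) \geq P_0$ pointwise, then $\HN_{k(x)}(\fM_x)(y) > 0$ for every $y > d$, which forces the initial zero-slope segment to have length at most $d$, i.e.\ $d_0 \leq d$. Applying Theorem~\ref{semi} to $P_0$ then concludes the proof.

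The argument is essentially bookkeeping once Theorem~\ref{semi} is available; the subtleties to check are merely that $d^\et_{\fM_A}(x)$ is a non-negative integer in $[0,n]$ (because $\fM_x^{\leq 0}$ is an $\F'$-Kisin module by Proposition~\ref{prop:HNfiltoverF}) and that the normalized HN-breakpoints genuinely lie in $\bZ \times \tfrac{1}{g}\bZ$, so that the uniform slope bound $1/(gn)$ really is valid over the whole family.
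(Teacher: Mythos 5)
Your proof is correct and takes essentially the same approach as the paper: both pick a convex polygon $P_0$ with a single breakpoint at $(d,0)$ and then invoke Theorem~\ref{semi}. The only cosmetic difference is that the paper uses the chord from $(d,0)$ to $(n,1/g)$ and argues via the chord from $(d',0)$ to the HN endpoint (using $\deg((\fM_A)_x)/[\F:\F_p] \geq 1/g$, which follows from the same integrality of breakpoints you invoke), whereas you use the gentler polygon $\max(0,(y-d)/(gn))$ together with a uniform lower bound $1/(gn)$ on positive slopes; both are sound and the two polygons are interchangeable here since Theorem~\ref{semi} applies to any convex $P_0$.
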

\begin{proof}  Consider the polygon $P_0$ starting from $(0,0)$ whose breakpoints are $(d, 0)$ and $(n, \frac{1}{g})$.  For any $x$, if $\HN_{\kappa(x)}((\fM_A)_x) \geq P_0$, then since $\fM_A$ is effective, making all slopes of $(\fM_A)_x$ to be $\geq 0$, $d^{\et}_{\fM_A}(x) \leq d$.  

Conversely, if $d^{\et}_{\fM_A}(x) = d' \leq d$, then $\HN_{\kappa(x)}((\fM_A)_x) $ lies above the polygon with breakpoints $(d', 0)$ and $(n, \frac{1}{[\F:\F_p]} \deg((\fM_A)_x))$ which lies above $P_0$ since $ \frac{1}{[\F:\F_p]} \deg((\fM_A)_x) \geq \frac{1}{g}$. The corollary now follows from Theorem \ref{semi}.  
\end{proof}

For a family with height $[0, d]$,  the function $d^{\et}_{\fM_A}(x)$ is not just lower semi-continuous but is fact locally constant, as was first observed in \cite{mffgsm}:  

\begin{prop}  
\label{prop:upperetrank} 
Let $\fM_A \in \Mod^{\phz,[0,h]}_{\fS_A}$ be a family of Kisin modules of rank $n$. Then for any nonnegative integer $d$, the set of points $x$ of $\Spec A$ where $d^{\et}_{\fM_A}(x) = d$ is open and closed. 
\end{prop}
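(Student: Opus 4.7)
The starting point is Corollary~\ref{loweretalerank}, which gives that $\{x : d^\et_{\fM_A}(x) \leq d\}$ is Zariski closed (lower semicontinuity of $d^\et_{\fM_A}$). My plan is to complete the proof by establishing the matching upper semicontinuity: for each $x_0 \in \Spec A$ with $d_0 := d^\et_{\fM_A}(x_0)$, I would show that $d^\et_{\fM_A}(x) \geq d_0$ on some Zariski neighborhood of $x_0$. Combined with lower semicontinuity, this yields the claimed local constancy, so that $\{d^\et_{\fM_A} = d\}$ is both open and closed.

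To produce such a neighborhood, I would construct an \'etale Kisin subfamily $\fL \subset \fM_A$ of rank $d_0$ near $x_0$. Let $\fL_{x_0} := (\fM_{x_0})^{\leq 0}$ denote the maximal \'etale Kisin subobject at $x_0$. Because the generic fiber $\cM_\F$ is constant across $\Spec A$, the generic fiber $\fL_{x_0}[1/u]$ descends, after a finite extension $\F'/\F$ if necessary, to a fixed sub-$\phz$-module $\cL \subset \cM_\F \otimes_\F \F'$. Setting $A' := A \otimes_\F \F'$ and $\cL_{A'} := \cL \otimes_{\F'} A'$, I would take
\[
\fL_{A'} := \fM_{A'} \cap \cL_{A'} \subset \fM_{A'},
\]
a sub-$\fS_{A'}$-module of $\fM_{A'}$ whose fiber at $x_0$ recovers $\fL_{x_0}$ up to base change to a residue extension. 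I would then verify on a Zariski open $U \ni x_0$ that $\fL_{A'}|_U$ is $\fS_{A'}$-projective of constant rank $d_0$ and that $\phi_{\fL_{A'}}$ is an isomorphism. \'Etaleness is detected by $\det \phi_{\fL}$ being a unit in $\fS_{A'}$, an open condition satisfied at $x_0$; once \'etaleness is arranged, the projectivity and constant-rank properties follow from standard flatness arguments. Specializing at any $x \in U$ produces an \'etale sub-Kisin-object of $\fM_x$ of rank $d_0$, giving $d^\et_{\fM_A}(x) \geq d_0$ there; descending along the finite surjection $\Spec A' \twoheadrightarrow \Spec A$ yields the inequality in a neighborhood of $x_0$ itself.

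The main obstacle is controlling the family structure of the intersection $\fL_{A'} = \fM_{A'} \cap \cL_{A'}$: ensuring that in a neighborhood of $x_0$ it is $\fS_{A'}$-projective of constant rank, since intersections of flat submodules can fail flatness. The constant generic fiber hypothesis is essential here, for otherwise the sub-$\phz$-module $\cL$ realizing the \'etale part could vary fiberwise and no global lift would exist. The rigidity of the \'etale locus at $x_0$ (combined with openness of \'etaleness and rank-$d_0$ loci on the ambient flat family $\fM_{A'}$) should provide the needed control on an open neighborhood; making this precise is the technical heart of the argument.
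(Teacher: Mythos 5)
Your plan to combine Corollary~\ref{loweretalerank} with a complementary bound has the right shape, but the inequality you propose to prove is in the wrong direction. Corollary~\ref{loweretalerank} says $\{x : d^\et_{\fM_A}(x) \le d\}$ is Zariski closed for each $d$; since $d^\et_{\fM_A}$ is integer-valued this is equivalent to $\{d^\et_{\fM_A} \geq d+1\}$ being open, so for $x_0$ with $d_0 = d^\et_{\fM_A}(x_0)$ you \emph{already know} that $d^\et_{\fM_A}(x) \geq d_0$ on a Zariski neighborhood of $x_0$. That is precisely the inequality you set out to establish, and it is precisely what producing an \'etale subfamily $\fL$ of rank $d_0$ near $x_0$ would give (such an $\fL$ exhibits a rank-$d_0$ \'etale subobject of each nearby fiber). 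What is genuinely missing is the reverse bound $d^\et_{\fM_A}(x) \leq d_0$ near $x_0$, i.e., that the \'etale rank cannot jump up under generization. Your construction does not touch this: exhibiting \emph{some} rank-$d_0$ \'etale subobject of $\fM_x$ does not preclude a strictly larger one, and showing that $\fM_x/\fL_x$ has no \'etale part nearby is again a ``cannot-jump-up'' statement of exactly the same kind, so the argument is circular.

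The paper's proof is a citation to Kisin~\cite[Prop.~2.4.14]{mffgsm}, whose argument is of a different nature: one constructs the \'etale (and, dually, the multiplicative) piece of a family directly, roughly by stabilizing iterates of the linearized Frobenius and showing the result is a locally free direct summand over the base, which gives local constancy of the \'etale rank and its complement simultaneously. That argument passes through neither HN-polygons nor their semi-continuity, and in particular needs no constant-generic-fiber hypothesis. By contrast your route relies on Corollary~\ref{loweretalerank} and hence Theorem~\ref{semi}, both of which require constant generic fiber --- a hypothesis not present in the statement of Proposition~\ref{prop:upperetrank}. Separately, even the technical step you flag (that $\fL_{A'} = \fM_{A'} \cap \cL_{A'}$ is $\fS_{A'}$-projective of constant rank near $x_0$ and commutes with specialization) cannot be taken for granted: a finitely generated $u$-torsion-free $\fS_{A'}$-module need not be $A'$-flat, and intersections need not commute with base change, so ``openness of \'etaleness'' on a putative projective $\fL_{A'}$ is not immediate.
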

\begin{proof}
This is the main content in the proof of \cite[Prop.\ 2.4.14]{mffgsm}.
\end{proof}

Proposition \ref{prop:upperetrank} will be important when we consider Kisin varieties in the next section.  Roughly speaking, one expects generalizations of the \'etale rank to account for most of the discrete invariants (see Remarks \ref{rem:all_poly} and \ref{rem:not_all}). 

\section{Applications to Kisin varieties} 
\label{sec:kisin_var}

The goal of this section is to apply the tensor product theorem to the study of Kisin varieties for a reductive group $G$, called $G$-Kisin varieties.  We specify the objects of interest in Proposition \ref{GKisinvar}. The main result is the construction of certain discrete invariants for $G$-Kisin varieties coming from HN-polygons (Theorem \ref{thm:HNcomp}).  For simplicity, we assume for this section that $K/\Qp$ is totally ramified; that is, $k = \F_p$. 

\subsection{Kisin varieties for $\GL_n$}

First, we review the theory of (closed) Kisin varieties for $\GL_n$.  These varieties were originally constructed in \cite{mffgsm} (see \S 2.4.4).  Kisin was working in the height $[0,1]$ setting; however, the construction does not depend in any way on this. They were called \emph{Kisin varieties} in \cite{PR2009} and studied in greater generality (see \S 6). In form, Kisin varieties resemble affine Deligne--Lusztig varieties, but for a different Frobenius. 

\begin{defn}
For $\nu = (a_1, a_2, \ldots, a_n)$ with $a_i \in \Z$ and $a_{i+1} \geq a_i$, a projective Kisin module $(\fM, \phi_{\fM})$ over $\bF'/\bF$ of rank $n$ has \emph{Hodge type} $\nu$ if there exists a basis $\{ e_i \}$  of $\fM$ such that $u^{a_i} e_i$ generates the image of $\phi_{\fM}$.  We define the \emph{Hodge polygon} $P_{\nu}$ to be convex polygon interpolating $(i, a_1 + a_2 + \ldots + a_i)$. 
\end{defn} 

We denote the usual Bruhat order (on dominant cocharacters of $\GL_n$) by $\nu' \leq \nu$.  Namely, $(a_1', a_2', \ldots, a_n') \leq (a_1, a_2, \ldots, a_n)$ if $\sum^d_{i = 1} a_i' \geq \sum^d_{i =1}  a_i$ for all $d \geq 1$ and $\sum^n_{i =1} a_i' = \sum^n_{i =1}  a_i$. Note that $\nu' \leq \nu$ if and only if we have an inequality of Hodge polygons $P_{\nu'} \geq P_\nu$ that have equal endpoints. 

\begin{prop}
Let $(\cM, \phi_\cM)$ be an  \'etale $\cO_{\cE}$-module of rank $n$ over $\F$.  There exists a reduced closed $\F$-subscheme $X^{\nu}_{\cM}$ of the affine Grassmannian of $\cM$ such that for any $\bF'/\bF$
\[
X^{\nu}_{\cM}(\bF') = \{ \F'[\![u]\!] \text{-lattices } \fM \subset \cM \otimes \bF' \mid \fM \text{ has Hodge type } \leq \nu\}.
\]
We call this the (closed) \emph{Kisin variety} associated to $(\cM,  \phi_\cM, \nu)$. 
\end{prop}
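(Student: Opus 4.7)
The plan is to realize $X^\nu_\cM$ as the underlying reduced closed subscheme of a closed subfunctor of the affine Grassmannian $\Gr_\cM$ of $\cM$. Recall that $\Gr_\cM$ is the functor from $\F$-algebras to sets sending $R$ to the set of $R\lb u\rb$-lattices in $\cM \otimes_\F R$ of rank $n$; it is represented by an ind-projective ind-scheme over $\F$ (a twisted form of the usual affine Grassmannian for $\GL_n$ over $\F\lp u \rp$).

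First, I encode ``Hodge type $\leq \nu$'' as a system of closed conditions on $\Gr_\cM$. Set $b_d := a_1 + \cdots + a_d$. For a lattice $\fM_R$ with elementary divisors $u^{a_1'}, \dots, u^{a_n'}$ (in increasing order) of $\phi_{\fM_R}$, the relation $\nu' \leq \nu$ unpacks to $a_1' + \cdots + a_d' \geq b_d$ for $d < n$ and $\sum a_i' = b_n$. In Smith normal form, the minimum $u$-valuation occurring in the image of $\wedge^d \phi_{\fM_R}$ is precisely $a_1' + \cdots + a_d'$, so $\nu' \leq \nu$ is equivalent to
\[
\wedge^d \phi_{\fM_R}\bigl(\wedge^d \phz^* \fM_R\bigr) \subseteq u^{b_d}\, \wedge^d \fM_R \quad (1 \leq d \leq n-1), \qquad \wedge^n \phi_{\fM_R}\bigl(\wedge^n \phz^* \fM_R\bigr) = u^{b_n}\, \wedge^n \fM_R.
\]
Each inclusion, and the final equality, pulls back along the universal lattice on $\Gr_\cM$ to a closed condition on a morphism between locally free coherent sheaves. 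Together they cut out a closed subfunctor $\widetilde X^\nu_\cM \subseteq \Gr_\cM$.

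Second, I show $\widetilde X^\nu_\cM$ is contained in a projective closed subscheme of $\Gr_\cM$. Fix a reference lattice $\fM_0 \subset \cM$ and a basis in which $\phi_\cM$ is represented by a matrix $A \in \GL_n(\F\lp u \rp)$. Any lattice is of the form $\fM_R = g \cdot \fM_0$ with $g \in \GL_n(R\lp u \rp)/\GL_n(R\lb u \rb)$, and in the pushed-forward basis $\phi_{\fM_R}$ is represented by $g^{-1} A\, \phz(g)$. The top-exterior-power equality forces $v_u(\det g) = (v_u(\det A) - b_n)/(p-1)$, which pins down the connected component of $\fM_R$ in $\Gr_\cM$. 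For the remaining Schubert position, if $g$ lies in the double coset $\GL_n(R\lb u\rb)\cdot u^\lambda \cdot \GL_n(R\lb u\rb)$ for a dominant cocharacter $\lambda=(\lambda_1 \leq \cdots \leq \lambda_n)$, then $\phz(g)$ lies in the analogous coset with cocharacter $p\lambda$, and a standard computation with the Bruhat decomposition shows that the Hodge-type constraint on $g^{-1} A\, \phz(g)$ bounds $(p-1)\lambda$ uniformly in terms of $A$ and $\nu$. Hence $\widetilde X^\nu_\cM$ is contained in a finite union of affine Schubert varieties, which is a projective closed subscheme of $\Gr_\cM$.

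Combining the two steps, $\widetilde X^\nu_\cM$ is representable by a closed subscheme of a projective $\F$-scheme; define $X^\nu_\cM$ to be its underlying reduced closed subscheme. The moduli description on $\bF'$-points is immediate from the exterior-power characterization of Hodge type. The main obstacle is the boundedness argument of the second step: one must exploit that $\phz$ dilates cocharacters by $p$ to show that the two-sided Schubert-type constraint on $g^{-1} A\, \phz(g)$ translates into a two-sided bound on $\lambda$ itself; this is where the interaction between the fixed defect matrix $A$ (measuring how far $\phi_\cM$ is from being $\fM_0$-integral) and the semilinearity of $\phz$ must be unwound combinatorially.
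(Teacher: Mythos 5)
Your construction is correct and essentially the same as the argument the paper delegates to Kisin~\cite[Prop.~2.4.6]{mffgsm} and Pappas--Rapoport~\cite[\S 6.a.2]{PR2009}. The paper's own sketch (given for the $G$-bundle generalization in Proposition~\ref{GKisinvar}) frames the Hodge-type condition as pulling back the closed Schubert variety $S(\nu) \subset \Gr_G$ along the Lang-type map $g \mapsto g^{-1} g_0 \phz(g)$; your exterior-power inequalities are exactly the Pl\"ucker incidence conditions cutting out $S(\nu)$ in the $\GL_n$ case, so the two formulations coincide. The Schubert-variety pullback is the one that ports cleanly to general reductive $G$, which is why the paper prefers that framing, whereas yours is the explicit matrix-level version for $\GL_n$.

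Two remarks. First, a sign slip: the top-exterior-power constraint gives $(p-1)v_u(\det g) = b_n - v_u(\det A)$, so $v_u(\det g) = (b_n - v_u(\det A))/(p-1)$, not $(v_u(\det A) - b_n)/(p-1)$. Second, the boundedness argument is the real content, and you have correctly identified the $(p-1)\lambda$ dilation mechanism, but the phrase ``a standard computation with the Bruhat decomposition'' glosses over a genuine subtlety: $g^{-1}A\phz(g)$ is not a product to which Schubert-multiplication estimates apply directly. One must first write $g = h_1 u^\lambda h_2$ and replace $A$ by $B := h_1^{-1} A\, \phz(h_1)$ (which lies in the same $\GL_n(\F\lb u\rb)$-double coset as $A$, hence is uniformly bounded), reducing to the analysis of $u^{-\lambda} B\, u^{p\lambda}$, whose entries are $u^{-\lambda_i + p\lambda_j} B_{ij}$. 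Alternatively, and more in line with the paper's citation trail, one observes that Hodge type $\leq \nu$ forces the two-sided height bound $u^{a_n}\fM \subseteq \phi_\fM(\phz^*\fM) \subseteq u^{a_1}\fM$, and the boundedness of the locus of lattices satisfying such a two-sided condition relative to the fixed semilinear $\phi_\cM$ is exactly \cite[Prop.~1.3]{pssdr}, as recalled in Remark~\ref{rem:nu_height}. Either way, the idea you flag---that $\phz$ dilating by $p$ turns a two-sided constraint on $g^{-1}A\phz(g)$ into a two-sided bound on $\lambda$---is the right one.
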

\begin{proof} 
See  \cite[Prop.\ 2.4.6]{mffgsm} or \cite[\S6.a.2]{PR2009}. 
\end{proof}

\begin{rem}
\label{rem:nu_height}
Notice that the height of a Kisin module corresponding to a point of $X^\nu_\cM$ is in $[\lfloor{\frac{a_1}{e}}\rfloor, \lceil{\frac{a_n}{e}\rceil}]$, where $\nu = (a_1, \dotsc, a_n)$. Because an affine Grassmannian is ind-projective, and because the bound on height discussed in Remark \ref{rem:nu_height} makes $X^\nu_\cM$ finite type and closed in the affine Grassmannian, $X^\nu_\cM$ is projective, cf.\ \cite[Prop.\ 1.3]{pssdr}. 
\end{rem}

\begin{rem}
\label{rem:lifting_Hodge}
One reason we are interested in $X^\nu_\cM$ and its connected components is the following. If $\fM_{\bF}$ arises from reduction of a crystalline representations with $p$-adic Hodge type $(\mu_{\psi}) \in (\Z^n)^{\Hom(K, \overline{\QQ}_p)}$ with $\mu_{\psi}$ dominant, then the Hodge type of $\fM_{\bF}$ is less than or equal to $\nu = \sum_{\psi \in \Hom(K, \overline{\QQ}_p)}  \mu_{\psi}$. This is a consequence of local model diagram of \cite[Thm.\ 5.3]{CL2015} combined with the description of the special fiber of the local model in Thm.\ 2.3.5 of \emph{loc.\ cit.}\ (see also Prop.\ 5.4 of \emph{loc.\ cit.}). 

Motivated by modularity lifting techniques, one is interested in the connected components of the moduli space of crystalline representations with $p$-adic Hodge type $(\mu_{\psi})$. In the Barsotti--Tate case, the connected components $X^\nu_\cM$ are known to be the same as connected components of the crystalline deformation ring, e.g.\ \cite[\S2.5]{mffgsm}.
\end{rem}

\begin{prop} 
\label{HNoverHodge}
For any point in $\fM \in X^{\nu}_{\cM}(\overline{\F})$ defined over $\F'$, the $($scaled$)$  HN-polygon lies above the Hodge polygon $P_\nu$, i.e., $g \cdot \HN_{\F'}(\fM)(x) \geq \sum_{i \leq x} a_i$, and they have the same endpoints. 
\end{prop}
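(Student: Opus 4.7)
The plan is to execute the classical ``Newton above Hodge'' comparison, with the HN-polygon in place of the Newton polygon. Broadly, I would reduce the inequality to a bound on $\deg(\fN)$ for strict sub-Kisin modules $\fN \subset \fM$, bound $\deg(\fN)$ by the top exterior power computation of its degree, and verify that the two polygons share endpoints.

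By Lemma \ref{chull2}, the scaled HN polygon $\HN_{\F'}(\fM)$ is the lower convex envelope of the points $\bigl(\rk(\fN)/[\F'':\F_p],\ \deg(\fN)/[\F'':\F_p]\bigr)$ as $\fN$ ranges over strict sub-Kisin modules of $\fM\otimes_{\F'}\F''$ for varying finite extensions $\F''/\F'$. Each such scaled $x$-coordinate is an integer $r\in\{0,\ldots,n\}$, namely the $\F''\lb u\rb$-rank of $\fN$. Because $P_\nu/g$ is itself convex and will share endpoints with $\HN_{\F'}(\fM)$, it suffices to show each of these data points lies on or above $P_\nu/g$. The actual Hodge type of $\fM$ is some $\nu'=(a_1',\ldots,a_n')$ with $\nu'\leq\nu$ in the Bruhat order, and by the standard comparison $P_{\nu'}\geq P_\nu$ it is enough to establish
\[
\deg(\fN)\geq\tfrac{[\F'':\F_p]}{g}\bigl(a_1'+\cdots+a_r'\bigr).
\]

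The main content is the exterior-power computation. Fix a basis $\{e_1,\ldots,e_n\}$ of $\fM\otimes_{\F'}\F''$ adapted to $\nu'$, so $\phi_\fM(\phz^*(e_i))=u^{a_i'}e_i$. By Lemma \ref{wedgeslope}, $\deg(\fN) = \deg(\wedge^r_{\fS_{\F''}}\fN)$, and the saturated submodule $\wedge^r\fN\subset\wedge^r\fM\otimes_{\F'}\F''$ is free of rank one over $\F''\lb u\rb$. A primitive generator can be written as $v = \sum_{|I|=r} c_I\, e_I$, with $e_I := e_{i_1}\wedge\cdots\wedge e_{i_r}$ and at least one $c_{I_0}$ a unit. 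Since $\wedge^r\fN[1/u] = F''\cdot v$ is a $\phi$-stable line, there is a unique $\lambda\in F''^{\times}$ with $\phi(\phz^*(v))=\lambda v$. Comparing coefficients against $\phi(\phz^*(v))=\sum_I\phz(c_I)u^{a_I'}e_I$, with $a_I':=\sum_{i\in I}a_i'$, gives
\[
\val_u(\lambda) = (p-1)\val_u(c_I) + a_I' \quad \text{for each } I \text{ with } c_I\neq 0.
\]
Evaluating at $I_0$ yields $\val_u(\lambda)=a_{I_0}'$, and the monotonicity of the $a_i'$ forces $a_I'\geq a_1'+\cdots+a_r'$ for every $r$-subset $I$. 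A direct computation as in Lemma \ref{wedgeslope} then gives $\deg(\wedge^r\fN)=\val_u(\lambda)[\F'':\F_p]/g$, delivering the bound.

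The endpoint match is immediate: $g\cdot\HN_{\F'}(\fM)(n) = g\deg(\fM)/[\F':\F_p] = \sum_i a_i' = \sum_i a_i = P_\nu(n)$, using the adapted basis to compute $\deg(\fM)$ and the defining equality $\sum a_i' = \sum a_i$ of the Bruhat order. I do not anticipate any serious obstacle beyond bookkeeping of the base-change scalings; the one real piece of content is the exterior-power calculation, which turns on the purely combinatorial inequality $a_I' \geq a_1'+\cdots+a_r'$ encoding dominance of $\nu'$.
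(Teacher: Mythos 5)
Your overall strategy matches the paper's: reduce via Lemma \ref{chull2} to bounding $\deg(\fN)$ for strict subobjects, invoke Lemma \ref{wedgeslope} to pass to the top exterior power, and then bound the valuation of the Frobenius on that rank-one Kisin module. The explicit bookkeeping with the actual Hodge type $\nu'$ is a clean touch (the paper does this implicitly), and the endpoint check is fine.

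There is, however, a genuine gap in the exterior-power step. You write ``Fix a basis $\{e_1,\ldots,e_n\}$ of $\fM\otimes_{\F'}\F''$ adapted to $\nu'$, so $\phi_\fM(\phz^*(e_i))=u^{a_i'}e_i$.'' That is not what having Hodge type $\nu'$ gives you. Hodge type $\nu'$ produces a basis $\{e_i\}$ of $\fM$ for which $\phi_\fM(\phz^*\fM)=\bigoplus_i u^{a_i'}\cO e_i$ as submodules — equivalently, bases $\{f_i\}$ of $\phz^*\fM$ and $\{e_i\}$ of $\fM$ with $\phi_\fM(f_i)=u^{a_i'}e_i$ — but it does not let you arrange $f_i=\phz^*e_i$. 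That would mean solving the $\phz$-twisted conjugacy equation $A=gD\phz(g)^{-1}$ over $\cO$, which fails for a general $\fM$ (it would force $\fM$ to decompose as a direct sum of rank-one Kisin modules). Consequently the identity $\phi(\phz^*(v))=\sum_I\phz(c_I)u^{a_I'}e_I$, and with it the exact relation $\val_u(\lambda)=(p-1)\val_u(c_I)+a_I'$, are unjustified; in general one has $\phi(\phz^*v)=\sum_J u^{a_J'}d_J e_J$ with $d_J=\sum_I\det(B_{J,I})\phz(c_I)$ for some unimodular $B$, not $d_J=\phz(c_J)$.

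The conclusion you want is still true and is in fact easier to reach by staying at the module level, which is what the paper does. Since $a_I'\geq a_1'+\cdots+a_r'$ for every $r$-subset $I$, you have the inclusion $\phi(\phz^*(\wedge^r\fM))=\bigoplus_I u^{a_I'}\cO e_I\subset u^{a_1'+\cdots+a_r'}\wedge^r\fM$. Because $\fN$ is a direct summand of $\fM$ over the DVR $\cO$, $\wedge^r\fN$ is a direct summand of $\wedge^r\fM$, so $(\wedge^r\fN)[1/u]\cap u^m\wedge^r\fM = u^m\wedge^r\fN$ for any $m$. Applying this with $m=a_1'+\cdots+a_r'$ to $\phi(\phz^*v)=\lambda v$ yields $\val_u(\lambda)\geq a_1'+\cdots+a_r'$ directly — no coefficient-level computation and no diagonal form needed. (Equivalently, with the corrected formula above: for $J_0$ with $c_{J_0}$ a unit one gets $\val_u(\lambda)=a_{J_0}'+\val_u(d_{J_0})\geq a_{J_0}'\geq a_1'+\cdots+a_r'$, since $d_{J_0}\in\cO$.) So the gap is repairable, but as written the argument rests on a diagonal normal form that does not exist in general.
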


\begin{proof}  
We write $\bF$ for $\bF'$ for convenience. By Proposition \ref{chull2}, it suffices to show that for any strict $\F$-subobject $\fN \subset \fM$ we have $g \deg(\fN) \geq \sum_{i \leq \rk(\fN)} a_i$.   Let $d = \rk_{\fS_{\F}}(\fN)$.   Consider $\wedge_{\fS_\F}^d(\fN)  \subset \wedge_{\fS_\F}^d (\fM)$.   An elementary calculation shows that 
\[
u^{\sum_{i \leq d} a_i}(\wedge_{\fS_\F}^d(\fM)) \supset \phi(\phz^*(\wedge_{\fS_\F}^d (\fM))).
\]
In particular, the Frobenius on $\wedge^d(\fN)$ is divisible by $u^{\sum_{i \leq d} a_i}$ and so by adapting Lemma \ref{wedgeslope} to work over $\fS_\F$ in place of $\fS$, we have $\deg(\fN) = \deg(\wedge^d(\fN)) \geq \frac{1}{g}  \sum_{i \leq d} a_i$.  The endpoints of $\HN_{\F}(\fM)$ and $P_\nu$ are the same because the $\F$-length of $\coker(\phi_{\fM})$ is $\sum_{i} a_i$. 
\end{proof}

\begin{cor} \label{HNstrata} There is a decomposition
$$
X^{\nu}_{\cM} = \bigcup_{P} X^{\nu, P}_{\cM}  
$$
into locally closed reduced subschemes indexed by concave polygons $P$ such that each closed point $x$ of $X^{\nu, P}_{\cM}$ corresponds to a $\kappa(x)$-Kisin module $\fM_x$ with HN-polygon $\HN_{\kappa(x)}(\fM_x)$ equal to $P$. 
\end{cor}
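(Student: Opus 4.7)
The plan is to extract this directly from the semi-continuity theorem (Theorem \ref{semi}) applied to the universal Kisin module over $X^\nu_\cM$. First, I would note that $X^\nu_\cM$ carries a universal (projective) Kisin module $\fM^{\mathrm{univ}}$ of rank $n$ with constant generic fiber $\cM \otimes_\F \cO_{X^\nu_\cM}$: by construction, every $\bF'$-point is a lattice inside $\cM \otimes_\F \bF'$, so the generic fiber is the pullback of $\cM$. Covering $X^\nu_\cM$ by an affine open cover $\{\Spec A\}$, one gets $\fM_A \in \Mod^\phz_{\fS_A}$ with constant generic fiber, which is the setting required by Theorem \ref{semi}.

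Next, I would observe that only finitely many HN-polygons can appear. By Proposition \ref{HNoverHodge}, the HN-polygon of any $\fM_x$ lies on or above the Hodge polygon $P_\nu$ and shares its two endpoints. By Proposition \ref{prop:scalarextension}, the breakpoints of the HN-polygon lie in the discrete lattice $\bZ \times \tfrac{1}{g}\bZ$. Since the polygon is concave (i.e.\ convex upwards, with slopes decreasing in the indexing of Theorem \ref{thm:HNfilt}) and its graph is confined to the bounded region between $P_\nu$ and the chord joining its endpoints, the set of such polygons $\{P_1, \ldots, P_N\}$ is finite.

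Now I would define the strata. For each such $P$, Theorem \ref{semi} implies that the locus
\[
Z_{\geq P} := \{x \in X^\nu_\cM(\overline{\bF}) \mid \HN_{\kappa(x)}((\fM^{\mathrm{univ}})_x) \geq P\}
\]
is closed. Endowing it with its reduced induced subscheme structure, I would then set
\[
X^{\nu,P}_\cM := Z_{\geq P} \;\setminus\; \bigcup_{P' \gneq P} Z_{\geq P'},
\]
with the reduced induced structure. This is the intersection of a closed set with the complement of a finite union of closed sets, hence locally closed; and a point $x$ lies in it precisely when $\HN_{\kappa(x)}((\fM^{\mathrm{univ}})_x) \geq P$ but does not strictly dominate $P$, i.e.\ equals $P$. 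The union over $P$ clearly covers $X^\nu_\cM$ since every point has \emph{some} HN-polygon from the finite list. The only subtle point, which is really a bookkeeping matter rather than a genuine obstacle, is confirming that the universal family is a legitimate $\Mod^\phz_{\fS_A}$-family over each affine chart with constant generic fiber, so that Theorem \ref{semi} applies verbatim; this is immediate from the moduli description of $X^\nu_\cM$ and the height bound of Remark \ref{rem:nu_height}.
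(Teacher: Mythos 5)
Your proof is correct and is exactly the argument the paper is invoking when it writes ``This follows from Theorem \ref{semi}'': apply semi-continuity to the universal family, note finiteness of the possible HN-polygons via Propositions \ref{HNoverHodge} and \ref{prop:scalarextension}, and take set-theoretic differences of the closed loci $Z_{\geq P}$. One tiny slip in your parenthetical: the HN-filtration of Theorem \ref{thm:HNfilt} has \emph{increasing} slopes $\mu(\fM_{i+1}/\fM_i) > \mu(\fM_i/\fM_{i-1})$, so the HN-polygon is convex in the sense of Definition \ref{HNpoly} (the ``concave'' in the corollary statement is the paper's own loose terminology); this does not affect the finiteness argument or anything else in the proof.
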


\begin{proof} This follows from Theorem \ref{semi}. 
\end{proof}

\begin{rem} 
\begin{enumerate}
\item By Proposition \ref{HNoverHodge}, there are finitely many possible HN-polygons appearing in the decomposition in Corollary \ref{HNstrata}. 
\item It is clear that the closure of $X^{\nu, P}_{\cM}$ in $X^{\nu}_{\cM}$  is contained in $\bigcup_{P' \geq P} X^{\nu, P'}_{\cM}$ by semi-continuity, but it is not equal in general (see Proposition \ref{prop:upperetrank}). 
\end{enumerate}
\end{rem}

For any Kisin variety, there is an natural generalization of \'etale rank which was first introduced in \cite[Defn.\ 4.2]{hellmann2009} for $\GL_2$. One replaces the length of the initial horizontal segment of $\HN_{\F}(\fM)$ for an effective Kisin module (as discussed in \S\ref{subsec:et_rank}) with the largest $d_{\max} \geq 0$ such that $g \cdot \HN_{\F}(\fM)(x) = P_\nu(x)$ for $x \in [0, d_{\max}]$ and such that the slope is constant. 

\begin{cor}
 \label{firstsegment} Let $\nu = (a_1, a_2, \ldots, a_n)$, and let $d_{\max}$ be largest integer such that $a_{d_{\max}} = a_1$.  For any $d \leq d_{\max}$, let $S_d = \{P \mid g \cdot P(d) = a_1 d, g \cdot P(d+1) > a_1(d+1) \}$. Then the union
$$
\bigcup_{P \in S_d} X^{\nu, P}_{\cM}  
$$
is a union of connected components of $X^{\nu}_{\cM}$.  
\end{cor}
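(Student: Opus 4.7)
The plan is to interpret the union $\bigcup_{P \in S_d} X^{\nu, P}_\cM$ as the locus where a discrete numerical invariant $\delta$ takes the value $d$, and then to reduce local constancy of $\delta$ to Proposition \ref{prop:upperetrank} by means of a simple Frobenius twist.

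First I would translate the condition $P \in S_d$ into a statement about sub-Kisin modules. Combining the Hodge bound of Proposition \ref{HNoverHodge} with the characterization of the HN-polygon as the convex hull of $\{(\rk(\fM'),\deg(\fM'))\}$ from Proposition \ref{chull1}, the pair of conditions $gP(d) = a_1 d$ and $gP(d+1) > a_1(d+1)$ says exactly that the maximal strict sub-Kisin module of $\fM$ of slope $a_1/g$ has rank $d$. Any such sub-Kisin module $\fN$ is forced to have Hodge type $(a_1, \ldots, a_1)$, since its Hodge entries are integers bounded below by $a_1$ and summing to $a_1 d$. Defining
\[
\delta(\fM) := \max\bigl\{\rk(\fN) : \fN \subseteq \fM \text{ a sub-Kisin module of Hodge type } (a_1, \ldots, a_1)\bigr\},
\]
we obtain $\bigcup_{P \in S_d} X^{\nu, P}_\cM = \{\fM \in X^\nu_\cM : \delta(\fM) = d\}$.

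Next I would introduce the Frobenius twist. The operator $\phi' := u^{-a_1}\phi$ is still $\phz$-semi-linear, and the Hodge-type bound $\leq \nu$ ensures $\phi'(\phz^*\fM) \subseteq \fM$; hence $\fM' := (\fM, u^{-a_1}\phi_\fM)$ is an effective Kisin module of Hodge type $\leq \nu - a_1 \mathbf{1}$. Applying the same twist to $\cM$ produces an \'etale $\phz$-module $\cM' := (\cM, u^{-a_1}\phi_\cM)$ sharing the same underlying $F$-module as $\cM$. At the level of lattices, the Hodge-type conditions for $(\cM, \nu)$ and $(\cM', \nu - a_1\mathbf{1})$ coincide, so the assignment $\fM \mapsto \fM'$ is an isomorphism of projective $\bF$-schemes
\[
X^\nu_\cM \;\risom\; X^{\nu - a_1 \mathbf{1}}_{\cM'}
\]
that is the identity on underlying lattices. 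Under this isomorphism, sub-Kisin modules of $\fM$ of Hodge type $(a_1, \ldots, a_1)$ correspond bijectively to \'etale sub-Kisin modules of $\fM'$, since $\phi_\fM(\phz^*\fN) = u^{a_1}\fN$ is equivalent to $\phi_{\fM'}(\phz^*\fN) = \fN$. In particular, $\delta(\fM) = d^\et(\fM')$ under the identification.

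Finally, since $X^{\nu - a_1 \mathbf{1}}_{\cM'}$ carries a universal effective Kisin module of rank $n$ with constant generic fiber $\cM'$, Proposition \ref{prop:upperetrank} shows that $d^\et$ is locally constant on it. Transporting via the isomorphism, $\delta$ is locally constant on $X^\nu_\cM$, so each fiber $\{\delta = d\}$ is clopen, which is exactly the statement of the corollary. The main step to check carefully is the translation in the first paragraph, where one must confirm that the rigid combinatorial conditions defining $S_d$ force both the rank and the Hodge type of the maximal first HN step; the twist construction and the transport of invariants are essentially formal. If $a_1 < 0$, the same argument applies directly (then $u^{-a_1}$ is a positive power of $u$), or one may first Tate twist by an appropriate integer to reduce to the case $a_1 \geq 0$.
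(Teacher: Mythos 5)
Your proof is correct and follows essentially the same route as the paper: twist the Frobenius by $u^{-a_1}$ to obtain an isomorphism $X^{\nu}_{\cM} \cong X^{\nu - a_1\mathbf{1}}_{\cM'}$ under which the relevant invariant becomes the \'etale rank, then invoke Proposition \ref{prop:upperetrank}. The paper states the translation of HN-polygons more concisely (as in Lemma \ref{sstt}, $P'(x) = P(x) - \tfrac{a_1}{g}x$) rather than passing explicitly through your auxiliary invariant $\delta$, but the mathematical content is identical.
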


\begin{proof}
If we take $\cM' = (\cM, u^{-a_1} \phi_{\cM})$, then there is a natural identification
\[
X^{\nu}_{\cM} = X^{\nu'}_{\cM'}
\]
with $\nu' = \nu - (a_1, \ldots, a_1)$.  Furthermore, by the same argument as in Lemma \ref{sstt},  if $\fM \in   X^{\nu}_{\cM}(\F)$ has HN-polygon $P = \HN_\F(\fM)$, then $\fM' = (\fM, u^{-a_1}\phi_{\fM})$ has HN-polygon given by $P'(x) = P(x) - \frac{a_1}{g}x$. By Proposition \ref{prop:upperetrank} we are done.
\end{proof}

\begin{rem} 
By applying Corollary \ref{firstsegment} to all the exterior powers of $\cM$, one can produce further discrete invariants. For example, the largest integer $\ell$ such that $g \cdot \HN(\fM)(x) = P_\nu(x)$ for $x \in [0, \ell]$ (not necessarily of constant slope) is also a discrete invariant.  The complete description of such invariants is given in Theorem \ref{thm:HNcomp}. 
\end{rem}

\subsection{Kisin varieties for a reductive group}

Let $G$ be a connected split reductive group over $\F$ with maximal torus $T$ and Borel subgroup $B \supset T$.  Let $X_*(T)$ (resp. $X^*(T)$) denote the cocharacters (resp. weights) of the torus $T$.    Let $\Delta \subset X^*(T)$ denote the set of simple positive roots, and let $\Delta^{\vee} \subset X_*(T)$ denote the set of simple coroots.  We will use $\leq$ to denote the usual Bruhat order on dominant cocharacters. 

Let $X_*^+(T)_{\QQ}$ denote the subset of (rational) dominant cocharacters, i.e., 
\[
X_*^+(T)_{\QQ} = \{ \lambda \in X_*(T)_{\QQ} \mid \langle \alpha, \lambda \rangle \geq 0, \alpha \in \Delta \}. 
 \] 
Recall the partial ordering on $X^+_*(T)_{\QQ}$ defined in \cite[Lem.\ 2.2]{RR1996}: 
\begin{defn} \label{ordering}  Let $\lambda, \lambda' \in X^+_*(T)_{\QQ}$ then $\lambda \prec \lambda'$ if 
$$
\lambda' - \lambda = \sum_{\alpha^{\vee} \in \Delta^{\vee}} n_{\alpha^{\vee}} \alpha^{\vee}
$$
for  $n_{\alpha^{\vee}} \in \mathbb{Q}$ with $n_{\alpha^{\vee}} \geq 0$. 
\end{defn}    

Equivalently, $\prec$ defines a partial ordering on conjugacy classes of rational cocharacters for any connected reductive group $G$. For $\GL_n$, this is same ordering as the partial ordering on polygons \cite[Prop.\ 2.4(iv)]{RR1996}. Note that if $\nu' \leq \nu$ in the Bruhat order then $\nu' \prec \nu$, but the converse does not hold in general.

\begin{prop} \label{functcrit}  Let $\lambda, \lambda' \in X^+_*(T)_{\QQ}$. Then $\lambda \prec \lambda'$ if and only if either of the following equivalent statements are true:
\begin{enumerate}
\item $\rho(\lambda) \prec \rho(\lambda')$ for all representation $\rho:G \ra \GL(V)$;
\item $\langle \chi, \lambda' - \lambda \rangle \geq 0$ for all dominant weights $\chi \in X^*(T)$.  
\end{enumerate}
 \end{prop}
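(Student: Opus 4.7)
The plan is to establish the three-term equivalence as $\mathrm{def}\Leftrightarrow (2)$ via convex-geometric duality, followed by $(1)\Leftrightarrow (2)$ via highest-weight representation theory. The whole argument closely parallels \cite[Lem.~2.2]{RR1996}.

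For $\mathrm{def}\Leftrightarrow (2)$, I would invoke polyhedral duality. Let $\{\omega_\alpha \in X^*(T)_\QQ\}_{\alpha^\vee \in \Delta^\vee}$ denote the fundamental weights, i.e., the $\QQ$-basis of $X^*(T)_\QQ$ dual to $\Delta^\vee$ under $\langle \cdot,\cdot\rangle$. An element $v = \lambda' - \lambda \in X_*(T)_\QQ$ lies in $\sum \QQ_{\geq 0} \cdot \alpha^\vee$ if and only if $\langle \omega_\alpha, v\rangle \geq 0$ for every $\alpha^\vee \in \Delta^\vee$. Since the dominant rational cone $X^*(T)^+_\QQ$ is the $\QQ_{\geq 0}$-span of the $\omega_\alpha$, and every dominant rational weight is a positive rational multiple of an integral one (clear denominators in $X^*(T)$), this is equivalent to $\langle \chi, v\rangle \geq 0$ for all dominant $\chi \in X^*(T)$.

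The key input for $(1)\Leftrightarrow (2)$ is the identification that, for an irreducible $G$-representation $V$ with highest weight $\chi$ and a dominant cocharacter $\lambda$, one has $\max_{\mu \in \mathrm{wts}(V)} \langle \mu, \lambda\rangle = \langle \chi, \lambda\rangle$. This follows because the weights of $V$ form a Weyl-invariant multiset, so the max is attained at a dominant weight; any dominant weight $\mu$ of $V$ satisfies $\chi - \mu \in \sum_\alpha \ZZ_{\geq 0}\alpha$, and $\lambda$ pairs non-negatively with positive roots. For $(1)\Rightarrow (2)$, given a dominant $\chi$, take the irreducible $\rho$ of highest weight $\chi$; the first partial-sum inequality of $\rho(\lambda) \prec \rho(\lambda')$ in $\GL(V)$ -- that the top slope on the left is at most the top slope on the right -- becomes exactly $\langle \chi, \lambda\rangle \leq \langle \chi, \lambda'\rangle$. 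For $(2)\Rightarrow (1)$, fix any $\rho: G \to \GL(V)$. Using $\mathrm{def}\Leftrightarrow (2)$ inside $\GL(V)$, $\rho(\lambda) \prec \rho(\lambda')$ is equivalent to $\langle \omega_d, \rho(\lambda)\rangle \leq \langle \omega_d, \rho(\lambda')\rangle$ for each fundamental weight $\omega_d$ of $\GL(V)$. The $\omega_d$-pairing computes the largest eigenvalue of $\rho(\cdot)$ acting on $\wedge^d V$; decomposing $\wedge^d V$ into $G$-irreducibles with highest weights $\{\eta_i\}$, the key identification gives $\langle \omega_d, \rho(\lambda)\rangle = \max_i \langle \eta_i, \lambda\rangle$. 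Applying (2) with $\chi = \eta_i$ and taking the maximum in $i$ yields the desired inequality.

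The main technical point, used in both directions of $(1)\Leftrightarrow (2)$, is the extraction of the maximum $\lambda$-value on $V$ (or $\wedge^d V$) from the highest weights of its irreducible $G$-constituents; this uses simultaneously the dominance of $\lambda$ and the Bruhat-dominance of the highest weight over all other dominant weights of the representation. There is no serious obstacle here: the argument is a routine assembly of polyhedral duality and highest-weight theory, and one could alternatively cite \cite[Lem.~2.2]{RR1996} directly.
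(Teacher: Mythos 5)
The paper gives no proof of its own for this proposition: it simply cites \cite[Lem.\ 2.2]{RR1996} and \cite[(12.9), Prop.\ 12.18]{AB1983}. Your write-up is a reasonable expansion of that citation, and the overall shape -- polyhedral duality for $\mathrm{def}\Leftrightarrow(2)$, then highest-weight theory for $(1)\Leftrightarrow(2)$ with the identity $\max_{\mu \in \mathrm{wts}(V_\chi)}\langle\mu,\lambda\rangle = \langle\chi,\lambda\rangle$ doing the real work -- is the right one, and it also makes transparent why only finitely many $\rho$ need to be tested.

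There is, however, a genuine imprecision you need to repair: you treat $X^*(T)_\QQ$ as though $G$ were semisimple. When $G$ has positive-dimensional connected center (e.g.\ $\GL_n$ or $\mathrm{GSp}_4$, both appearing in \S\ref{sec:examples}), the set $\Delta^\vee$ is \emph{not} a $\QQ$-basis of $X_*(T)_\QQ$, so ``the $\QQ$-basis of $X^*(T)_\QQ$ dual to $\Delta^\vee$'' does not exist, and the dominant cone $X^*(T)^+_\QQ$ is not the $\QQ_{\geq 0}$-span of fundamental weights but the Minkowski sum of that span with the lineality space $V_Z := \{\chi : \langle\chi,\alpha^\vee\rangle = 0\ \text{for all}\ \alpha^\vee \in \Delta^\vee\}$. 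The fix is routine: first apply $(2)$ to $\pm\chi$ for $\chi \in V_Z$ (both are dominant) to force $\lambda'-\lambda$ into the coroot span, and only then use the fundamental-weight pairings; conversely $\mathrm{def}$ manifestly gives $\langle\chi,\lambda'-\lambda\rangle = 0$ for $\chi \in V_Z$ and $\geq 0$ for $\chi$ in the span of fundamental weights. The same gap recurs inside your $(2)\Rightarrow(1)$ step when you invoke ``$\mathrm{def}\Leftrightarrow(2)$ for $\GL(V)$'' but only test the pairings with $\omega_1,\dotsc,\omega_{\dim V - 1}$: since $\GL(V)$ has center $\GG_m$, you also need $\langle\det,\rho(\lambda)\rangle = \langle\det,\rho(\lambda')\rangle$. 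This equality is in fact automatic once you know $\lambda'-\lambda$ lies in the coroot span of $G$, because $\det\circ\rho\vert_T = \sum_\mu \dim(V_\mu)\,\mu$ is a Weyl-invariant character of $T$ and hence pairs trivially with every coroot -- but you should say so, since this is exactly the part of $\prec$ in $\GL(V)$ that the $\omega_d$-inequalities do not see.
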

 
 \begin{proof}  
 See \cite[Lem.\ 2.2]{RR1996} as well as \cite[(12.9) and Prop.\ 12.18]{AB1983}. 
 \end{proof} 

We now introduce Kisin modules with $G$-structure, which we call ``$G$-Kisin modules,'' and the corresponding Kisin varieties.  We refer the reader to \cite[\S 2]{levin2015} for background. For convenience, we work mod $p$ since this is all we need for the application. We also continue to assume that $K/\Qp$ is totally ramified.  
 
\begin{defn} Let $A$ be any $\F$-algebra, then a \emph{$G$-Kisin module over $A$} (with bounded height) is a $G$-bundle $\fP_A$ over $\fS_A := \fS \otimes_{\ZZ_p} A$ together with an isomorphism $\phi_{\fP_A}:\phz^*(\fP_A)[1/u] \cong \fP_A[1/u]$.  Denote the category of such objects by $\mathrm{GMod}^{\phz}_{\fS_A}$.   
\end{defn}

Since we working mod $p$, inverting $u$ is the same as inverting the Eisenstein polynomial $E(u)$.  Because of our assumption that $k = \F_p$, $\fS_{\F'}$ is complete local ring with finite residue field $\F'$.  Since $G$ is connected, any $G$-bundle over $\fS_{\F'}$ is trivial by Lang's theorem. The following is the analogue of the relative position of $\fM$ and $\phi(\phz^*(\fM))$ introduced in the previous section.  

\begin{defn} \label{GHodge} Let $\fP_{\F} \in \mathrm{GMod}^{\phz}_{\fS_{\F}}$.  We say that $\fP_{\F}$ has \emph{Hodge type} $\lambda \in X_*(T)$ if for any choice of trivialization $\beta$ of $\fP_{\F}$, we have 
\[
(\phi_{\fP_{\F}})_{\beta} \in G(\F[\![u]\!]) \lambda(u) G(\F[\![u]\!]).
\] 
where $(\phi_{\fP_{\F}})_{\beta}$ denotes the matrix for Frobenius with respect to $\beta$ and $\lambda(u) \in T(\F(\!(u)\!))$ is induced by $\lambda:\Gm \ra T$.  
\end{defn}  

\begin{defn} An \emph{\'etale $\cO_{\cE}$-module with $G$-structure} over $A$ is a $G$-bundle $\cP$ on $\Spec \cO_{\cE} \otimes_{\ZZ_p} A$ together with an isomorphism $\phi_{\cP}:\phz^*(\cP) \cong \cP$. 
\end{defn}

We can now define the $G$-analogue of Kisin varieties.
\begin{prop} 
\label{GKisinvar} 
Let $(\cP, \phi_{\cP})$ be an \'etale $\cO_{\cE}$-module with $G$-structure over $\F$ and let $\nu \in X_*(T)$. There exists a projective scheme over $\F$ such that for any $\F'/\F$, 
\[
X^{\nu}_{\cP} (\F') = \{ \fP_{\F'}[1/u] \cong \cP \otimes_{\F} \F'  \mid  \fP_{\F'} \in \mathrm{GMod}^{\phz}_{\fS_{\F'}} \text{ has Hodge type} \leq \nu \}.
\]
\end{prop}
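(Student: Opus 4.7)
The strategy is to realize $X^{\nu}_{\cP}$ as a closed subscheme of (a twist of) the affine Grassmannian $\mathrm{Gr}_G$ for $G$ over $\F$, and then invoke the classical projectivity of Schubert subvarieties. Recall that $\mathrm{Gr}_G$ is the ind-projective ind-$\F$-scheme with $R$-points $G(R(\!(u)\!))/G(R[\![u]\!])$, representing the functor of pairs $(\fP, \beta)$ where $\fP$ is a $G$-torsor on $\Spec R[\![u]\!]$ and $\beta$ is a trivialization of $\fP[1/u]$; for any dominant $\mu$, the closed Schubert subvariety $\mathrm{Gr}_G^{\leq \mu} \subset \mathrm{Gr}_G$ is a projective reduced $\F$-scheme (a standard result for split connected reductive $G$).

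The first step is to parametrize pairs $(\fP_R, \beta)$, where $\fP_R$ is a $G$-bundle on $\Spec \fS_R = \Spec R[\![u]\!]$ and $\beta : \fP_R[1/u] \risom \cP \otimes_\F R$, by the twisted affine Grassmannian $\mathrm{Gr}_{G,\cP}$. Since $G$ is split connected reductive, a Lang--Steinberg-type argument shows that $\cP$ trivializes after a finite extension of $\F$, so $\mathrm{Gr}_{G,\cP}$ is a form of $\mathrm{Gr}_G$ and, in particular, remains ind-projective over $\F$. Given a pair $(\fP_R, \beta)$, the Kisin structure is forced to be $\phi_{\fP_R} := \beta^{-1} \circ \phi_\cP \circ \phz^*(\beta)$. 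After locally trivializing $\fP_R$ and $\cP$, and lifting the corresponding $\mathrm{Gr}_{G,\cP}$-point to $h \in G(R(\!(u)\!))$, this Frobenius is represented by the explicit element $A_h := h^{-1} g_0 \phz(h) \in G(R(\!(u)\!))$, where $g_0 \in G(\F(\!(u)\!))$ encodes $\phi_\cP$. Replacing $h$ by $hk$ with $k \in L^+G$ sends $A_h$ to $k^{-1} A_h \phz(k)$, which lies in the same double coset; hence the image of $A_h$ in $L^+G \backslash LG / L^+G$ is a well-defined invariant of the $\mathrm{Gr}_{G,\cP}$-point, equal to the Hodge type of $\fP_R$ from Definition \ref{GHodge}.

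The Hodge type $\leq \nu$ condition states that $A_h$ lies in $\bigcup_{\nu' \leq \nu} L^+G \cdot \nu'(u) \cdot L^+G \subset LG$, equivalently that its image in $LG/L^+G = \mathrm{Gr}_G$ lies in the closed Schubert subvariety $\mathrm{Gr}_G^{\leq \nu}$. The morphism $LG \to LG$, $h \mapsto A_h$, is a morphism of ind-schemes; its preimage of the above closed $L^+G$-invariant subset is closed and $L^+G$-invariant, and descends along the $L^+G$-torsor $LG \to \mathrm{Gr}_{G,\cP}$ to a closed subscheme $X^\nu_\cP \subset \mathrm{Gr}_{G,\cP}$. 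Because $\nu$ bounds the height of $\fP_R$, this closed subscheme sits inside a finite union of Schubert subvarieties of $\mathrm{Gr}_{G,\cP}$, which is projective; hence $X^\nu_\cP$ is a projective $\F$-scheme. The main technical hurdle is rigorously carrying out this descent in families, i.e., constructing the ``relative-position'' or Frobenius-twist morphism as a morphism of ind-schemes and checking the scheme-theoretic descent along $L^+G$-torsors. An alternative route, sidestepping these issues, is to fix a faithful representation $\rho : G \hookrightarrow \GL(V)$, map $\fP \mapsto \rho_*\fP$ to embed the moduli problem into the already-constructed projective $\GL(V)$-Kisin variety $X^{\rho(\nu)}_{\rho_*\cP}$ (using Proposition \ref{functcrit} to convert the Bruhat bound on $\nu$ into one on $\rho(\nu)$), and then cut out the $G$-reduction locus by Tannakian tensor-vanishing conditions coming from the realization of $G$ as the stabilizer in $\GL(V)$ of a family of tensors $\{s_i\} \subset V^{\otimes \bullet}$; these conditions are visibly closed, yielding $X^\nu_\cP$ as a closed subscheme of a projective scheme.
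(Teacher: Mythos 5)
Your proposal follows essentially the same approach as the paper: fix a trivialization of $\cP$ (possible by Lang's theorem since $G$ is connected and $\F$ is finite), realize the moduli problem inside the affine Grassmannian of lattices in $\cP$, observe that the Hodge type $\leq \nu$ condition on $g^{-1}g_0\phz(g)$ is the closed Schubert condition $S(\nu)\subset\Gr_G$, and obtain finite type/projectivity by embedding $G$ into some $\GL_n$. The one place your ``first route'' is too quick is the claim that ``because $\nu$ bounds the height, the closed ind-subscheme sits inside a finite union of Schubert subvarieties'': a closed ind-subscheme of an ind-projective ind-scheme is a priori only ind-projective, and the fact that the Hodge bound on the relative position $g^{-1}g_0\phz(g)$ actually confines $g$ itself to a finite union of Schubert strata is precisely the nontrivial point (in the $\GL_n$ case it is the content of the reference the paper cites for Remark~\ref{rem:nu_height}). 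Your ``alternative route'' — mapping to $X^{\rho(\nu)}_{\rho_*\cP}$ for a faithful $\rho: G\hookrightarrow\GL(V)$ and cutting out the $G$-reduction by closed tensor conditions — is exactly how the paper deduces schemeness from the $\GL_n$ case, so the proposal as a whole is correct.
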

\begin{proof}
Kisin varieties for reductive groups were first introduced in \S 6.1.2 of \cite{PR2009}.  Details of the construction appears in \cite[Prop.\ 3.3.9]{levin2015}.   For the convenience of the reader, we include a sketch of the argument. 

Fix a lattice $\fP_{\F, 0}$ over $\Spec \fS_{\F}$ of $\cP$ and let $g_0 \in G(\F(\!(u)\!))$ be the Frobenius on $\fP_{\F, 0}$.   Then $X^{\nu}_{\cP}$ is a closed subscheme of the affine Grassmannian $\Gr_{G}$ of ``lattices'' in $\fP_{\F,0}[1/u]$.   Let $S(\nu) \subset \Gr_G$ denote the closed Schubert variety corresponding to $\nu$.  For any lattice $\fP_{\F} = g \fP_{\F, 0}$ with $g \in  G(\F(\!(u)\!))$,  the condition of having Hodge type $\leq \nu$ is given by $g^{-1} g_0 \phz(g) \in  G(\F[\![u]\!]) \nu'(u) G(\F[\![u]\!])$ for $\nu' \leq \nu$ or, equivalently, $g^{-1} g_0 \phz(g)(\fP_{\F, 0}) \in S(\nu) \subset \Gr_G$.   Since $S(\nu)$ is closed, this defines a closed subscheme of $\Gr_G$.  The fact that $X^{\nu}_{\cP}$ is a scheme  (as opposed to an ind-scheme) can be deduced from the $\GL_n$ case (described in Remark \ref{rem:nu_height}) by embedding $G$ in some $\GL_n$.   
\end{proof}

By the Cartan decomposition for $G(\F(\!(u)\!))$, any $G$-Kisin module over $\F$ has a Hodge type $\lambda$ which is well-defined up conjugation.  If $\fP_{\F}$ has Hodge type $\lambda$, then define the \emph{Hodge vector} of $\fP_{\F}$ to be dominant representative for the conjugacy class of $\lambda$ in $X_*(T)$.  

It is a well-known consequence of the tensor product Theorem \ref{thm:tensor} that the HN-theory for $\Mod_{\fS_\F}^\phz$ carries over to the category of $G$-bundles $\mathrm{GMod}^\phz_{\fS_\F}$.  We briefly recall how to do this, referring to \cite{DOR2010} for the details of each step. 

\begin{prop}  \label{tpwithcoeff}
If $\fM_{\F}$ and $\fN_{\F}$ are semi-stable of slope $\alpha$ and $\beta$, then $\fM_{\F} \otimes_{\fS_{\F}} \fN_{\F}$ is semi-stable of slope $\alpha + \beta$. If $\fM_{\F}$ is semi-stable, then $\Sym^r_{\fS_\F} (\fM_{\F})$ and $\bigwedge^r_{\fS_\F} (\fM_{\F})$ are also semi-stable.  
\end{prop}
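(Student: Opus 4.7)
The plan is to reduce both assertions to the mod-$p$ tensor product theorem (Theorem \ref{thm:tensor}) combined with the stability of semi-stability under strict subquotients of the same slope (Corollary \ref{cor:subquotss}). The key preliminary observation is that for any $\fP_{\F} \in \Mod_{\fS_{\F}}^{\phz}$, semi-stability as an $\fS_{\F}$-Kisin module is equivalent to semi-stability when $\fP_{\F}$ is viewed as an object of $\Mod_{\fS}^{\phz}$; the non-trivial direction follows from Proposition \ref{prop:HNfiltoverF}, since the HN-filtration of $\fP_{\F}$ in $\Mod_{\fS}^{\phz}$ automatically consists of $\F$-subobjects, so any $\fS$-destabilizing strict subobject already gives an $\fS_{\F}$-destabilizing one.

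With this in place, given semi-stable $\fM_{\F}, \fN_{\F}$ of slopes $\alpha, \beta$, the observation makes them semi-stable in $\Mod_{\fS}^{\phz}$, and Theorem \ref{thm:tensor} yields that $\fM_{\F} \otimes_{\fS} \fN_{\F}$ is semi-stable of slope $\alpha + \beta$ in $\Mod_{\fS}^{\phz}$. There is a canonical $\phz$-equivariant $\fS$-linear surjection
\[
\fM_{\F} \otimes_{\fS} \fN_{\F} \twoheadrightarrow \fM_{\F} \otimes_{\fS_{\F}} \fN_{\F},
\]
and its kernel is a strict $\fS$-subobject because the target is a projective (hence free) module over the DVR $\fS_{\F} = \F[\![u]\!]$ and therefore $u$-torsion free over $\fS$. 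The evident adaptation of Lemma \ref{wedgeslope} and Proposition \ref{prop:ten_add} to Kisin modules over the DVR $\fS_{\F}$ (together with the fact that the $\fS$-slope of a $\fS_{\F}$-module equals its $\fS_{\F}$-slope, as the respective ranks and degrees both scale by $[\F:\F_p]$) gives $\mu(\fM_{\F} \otimes_{\fS_{\F}} \fN_{\F}) = \alpha + \beta$, matching the slope of the source. Corollary \ref{cor:subquotss} then yields semi-stability of $\fM_{\F} \otimes_{\fS_{\F}} \fN_{\F}$ in $\Mod_{\fS}^{\phz}$, which descends to $\Mod_{\fS_{\F}}^{\phz}$ by the initial observation.

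For $\Sym^r$ and $\bigwedge^r$, iterate the first part to see that the $r$-fold $\fS_{\F}$-tensor power $\fM_{\F}^{\otimes r}$ is semi-stable of slope $r\alpha$. Both $\Sym^r_{\fS_{\F}} \fM_{\F}$ and $\bigwedge^r_{\fS_{\F}} \fM_{\F}$ are canonical Kisin-module quotients of $\fM_{\F}^{\otimes r}$ and, being projective $\fS_{\F}$-modules, these quotients are strict. Their slopes are each computed to be $r\alpha$ via the standard determinant identities
\[
\det \Sym^r_{\fS_{\F}} \fM_{\F} = (\det \fM_{\F})^{\binom{n+r-1}{r-1}}, \qquad \det \bigwedge^r_{\fS_{\F}} \fM_{\F} = (\det \fM_{\F})^{\binom{n-1}{r-1}}
\]
(where $n = \rk_{\fS_{\F}} \fM_{\F}$) combined with the $\fS_{\F}$-version of Lemma \ref{wedgeslope}. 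A final application of Corollary \ref{cor:subquotss} concludes.

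The main technical point is the slope equality $\mu(\fM_{\F} \otimes_{\fS_{\F}} \fN_{\F}) = \alpha + \beta$: because the $\fS$-rank of $\fM_{\F} \otimes_{\fS_{\F}} \fN_{\F}$ is smaller than that of $\fM_{\F} \otimes_{\fS} \fN_{\F}$ by a factor of $[\F:\F_p]$, the slope cannot simply be inherited through the surjection, and one must carry out the determinant computation directly over the DVR $\fS_{\F}$. Everything else is a formal consequence of the HN-axiom package from \S\ref{sec:HN} together with the tensor product theorem.
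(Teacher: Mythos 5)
Your proposal is correct and follows essentially the same route as the paper's proof: reduce the tensor-product claim to Theorem~\ref{thm:tensor} via the canonical surjection $\fM_{\F}\otimes_{\fS}\fN_{\F} \twoheadrightarrow \fM_{\F}\otimes_{\fS_{\F}}\fN_{\F}$ together with Corollary~\ref{cor:subquotss}, then obtain the $\Sym^r$ and $\bigwedge^r$ cases as equal-slope subquotients of $\fM_\F^{\otimes r}$ by the same mechanism. The one place you go beyond the paper's terse argument is in explicitly verifying $\mu(\fM_{\F}\otimes_{\fS_{\F}}\fN_{\F}) = \alpha + \beta$ via elementary divisors over the DVR $\fS_\F$ — this check is genuinely required to invoke Corollary~\ref{cor:subquotss}, so it is a worthwhile detail to record even though the paper treats it as implicit.
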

\begin{proof} 
Both $\Sym^r_{\fS_\F} (\fM_{\F})$ and $\bigwedge^r_{\fS_\F} (\fM_{\F})$ are subquotients of $\otimes^r_{\fS_\F} \fM_{\F}$ with the same slope so the first part implies the second part using Corollary \ref{cor:subquotss}.  

For the first part, we have that $\fM_{\F} \otimes_{\fS_{\F}} \fN_{\F}$ is a quotient of the tensor product $\fM_{\F} \otimes_{\fS} \fN_{\F}$ (without coefficients) which is semi-stable of the desired slope by Theorem \ref{thm:tensor} and so we can again apply Corollary \ref{cor:subquotss}.  
\end{proof} 

Let $\fM$ be a Kisin module with bounded height.  As before we have an HN-filtration 
\[
0 = \fM_0 \subset \fM_1 \subset \fM_2 \subset \ldots \subset \fM_n = \fM
\]
on $\fM$.  We index the filtration in the natural way following Proposition \ref{prop:HN_funct}, namely, for any $\alpha \in \Q$, we have $\fM^{\leq \alpha} = \bigcup_{i,  \mu(\fM_i/\fM_{i-1}) \leq \alpha} \fM_i$.  We will write $\Filt_{\fS_\F}$ to refer to the additive exact $\otimes$-category of finite projective $\fS_\F$-modules increasingly filtered by strict submodules and indexed as above. The tensor product theorem tells us that the HN-filtration yields an $\otimes$-functor $\Mod^\phz_{\fS_\F} \ra \Filt_{\fS_\F}$.  Exactness is not obvious. But in the context of Kisin module with $G$-structure, it holds true (Theorem \ref{thm:GHN}).  

\begin{prop}  
\begin{enumerate}
\item Let $\fM_{\F}, \fN_{\F}$ be Kisin modules with bounded height.  The HN-filtration on $\fM_{\F} \otimes_{\fS_{\F}} \fN_{\F}$ is induced by the HN-filtrations on $\fM_{\F}$ and $\fN_{\F}$ respectively.  
\item Let $\fM^{\vee}_{\F} = \Hom_{\fS_{\F}}(\fM_{\F}, \fS_{\F})$.  The HN-filtration on $\fM^{\vee}_{\F}$ is dual to the HN-filtration on $\fM_{\F}$.    
\end{enumerate}
\end{prop}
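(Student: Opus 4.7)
The plan is to establish both statements by constructing explicit filtrations with semi-stable subquotients of strictly increasing slope, and then invoking the uniqueness in Theorem \ref{thm:HNfilt}.

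For part (1), I would define the \emph{convolution filtration}
\[
(\fM_{\F} \otimes_{\fS_{\F}} \fN_{\F})^{\leq \alpha} := \sum_{\beta + \gamma \leq \alpha} \fM_{\F}^{\leq \beta} \otimes_{\fS_{\F}} \fN_{\F}^{\leq \gamma},
\]
where $\beta, \gamma$ range over the slopes appearing in the HN-filtrations of $\fM_{\F}$ and $\fN_{\F}$. Since each step in the HN-filtration is a strict subobject, all pieces $\fM_{\F}^{\leq \beta}$, all quotients $\fM_{\F}/\fM_{\F}^{\leq \beta}$, and all graded subquotients $\gr^\beta \fM_{\F}$ are $\fS_{\F}$-flat, and similarly for $\fN_{\F}$. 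From this flatness I would verify that the associated graded at level $\alpha$ is naturally
\[
\gr^\alpha(\fM_{\F} \otimes_{\fS_{\F}} \fN_{\F}) \cong \bigoplus_{\beta + \gamma = \alpha} \gr^\beta \fM_{\F} \otimes_{\fS_{\F}} \gr^\gamma \fN_{\F}.
\]
Each summand on the right is semi-stable of slope $\beta + \gamma = \alpha$ by the tensor product theorem with coefficients, Proposition \ref{tpwithcoeff}. Since finite direct sums of semi-stable objects of the same slope are again semi-stable (Proposition \ref{prop:ext_ss}), $\gr^\alpha$ is semi-stable of slope $\alpha$. The slopes are strictly increasing in $\alpha$ by construction, so uniqueness in Theorem \ref{thm:HNfilt} identifies the convolution filtration with the HN-filtration of $\fM_{\F} \otimes_{\fS_{\F}} \fN_{\F}$.

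For part (2), I would first establish the numerical behavior of duality: if $\fM_{\F}$ has height in $[a,b]$, then $\fM_{\F}^\vee$ has height in $[-b,-a]$, and an exterior-power computation paralleling Lemma \ref{wedgeslope} yields $\rk(\fM_{\F}^\vee) = \rk(\fM_{\F})$ and $\deg(\fM_{\F}^\vee) = -\deg(\fM_{\F})$, hence $\mu(\fM_{\F}^\vee) = -\mu(\fM_{\F})$. Since $\Hom_{\fS_{\F}}(-,\fS_{\F})$ is an involutive exact anti-equivalence on finite projective Kisin modules (cf.\ Remark \ref{rem:rigid}), it induces a slope-negating, order-reversing bijection between strict subobjects of $\fM_{\F}^\vee$ and strict quotients of $\fM_{\F}$; in particular $\fM_{\F}$ is semi-stable if and only if $\fM_{\F}^\vee$ is, with opposite slope. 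Given the HN-filtration $0 = \fM_0 \subset \fM_1 \subset \dotsm \subset \fM_n = \fM_{\F}$, dualizing would then yield
\[
0 \subset (\fM_{\F}/\fM_{n-1})^\vee \subset (\fM_{\F}/\fM_{n-2})^\vee \subset \dotsm \subset (\fM_{\F}/\fM_0)^\vee = \fM_{\F}^\vee,
\]
whose $i$th graded piece is $(\fM_{n-i+1}/\fM_{n-i})^\vee$, semi-stable of slope $-\mu(\fM_{n-i+1}/\fM_{n-i})$. These slopes are strictly increasing in $i$, so uniqueness in Theorem \ref{thm:HNfilt} identifies this with the HN-filtration of $\fM_{\F}^\vee$.

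The main technical obstacle I anticipate is the graded identification in part (1). It rests crucially on the $\fS_{\F}$-flatness of every piece of the HN-filtrations, which is guaranteed because the HN-filtration consists of strict subobjects (Proposition \ref{prop:HN_main}(1)). Once this flatness is in hand, one computes the graded pieces inductively by processing the two filtrations one factor at a time, using that tensoring with a flat module is exact; everything else is a formal consequence of the tensor product theorem with coefficients and the uniqueness of the HN-filtration.
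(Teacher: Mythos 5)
Your proof is correct and follows the standard argument; the paper itself offers no proof beyond citing \cite[Prop.\ 1.3.6]{DOR2010}, which is exactly the convolution/dual-filtration argument you spell out (construct the candidate filtration, check that the graded pieces are semi-stable of strictly increasing slope via Proposition \ref{tpwithcoeff} and Proposition \ref{prop:ext_ss}, and invoke uniqueness in Theorem \ref{thm:HNfilt}). The key technical point you identify --- that all steps of the HN-filtration are $\fS_{\F}$-projective direct summands, so the filtration on the tensor product splits as an $\fS_{\F}$-module and the graded pieces decompose as claimed, and likewise that dualizing is exact on short exact sequences of projectives --- is exactly what makes the identifications clean.
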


\begin{proof} 
See \cite[Prop.\ 1.3.6]{DOR2010}.
\end{proof}

Let $\fP_{\F}$ be a $G$-Kisin module over $\F$.  This induces an exact $\otimes$-functor 
$$
\Rep_{\F}(G) \ra  \Mod_{\fS_{\F}}^{\phz}
$$
which we denote $V \mapsto \fP_{\F}(V)$.  

\begin{thm} \label{thm:GHN} 
The HN-filtration on $\fP_{\F}(V)$ for all $V \in \Rep_{\F}(G)$ induces an exact $\otimes$-functor $\cF^{HN}: \Rep_{\F}(G) \ra \mathrm{Filt}_{\fS_{\F}}$ on $\fP_{\F}$.   
\end{thm}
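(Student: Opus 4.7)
The plan is to derive Theorem \ref{thm:GHN} from the tensor product theorem with coefficients (Proposition \ref{tpwithcoeff}), combined with the general theory of filtered $\otimes$-functors on neutral Tannakian categories, along the same template as the filtered isocrystal case treated in \cite[\S1.3, \S4.2]{DOR2010}. There are two things to check: tensor compatibility and exactness.

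First I would address tensor compatibility, which follows directly from Proposition \ref{tpwithcoeff}. Given $V, W \in \Rep_{\F}(G)$, the natural $\otimes$-structure on $\fP_{\F}(-)$ yields an isomorphism $\fP_{\F}(V \otimes W) \simeq \fP_{\F}(V) \otimes_{\fS_{\F}} \fP_{\F}(W)$ in $\Mod_{\fS_{\F}}^{\phz}$. Proposition \ref{tpwithcoeff} implies that for each $\gamma \in \Q$ the submodule
\[
\sum_{\alpha + \beta \leq \gamma} \fP_{\F}(V)^{\leq \alpha} \otimes_{\fS_{\F}} \fP_{\F}(W)^{\leq \beta}
\]
has semi-stable graded pieces of the appropriate slope, so this tensor product filtration must coincide with $\cF^{HN}(V \otimes W)$ by the uniqueness clause of Theorem \ref{thm:HNfilt}. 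Compatibility with duals is parallel, using $\fP_{\F}(V^{\vee}) \simeq \fP_{\F}(V)^{\vee}$ together with the dual formula for HN-filtrations. This gives $\cF^{HN}$ the structure of a $\otimes$-functor.

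Exactness is more subtle. Given a short exact sequence $0 \to V' \to V \to V'' \to 0$ in $\Rep_{\F}(G)$, the induced sequence $0 \to \fP_{\F}(V') \to \fP_{\F}(V) \to \fP_{\F}(V'') \to 0$ is strict short exact in $\Mod_{\fS_{\F}}^{\phz}$, and Proposition \ref{prop:HN_funct} gives the inclusions $\cF^{HN}(V')^{\leq \alpha} \subseteq \cF^{HN}(V)^{\leq \alpha} \cap \fP_{\F}(V')$ together with surjections $\cF^{HN}(V)^{\leq \alpha} \twoheadrightarrow \cF^{HN}(V'')^{\leq \alpha}$. Granted tensor compatibility, showing these are equalities is equivalent to producing a canonical reduction of structure of $\fP_{\F}$ to a parabolic subgroup $P \subseteq G_{\fS_{\F}}$ such that $\cF^{HN}(V)$ is induced by the $P$-filtration on $V$ for every $V$, from which exactness is automatic since $P$-filtrations of $G$-representations are manifestly exact on short exact sequences.

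The main obstacle will be the construction of this canonical parabolic reduction. Adapting the strategy of \cite[\S4.2]{DOR2010}, one uses that the HN-filtration on each $\fP_{\F}(V)$ has only finitely many jumps, and that its graded pieces are projective $\fS_{\F}$-modules of constant rank (by the comments of \S\ref{subsec:KM_coefs} applied to the semi-stable Kisin modules appearing as graded pieces). Passing to the associated-graded $\otimes$-functor $V \mapsto \gr^{\bullet} \cF^{HN}(V)$ with values in $\Q$-graded Kisin modules, Tannakian reconstruction over $\fS_{\F}$ produces a cocharacter $\chi : \Gm \to G_{\fS_{\F}}$ up to $G_{\fS_{\F}}$-conjugacy; the associated parabolic $P := P_{\chi}$ is the desired reduction. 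All remaining verifications are then formal.
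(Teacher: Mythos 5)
Your tensor-compatibility argument is sound and matches the paper's approach via Proposition \ref{tpwithcoeff} and the uniqueness clause of Theorem \ref{thm:HNfilt}. The exactness step, however, as you have written it, is circular. You are correct that exactness of $\cF^{HN}$ is equivalent to the existence of a parabolic reduction of $\fP_\F$ inducing the HN-filtration, but you then propose to \emph{construct} that parabolic by passing to the graded $\otimes$-functor $V \mapsto \gr^\bullet \cF^{HN}(V)$ and invoking Tannakian reconstruction to produce a cocharacter. Tannakian reconstruction requires the graded $\otimes$-functor to be an \emph{exact} $\otimes$-functor (a fiber functor), and exactness of $\gr^\bullet \cF^{HN}$ on short exact sequences in $\mathrm{Rep}_\F(G)$ is equivalent to strictness of $\cF^{HN}$ --- precisely what is to be proved. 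The phrase ``All remaining verifications are then formal'' buries the non-formal crux of the whole theorem.

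The missing ingredient, which is the substance of \cite[Thm.\ 5.3.1]{DOR2010} that the paper cites, is Saavedra Rivano's theorem (developed in the very \cite[\S4.2]{DOR2010} you reference): a functorial, $\otimes$-compatible filtration of a fiber functor on a neutral Tannakian category is \emph{automatically} exact. Equivalently, any $\otimes$-functor from a Tannakian category into finite projective modules over a ring is automatically exact --- a rigidity argument, not a formal check. Your observations that the graded pieces are projective $\fS_\F$-modules of constant rank and that the filtration is $\otimes$-compatible are exactly the hypotheses this theorem needs, so you are close, but you must cite or prove it; you cannot bypass it by constructing the cocharacter first. Once the theorem is invoked, exactness follows and the parabolic reduction of $\fP_\F$ is then a \emph{consequence} (using triviality of $G$-torsors over the complete local ring $\fS_\F$ with finite residue field). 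Reverse your direction of implication: $\otimes$-compatibility yields exactness (by Saavedra), and exactness yields the parabolic --- not the other way around.
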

\begin{proof}
The only remaining issue is whether $\cF^{HN}$ is strict in exact sequences.  The same argument as in \cite[Thm.\ 5.3.1]{DOR2010} works here as well. 
\end{proof}

Choose a $G$-bundle trivialization of $\fP_{\F}$.  The filtration $\cF^{HN}$ modulo $u$ gives rise to a filtration on the fiber functor, i.e., a tensor exact functor
\[
\omega^{HN}_G:\Rep_{\F}(G) \ra \mathrm{Filt}_{\F}.
\]
\begin{defn}
Associated to $\omega^{HN}_G$ is a conjugacy class of rational cocharacters in $X_*(T)_{\QQ}$.  Define the \emph{HN-vector}  $\lambda_{\HN}$ of $\fP_{\F}$  to be the dominant representative for this conjugacy class.   
\end{defn}

 \begin{prop} 
 Let $\fP_{\F}$ be $G$-Kisin module over $\F$.  If $\fP_{\F}$ has Hodge type $\nu$, then $g \cdot \lambda_{\HN} \prec \nu$. 
 \end{prop}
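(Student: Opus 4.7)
The plan is to reduce the assertion $g \cdot \lambda_\HN \prec \nu$ to its $\GL_n$ avatar (Proposition \ref{HNoverHodge}), using the Tannakian characterization of $\prec$ in Proposition \ref{functcrit}(1). Namely, it suffices to verify, for every representation $\rho: G \to \GL(V)$, that $\rho(g \cdot \lambda_\HN) \prec \rho(\nu)$ as rational dominant cocharacters of $\GL(V)$. Fixing $\rho$, I would focus on the associated Kisin module $\fM := \fP_\F(V) \in \Mod_{\fS_\F}^{\phz}$ and interpret both sides as polygons attached to $\fM$.

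The first step is twofold. On one hand, the Hodge type of $\fM$ is the dominant conjugate of $\rho(\nu)$: choosing a trivialization of $\fP_\F$ (which exists by Lang's theorem since $G$ is connected) presents the Frobenius of $\fP_\F$ in $G(\F\lb u\rb)\nu(u)G(\F\lb u\rb)$, and applying $\rho$ places the Frobenius of $\fM$ in $\GL(V)(\F\lb u\rb)\rho(\nu)(u)\GL(V)(\F\lb u\rb)$. On the other hand, the HN-vector of $\fM$ is $\rho(\lambda_\HN)$: by Theorem \ref{thm:GHN}, the HN-filtration of $\fM$ is the tensor-exact functor $\cF^\HN$ applied to $V$ (viewed via $\rho$), so its reduction modulo $u$ is $\omega^\HN_G(V)$, whose associated cocharacter of $\GL(V)$ is, by construction of $\lambda_\HN$, the image $\rho(\lambda_\HN)$. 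Unwinding the normalization of slopes by $g$ from Definition \ref{defn:rk_deg}, the slopes of $g \cdot \HN_\F(\fM)$, counted with multiplicity, then coincide precisely with the entries of $\rho(g \cdot \lambda_\HN)$ in dominant form.

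With these identifications in hand, I apply Proposition \ref{HNoverHodge} to $\fM$, viewed as an $\F$-point of the Kisin variety attached to its generic fiber. This yields the polygon inequality $g \cdot \HN_\F(\fM) \geq P_{\rho(\nu)^{\mathrm{dom}}}$ of convex polygons sharing their endpoints. By the equivalence between the polygon ordering and $\prec$ on $\GL_n$ recorded after Definition \ref{ordering}, this is exactly $\rho(g \cdot \lambda_\HN) \prec \rho(\nu)$. Since $\rho$ was arbitrary, Proposition \ref{functcrit}(1) concludes $g \cdot \lambda_\HN \prec \nu$.

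The main obstacle is really the identification in the second paragraph of the HN-vector of $\fP_\F(V)$ with $\rho(\lambda_\HN)$. This is the Tannakian shadow of the tensor product theorem: it is precisely the tensor-exactness of $\cF^\HN$ on $\Rep_\F(G)$ (Theorem \ref{thm:GHN}), which in turn rests on Theorem \ref{thm:tensor}, that ensures $\omega^\HN_G$ arises from a single well-defined conjugacy class of rational cocharacters of $T$ compatible with every choice of $\rho$. Once that identification is granted, the remaining argument is a clean translation between the polygon and cocharacter languages.
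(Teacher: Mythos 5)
Your proof is correct and follows essentially the same route as the paper's: reduce via Proposition \ref{functcrit}(1) to checking $\rho(g\lambda_\HN)\prec\rho(\nu)$ for every representation $\rho$, identify the Hodge type and HN-vector of $\fP_\F(V)$ functorially (the latter resting on Theorem \ref{thm:GHN}), and invoke Proposition \ref{HNoverHodge}. The paper compresses your second and third paragraphs into the single phrase that both polygons are functorial in $\Rep_\F(G)$; your version spells out that compatibility carefully, which is a useful elaboration but not a different argument.
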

 \begin{proof} By Proposition \ref{functcrit}, it suffices to check the claim for any representation $V$ of $G$.  Since both the Hodge and HN-polygons are functorial in representations of $G$, we are reduced to Proposition \ref{HNoverHodge}.   
 \end{proof}

\begin{thm} \label{GHNstrata} Let $\cP = \fP_{\F}[1/u]$ for $\fP_{\F} \in \mathrm{GMod}^{\phz}_{\fS_{\F}}$.  There is a decomposition 
$$
X^{\nu}_{\cP} = \bigcup_{g \lambda \prec  \nu} X^{\nu, \lambda}_{\cP} 
$$
by locally closed reduced subschemes indexed by dominant $\lambda \in X_*(T)_{\QQ}$ such that the closed points of $X^{\nu, P}_{\cM}$ are $G$-Kisin modules with HN-vector $\lambda$. 
\end{thm}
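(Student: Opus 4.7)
The plan is to reduce Theorem \ref{GHNstrata} to the $\GL_n$ case (Corollary \ref{HNstrata}) by exploiting the Tannakian formalism of Theorem \ref{thm:GHN}. For each representation $\rho: G \to \GL(V)$ with $V \in \Rep_\F(G)$, the $\otimes$-functor $\fP \mapsto \fP(V)$ carries the universal family of $G$-Kisin modules over $X^\nu_\cP$ to a family of $\GL(V)$-Kisin modules with constant generic fiber $\cP(V)$ and Hodge type bounded by the dominant Weyl conjugate $\nu_V$ of $\rho \circ \nu$ (the latter bound follows from compatibility of the Cartan decomposition with $\rho$). This induces a morphism of projective $\F$-schemes
\[
\rho_*: X^\nu_\cP \lra X^{\nu_V}_{\cP(V)}.
\]

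Next, I would pull back the $\GL_n$ HN-polygon stratification and then refine to HN-vectors. Applying Corollary \ref{HNstrata} to $X^{\nu_V}_{\cP(V)}$ and pulling back via $\rho_*$ yields a locally closed decomposition of $X^\nu_\cP$ on which the HN-polygon of $\fP(V)$ is constant. To upgrade this to a stratification by HN-vectors, I would use that by Proposition \ref{functcrit}(2), a dominant $\lambda \in X_*(T)_\QQ$ is determined by the integers $\langle \chi_i, \lambda\rangle$ as $\chi_i$ ranges over the fundamental dominant weights of $G$. By Theorem \ref{thm:GHN}, the HN-polygon of $\fP(V_{\chi_i})$ is governed by the $\lambda_{HN}$-weight decomposition of $V_{\chi_i}$; in particular its largest slope equals $\langle \chi_i, \lambda_{HN}\rangle$, since $\chi_i$ is the highest weight and $\lambda_{HN}$ is dominant. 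Because the possible HN-vectors satisfy $g\lambda_{HN} \prec \nu$ and lie in $\tfrac{1}{g}X_*(T)$, there are only finitely many of them, so finitely many fundamental representations $V_{\chi_1}, \ldots, V_{\chi_r}$ suffice to separate them. Intersecting the corresponding pullback stratifications and taking the reduced induced scheme structure yields the desired decomposition $X^\nu_\cP = \bigcup_{g\lambda \prec \nu} X^{\nu,\lambda}_\cP$.

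The main obstacle I anticipate is upgrading $\rho_*$ from a map of point-sets to an actual morphism of schemes compatible with base change on the parameter space. This should follow from the functorial construction of $X^\nu_\cP$ as a closed subscheme of the affine Grassmannian sketched in Proposition \ref{GKisinvar}: the representation $\rho$ induces a morphism $\Gr_G \to \Gr_{\GL(V)}$, and one must check that the Hodge-type bound remains a closed condition under this morphism, which reduces to the corresponding statement for Schubert varieties. The remaining ingredients---semi-continuity of HN-polygons in families with constant generic fiber (Theorem \ref{semi}), exactness of $\cF^{HN}$ as a tensor functor (Theorem \ref{thm:GHN}), and the finiteness of the combinatorial data---are all already in place.
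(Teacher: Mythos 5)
Your proposal matches the paper's proof in essence: push forward via $\rho_*: X^\nu_\cP \to X^{\rho(\nu)}_{\cP(V)}$, pull back the $\GL_n$ HN-polygon stratification from Corollary \ref{HNstrata}, and intersect over finitely many representations. The paper is terser---it simply asserts that finitely many representations determine the conjugacy class of $\lambda$ without singling out the fundamental weights---but your more explicit use of the $\langle\chi_i,\lambda\rangle$ pairings and Proposition \ref{functcrit} is a reasonable way to justify that finiteness, so the two arguments are the same in substance.
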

\begin{proof}
By construction, the HN-vector of $\fP_{\F} \in X^{\nu}_{\cP}(\F)$ is determined by the HN-polygons of the collection $\{\fP_{\F}(V)\}_{V \in \Rep_{\F}(G)}$.   For any representation $\rho:G \ra \GL(V)$, we have an induced map
$$
\rho_*:X^{\nu}_{\cP} \ra X^{\rho(\nu)}_{\cP(V)}.
$$
The desired locally closed subscheme is then given by $X^{\nu, \lambda}_{\cP}$
\[
\bigcap_{\rho} \rho^{-1}_*( X^{\rho(\nu), P_{\rho(\lambda)}}_{\cP(V)})
\]
where $P_{\rho(\lambda)}$ is the polygon associated to the rational conjugacy class of $\rho(\lambda)$. The intersection is only a finite intersection since the conjugacy class of a fixed $\lambda$ is determined by its image under finitely many representations of $G$.
\end{proof}

We end with generalization of Corollary \ref{firstsegment} which includes as a special case the construction in \cite[\S 2.4.13]{mffgsm}. 

\begin{thm} 
\label{thm:HNcomp} 
For any dominant weight $\chi \in X^*(T)$, let $S_{\chi}$ denote the collection of $\lambda \in X_*^+(T)_ {\QQ}$ such that $\langle \chi, \nu - g \lambda \rangle = 0$.   Then, 
\[
X^{\nu}_{\cP, \chi} := \bigcup_{\lambda \in S_{\chi}} X^{\nu, \lambda}_{\cP}
\]
is both open and closed in $X^{\nu}_{\cP}$.  
\end{thm}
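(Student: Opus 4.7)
The plan is to reduce Theorem \ref{thm:HNcomp} to Corollary \ref{firstsegment} via an appropriate choice of representation of $G$. Fix $V$ a representation of $G$ over $\F$ whose lowest weight is $-\chi$ with multiplicity one (for example, the Weyl module with highest weight $-w_0\chi$, or equivalently the dual $V_\chi^*$, where $w_0$ is the longest element of the Weyl group). All other weights $w$ of $V$ then satisfy that $w+\chi$ is a non-negative combination of positive roots, so that $\min_w \langle w, \mu\rangle = -\langle \chi, \mu\rangle$ for any dominant $\mu \in X_*^+(T)_{\Q}$. Consider the induced morphism of projective $\F$-schemes
\[
\rho_*: X^\nu_\cP \longrightarrow X^{\rho(\nu)}_{\cP(V)},
\]
where $\rho(\nu)$ is understood via its dominant representative as a cocharacter of $\GL(V)$.

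The key step is to identify $X^\nu_{\cP, \chi}$ with the preimage under $\rho_*$ of a specific open-closed locus. For a closed point $x \in X^\nu_\cP(\overline\F)$ with HN-vector $\lambda_x \in X_*^+(T)_{\Q}$, the tensor functoriality of the HN-filtration (Theorem \ref{thm:GHN}, a direct consequence of the tensor product theorem) implies that the HN-polygon of $\fP_x(V)$ regarded as a $\GL(V)$-Kisin module has slopes $\{\langle w, \lambda_x\rangle\}$ as $w$ runs through the weights of $V$. Consequently, its smallest HN-slope is $-\langle \chi, \lambda_x\rangle$. Similarly, the smallest Hodge exponent of $\rho(\nu)$ equals $a_1 := -\langle \chi, \nu\rangle$. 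Accounting for the normalization $g\lambda_x \prec \nu$, which places Hodge exponents and HN-slopes into the relation $(\text{Hodge exponent}) = g \cdot (\text{HN-slope})$ when equality is attained, the condition $\lambda_x \in S_\chi$ (that is, $\langle \chi, \nu - g\lambda_x\rangle = 0$) becomes equivalent to the equality of the smallest HN-slope of $\fP_x(V)$ with $a_1/g$.

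Next, I would invoke Corollary \ref{firstsegment} for $X^{\rho(\nu)}_{\cP(V)}$: letting $d_{\max} \geq 1$ denote the multiplicity of $a_1$ in $\rho(\nu)$, for each $d \in \{1, \ldots, d_{\max}\}$ the locus $\bigcup_{P \in S_d} X^{\rho(\nu), P}_{\cP(V)}$ is a union of connected components. Their finite union $U$ is therefore open and closed. A short convexity argument (using Proposition \ref{HNoverHodge} applied at $x=1$ to force the first HN-slope to be at least $a_1/g$) shows that $U$ is precisely the locus of points whose smallest HN-slope equals $a_1/g$. In view of the identification above on closed points, $\rho_*^{-1}(U)$ and $X^\nu_{\cP, \chi}$ have the same underlying set; since $\rho_*^{-1}(U)$ is open and closed in $X^\nu_\cP$, so is $X^\nu_{\cP, \chi}$.

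The main obstacle will be the second paragraph: precisely translating the $G$-theoretic condition on the HN-vector $\lambda_x$ into the $\GL(V)$-theoretic condition about the smallest slope of the HN-polygon of $\fP_x(V)$. This rests crucially on the tensor product theorem (through Theorem \ref{thm:GHN}) and on careful bookkeeping of normalization conventions, in particular the factor of $g$ relating the rational HN-vector (whose pairing with weights yields HN-slopes) to the integral Hodge cocharacter (whose pairing with weights yields Hodge exponents).
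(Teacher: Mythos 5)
Your proposal is correct and takes essentially the same approach as the paper: pass to the dual representation $V_\chi^*$ (lowest weight $-\chi$), push forward via $\rho_*$ to the $\GL(V)$-Kisin variety, and identify $X^\nu_{\cP,\chi}$ with the preimage of the open-closed locus produced by Corollary~\ref{firstsegment}. Your second paragraph, translating the condition $\langle\chi, \nu - g\lambda\rangle = 0$ into the statement that the first HN-slope of $\fP_x(V)$ equals the first Hodge exponent divided by $g$, fills in the pairing-with-weights computation that the paper states more tersely.
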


\begin{proof}
Let $V_{\chi}$ denote the representation of $G$ with highest weight $\chi$. Let $\rho:G \ra \GL(V_{\chi}^*)$ be the dual representation and set $\cM = \cP(V_{\chi}^*)$.  Consider the induced map on Kisin varieties
$$
\rho_*:X^{\nu}_{\cP} \ra X^{\rho(\nu)}_{\cM}.
$$  
The dual representation has lowest weight $-\chi$.  In particular, if we choose a dominant representative for $\rho(\nu)$, then $\rho(\nu) = (\langle -\chi, \nu \rangle, \ldots)$ and similarly for the HN-vector of any $\fP \in X^{\nu}_{\cP}$.  The condition that $\langle \chi, \nu - g \lambda \rangle = 0$ is exactly the condition that $\rho(\lambda) \in S_d$ for some $d \geq 1$ where $S_d$ is as in  Corollary \ref{firstsegment}.     Thus, 
\[
\bigcup_{\lambda \in S_{\chi}} X^{\nu, \lambda}_{\cP} = \rho^{-1} (\bigcup_{d\geq 1} X^{\rho(\nu)}_{\cM, d})
\]
which is open and closed. 
\end{proof}

If we assume that the adjoint group $G_{\ad}$ is simple of rank $\ell$, then we can make Theorem \ref{thm:HNcomp} more concrete. Let $\Sigma = \{ \chi_i \}_{1 \leq i \leq \ell}$ denote the fundamental weights. Then for any subset $J \subset \Sigma$, we can define an open and closed subscheme
\[
X^{\nu}_{\cP, J} := (\bigcap_{i \in J} X^{\nu}_{\cP, \chi_i}) \cap (\bigcap_{i \notin J}  (X^{\nu}_{\cP} \backslash X^{\nu}_{\cP, \chi_i})).
\]
Note that if $G$ is not simply connected, the fundamental representations may not be defined on $G$, but the representation with highest weight $n \chi_i$ will be for some integer $\chi$. Therefore, $X^{\nu}_{\cP, \chi_i}$ as defined in Theorem \ref{thm:HNcomp} still makes sense. 

\begin{rem}  
\label{rem:all_poly}
We expect that the $X^{\nu}_{\cP, J}$ captures all discrete invariants which arise from HN-polygons.   
\end{rem}

\begin{rem}  If one considers the case of $G = \GL_n$, the fundamental representations are exterior powers of the standard representation and so Theorem \ref{thm:HNcomp} is equivalent to the local constancy of the \'etale rank (Proposition \ref{prop:upperetrank}).
\end{rem}

\begin{rem}
\label{rem:not_all}
  Even when $G = \GL_2$, the HN-polygon alone is not sufficient to capture all connected components.  For example, when $\overline{\rho}$ is unramified of dimension two with distinct characters, the Kisin variety can have two ordinary components in addition to a non-ordinary component (see \cite[Thm.\ 1.2]{hellmann2009}).  Thus, we do not conjecture that  the $X^{\nu}_{\cP, J}$ are connected, though one might expect this to be true under suitable hypotheses on $\cP$. 
\end{rem}

\section{HN-theory applied to flat Kisin modules} 
\label{sec:flat}

Recall that a \emph{flat} Kisin module is a Kisin module whose underlying $\fS$-module is flat, with the point of emphasis being that it is $p$-torsion free. In this section, our goal is to deduce consequences about flat Kisin modules from the Harder--Narasimhan theory for torsion Kisin modules that we developed in \S\ref{sec:HN}.  One nice consequence is that one gets distinguished representatives of an isogeny class whose HN-polygon does not change under reduction mod $p$  (Theorem \ref{thm:HN_isog}).   In \S 6.5, we show that if $\fM$ is a flat Kisin module, then our HN-filtration on $\fM/p^n\fM$ is an approximation to the HN-filtration which comes from Hodge--Tate filtration on $p$-adic representations of $\Gamma_K = \Gal(\overline{K}/K)$. In particular, see Theorem \ref{thm:HN-HT}. 

\subsection{Deformation theory}

The main case of interest is when we have a flat Kisin module $\fM$, and we study $\fM/p^n\fM$ for $n \geq 1$ deforming $\fM/p\fM$. In this section, we will give a brief indication of what can be said about deformations of a $p$-torsion Kisin module to an $A$-Kisin module for general $A$. Afterward we will focus only on the case $A = \bZ_p/p^n$. 

Let $A$ be a local Artinian $\bZ_p$-algebra with residue field $A/\fm_A \cong \bF$, a finite extension of $\bF_p$. Recall that $\Mod_{\fS_A}^\phz$ refers to the category of $A$-Kisin modules $(\fM_A, \phi_{\fM_A})$, and the underlying $\fS_A$-module $\fM_A$ is projective of constant rank.  
\begin{defn}
Let $\fM_\bF \in \Mod_{\fS_\F}^\phz$. A \emph{deformation} of $\fM_\bF$ to $A$ is an $A$-Kisin module $\fM_A \in \Mod_{\fS_A}^\phz$ such that there is a $\Mod_{\fS_\bF}^\phz$-isomorphism $\fM_A \otimes_A \bF \simeq \fM_\bF$. 
\end{defn}

Here we give the following general description of the behavior of HN-filtrations under deformation to Artinian local $A$ as above. 

\begin{prop}
\label{prop:def_main}
Fix a Kisin module $\fM_\bF \in \Mod^\phz_{\fS_\F}$ with its HN-filtration 
\[
\fM_{\F, 0} = 0 \subset \fM_{\F, 1} \subset \fM_{\F,2} \subset \ldots \subset \fM_{\F, n} = \fM_{\F},
\]
into strict subobjects in $\Mod_{\fS_\F}^\phz$. Let $\fM_A$ be a deformation of $\fM_\bF$ to $A$. Then the HN-filtration on $\fM_A$ into objects of $\Mod_{\fS, \tor}^\phz$ 
\[
\fM_{A, 0} = 0 \subset \fM_{A, 1} \subset \fM_{A,2} \subset \ldots \subset \fM_{A, m} = \fM_{A},
\]
consists of generalized $A$-Kisin modules and $\mu(\fM_A) = \mu(\fM_\bF)$. Moreover, 
\begin{equation}
\label{eq:end_slopes}
\mu(\fM_{\F, 1}) = \mu(\fM_{A,1}) \quad \text{and} \quad \mu(\fM_{\F,n}) = \mu(\fM_{A,m}).
\end{equation}
In particular, $\fM_\bF$ is semi-stable if and only if $\fM_A$ is semi-stable. 
\end{prop}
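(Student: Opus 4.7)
My strategy rests on the HN-functoriality results of \S\ref{subsec:HN-theory} combined with the $\fm_A$-adic filtration on $\fM_A$. First, since multiplication by any $a \in A$ defines a $\phi$-equivariant endomorphism of $\fM_A$, Corollary \ref{cor:endo} yields that each $\fM_{A,i}$ is $A$-stable and therefore carries an $\fS_A$-module structure making it a generalized $A$-Kisin module (not necessarily $\fS_A$-projective).

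For the slope equality $\mu(\fM_A) = \mu(\fM_\F)$, I exploit the $\fm_A$-adic filtration $\fM_A \supset \fm_A\fM_A \supset \fm_A^2\fM_A \supset \dots \supset 0$. Since $\fS_A$ is $A$-free ($A$ is Artinian) and $\fM_A$ is $\fS_A$-projective, $\fM_A$ is $A$-flat; consequently each graded piece $\fm_A^i\fM_A/\fm_A^{i+1}\fM_A$ is canonically isomorphic to $\fM_\F \otimes_\F (\fm_A^i/\fm_A^{i+1})$, a direct sum of copies of $\fM_\F$ in $\Mod^\phz_{\fS_\F}$. Each successive inclusion $\fm_A^{i+1}\fM_A \hookrightarrow \fm_A^i\fM_A$ is strict, since the relevant quotients are $\fS_{A/\fm_A^j}$-projective and hence $u$-torsion free. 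Additivity of degree and rank (Proposition \ref{prop:HN_main}(2)) then gives $\mu(\fM_A) = \mu(\fM_\F)$, which takes care of the equality $\mu(\fM_{A,m}) = \mu(\fM_{\F,n})$, as these are just the total slopes.

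The heart of the argument is the equality $\mu(\fM_{A,1}) = \mu(\fM_{\F,1})$. For the upper bound, let $r$ be maximal with $\fm_A^r \neq 0$. Then $\fm_A^r\fM_A \cong \fM_\F \otimes_\F \fm_A^r \cong \fM_\F^{\oplus d}$ as Kisin modules, is a strict sub-object of $\fM_A$ by the flatness observation above, and its first HN-step $\fM_{\F,1}^{\oplus d}$ (of slope $\mu(\fM_{\F,1})$) provides a strict sub-Kisin module of $\fM_A$ witnessing $\mu(\fM_{A,1}) \leq \mu(\fM_{\F,1})$. For the lower bound, I claim that if $\alpha < \mu(\fM_{\F,1})$ then $\fM_A^{\leq \alpha} \subset \fm_A^s\fM_A$ for every $s \geq 0$, by induction on $s$. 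The inductive step applies Proposition \ref{prop:HN_funct} to the composition
\[
\fM_A^{\leq \alpha} \hookrightarrow \fm_A^s\fM_A \twoheadrightarrow \fm_A^s\fM_A/\fm_A^{s+1}\fM_A \cong \fM_\F^{\oplus d_s};
\]
since the HN-filtration of a direct sum is the direct sum of HN-filtrations (Proposition \ref{prop:ext_ss}), the target has $\bigl(\fM_\F^{\oplus d_s}\bigr)^{\leq \alpha} = 0$, forcing $\fM_A^{\leq \alpha} \subset \fm_A^{s+1}\fM_A$. Nilpotence of $\fm_A$ then yields $\fM_A^{\leq \alpha} = 0$, hence $\mu(\fM_{A,1}) \geq \mu(\fM_{\F,1})$.

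The semi-stability equivalence is then immediate: $\fM_A$ is semi-stable iff $\mu(\fM_{A,1}) = \mu(\fM_A)$, and by the preceding two paragraphs this is equivalent to $\mu(\fM_{\F,1}) = \mu(\fM_\F)$, i.e.\ to $\fM_\F$ being semi-stable. I expect the main obstacle to be verifying carefully, at each step of the $\fm_A$-adic filtration, that the inclusions $\fm_A^{s+1}\fM_A \hookrightarrow \fm_A^s\fM_A$ are genuinely strict (so that the Kisin module structure descends to graded pieces and Proposition \ref{prop:HN_funct} applies); this depends delicately on the interplay between $\fS_A$-projectivity and $A$-flatness of $\fM_A$, but should be routine given that $A$ is Artinian local.
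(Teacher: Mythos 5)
Your argument for $A$-stability of the $\fM_{A,i}$ (via Corollary \ref{cor:endo}), for the total slope equality $\mu(\fM_A) = \mu(\fM_\F)$ via additivity along the $\fm_A$-adic filtration, for the first-slope equality $\mu(\fM_{A,1}) = \mu(\fM_{\F,1})$, and for the semi-stability equivalence is essentially correct and closely parallels the paper's proof. The paper organizes the same ideas as an induction on the order of nilpotence of $\fm_A$, using at each step a single ideal $I$ with $\fm_A I = 0$ and the short exact sequence $0 \to I \otimes_\bF \fM_\bF \to \fM_A \to \fM_{A/I} \to 0$, whereas you unroll the entire $\fm_A$-adic filtration at once; in both versions Proposition \ref{prop:HN_funct} is doing the same work. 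Two minor points: the cokernels of $\fm_A^{i+1}\fM_A \hookrightarrow \fm_A^i\fM_A$ are $\fS_\bF$-modules, so the projectivity you should cite is $\fS_\bF$-projectivity of $\fM_\bF$, not $\fS_{A/\fm_A^j}$-projectivity; and the fact that HN-filtrations of direct sums are direct sums of HN-filtrations follows from Proposition \ref{prop:ext_ss} and \ref{prop:HN_funct}, but it is not literally the statement of \ref{prop:ext_ss}.

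There is, however, a genuine gap in your treatment of the second half of \eqref{eq:end_slopes}. You read $\mu(\fM_{\F,n}) = \mu(\fM_{A,m})$ literally as $\mu(\fM_\F) = \mu(\fM_A)$ and dismiss it as ``just the total slopes.'' But the paper's own proof makes clear that this notation is used (slightly abusively) to denote the \emph{largest slope} of the HN-filtration, i.e.~the claim is really $\mu(\fM_{\F,n}/\fM_{\F,n-1}) = \mu(\fM_{A,m}/\fM_{A,m-1})$, which is strictly stronger and not a consequence of what you have shown. The paper proves it by an argument dual to the first-slope case: if the largest slope $\mu$ of $\fM_A$ were $> \mu' := \mu(\fM_{\F,n}/\fM_{\F,n-1})$, then Proposition \ref{prop:HN_funct} forces $I \otimes_\F \fM_\F$ to map to zero in $\fM_A/\fM_A^{\leq \mu'}$, so the latter is a quotient of $\fM_{A/I}$, whose slopes are $\leq \mu'$ by induction, a contradiction; equality then follows from the surjection $\fM_A \twoheadrightarrow \fM_\F/\fM_{\F,n-1}$. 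A corresponding dual argument (descending through the quotients $\fM_A/\fm_A^s\fM_A$ instead of ascending through the subobjects $\fm_A^s\fM_A$) would fit naturally into your framework, and you need to supply it.
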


Recall the terminology ``generalized $A$-Kisin module'' from \S\ref{subsec:KM_coefs}. In particular, the $\fM_{A,i}$ need not be projective $\fS_A$-modules. 

\begin{proof}
The argument in the beginning of the proof of Proposition \ref{prop:HNfiltoverF} applies with $A$ in the place of $\F$ to show that each $\fM_{A,i}$ is $A$-stable. 

Let $\fm_A \subset A$ be the maximal ideal, and let $I \subset A$ be an ideal such that $\fm_A \cdot I = 0$. Notice that we have a short exact sequence of $A$-Kisin modules
\begin{equation}
\label{eq:def-SES}
0 \lra I \cdot \fM_A \lra \fM_A \lra \fM_{A/I} \lra 0,
\end{equation}
Using the fact that $\fM_A$ is flat over $\fS_A$ and the $\fS_A$-module structure on $I\cdot \fM_A$ factors through $\fS_\bF$, we have a canonical isomorphism of Kisin modules $I \cdot \fM_A \simeq I \otimes_\bF \fM_\bF$. 

First we prove that $\mu(\fM_A) = \mu(\fM_\F)$ and also prove that $\fM_\F$ is semi-stable if and only if $\fM_A$ is semi-stable. We induct on the order of nilpotence $i$ of the maximal ideal $\fm_A \subset A$. When $i=1$, there is nothing to prove. Assuming the result for some $i$ and assuming that $\fm_A^{i+1} = 0$, we have that $I = \fm_A^i$ satisfies $\fm_A \cdot I = 0$. Then \eqref{eq:def-SES} is an extension of Kisin modules of identical slope, so the extension has the same slope. Then, we know from Proposition \ref{prop:ext_ss} that $\fM_A$ is semi-stable if and only if $I \cdot \fM_A$ and $\fM_{A/I}$ are semi-stable. 

Similarly, \eqref{eq:end_slopes} can be proved by induction on the order of nilpotence of the maximal ideal of $A$. Thus we begin our proof by letting $\fM_A$ be some deformation of $\fM_\bF$ to $A$, letting $I \subset A$ be an ideal such that $\fm_A \cdot I = 0$, and assuming \eqref{eq:end_slopes} for $\fM_{A/I} := \fM_A \otimes_A A/I$. 

Let $\mu = \mu(\fM_{A,1})$ be the least slope of the HN-filtration of $\fM_A$. Then $\mu < \mu(\fM_{\F, 1})$ is impossible, because in this case Proposition \ref{prop:HN_funct} implies that $\fM_A^{\leq \mu}$ maps to $0$ in $\fM_{A/I}$, meaning that $\fM_A^{\leq \mu} \subset \fM_F \otimes_\bF I$, which is clearly not possible as the least slope of any submodule is also $\mu(\fM_{\F, 1})$. We may then conclude that $\mu = \mu(\fM_{\F, 1})$, because $\fM_{\bF,1} \otimes_\bF I \subset I\cdot \fM_A \subset \fM_A$ has slope $\mu(\fM_{\F, 1})$. Therefore, the first step $\fM_{A,1}$ in the HN-filtration of $\fM_A$ has slope $\mu(\fM_{\F, 1})$. 

Similarly, let $\mu = \mu(\fM_{A,m})$ be the greatest slope of $\fM_A$, and write $\mu'$ for $\mu(\fM_{\F,n})$. Assuming $\mu > \mu'$, then $I \otimes_\F \fM_\F$ maps to $0$ in $\fM_A/\fM_A^{\leq \mu'}$ by Proposition \ref{prop:HN_funct}, as all of the slopes of $\fM_A/\fM_A^{\leq \mu'}$ are strictly greater than those of $\fM_\F$. This makes $\fM_A/\fM_A^{\leq \mu'}$ a quotient of $\fM_{A/I}$. However, all of the slopes of $\fM_{A/I}$ are $\leq \mu'$ by the induction hypothesis, a contradiction. 

To see that $\mu= \mu'$, note that $\fM_A$ surjects onto $\fM_\F/\fM_{\F,n-1}$, while Proposition \ref{prop:HN_funct} would imply that the image of the surjection is $0$ if all of the slopes of $\fM_A$ are $< \mu'$. 
\end{proof}

\subsection{The limit of HN-polygons}

In order to understand the implications of this Harder--Narasimhan theory to flat Kisin modules $\fM$, we will study the behavior of HN-polygons of $\fM/p^n$ as $n$ varies. Here we will assume that $\fM$ is flat as a $\fS$-module and use the fact that $\fM/p^n$ is an extension of $\fM/p^{n-m}$ by $\fM/p^m$. We begin with this general lemma about the behavior of HN-polygons under extension, following Fargues \cite[\S4]{fargues2010}. 

\begin{defn}
Given functions $f: [0,r] \ra [0,d]$, $f': [0,r'] \ra [0,d']$, let $f \ast f' : [0, r+r'] \ra [0,d+d']$ be defined by
\[
x \mapsto \inf f(a) + f'(b)
\]
where the infimum is taken over $(a,b) \in [0,r] \times [0,r']$ such that $a+b = x$. 
\end{defn}
One can check that $f \ast f'$ is convex when $f$ and $f'$ are convex. 

\begin{lem}
\label{lem:ext_HN}
Let $0 \ra \fM' \ra \fM \ra \fM'' \ra 0$ be a short exact sequence of Kisin modules. Then 
\[
\HN(\fM) \geq \HN(\fM' \oplus \fM'') = \HN(\fM') \ast \HN(\fM''),
\]
\end{lem}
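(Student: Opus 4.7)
The plan is to prove the equality $\HN(\fM' \oplus \fM'') = \HN(\fM') \ast \HN(\fM'')$ first, and then deduce the inequality $\HN(\fM) \geq \HN(\fM' \oplus \fM'')$ using the convex-hull characterization of Proposition \ref{chull1}.

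For the equality, I will explicitly describe the HN-filtration of a direct sum. Letting $\mu_1 < \mu_2 < \dotsb < \mu_s$ enumerate the distinct slopes appearing in either of the two HN-filtrations, I claim that the family $\{(\fM')^{\leq \mu_j} \oplus (\fM'')^{\leq \mu_j}\}_{j=1}^s$ is the HN-filtration of $\fM' \oplus \fM''$. The steps are strict subobjects, their graded pieces are $\gr^{\mu_j} \fM' \oplus \gr^{\mu_j} \fM''$, and these are semi-stable of slope $\mu_j$ by Proposition \ref{prop:ext_ss} (with the convention that a missing summand is zero). Hence $\HN(\fM' \oplus \fM'')$ has slope $\mu_j$ occurring with length $\rk(\gr^{\mu_j} \fM') + \rk(\gr^{\mu_j} \fM'')$. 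On the other hand, the infimal convolution of two piecewise linear convex functions starting at the origin has slope set equal to the multiset union of the constituent slope sets, with lengths of common slopes adding; this exactly matches the description above, and both polygons share the same endpoints.

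For the inequality, the key input is: for every subobject $\fN \subset \fM$, the point $(\rk(\fN), \deg(\fN))$ lies on or above the graph of $\HN(\fM' \oplus \fM'')$. Set $\fN' := \fN \cap \fM'$ (a sub-Kisin-module of $\fM'$, since $\phi$-stability passes to intersections inside $\fM[1/E(u)]$) and $\fN'' := \fN/\fN' \hookrightarrow \fM/\fM' = \fM''$, producing a short exact sequence of Kisin modules
\[
0 \lra \fN' \lra \fN \lra \fN'' \lra 0.
\]
Additivity of rank and degree (Proposition \ref{prop:HN_main}(2)) yields $\rk(\fN) = \rk(\fN') + \rk(\fN'')$ and $\deg(\fN) = \deg(\fN') + \deg(\fN'')$. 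Since Proposition \ref{chull1} applies to \emph{all} subobjects (not just strict ones), we have $\deg(\fN') \geq \HN(\fM')(\rk(\fN'))$ and $\deg(\fN'') \geq \HN(\fM'')(\rk(\fN''))$; summing and invoking the defining infimum of $\ast$ gives
\[
\deg(\fN) \geq \HN(\fM')(\rk(\fN')) + \HN(\fM'')(\rk(\fN'')) \geq (\HN(\fM') \ast \HN(\fM''))(\rk(\fN)).
\]

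To conclude, note that $\HN(\fM)$ and $\HN(\fM' \oplus \fM'')$ share endpoints $(0,0)$ and $(\rk \fM, \deg \fM)$, the latter by additivity applied to the original short exact sequence. By Proposition \ref{chull1}, $\HN(\fM)$ is the largest convex function with these endpoints lying on or below all subobject points $(\rk(\fN), \deg(\fN))$ of $\fM$; the convex function $\HN(\fM' \oplus \fM'')$ enjoys the same bounding property by the previous paragraph, and hence satisfies $\HN(\fM' \oplus \fM'') \leq \HN(\fM)$. The only genuinely delicate point is the bookkeeping around the sign convention of Proposition \ref{chull1} (subobject points lie \emph{above} the HN-polygon, which is therefore the \emph{lower} convex hull); once this is unwound, the argument is a short exercise in additivity and convex analysis, with no serious obstacle.
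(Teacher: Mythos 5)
Your proof is correct and fills in, with full detail, the argument that the paper outsources to \cite[Lem.~7.4(a)]{pottharst} for the inequality and states in one sentence for the equality. Decomposing an arbitrary subobject $\fN \subset \fM$ via $\fN' = \fN \cap \fM'$ and $\fN'' = \fN/\fN' \hookrightarrow \fM''$, then invoking additivity of rank and degree together with the convex-hull description of Proposition~\ref{chull1} and the definition of $\ast$, is exactly the standard route, and your handling of the sign convention (subobject points lie above the lower-convex-hull polygon) is correct.
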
 
Moreover, there are some cases where $\HN(\fM)$ can be determined from $\HN(\fM')$ and $\HN(\fM'')$.
\begin{proof}
The inequality is \cite[Prop.\ 7.6(a)]{pottharst}. The equality follows from the fact that the HN-filtration of a direct sum of objects consists of the direct sum of each part of the HN-filtration (indexed by slopes), as in \cite[Prop.\ 7.6(b)]{pottharst}. 
\end{proof}

Now we may define the normalized HN-polygon of a flat Kisin module, in analogy to Fargues's definition of a normalized HN-polygon of a $p$-divisible group (\cite[\S4.2, Thm.\ 2]{fargues2010}). 
 
\begin{thm}
\label{thm:HNP_limit}
The sequence of functions from $[0, \rk_\fS \fM]$ to $[0, \deg(\fM/p)]$ given by 
\[
x \mapsto \frac{1}{n} \HN(\fM/p^n)(nx)
\]
converges uniformly increasingly as $n \ra \infty$ to a continuous convex function 
\[
\HN(\fM) : [0, \rk_\fS \fM] \lra [0, \deg(\fM/p)]
\]
such that $\HN(\fM)(0) = 0$ and $\HN(\fM)(\rk_\fS \fM) = \deg(\fM/p)$. 
\end{thm}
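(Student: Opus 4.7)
The plan is to exploit the flatness of $\fM$ to recursively apply Lemma \ref{lem:ext_HN}, then pass to the limit using Fekete's lemma for super-additive sequences, and finally upgrade to uniform convergence via equicontinuity of convex functions on a compact interval.

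First, I would exhibit, for each $m,n \geq 1$, the short exact sequence of torsion Kisin modules
\[
0 \to \fM/p^m\fM \xrightarrow{\,\cdot p^n\,} \fM/p^{n+m}\fM \to \fM/p^n\fM \to 0,
\]
which exists because $\fM$ is $\fS$-flat. Additivity of rank and degree (Proposition \ref{prop:HN_main}(2)) then gives $\rk(\fM/p^n) = nr$ and $\deg(\fM/p^n) = nD$, where I set $r := \rk_\fS \fM$ and $D := \deg(\fM/p)$. Applying Lemma \ref{lem:ext_HN} yields
\[
\HN(\fM/p^{n+m}) \geq \HN(\fM/p^m) \ast \HN(\fM/p^n).
\]
Setting $f_n(x) := \HN(\fM/p^n)(nx)$ on $[0, r]$ and evaluating the convolution at $y = (n+m)x$ with split $(a,b) = (mx, nx)$ gives the pointwise superadditivity $f_{n+m}(x) \geq f_n(x) + f_m(x)$.

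Second, I invoke Fekete's lemma pointwise in $x$: the sequence $g_n(x) := f_n(x)/n$ converges to $\sup_n g_n(x) =: g(x)$. Finiteness of the supremum follows from convexity of each $g_n$ together with the fixed boundary values $g_n(0) = 0$, $g_n(r) = D$, which give $g_n(x) \leq Dx/r$ by the secant inequality. Iterating the convolution bound (or using the convex case of convolution) gives the uniform lower bound $g_n(x) \geq \HN(\fM/p)(x)$. Observe that along multiplicative subsequences $n=cd$ with $c \to \infty$, the inequality $f_{cd}(x) \geq c f_d(x)$ yields genuine monotonicity $g_{cd}(x) \geq g_d(x)$, which explains the "increasing" attribute of the convergence.

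Third, I verify that $g$ has the claimed properties. As a pointwise supremum of convex functions, $g$ is convex; since $g_n$ has the constant boundary values $0$ and $D$, so does $g$. Convexity alone gives continuity on the interior $(0,r)$. For continuity at the endpoints, I use the sandwich
\[
\HN(\fM/p)(x) \leq g(x) \leq \frac{D x}{r},
\]
together with the symmetric sandwich obtained by considering the chord from $(x, g(x))$ to $(r, D)$; both outer bounds are continuous functions agreeing with $g$ at the endpoints, forcing continuity of $g$ on all of $[0,r]$.

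Fourth, I upgrade pointwise convergence to uniform convergence. The family $\{g_n\}$ is uniformly bounded on $[0,r]$ and each function is convex, so on any compact subinterval of $(0,r)$ the family is uniformly Lipschitz (the one-sided slopes are controlled by the uniform bounds and the secant inequality), hence equicontinuous. Pointwise convergence plus equicontinuity yields uniform convergence on compact subintervals, and the matching boundary values together with continuity of $g$ at the endpoints propagate this to uniform convergence on all of $[0,r]$.

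The main obstacle is the continuity of $g$ at the endpoints: pointwise suprema of continuous functions are in general only lower semicontinuous, and convex functions on a closed interval can jump upward at endpoints. The sandwich estimate resolves this, and it is precisely the iterated application of Lemma \ref{lem:ext_HN}---made possible by flatness of $\fM$---that produces the crucial lower bound $g(x) \geq \HN(\fM/p)(x)$.
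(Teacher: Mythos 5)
Your overall framework is sensible and your short exact sequence is exactly the one the paper uses, but there is a genuine error at the very first step that the rest of the argument depends on. Lemma~\ref{lem:ext_HN} gives $\HN(\fM/p^{n+m}) \geq \HN(\fM/p^m) \ast \HN(\fM/p^n)$, and the operation $\ast$ is an \emph{infimum} over splits. Evaluating at the particular split $(a,b) = (mx, nx)$ therefore gives an \emph{upper} bound on $(\HN(\fM/p^m) \ast \HN(\fM/p^n))((n+m)x)$, not a lower bound:
\[
\HN(\fM/p^{n+m})((n+m)x) \;\geq\; (\HN(\fM/p^m) \ast \HN(\fM/p^n))((n+m)x) \;\leq\; \HN(\fM/p^m)(mx) + \HN(\fM/p^n)(nx),
\]
so the two inequalities point in opposite directions and cannot be chained to yield $f_{n+m}(x) \geq f_n(x) + f_m(x)$. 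The infimum in the convolution is attained where the marginal slopes of $\HN(\fM/p^n)$ and $\HN(\fM/p^m)$ match, and there is no reason a priori that this happens at $(nx, mx)$ since the two polygons need not be rescalings of one another. Your Fekete-type argument, the sandwich, and the equicontinuity step are all downstream of this superadditivity claim, so the proof as written does not close.

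The paper does not attempt a self-contained convergence argument: after establishing $\HN(\fM/p^{n+m}) \geq \HN(\fM/p^n) \ast \HN(\fM/p^m)$ exactly as you do, it simply invokes Fargues (\cite[\S4.1, Prop.\ 2]{fargues00}), which proves uniform increasing convergence for \emph{any} sequence of polygons satisfying the infimal-convolution inequality. The correct route exploits the full inf-convolution structure rather than collapsing it to a pointwise Fekete inequality --- for instance by passing to Legendre duals, where $P_{n+m} \geq P_n \ast P_m$ becomes genuine subadditivity $P_{n+m}^\ast \leq P_n^\ast + P_m^\ast$ --- or by a polygon-slope argument along the lines of Fargues. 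Your observation that convexity yields $g_{kn} \geq g_n$ along multiplicative subsequences (via Jensen applied to the diagonal split) is correct, but as you note this does not by itself give the monotonicity claimed in the theorem, and it does not rescue the failed superadditivity for general $n,m$. If you want to avoid the citation, you would need to either prove the increasing convergence at the level of the convolution inequality (Fargues' actual statement) or establish that for these particular HN-polygons the minimizing split is always the ``diagonal'' one, which you have not done.
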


\begin{proof}
The uniform increasing convergence of functions $(f_n)_{n \geq 1}$ satisfying the property $f_{n+m} \geq f_n \ast f_m$ for all $n,m \geq 1$ is shown in \cite[\S1.1, Prop.\ 2]{fargues2019}. By Lemma \ref{lem:ext_HN} applied to the exact sequence
\[
0 \lra \fM/p^{m}\fM \buildrel{\cdot p^n}\over\lra \fM/p^{n+m}\fM \lra \fM/p^m\fM \lra 0,
\]
the functions $f_n = \HN(\fM/p^n)(nx)/n$ satisfy this property. 
\end{proof}

It will be useful to know that this limit does not change under an \emph{isogeny}, i.e., a strict map $\fM \ra \fN$ of flat Kisin modules whose kernel and cokernel are $p$-power torsion Kisin modules (i.e., $u$-torsion free). 
\begin{lem}[{(cf.\ \cite[Prop.\ 3, \S1.3]{fargues2019})}]
\label{lem:HNP_isog_invt}
When $\fM$ and $\fM'$ are isogenous flat Kisin modules, $\HN(\fM) = \HN(\fM')$.
\end{lem}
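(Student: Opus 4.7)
The plan is to reduce the claim to a uniform bound: I want to show that the HN-polygons of $\fM/p^n\fM$ and $\fM'/p^n\fM'$ differ by a bounded amount independent of $n$. Since both normalized polygons $\tfrac{1}{n}\HN(\fM/p^n\fM)(n\cdot)$ and $\tfrac{1}{n}\HN(\fM'/p^n\fM')(n\cdot)$ converge by Theorem~\ref{thm:HNP_limit}, such a uniform bound forces the two limits to coincide, yielding $\HN(\fM) = \HN(\fM')$.

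First I would reduce to the case of an injective isogeny. Given any isogeny $f:\fM \to \fM'$, factor it through $\fM/\Ker(f)$, which is flat because it embeds in the flat module $\fM'$. This decomposes $f$ into a surjection with $p$-power torsion kernel and an injection with $p$-power torsion cokernel; each is an isogeny of flat Kisin modules, and by symmetry I treat the injective case. Suppose $f:\fM \hookrightarrow \fM'$ has cokernel $Q$ killed by $p^N$. After a Tate twist (which shifts each HN-polygon linearly and preserves both flatness and the isogeny), I may assume $Q$ is effective. Applying the snake lemma to multiplication by $p^n$ on $0 \to \fM \to \fM' \to Q \to 0$ yields, for $n \geq N$, the four-term exact sequence
\[
0 \to Q \to \fM/p^n\fM \to \fM'/p^n\fM' \to Q \to 0,
\]
which splits into two short exact sequences sharing a common middle term $\fM_0 := \mathrm{image}(\fM/p^n\fM \to \fM'/p^n\fM')$.

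The heart of the proof is to show that both $\HN(\fM/p^n\fM)$ and $\HN(\fM'/p^n\fM')$ lie within a fixed sup-norm distance (depending only on $Q$) of $\HN(\fM_0)$. In one direction, Lemma~\ref{lem:ext_HN} gives $\HN(\fM/p^n\fM) \geq \HN(Q) \ast \HN(\fM_0)$ and the analogous bound for $\fM'/p^n\fM'$. For the reverse direction, for each strict subobject $\fN \subset \fM/p^n\fM$, the induced short exact sequence $0 \to \fN \cap Q \to \fN \to \fN' \to 0$ has $\fN' \subset \fM_0$ (possibly non-strict). Additivity of $\rk$ and $\deg$, combined with the boundedness of $\rk(\fN \cap Q) \leq \rk(Q)$ and $\deg(\fN \cap Q) \leq \deg(Q)$ (using effectivity of $Q$), shows $(\rk(\fN),\deg(\fN))$ lies within a bounded vector of $(\rk(\fN'),\deg(\fN'))$; replacing $\fN'$ by its strict closure in $\fM_0$ and invoking Proposition~\ref{prop:HN_main}(3) places the latter point above $\HN(\fM_0)$. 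Passing to convex hulls via Proposition~\ref{chull1} then yields $\sup_x |\HN(\fM/p^n\fM)(x) - \HN(\fM'/p^n\fM')(x)| \leq C$ uniformly in $n$, and dividing by $n$ and letting $n \to \infty$ completes the proof.

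The main obstacle is precisely the upper bound in the last step: Lemma~\ref{lem:ext_HN} only provides one-sided control, and strict subobjects of $\fM/p^n\fM$ need not map to strict subobjects of $\fM_0$, so the interaction between the strict closure operation and the degree must be handled carefully to produce a bound truly independent of $n$.
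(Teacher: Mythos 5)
Your overall strategy — reduce to an injective isogeny, use the snake lemma to produce a common middle term $\fM_0$ sitting in $0 \to Q \to \fM/p^n \to \fM_0 \to 0$ and $0 \to \fM_0 \to \fM'/p^n \to Q \to 0$, bound the three polygons against each other by an amount depending only on $Q$, then pass to the limit — is close in spirit to the paper's argument (which constructs an analogous auxiliary flat module $\fP$). However, the inequality you actually need is missing, and the step you label ``the reverse direction'' does not supply it.

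Examine carefully what each of your two steps proves. Lemma~\ref{lem:ext_HN} gives $\HN(\fM/p^n) \geq \HN(Q) \ast \HN(\fM_0)$, which (since $Q$ is effective and $\fM_0$ has bounded slopes) is a \emph{lower} bound $\HN(\fM/p^n) \geq \HN(\fM_0) - C$. Now look at your ``reverse direction'': for a strict $\fN \subset \fM/p^n$ you exhibit $(\rk(\fN),\deg(\fN))$ as $(\rk(\fN'),\deg(\fN'))$ plus a vector in $[0,\rk Q]\times[0,\deg Q]$, and $(\rk(\fN'),\deg(\fN'))$ lies above $\HN(\fM_0)$. That again says $\deg(\fN) \geq \HN(\fM_0)(\rk(\fN)) - C'$ for every strict $\fN$, i.e.\ $\HN(\fM/p^n) \geq \HN(\fM_0) - C'$ — the \emph{same} direction as before. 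Proposition~\ref{chull1} only identifies $\HN(\fM/p^n)$ as the lower convex hull of the points $(\rk,\deg)$; knowing these points sit \emph{above} something gives a lower bound on the polygon, not an upper one. So you have two lower bounds on $\HN(\fM/p^n)$ (and one on $\HN(\fM'/p^n)$) in terms of $\HN(\fM_0)$, and no upper bound on either. The claimed uniform sup-norm estimate therefore does not follow.

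What is actually needed — and what the paper uses — is the opposite direction, obtained not from Lemma~\ref{lem:ext_HN} but from strictness of the inclusion $\fM_0 \hookrightarrow \fM'/p^n$: since its cokernel $Q$ is $u$-torsion free, every strict subobject of $\fM_0$ is a strict subobject of $\fM'/p^n$, so by Proposition~\ref{chull1}, $\HN(\fM_0)(y) \geq \HN(\fM'/p^n)(y)$ for $y \in [0, \rk(\fM_0)]$. Combined with $\HN(\fM/p^n) \geq \HN(Q)\ast\HN(\fM_0)$ and the effectivity and slope bounds, this yields the one-sided uniform estimate $\HN(\fM/p^n)(x) \geq \HN(\fM'/p^n)(x) - C$. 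Dividing by $n$ and passing to the limit gives $\HN(\fM) \geq \HN(\fM')$. To get equality one must then invoke the symmetry of the isogeny relation (multiplication by $p^N$ is an isogeny $\fM' \to \fM$), a step you also omit. Both the strictness observation and the symmetry argument need to be added for the proof to close.
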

\begin{proof}
We may assume that our isogeny is an injection $f: \fM' \rinj \fM$ with cokernel $\fN$, a $p$-power torsion Kisin module. Writing $\fP$ for the flat Kisin module $\fP := (\fM \oplus \fM')/(f \oplus \mathrm{id}_{\fM'})(\fM')$. We see that for large enough $n$, we have exact sequences
\[
0 \ra \fN \ra \fP/p^n \ra \fM/p^n \ra 0, \qquad 0 \ra \fM'/p^n \ra \fP/p^n \ra \fN \ra 0.
\]

Using Lemma \ref{lem:ext_HN} and the leftmost exact sequence, we find that 
\[
\HN(\fP/p^n) \geq \HN(\fN) \ast \HN(\fM/p^n). 
\]
Let $C \geq 0$ be an upper bound on $\HN(\fM/p^n)(x) - \HN(\fN)(x)$ for $x \in [0, \rk \fN]$ and all $n \geq 1$. We may select $C$ that satisfies this inequality for all $n \geq 1$ because the slopes of $\fM/p^n$ are uniformly bounded. We deduce that $\HN(\fP/p^n)(nx) \geq \HN(\fM/p^n)(nx) - C$ for all $x \in [0, \rk \fM]$. 

The inclusion $\fM'/p^n \rinj \fP/p^n$ implies that $\HN(\fM'/p^n) \geq \HN(\fP/p^n)$, in light of Proposition \ref{chull1} and the fact that any strict subobject of $\fM'/p^n$ is a strict subobject of $\fP/p^n$. Altogether, one has
\[
\frac{1}{n}\HN(\fM'/p^n)(nx) \geq \frac{1}{n} \HN(\fM/p^n)(nx) - \frac{C}{n},
\]
so $\HN(\fM') \geq \HN(\fM)$. 

Because isogeny is a symmetric relation, the lemma follows. 
\end{proof}

\subsection{An intrinsic notion of degree of a flat Kisin module}

We may now define a notion of degree of a flat Kisin module, which will be shown to be compatible with the limit HN-polygon discussed in Theorem \ref{thm:HNP_limit}. First we recall this standard result from commutative algebra, often applied in Iwasawa theory. 
\begin{prop}
\label{prop:char_form}
Let $M$ be a finitely generated torsion $\fS$-module. Then $M$ is pseudo-isomorphic to a module of the form
\begin{equation}
\label{eq:char_form}
\bigoplus_{i=1}^n \frac{\fS}{(f_i^{h_i})}
\end{equation}
where $f_i$ varies over distinguished polynomials (i.e.~generators of the height 1 primes of $\fS$) and a \emph{pseudo-isomorphism} is a morphism of $\fS$-modules with finite cardinality kernel and cokernel. 
\end{prop}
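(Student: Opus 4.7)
The plan is to recognize this as the classical structure theorem for finitely generated torsion modules over a two-dimensional regular local ring, which is standard in Iwasawa theory (cf.\ Bourbaki, \emph{Commutative Algebra}, Ch.\ 7, \S4, or Washington, \emph{Introduction to Cyclotomic Fields}, Ch.\ 13). The ring $\fS = W(k)\lb u\rb$ is a two-dimensional regular local ring with maximal ideal $\fm = (p,u)$, hence a UFD, and by Weierstrass preparation its height 1 primes are principal, generated either by $p$ or by an irreducible distinguished polynomial in $u$. A key observation I would use throughout is that a finitely generated $\fS$-module is \emph{finite} (in the sense of finite cardinality) if and only if it is supported only at $\fm$, i.e.\ killed by a power of $\fm$.

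The first step is to reduce to the case where $M$ is primary with respect to a single height 1 prime. Because $M$ is finitely generated and torsion, its support avoids the generic point, so it consists of a finite collection of height 1 primes $\fp_1,\dotsc,\fp_s$ together with (possibly) $\fm$. For each $i$, let $M^{(i)} := \{m \in M : \fp_i^{n} m = 0 \text{ for some } n \geq 1\}$. The natural map $\bigoplus_i M^{(i)} \to M$ induces an isomorphism after localization at any height 1 prime: at $\fp_i$ it is identified with the identity on $M_{\fp_i}$, and at any other height 1 prime both sides vanish. Hence the kernel and cokernel have height 1 support disjoint from the $\fp_i$, so their only possible associated prime is $\fm$, making them finite. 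This realizes $M$ as pseudo-isomorphic to $\bigoplus_i M^{(i)}$ and reduces us to the primary case.

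The second step handles the $\fp$-primary case with $\fp = (f)$. Localizing, $M_\fp$ is a finite length torsion module over the DVR $\fS_\fp$, so the PID structure theorem gives $M_\fp \cong \bigoplus_{j=1}^n \fS_\fp/(f^{h_j})$. I would lift this decomposition: choose elements $m_j \in M$ whose images realize the cyclic generators of $M_\fp$. Since each $f^{h_j} m_j$ vanishes in $M_\fp$, there exists $s_j \notin \fp$ with $s_j f^{h_j} m_j = 0$ in $M$; after replacing $m_j$ by $s_j m_j$ (which does not change its cyclic summand in $M_\fp$ up to a unit of $\fS_\fp$), we may arrange $f^{h_j} m_j = 0$ in $M$ itself. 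Define $\varphi : \bigoplus_j \fS/(f^{h_j}) \to M$ by $1 \mapsto m_j$ on the $j$-th factor. By construction $\varphi_\fp$ is an isomorphism, so $\ker \varphi$ and $\coker \varphi$ are not supported at $\fp$; since $M$ is $\fp$-primary, their support lies in $\{\fm\}$, so both are finite, giving the required pseudo-isomorphism.

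The main obstacle is the lifting in the second step: one must choose the $m_j$ so that their annihilators in $M$ are exactly $(f^{h_j})$ (and not a strictly larger ideal arising from extraneous $\fm$-torsion), and so that they remain $\fS_\fp$-linearly independent in $M_\fp$. The device of multiplying by an appropriate $s_j \notin \fp$ exploits precisely the fact that elements outside $\fp$ act as units in $M_\fp$, so this rescaling is invisible to the target decomposition while clearing unwanted torsion integrally. The rest of the argument is bookkeeping with supports, using (iii) at each stage to convert height 1-disjoint support into finiteness.
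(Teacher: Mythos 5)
The paper does not prove Proposition \ref{prop:char_form}; it cites it as the standard structure theorem for finitely generated torsion modules over a two-dimensional regular local ring from Iwasawa theory. Your argument is a correct and complete rendering of that standard proof: the reduction to the $\fp$-primary case via the map $\bigoplus_i M^{(i)} \to M$ (with kernel and cokernel supported only at $\fm$, hence finite), followed by localizing at $\fp$ to invoke the PID structure theorem over $\fS_\fp$ and clearing denominators by multiplying the chosen lifts $m_j$ by suitable $s_j \notin \fp$, is exactly the argument in Bourbaki/Washington that the paper has in mind. The one piece of care worth keeping in mind is that the set $\{\fp_1,\dotsc,\fp_s\}$ of height-one primes in $\mathrm{Supp}(M)$ is finite because $M$ is finitely generated and torsion, and that each $(\coker\varphi)_\fq$ vanishes for height-one $\fq \neq \fp$ because every $f_i$ with $i$ indexing a different prime is a unit in $\fS_\fq$ --- both of which you implicitly use and which are routine to verify.
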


We also require the notion of characteristic ideal of a torsion $\fS$-module. 
\begin{defn}
Let $M$ be a $\fS$-module pseudo-isomorphic to the module of \eqref{eq:char_form}. Then the characteristic ideal $\Char(M)$ of $M$ is the ideal of $\fS$ generated by $\prod_{i=1}^n f_i^{h_i}$. 
\end{defn}

Because the cokernel of $\phi_\fM$ is guaranteed to be $E(u)$-power torsion by definition of Kisin modules, all of the distinguished polynomials $f_i$ appearing in its characteristic ideal are equal to $E(u)$. This allows us to make the following definition of degree. 
\begin{defn}
The degree $\deg_\fS(\fM)$ of a flat effective Kisin module $(\fM, \phi_\fM)$ is the non-negative integer characterized by the equality
\[
\Char(\coker(\phi_\fM)) = (E(u)^{\deg_\fS(\fM)}).
\]
\end{defn}
This notion of degree may be extended to flat non-effective Kisin modules by twisting as Definition \ref{defn:rk_deg}.

Compatibility between $\deg_\fS(\fM)$ and $\deg(\fM/p^n)$ takes the following form. 
\begin{prop}
\label{prop:deg_mod}
Let $\fM$ be a flat Kisin module. Then 
\[
\deg_\fS(\fM) = \deg(\fM/p^n)/n \quad \text{ and } \quad \mu_\fS(\fM) = \mu(\fM/p^n)
\]
for all $n \in \bZ_{\geq 1}$. 
\end{prop}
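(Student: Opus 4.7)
The plan is to reduce, by two successive simplifications, to a direct computation in the rank-one effective case.

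First I would reduce to the effective case. The right-hand side $\deg(\fM/p^n)/n$ transforms under $\fM \rightsquigarrow \fM(r)$ by adding $r \rk_\fS(\fM)$, using $(\fM(r))/p^n = (\fM/p^n)(r)$ and $\rk(\fM/p^n) = n\rk_\fS(\fM)$ (since $\fM$, being finite flat over the local Noetherian ring $\fS$, is free). The left-hand side $\deg_\fS(\fM)$ transforms by the same quantity by definition, so it suffices to handle the effective case. Moreover, the slope identity $\mu_\fS(\fM) = \mu(\fM/p^n)$ follows from the degree identity by dividing both sides by $\rk_\fS(\fM)$, since $\rk(\fM/p^n)/n = \rk_\fS(\fM)$.

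Second, I would reduce to rank one by taking top exterior powers. Write $d = \rk_\fS(\fM)$, so $\wedge^d_\fS \fM \cong \fS$ and $\phi_{\wedge^d \fM}$ is multiplication by $\det(\phi_\fM) \in \fS$. On the integral side, $\coker(\phi_\fM)$ is $E(u)$-power torsion, so its characteristic ideal equals $(\det \phi_\fM)$ (the valuation at $(E(u))$ of $\det \phi_\fM$ records the length at the only relevant height-one prime); thus $\deg_\fS(\fM) = \val_{E(u)}(\det \phi_\fM) = \deg_\fS(\wedge^d \fM)$. On the mod-$p^n$ side, applying Lemma \ref{wedgeslope} to the $\fS/p^n$-flat module $\fM/p^n$ gives $\deg(\fM/p^n) = \deg((\wedge^d \fM)/p^n)$. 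So it suffices to prove the formula when $d=1$.

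Finally, in the rank-one effective case, $\phi_\fM$ is multiplication by some $f \in \fS$ which is a unit in $\fS[1/E(u)]$. Since $\fS$ is a UFD and $E(u)$ is the unique irreducible factor inverted, $f = \alpha E(u)^h$ for some $\alpha \in \fS^\times$ and $h = \deg_\fS(\fM)$. Then $\coker(\phi_{\fM/p^n}) \cong \fS/(p^n,E(u)^h)$, which is free over $W(k)/p^n$ of rank $eh$ and hence of $\Z_p$-length $nefh = ngh$ (using $g = ef$). Dividing by $g$ yields $\deg(\fM/p^n) = nh = n\deg_\fS(\fM)$. There is no real obstacle; the only point that requires minor care is verifying that the top exterior power is compatible with both notions of degree in the reduction step, after which the rank-one computation is elementary.
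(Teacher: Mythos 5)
Your proof is correct, but it takes a genuinely different route from the paper. The paper's proof works directly with $\coker(\phi_\fM)$: it observes this module has homological dimension $1$ (because $\phi_\fM$ itself is a two-step free resolution), deduces that it has no $p$-torsion, and that the pseudo-isomorphism from the structure theorem (Proposition \ref{prop:char_form}) is therefore an \emph{injection} with finite cokernel; it then compares $\Zp$-lengths of $\coker(\phi_\fM)/p^n$ and $\bigoplus_i \fS/(E(u)^{h_i}, p^n)$ via the resulting four-term exact sequence. Your proof avoids the structure theory entirely: you reduce to the effective case by Tate twisting, then to rank one by top exterior powers — using Lemma \ref{wedgeslope} on the torsion side and, on the flat side, the identification of $\Char(\coker\phi_\fM)$ with $(\det\phi_\fM)$ by localizing the characteristic ideal at the unique relevant height-one prime $(E(u))$ — and finish with an explicit rank-one computation. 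This is arguably more elementary and makes transparent the guiding fact that $\deg_\fS(\fM)$ is the $E(u)$-adic valuation of $\det\phi_\fM$, at the small cost of the side remark (which you should state explicitly) that $\det\phi_\fM$, being a unit in $\fS[1/E(u)]$, is a unit times a power of $E(u)$ since $\fS$ is a UFD and $E(u)$ is irreducible; the paper's approach instead buys the injectivity of the pseudo-isomorphism, which is used again implicitly elsewhere. Both arguments are sound.
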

\begin{proof}
Because $\coker(\phi_\fM)$ has a two-step projective resolution by $\phi_\fM$ itself, it has homological dimension 1. Consequently, as it is $E(u)$-power torsion, it has no $p$-torsion, and the quasi-isomorphism of Proposition \ref{prop:char_form} is an injection 
\begin{equation}
\label{eq:char_form2}
\coker(\phi_\fM) \rinj \bigoplus_{i=1}^m \frac{\fS}{(E(u)^{h_i})} =: T 
\end{equation}
with finite cardinality cokernel $C$. One then checks that the cokernel of the induced injective map $\phi_{\fM/p^n} : \phz^*\fM/p^n \ra \fM/p^n$ is naturally isomorphic to $\coker(\phi_\fM)/p^n$. 

The exact sequence
\[
0 \lra C[p^n] \lra \coker(\phi_\fM)/p^n \lra T/p^n \lra C/p^n \lra 0
\]
allows us to check that $\ell_{\bZ_p}( \coker(\phi_\fM)/p^n) = \ell_{\bZ_p}(T/p^n)$. Recalling that $\deg_\fS(\fM) = \sum_{i=1}^m h_i$, we calculate 
\[
g\cdot \deg(\fM/p^n) = \ell(\coker \phi_{\fM/p^n}) = \ell(\coker(\phi_\fM)/p^n) = \ell(T/p^n)  = g\deg_\fS(\fM)n,
\]
where the last inequality follows from the fact that $\fS/(E(u)^{h_i}, p^n)$ is length $neh_i$ as a $\cO_K$-module. The equality of slopes follows from $\rk_\fS(\fM) = \rk(\fM/p)$ and $\rk(\fM/p^n) = n\rk(\fM/p)$. 
\end{proof}

As a consequence of Proposition \ref{prop:deg_mod}, we can show that our Harder--Narasimhan theory restricted to $\Mod_{\fS, \tor}^{\phz, [0,1]}$ is the same as that on finite flat group schemes from \cite{fargues2010}. Let $\cG \mapsto \fM(\cG)$ denote the anti-equivalence from the category from $p^\infty$-torsion finite flat group schemes over $\cO_K$ to $\Mod_{\fS}^{\phz, [0,1]}$ as in Remark \ref{rem:anti-equiv}. 
\begin{prop} 
\label{farguescomp}  
Let $\cG$ be a finite flat group scheme. If $\cG_i$ is the HN-filtration on $\cG$ from \cite[\S4]{fargues2010}, then $\fM(\cG^i)$ is the HN-filtration of $\fM(\cG)$. 
\end{prop}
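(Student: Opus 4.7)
The plan is to show that Kisin's anti-equivalence $\cG \mapsto \fM(\cG)$ intertwines all the numerical data defining the two HN-theories, so that the uniqueness of the HN-filtration (Theorem \ref{thm:HNfilt}) forces the filtrations to correspond. The three pieces of data to align are: (i) the generic-fiber functor, together with the bijection of strict subobjects; (ii) the rank function; and (iii) the degree function. Pieces (i) and (ii) are essentially formal consequences of Kisin's original theorem combined with the theory of norm fields.

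For (i) and (ii), I would use the fact that $\cG \mapsto \fM(\cG)$ is an exact anti-equivalence carrying $\cG(\overline{K})$ to the $\Gal(\overline{K}/K_\infty)$-representation associated to $\fM(\cG) \otimes_\fS \cO_\cE$ (Remark \ref{rem:JMF}). Restriction to $\Gal(\overline{K}/K_\infty)$ preserves $\bZ_p$-length, so Fargues' rank (the $\bZ_p$-length of $\cG(\overline{K})$) matches $\rk(\fM(\cG))$. The anti-equivalence turns short exact sequences of finite flat group schemes into strict short exact sequences of Kisin modules with arrows reversed, and when combined with our reversed inequality for semi-stability (the remark following Definition of semi-stability), Fargues' ascending filtration $\cG_1 \subset \cdots \subset \cG_n = \cG$ (with $\mu(\cG_1) > \mu(\cG_2/\cG_1) > \cdots$) pulls back to an ascending strict filtration of $\fM(\cG)$ whose graded pieces are the $\fM$-images of Fargues' graded pieces in reverse order, so that slopes become increasing.

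The substantive step is (iii), the equality of degrees. Fargues' normalization yields $\deg_\mathrm{Fargues}(\cG) = \tfrac{1}{e}\ell_{\cO_K}(\omega_\cG)$, where $\omega_\cG$ is the $\cO_K$-module of invariant differentials; this is pinned down by the computation $\deg_\mathrm{Fargues}(\mu_p) = 1$. On our side, because $\fM(\cG)$ has height in $[0,1]$, $\coker(\phi_{\fM(\cG)})$ is killed by $E(u)$ and so inherits a canonical $\cO_K = \fS/E(u)$-module structure. Using $\ell_{\bZ_p}(M) = f \cdot \ell_{\cO_K}(M)$ for any $\cO_K$-module $M$, one obtains
\[
\deg(\fM(\cG)) \;=\; \tfrac{1}{g}\ell_{\bZ_p}\big(\coker \phi_{\fM(\cG)}\big) \;=\; \tfrac{1}{e}\ell_{\cO_K}\big(\coker \phi_{\fM(\cG)}\big).
\]
The input from Kisin's construction in \cite{crfc} is the natural identification $\omega_\cG \cong \coker(\phi_{\fM(\cG)})$ as $\cO_K$-modules, which closes the comparison. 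Because both rank and degree are additive in strict short exact sequences on both sides, this identification propagates to all subquotients, not just to $\cG$ itself.

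The step I expect to demand the most care (though not real difficulty) is the directional bookkeeping: both the arrows and the slopes flip under the anti-equivalence, and the paper's reversed semi-stability convention is precisely what makes these two flips cancel, so Fargues' subobject HN-filtration corresponds to our subobject HN-filtration (rather than to a filtration by quotients). Once this is settled and the numerical data above are matched, the characterization of the HN-filtration in Theorem \ref{thm:HNfilt} applies to both sides and forces them to coincide, yielding the proposition.
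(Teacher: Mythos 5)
Your proof is correct in substance but takes a genuinely different route from the paper. Both proofs reduce the proposition to showing that Fargues' degree of $\cG$ equals $\deg(\fM(\cG))$; the rank comparison and the directional bookkeeping under the anti-equivalence are formal, and you handle them correctly. Where you diverge is in how you compare degrees. You identify $\coker(\phi_{\fM(\cG)})$ with the module of invariant differentials $\omega_\cG$ (or $\Lie(\cG)$, which has the same $\cO_K$-length) and compute both sides directly. The paper instead invokes Raynaud's theorem to present $\cG$ as $\ker(\Psi\colon \cG_1 \to \cG_2)$ for an isogeny of $p$-divisible groups, realizes $\cG$ in a short exact sequence with truncated Barsotti--Tate groups $\cG_i[p^n]$, and uses additivity of degree to reduce to that case; there, Fargues computes $\deg(\cG_i[p^n]) = n d_i$ with $d_i = \dim \cG_i$, and on the Kisin module side the degree is read off from the Smith normal form of $\phi$, whose number of nontrivial elementary divisors agrees with $d_i$ by Tate's comparison of Hodge--Tate weights.

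Your route is more direct and, if anything, more conceptual: it explains \emph{why} the two degrees agree rather than checking agreement on a cofinal set of inputs. The trade-off is that it rests on the identification $\omega_\cG \cong \coker(\phi_{\fM(\cG)})$ (as $\cO_K = \fS/E(u)$-modules), which is folklore close to Kisin's construction but is not packaged as a citable clean statement in \cite{crfc}; you would need either to prove it, to pin down exactly where it appears in the construction of the anti-equivalence, or to cite a secondary source (e.g.\ Brinon--Conrad's CMI notes) where it is made explicit. The paper's proof sidesteps this by needing only Tate's 1967 theorem plus the structure theory of $\coker(\phi_\fM)$ via characteristic ideals (equation \eqref{eq:char_form2}), both of which are cited with precision. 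So: same destination, different ingredients; yours is cleaner if the identification is accepted, the paper's is more self-contained given its bibliography.
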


\begin{proof}
It suffices to compare the degrees of $\cG$ and $\fM(\cG)$.  We can write $\cG = \ker(\Psi:\cG_1 \ra \cG_2)$ where $\Psi$ is an isogeny of $p$-divisible groups.  For some $n \gg 0$, we have
\[
0 \ra \cG \ra \cG_1[p^n] \ra \cG_2[p^n] \ra 0.
\]  
Using the behavior of degree in exact sequence, we reduce to the case of $\cG_i[p^n]$, i.e., truncated Barsotti--Tate groups.  By \cite[\S3, Ex.\ 2]{fargues2010}, $\deg(\cG_i[p^n]) = n d_i$ where $d_i$ is the dimension of $\cG_i$. 

Let $\fM_i = \fM(\cG_i)$.  Since $\fM_i$ is a flat Kisin module with height in $[0,1]$, each $h_i $ in \eqref{eq:char_form2} is either $0$ or $1$.  If $d_i'$ is the number of non-zero $h_i$, then $\deg(\fM_i/p^n) = n d_i'$.  By comparison of Hodge--Tate weights (\cite[Cor.\ 2]{tate1967}), we conclude that $d_i  = d_i'$ and hence $\mu(\cG_i[p^n]) = \mu(\fM_i/p^n)$.  
\end{proof}

We may now define a new Harder--Narasimhan theory on the isogeny category of flat Kisin modules. Recalling the formal setup of \S\ref{subsec:HN-theory}, the target of the generic fiber functor will be the category of \'etale $\phz$-modules over $\cE$, where $\cE := \cO_\cE[1/p]$. 
\begin{defn}
An \emph{\'etale $\phz$-module over $\cE$} is a finite dimensional $\cE$-vector space $D$ equipped with a semi-linear endomorphism $\phz$ such that there exists a $\phz$-stable $\cO_\cE$-lattice $\cM \subset D$ that is an \'etale $\phz$-module over $\cO_\cE$. We write the abelian category of such objects as $\Mod^\phz_\cE$. 
\end{defn}
The category $\Mod^\phz_\cE$ is equivalent to the category $\bQ_p$-linear continuous representations of $G_{K_\infty}$, cf.\ Remark \ref{rem:JMF}. 

\begin{defn}
The rank $\rk_\fS(\fM)$ of a flat Kisin module $(\fM, \phi_\fM)$ is the dimension over $\cE$ of the \'etale $\phz$-module $\fM \otimes_\fS \cE$. The slope is given by $\mu_\fS(\fM) := \deg_\fS(\fM)/\rk_\fS(\fM)$. 
\end{defn}
This rank is equal to the rank of $\fM$ as a $\fS$-module. 

When we try to verify the Harder--Narasimhan axioms listed in Proposition \ref{prop:HN_main} for the additive exact category of flat Kisin modules (as opposed to its isogeny category), we find that they are not quite true. Namely, non-strict subobjects inducing the same generic fiber do not necessarily have strict inequality in the degree. To have an HN-theory, we pass to isogeny category. 
\begin{prop}
\label{prop:HN_false}
Let rank and degree be as defined above. 
\begin{enumerate}
\item The functor $\otimes_\fS \cE$ sending flat Kisin modules to \'{e}tale $\phz$-modules over $\cE$ is exact and faithful, and induces a bijection
\[
\{\text{strict subobjects of } \fM\} \lrisom \{\text{subobjects of } \fM \otimes_\fS \cE\}
\]
\item Both $\rk_\fS$ and $\deg_\fS$ are additive in short exact sequences, and $\rk_\fS(M) = 0 \iff M = 0$. 
\item If an injection $\fM' \iarrow \fM$ induces an isomorphism $\fM' \otimes_\fS \cE \lrisom \fM \otimes_\fS \cE $, then $\fM'[1/p] \risom \fM[1/p]$ and $\deg_\fS(\fM') = \deg_\fS(\fM)$. 
\end{enumerate}
\end{prop}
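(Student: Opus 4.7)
The plan is to follow the template of Proposition~\ref{prop:HN_main} (the axiom verification in the torsion case), adjusted for the flat setting where $\cE = \cO_\cE[1/p]$ is obtained from $\fS$ by a chain of flat operations (localization at $u$, $p$-adic completion, and localization at $p$). Hence $\otimes_\fS \cE$ is automatically exact, and faithfulness on $\Mod_{\fS,\fl}^\phz$ follows because a flat (hence free) Kisin module is $u$- and $p$-torsion free, so injects into its generic fiber. The bijection on strict subobjects in (1) follows the argument of Proposition~\ref{prop:HN_main}(1): send $\fN$ to $\fN \otimes_\fS \cE$, with inverse $D' \mapsto \fM \cap D'$. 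The new input is that $\fM \cap D'$ is again flat, which I would establish via a depth argument: the quotient $\fM/(\fM\cap D')$ embeds into the $\cE$-module $(\fM\otimes\cE)/D'$ and so is $u$-torsion free, hence has depth $\geq 1$ over the two-dimensional regular local ring $\fS$; the depth lemma then forces $\fM\cap D'$ to have depth $2$, so to be free. For (2), additivity of $\rk_\fS$ is immediate from exactness, while additivity of $\deg_\fS$ follows from the snake lemma applied to the diagram with rows $0\to\phz^*\fM'\to\phz^*\fM\to\phz^*\fM''\to 0$ and $0\to\fM'\to\fM\to\fM''\to 0$; flatness of each $\fM_\bullet$ makes each $\phz^*\fM_\bullet$ $E(u)$-torsion free, hence each $\phi_\bullet$ injective, and the resulting short exact sequence of cokernels combined with multiplicativity of characteristic ideals in short exact sequences of torsion $\fS$-modules gives the additivity.

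The main work is in (3). Setting $T := \fM/\fM'$, part (2) shows $T$ is $\fS$-torsion, and the key claim is that $T$ is $p$-power torsion. I would reduce to the rank-one case by taking top exterior powers: $\wedge^r \fM' = d \cdot \wedge^r \fM$ for a unique (up to units) $d \in \fS$, with $d$ generating $\Char(T)$. Comparing the Frobenii of the rank-one Kisin modules $\wedge^r \fM$ and $\wedge^r \fM'$ yields the relation $\phz(d)/d = (\mathrm{unit}) \cdot E(u)^{h'-h}$ in $\fS[1/E(u)]$, where $h, h'$ are the heights of the respective determinants. Factoring $d = p^a u^b E(u)^c \prod_f f^{n_f}$ into prime powers in the UFD $\fS$ and analyzing prime-by-prime: the $u^{b(p-1)}$-contribution forces $b = 0$ since $u$ is not a unit in $\fS[1/E(u)]$; the $(E(u^p)/E(u))^c$-contribution forces $c = 0$ since the $1$-unit $1 + pg/E(u)^p$ arising from the identity $E(u^p) = E(u)^p + pg$ fails to be invertible in the non-$p$-adically-complete ring $\fS[1/E(u)]$; and each $(\phz(f)/f)^{n_f}$-contribution forces $n_f = 0$ since $f$ is not a unit in $\fS[1/E(u)]$ and $\phz(f)$ is coprime to $f$. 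Thus $d = p^a$ up to units, so $T$ is $p$-power torsion. Then $\fM'[1/p] \to \fM[1/p]$ has cokernel $T[1/p] = 0$ and is an isomorphism; moreover, $\ker\phi_T$ and $\coker\phi_T$ are $p$-power torsion with characteristic ideals pure powers of $(p)$, contributing zero $E(u)$-valuation, so the cokernel short exact sequence from (2) yields $\deg_\fS(\fM) = \deg_\fS(\fM')$.

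The main obstacle is the prime-by-prime analysis in (3), particularly the subtle ruling-out of an $(E(u))$-component of $d$: this hinges on the precise fact that $\fS[1/E(u)]$ is not $p$-adically complete, so the $1$-unit arising from $E(u^p)/E(u)$ is not invertible there. This arithmetic rigidity of the Kisin module condition -- strictly stronger than being \'etale over $\cO_\cE$, precisely because $E(u)$ is inverted without also completing $p$-adically -- is what forces $T$ to be $p$-power torsion in the flat category, and its failure at the non-strict level is exactly the reason one must pass to the isogeny category to obtain a full Harder-Narasimhan theory.
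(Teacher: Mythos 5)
Your part (3) takes a genuinely different and more self-contained route than the paper, which deduces that $T := \fM/\fM'$ is $p$-power torsion from Kisin's full-faithfulness results for flat Kisin modules (\cite[Prop.~2.1.12, Lem.~2.1.15]{crfc}). The determinant reduction to a relation $\phz(d)/d = (\mathrm{unit}) \cdot E(u)^{h'-h}$, where $d$ generates $\Char(T)$, is a nice idea, but the prime-by-prime factorization argument that follows has a genuine gap. The contributions $u^{b(p-1)}$, $(E(u^p)/E(u))^c$, and $(\phz(f)/f)^{n_f}$ to $\phz(d)/d$ are not independent: for a fixed prime $q$ of $\fS$, the $q$-adic valuation of $\phz(d)/d$ receives contributions from $\phz(f)$ for every $f$ dividing $d$, because $\phz(f)$ generally factors into several primes. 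In particular, the coprimality of $\phz(f)$ and $f$ does not preclude $\phz(f)$ sharing a prime with a \emph{different} $f'$ dividing $d$, so your claim that each $(\phz(f)/f)^{n_f}$-contribution forces $n_f = 0$ does not follow; the $E(u)$-case has the same problem, with the non-unit $E(u^p)/E(u)^p$ potentially cancelled by the $\phz(f)/f$ terms. Only the $u$-case is clean, because $\phz(u) = u^p$ is a pure power of $u$, so $v_u$ separates. The conclusion is nonetheless correct, and your approach can be repaired by aggregating the non-$p$ primes with Weierstrass preparation: having established $v_u(d) = 0$, write $d = p^a\, g(u)\, (\text{unit})$ with $g$ distinguished, so that $g(u^p) = (\text{unit})\, E(u)^{h'-h}\, g(u)$; uniqueness of Weierstrass factorization forces the unit to be $1$; comparing constant terms gives $(p c_0)^{h'-h} = 1$ in $W(k)$ (and $g(0) \neq 0$), hence $h' = h$; and comparing degrees then gives $p \deg g = \deg g$, hence $g = 1$ and $d = p^a$ up to units.

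Part (1) also has a gap. The depth argument cleanly shows that $\fM \cap D'$ is free and a strict subobject, which is a nice replacement for one half of the citation. But you do not verify that the assignment $D' \mapsto \fM \cap D'$ is actually inverse to $\fN \mapsto \fN \otimes_\fS \cE$, i.e.\ that $(\fM \cap D') \otimes_\fS \cE = D'$. In the torsion setting of Proposition \ref{prop:HN_main}(1) this is immediate because $\cO_\cE/p^n = (\fS/p^n)[1/u]$ is a localization, but $\cE$ here is the fraction field of a $p$-adic \emph{completion} of a localization, and recovering $D'$ from its $\fS$-lattice requires the Frobenius structure in an essential way; this is precisely what the cited results of \cite{crfc} supply, and some such input (or a direct argument) is still needed.
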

\begin{proof}
We see in \cite[Prop.\ 2.1.12]{crfc} that $\otimes_{\fS} \cO_\cE$ is fully faithful between the exact categories of flat Kisin modules and flat \'etale $\cO_\cE$-modules. Then, applying $\otimes_{\cO_\cE} \cE$ is clearly faithful and exact.  $\cE$ is a flat $\fS$-module just as $\cO_\cE$ is, and $\fM$ is now assumed to be $\fS$-flat just as $\fM$ was assumed to be $u$-torsion free before. The bijection from strict subobjects to subobjects follows from the fully faithful property of $\otimes_\fS \cO_\cE$ along with \cite[Lem.\ 2.1.15]{crfc}. 

One may quickly check that $\rk_\fS$ and $\deg_\fS$ are additive in short exact sequences. 

Let $\fM' \rinj \fM$ be an injection of flat Kisin modules as in (iii) with cokernel $\fN$. Clearly $\fN$ is a torsion $\fS$-module, and by \emph{loc.\ cit.}, $\fN$ is $p$-power torsion. Moreover, the two-step presentation of $\fN$ by flat $\fS$-modules implies that it has homological dimension 1, and consequently has no $u$-torsion. 

Just as in the proof of Proposition \ref{prop:HN_main}, we derive an exact sequence 
\[
0 \lra \ker (\phi_{\fN}) \lra \coker (\phi_{\fM}) \lra \coker (\phi_{\fM'}) \lra \coker(\phi_{\fN}) \lra 0
\]
from the snake lemma. Because $\fN$ and $\phz^*\fN$ have characteristic ideals that are powers of $(p)$, the same is true for $\coker(\phi_{\fN})$ and $\ker(\phi_\fN)$, as characteristic ideals are multiplicative in exact sequences. Therefore the powers of $(E(u))$ in the characteristic ideals of $\coker(\phi_{\fM'})$ and $\coker(\phi_{\fM})$ are identical, i.e., $\deg_\fS(\fM') = \deg_\fS(\fM)$, as desired. 
\end{proof}

It follows from Proposition \ref{prop:HN_false} that the essential image of the isogeny category $\Mod_\fS^{\phz} \otimes \bQ_p$ in $\Mod^\phz_\cE$ is an abelian category equipped with a notion of rank and degree. Consequently, the identity functor from $\Mod_\fS^{\phz} \otimes \bQ_p$ to itself satisfies the Harder--Narasimhan axioms for $\rk_\fS$ and $\deg_\fS$. Indeed, note that $\rk_\fS(\fM)$ and $\deg_\fS(\fM)$ depend only upon $\fM[1/p]$ when $\fM$ is a flat Kisin module. 

Consequently, we have the standard result that there is a unique filtration 
\begin{equation}
\label{eq:HN_filt_isog}
0 = \fM[1/p]_0 \subset \fM[1/p]_1 \subset \dotsc \subset \fM[1/p]_r = \fM[1/p]
\end{equation}
such that $\mu_\fS(\fM[1/p]_{i+1}/\fM[1/p]_i) > \mu_\fS(\fM[1/p]_i/\fM[1/p]_{i-1})$. 

\begin{lem}
\label{lem:K_ss}
Let $\fM$ be a flat Kisin module. The following are equivalent.
\begin{enumerate}
\item $\fM/p$ is semi-stable.
\item $\fM/p^n$ is semi-stable for all $n \geq 1$. 
\item For any $p$-power torsion quotient Kisin module $\fN$ of $\fM$, $\mu(\fN) \leq \mu_\fS(\fM)$.
\end{enumerate}
\end{lem}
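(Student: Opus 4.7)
My plan is to prove $(1) \Leftrightarrow (2) \Leftrightarrow (3)$ by exploiting a single observation: since $\fM$ is $\fS$-flat, for each $i \geq 0$ multiplication by $p^i$ realizes an isomorphism of Kisin modules $\fM/p\fM \cong p^i\fM/p^{i+1}\fM$. Using this, I would produce, for each $n \geq 1$, a length-$n$ filtration of $\fM/p^n\fM$ by strict sub-Kisin modules
\[
0 = p^n\fM/p^n\fM \subset p^{n-1}\fM/p^n\fM \subset \cdots \subset p\fM/p^n\fM \subset \fM/p^n\fM,
\]
where strictness holds because each quotient $\fM/p^i\fM$ is $u$-torsion free by flatness of $\fM$. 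All graded pieces are isomorphic to $\fM/p\fM$, and by Proposition \ref{prop:deg_mod} both the graded pieces and $\fM/p^n\fM$ itself have common slope $\mu_\fS(\fM)$.

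Given this filtration, $(2) \Rightarrow (1)$ is trivial upon setting $n=1$, and $(1) \Rightarrow (2)$ follows by inductively applying Proposition \ref{prop:ext_ss}, since semi-stable objects of a fixed slope are closed under extensions. For $(2) \Rightarrow (3)$, any $p$-power torsion quotient Kisin module $\fN$ of $\fM$ is killed by some $p^n$, so $\fN$ is a strict quotient of $\fM/p^n\fM$ (strictness because $\fN$ is $u$-torsion free by virtue of being a Kisin module). The semi-stability of $\fM/p^n\fM$ of slope $\mu_\fS(\fM)$, combined with additivity of rank and degree in strict short exact sequences (Proposition \ref{prop:HN_main}(2)), then forces $\mu(\fN) \leq \mu_\fS(\fM)$.

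For $(3) \Rightarrow (1)$, I would argue by contrapositive. If $\fM/p\fM$ fails to be semi-stable, there is a nonzero strict subobject $\fP \subsetneq \fM/p\fM$ with $\mu(\fP) < \mu(\fM/p\fM) = \mu_\fS(\fM)$; then by additivity of rank and degree, the strict quotient $\fQ := (\fM/p\fM)/\fP$ satisfies $\mu(\fQ) > \mu_\fS(\fM)$. Since $\fQ$ is a $p$-torsion, hence $p$-power torsion, quotient Kisin module of $\fM$, this contradicts (3).

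I do not expect any serious obstacle: every step is a formal consequence of the HN formalism already in place (Propositions \ref{prop:HN_main} and \ref{prop:ext_ss}), together with the compatibility between $\mu_\fS$ and $\mu(-/p^n)$ supplied by Proposition \ref{prop:deg_mod}. The only care required is in tracking normalizations of slope as one passes between flat Kisin modules and their $p$-power torsion quotients.
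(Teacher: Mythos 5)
Your proof is correct. The only genuine difference from the paper lies in the $(1)\Leftrightarrow(2)$ step: the paper invokes Proposition~\ref{prop:def_main} (semi-stability is preserved under deformation to Artinian $\bZ_p$-algebras), whereas you inline the relevant special case $A = \bZ/p^n$ by exhibiting the filtration of $\fM/p^n$ by $p^i\fM/p^n\fM$ with all graded pieces isomorphic to $\fM/p$, and then appealing to closure of semi-stables of a fixed slope under extensions (Proposition~\ref{prop:ext_ss}). This is essentially the same dévissage that drives Proposition~\ref{prop:def_main} internally, so the mathematics is the same, but your version is more self-contained and avoids the extra overhead of stating the deformation-theoretic result in full generality. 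The $(2)\Leftrightarrow(3)$ direction matches the paper's argument (every $p$-power torsion quotient of $\fM$ is a strict quotient of some $\fM/p^n$; apply $\mu(\fM/p^n) = \mu_\fS(\fM)$ from Proposition~\ref{prop:deg_mod} and additivity), and your contrapositive argument for $(3)\Rightarrow(1)$ is a clean way to close the loop. All the strictness and $u$-torsion-freeness claims you use are justified by flatness of $\fM$ over $\fS$, and you track slopes correctly; there are no gaps.
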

\begin{proof}
The equivalence of (i) and (ii) follows from Proposition \ref{prop:def_main}.

By Proposition \ref{prop:deg_mod}, we have $\mu(\fM/p^n) = \mu_\fS(\fM)$. Any $p$-power torsion Kisin module of $\fM$ is a quotient Kisin module of $\fM/p^n$ for some $n$. We immediately get the equivalence of (ii) and (iii). 
\end{proof}

\begin{defn}
Call a flat Kisin module $\fM$ \emph{semi-stable} when any of the equivalent conditions of Lemma \ref{lem:K_ss} are true. 
\end{defn}

\begin{rem}
Notice that one can immediately derive the tensor product theorem for flat Kisin modules from the equivalence of Lemma \ref{lem:K_ss} and the tensor product theorem for $p$-torsion Kisin modules (Theorem \ref{thm:tensor}). 
\end{rem}

This notion of semi-stability is different than the notion native to the isogeny category given in \eqref{eq:HN_filt_isog}, and they have the following relation to each other. 
\begin{prop}[{(cf.\ \cite[Prop.\ 6, \S2.1]{fargues2019})}]
\label{prop:ss_isog}
Let $\fM$ be a flat Kisin module. The following statements are equivalent.
\begin{enumerate}
\item The isogeny class of $\fM$ has a semi-stable member, which we call $\fN$. 
\item For any strict subobject $\fM'[1/p]$ of $\fM[1/p]$, $\mu_\fS(\fM') \geq \mu_\fS(\fM)$, i.e.~$\fM[1/p]$ is semi-stable in $\Mod^{\phz}_\fS \otimes \bQ_p$. 
\end{enumerate}
\end{prop}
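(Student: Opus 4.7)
My plan is as follows. For (1)$\Rightarrow$(2), I would take a semi-stable flat representative $\fN$ in the isogeny class of $\fM$ and any strict sub-iso-crystal $\fM'[1/p] \subset \fM[1/p] = \fN[1/p]$. By Proposition \ref{prop:HN_false}(1), this corresponds to a strict flat sub-Kisin module $\fD \subset \fN$ with $\fD[1/p] = \fM'[1/p]$. Flatness of the quotient $\fN/\fD$ ensures that $0 \to \fD/p \to \fN/p \to (\fN/\fD)/p \to 0$ is exact in $\Mod^\phz_{\fS,\tor}$, making $\fD/p$ a strict subobject of the semi-stable $\fN/p$. Thus $\mu(\fD/p) \geq \mu(\fN/p)$, which by Proposition \ref{prop:deg_mod} gives $\mu_\fS(\fD) \geq \mu_\fS(\fN) = \mu_\fS(\fM)$; isogeny-invariance of $\mu_\fS$ (Proposition \ref{prop:HN_false}(3)) then yields $\mu_\fS(\fM') \geq \mu_\fS(\fM)$.

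For (2)$\Rightarrow$(1), the plan rests on a simplifying observation. By Theorem \ref{thm:HNP_limit}, the polygons $\tfrac{1}{n}\HN(\fM/p^n)(nx)$ converge increasingly to $\HN(\fM)$, and all of them share the endpoints $(0,0)$ and $(\rk_\fS\fM,\deg_\fS\fM)$. Any convex polygon with those endpoints lying on or below the straight line of slope $\mu_\fS(\fM)$ joining them must equal that line. Hence if I can show that condition (2) forces $\HN(\fM)$ to be that straight line, then $\HN(\fM/p)$ is also the straight line, i.e.\ $\fM/p$ is semi-stable, and (1) holds with $\fN := \fM$ (no isogeny modification needed). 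The task thus reduces to proving: under (2), $\HN(\fM)$ is the straight line of slope $\mu_\fS(\fM)$.

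The main obstacle is exactly this reduction, which I would establish by contradiction. Suppose $\HN(\fM)$ has an interior kink at $(r_0,y_0)$ with $0 < r_0 < \rk_\fS\fM$ and $y_0/r_0 < \mu_\fS(\fM)$. By Theorem \ref{thm:HNP_limit}, for all $n$ sufficiently large there exists a strict sub-Kisin module $\fQ_n \subset \fM/p^n$ of rank close to $nr_0$ and slope close to $y_0/r_0$. From the family $\{\fQ_n\}$ I would extract a proper strict flat sub-Kisin module $\fD \subsetneq \fM$ with $\mu_\fS(\fD) < \mu_\fS(\fM)$, contradicting (2). The naive preimage of $\fQ_n$ in $\fM$ is of full $\fS$-rank and therefore useless; instead I would pass to the sub-\'etale $\cO_\cE/p^n$-module $\fQ_n\otimes_{\fS/p^n}\cO_\cE/p^n \subset (\fM\otimes\cO_\cE)/p^n$, show that after saturation these sub-modules are compatible as $n$ varies, and take an inverse limit to produce a proper sub-\'etale $\cE$-module $D \subsetneq \fM\otimes\cE$. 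Intersecting back with $\fM$ via the bijection of Proposition \ref{prop:HN_false}(1) would give the desired strict sub-Kisin module $\fD$.

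An alternative, more structural approach follows Fargues \cite[Prop.~5]{fargues00}: consider the infimum of $\HN(L/p)$ as $L$ ranges over all flat Kisin lattices in $\fM[1/p]$, prove the infimum is achieved by some $L_{\min}$, and show that under (2) this infimum polygon coincides with the straight line---any sub-Kisin module of $L_{\min}/p$ destabilizing it would, by a saturation argument of the same flavor, produce a proper strict flat sub-Kisin module of $\fM[1/p]$ violating (2). Either way, the heart of the matter---and the step I expect to require the most care---is the passage from HN-subobjects mod $p^n$ back to sub-iso-crystals of $\fM[1/p]$.
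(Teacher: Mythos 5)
The direction $(1)\Rightarrow(2)$ in your proposal is correct and essentially matches the paper: pull $\fM'[1/p]$ back to a strict flat sub-Kisin module of the semi-stable representative $\fN$, reduce modulo $p$, and use Proposition \ref{prop:deg_mod} together with isogeny-invariance of $\mu_\fS$.

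Your proposed reduction for $(2)\Rightarrow(1)$, however, has a genuine logical gap that makes the whole direction collapse. You claim that if $\HN(\fM)$ (the limit polygon of Theorem \ref{thm:HNP_limit}) equals the straight line of slope $\mu_\fS(\fM)$, then $\HN(\fM/p)$ must also equal that straight line, so that $\fM$ itself is semi-stable and no isogeny correction is needed. The inference ``any convex polygon with those endpoints lying on or below the straight line must equal that line'' is false: \emph{every} convex polygon with those endpoints lies on or below the chord joining them, so this condition is vacuous. The increasing convergence of Theorem \ref{thm:HNP_limit} gives $\HN(\fM/p)\leq\HN(\fM)$, which only reaffirms that $\HN(\fM/p)$ sits below the straight line; it cannot force equality. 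Indeed, the stronger conclusion you aim for---that $(2)$ implies $\fM$ itself is semi-stable---is false in general: $\fM$ can be isogenous to a semi-stable flat Kisin module while $\fM/p$ has a destabilizing subobject, which is precisely why the proposition is stated up to isogeny. (Compare Lemma \ref{lem:HN_iff_eq}: $\HN(\fM)=\HN(\fM/p)$ is equivalent to $\fM$ being of type HN, a condition that a random member of the isogeny class need not satisfy.)

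Because the reduction fails, the contradiction argument that follows it (extracting a destabilizing flat sub-Kisin module $\fD\subsetneq\fM$ from a family $\{\fQ_n\}$ of sub-Kisin modules of $\fM/p^n$ via saturation and inverse limits) is aimed at a statement that is both stronger than needed and false, so even if carried out it could not succeed. The paper's proof of $(2)\Rightarrow(1)$ works with quotients instead of subobjects: it considers the inverse system of $p$-power torsion quotient Kisin modules of $\fM$ whose HN-slopes all exceed $\mu_\fS(\fM)$, shows the HN-rank of members of this system is uniformly bounded (else one could produce a $p$-torsion-free quotient of too-large slope, contradicting additivity of $\deg_\fS$), deduces that the system has a unique maximal element $\fP$ (the top part of the HN-filtration of $\fM/p^n$ for large $n$), and then takes $\fN=\ker(\fM\twoheadrightarrow\fP)$. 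This $\fN$ is isogenous to $\fM$ and semi-stable, but it is typically not $\fM$ itself. If you want to salvage your approach, you must allow for an isogeny modification of $\fM$; working with quotients of $\fM$ (which are honest torsion Kisin modules) is more tractable than trying to take an inverse limit of sub-\'etale modules, precisely because the $\fM/p^n$ are genuine quotients of $\fM$ and the functoriality of the HN-filtration (Proposition \ref{prop:HN_funct}) then controls the compatibilities you are worried about.
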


\begin{proof}
We have observed that the slope of a flat Kisin module depends only on its isogeny class. Assuming (i) and choosing a strict subobject $\fM' \subset \fM$, it follows that $\mu_\fS(\fM') = \mu_\fS(\fM'[1/p] \cap \fN)$, where this intersection is taken after choosing an isomorphism $\fN[1/p] \simeq \fM[1/p]$. Because $\fN$ is semi-stable, so is $\fN/p$. Consequently, $\deg((\fM'[1/p] \cap \fN)/p) \geq \deg(\fN/p)$. Then (ii) follows, by using Proposition \ref{prop:deg_mod} to calculate degrees of flat Kisin modules by calculating their degree modulo $p$.

Now assume (ii). If $\fM$ is semi-stable we have established (i), so we address the case that $\fM$ is not semi-stable. Lemma \ref{lem:K_ss} tells us that $\deg(\fM/p^n)$ is not semi-stable for some $n \geq 1$. Consequently, there exists some $p$-power torsion quotient Kisin module of $\fM$ with all HN-slopes strictly greater than $\mu_\fS(\fM)$. Consider the inverse system of all such quotients of $\fM$. We claim that the HN-rank of these quotients is bounded above. For if it is not bounded above, the fact that the number of generators as a $\fS$-module is bounded above implies that there exists a $p$-torsion free quotient Kisin module $\fM''$ of $\fM$, realizable as a quotient of the inverse limit of the inverse system. We then have that $\mu_\fS(\fM'') > \mu_\fS(\fM)$, because we can calculate these slopes modulo $p$, and $\mu(\fM''/p) > \mu(\fM/p)$ by assumption. Using the additivity of $\deg_\fS$ in exact sequences, we deduce that $\fM[1/p]$ is not semi-stable, a contradiction. 

There is a unique maximal element $\fP$ to the inverse system we have defined. One way to see this is to start by applying the bound on the rank of members of the system to deduce that all of them are quotients of $\fM/p^n$ for some sufficiently large $n$. Then, $\fP$ is the quotient of $\fM/p^n$ by the largest step of the HN-filtration with slopes $\leq \mu_\fS(\fM)$. By Proposition \ref{prop:HN_funct}, all elements of the inverse system are quotients of $\fP$. 

Now consider the kernel Kisin module $\fN$ of the surjection $\fM \rsurj \fP$. We know that $\mu_\fS(\fM) = \mu_\fS(\fN) = \mu(\fN/p^n)$, and that any quotient of $\fN/p^n$ has slope no more than $\mu_\fS(\fM)$. Consequently, $\fN/p^n$ is semi-stable and, by Lemma \ref{lem:K_ss}, $\fN$ is semi-stable and isogenous to $\fM$, as desired. 
\end{proof}

\subsection{Flat Kisin modules of type HN}

Not all flat Kisin modules have a filtration by strict semi-stable subobjects satisfying the axioms of the HN-filtration (since, a priori, the filtration lives in the isogeny category). Those which do are called \emph{type HN}, following \cite[Defn.\ 5, \S2.3]{fargues2019}. 
\begin{defn}
\label{defn:type_HN}
A flat Kisin module $\fM$ is said to be of \emph{type HN} when there exists an increasing filtration $\fM_i$ by strict subobjects such that each graded piece $\fM_i/\fM_{i-1}$ is a semi-stable flat Kisin module and 
\begin{equation}
\label{eq:HN_type_filt}
\mu (\fM_1/\fM_0) < \mu(\fM_2/\fM_1) < \dotsm < \mu (\fM_r/\fM_{r-1}).
\end{equation}
\end{defn}

The following observation will be useful. 
\begin{lem}
\label{lem:HN_iff_eq}
A flat Kisin module $\fM$ is of type HN if and only if $\HN(\fM) = \HN(\fM/p)$. 
\end{lem}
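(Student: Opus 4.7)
Given a type HN filtration $0 = \fM_0 \subset \cdots \subset \fM_r = \fM$, each graded piece $\fM_i/\fM_{i-1}$ is flat and semi-stable, so reducing modulo $p^n$ is exact and yields a filtration of $\fM/p^n$ whose graded pieces $(\fM_i/\fM_{i-1})/p^n$ are semi-stable by Lemma \ref{lem:K_ss}, of slope $\mu_\fS(\fM_i/\fM_{i-1})$ by Proposition \ref{prop:deg_mod}, and strictly increasing in $i$. By uniqueness in Theorem \ref{thm:HNfilt} this is the HN-filtration of $\fM/p^n$, so $\HN(\fM/p^n)(nx)/n = \HN(\fM/p)(x)$ for every $n \geq 1$; passing to the limit in Theorem \ref{thm:HNP_limit} gives $\HN(\fM) = \HN(\fM/p)$.

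For the converse, assume $\HN(\fM) = \HN(\fM/p)$. Since the sequence in Theorem \ref{thm:HNP_limit} increases monotonically to its limit, every term equals it. Writing $\bar\fM_\bullet$ and $\fM_\bullet^{(n)}$ for the HN-filtrations of $\fM/p$ and $\fM/p^n$, both have the same number of steps $r$ with the same slopes $\mu_1 < \cdots < \mu_r$, and $\rk \fM_i^{(n)} = n\rk\bar\fM_i$, $\deg \fM_i^{(n)} = n\deg\bar\fM_i$. The key technical step is this: for all $k \geq n \geq 0$, the short exact sequence $0 \to \fM/p^{k-n} \xrightarrow{\cdot p^n} \fM/p^k \to \fM/p^n \to 0$ restricts to $0 \to \fM_i^{(k-n)} \to \fM_i^{(k)} \to \fM_i^{(n)} \to 0$. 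Functoriality of HN-filtrations in slopes (Proposition \ref{prop:HN_funct}) gives containments $\ker \subseteq \fM_i^{(k-n)}$ and $\operatorname{im} \subseteq \fM_i^{(n)}$; rank additivity combined with the scaling relation forces the ranks to match on both sides. Proposition \ref{prop:HN_main}(3) then yields slope inequalities $\mu(\ker) \geq \mu(\fM_i^{(k-n)})$ and $\mu(\operatorname{im}) \geq \mu(\fM_i^{(n)})$ with equality iff the inclusions are equalities; combining with degree additivity in the sequence and the scaling $\deg \fM_i^{(m)} = m\deg\bar\fM_i$ forces both to be equalities.

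Granting the short exact sequence, the transition maps $\fM_i^{(k)} \twoheadrightarrow \fM_i^{(n)}$ are surjective, and $\fM_i := \varprojlim_n \fM_i^{(n)}$ embeds in $\fM = \varprojlim_n \fM/p^n$; the same sequence identifies the kernel of the projection $\fM_i \twoheadrightarrow \fM_i^{(n)}$ as $p^n\fM_i$, giving $\fM_i/p^n\fM_i = \fM_i^{(n)}$. Since $\fM_i^{(1)}$ is a sub-$\fS$-module of the $u$-torsion-free module $\fM/p$, it is free over $\F_p\lb u\rb$; together with $p$-adic completeness of $\fM_i$ (as an inverse limit of its own reductions) and $p$-torsion-freeness, a standard Nakayama argument shows $\fM_i$ is free over $\fS$, hence flat. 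The analogous argument applied to $\fM/\fM_i$ ($p$-torsion-free by the snake lemma, with $u$-torsion-free mod-$p$ reduction since $\fM_i^{(1)}$ is strict in $\fM/p$) shows $\fM_i \subset \fM$ is strict with flat quotient, so each $\fM_i/\fM_{i-1}$ is flat with reduction mod $p$ equal to the semi-stable graded piece $\bar\fM_i/\bar\fM_{i-1}$; by Lemma \ref{lem:K_ss} it is semi-stable of slope $\mu_i$, and therefore $\fM$ is of type HN. The main technical obstacle is the degree-and-rank bookkeeping that establishes the short exact sequence restriction, which drives the entire lifting construction.
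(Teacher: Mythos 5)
Your overall strategy follows the paper's: for the forward direction reduce the type-HN filtration modulo $p^n$ and invoke uniqueness of the HN-filtration; for the converse, use constancy of the sequence in Theorem \ref{thm:HNP_limit} to extract scaling relations, show that the reduction short exact sequences restrict to the HN-filtration steps, and pass to an inverse limit. The forward direction and the inverse-limit/flatness portion of the converse are correct and correctly fill in details the paper elides.

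There is, however, a genuine error in the key technical step that you yourself flag as the crux. You assert that Proposition \ref{prop:HN_funct} gives $\ker \subseteq \fM_i^{(k-n)}$ (and $\operatorname{im} \subseteq \fM_i^{(n)}$). In fact, Proposition \ref{prop:HN_funct} applied to the \emph{injection} $\fM/p^{k-n} \xrightarrow{\cdot p^n} \fM/p^k$ gives that the image of $(\fM/p^{k-n})^{\leq \mu_i}$ lands in $(\fM/p^k)^{\leq\mu_i}$, i.e.\ the \emph{opposite} containment $\fM_i^{(k-n)} \subseteq \ker$. (The containment $\operatorname{im} \subseteq \fM_i^{(n)}$ is correctly directed.) Your claimed inclusion $\ker \subseteq \fM_i^{(k-n)}$ would require knowing a priori that $\ker$ has all HN-slopes $\leq \mu_i$ as a subobject of $\fM/p^{k-n}$, which is not what the functoriality statement says. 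This matters because your subsequent degree bookkeeping depends on the direction: with the \emph{correct} containments, Proposition \ref{prop:HN_main}(3) yields $\deg(\ker) \leq (k-n)\deg\bar\fM_i$ and $\deg(\operatorname{im}) \geq n\deg\bar\fM_i$, which point in opposite directions and are consistent with $\deg(\ker) + \deg(\operatorname{im}) = k\deg\bar\fM_i$ without forcing equalities. The rank count still forces $\rk(\ker) = (k-n)\rk\bar\fM_i$ and $\rk(\operatorname{im}) = n\rk\bar\fM_i$, but the degree part of your argument breaks.

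The fix is short: once $\rk(\ker) = \rk(\fM_i^{(k-n)})$ with $\fM_i^{(k-n)} \subseteq \ker$ and both strict in $\fM/p^{k-n}$, they have the same generic fiber, hence are equal by the bijection in Proposition \ref{prop:HN_main}(1) --- no degree argument needed for that piece. Alternatively, one can obtain the missing degree inequality $\deg(\ker) \geq (k-n)\deg\bar\fM_i$ from the convex-hull property (Proposition \ref{chull1}) since $\ker$ is a strict subobject of $\fM/p^{k-n}$ and $(\rk\fM_i^{(k-n)}, \deg\fM_i^{(k-n)})$ is a breakpoint of $\HN(\fM/p^{k-n})$, and then combine with degree additivity to get $\deg(\operatorname{im}) = n\deg\bar\fM_i$, hence $\operatorname{im} = \fM_i^{(n)}$ by Proposition \ref{prop:HN_main}(3). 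With this correction, the rest of your argument goes through and matches the paper's sketch.
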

\begin{proof}
If $\fM$ is of type HN, then Lemma \ref{lem:K_ss} implies that the reduction modulo $p^n$ of the filtration \eqref{eq:HN_type_filt} has the properties of the canonical HN-filtration of $\fM/p^n$ for every $n \geq 1$. It follows that $\HN(\fM) = \HN(\fM/p)$. Conversely, $\HN(\fM) = \HN(\fM/p)$ implies that the increasing sequence of HN-polygons of Theorem \ref{thm:HNP_limit} is a constant sequence. It follows that $\rk((\fM/p^n)^{\leq \mu}) = n \rk((\fM/p)^{\leq \mu}$ for any $\mu$ and all $n \geq 1$, and that $(\fM/p^n)^{\leq \mu}$ is an extension of $\fM/p^{n-1}$ by $\fM/p$. Setting $\fM^{\leq \mu} := \varprojlim_n (\fM/p^n)^{\leq \mu}$, one can easily check that $\fM^{\leq \mu}/\fM^{< \mu}$ is a flat semi-stable Kisin module, so that the $\fM^{\leq \mu}$ realize the filtration of Definition \ref{defn:type_HN}.
\end{proof}

A generalization of Proposition \ref{prop:ss_isog} is to show that 
\begin{thm}
\label{thm:HN_isog}
Any flat Kisin module is isogenous to a flat Kisin module of type HN. 
\end{thm}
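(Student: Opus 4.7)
The proof proceeds by induction on the length $r$ of the HN-filtration
\[
0 = \fM[1/p]_0 \subset \fM[1/p]_1 \subset \dotsm \subset \fM[1/p]_r = \fM[1/p]
\]
of $\fM[1/p]$ in the isogeny category $\Mod^\phz_\fS \otimes \bQ_p$. The base case $r = 1$ is precisely Proposition \ref{prop:ss_isog}, which yields a semi-stable (and hence trivially type HN) flat Kisin module in the isogeny class of $\fM$.

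For the inductive step with $r \geq 2$, the first task is to replace $\fM$ with an isogenous Kisin module $\fM'$ whose first strict HN-step is semi-stable. Let $\fM_1 := \fM \cap \fM[1/p]_1$, a saturated strict subobject of $\fM$ whose generic fiber is semi-stable. Proposition \ref{prop:ss_isog} applied to $\fM_1$ yields a semi-stable $\fN_1^0 \subset \fM_1$ isogenous to $\fM_1$; for $j \geq 0$ with $p^j \fM_1 \subset \fN_1^0$, the module $\fN_1 := p^{-j}\fN_1^0 \subset \fM_1[1/p]$ is $\fS$-free, semi-stable of the same slope as $\fN_1^0$, and contains $\fM_1$. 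Set $\fM' := \fM + \fN_1 \subset \fM[1/p]$. The saturation of $\fM_1$ in the free $\fS$-module $\fM$ yields an $\fS$-splitting $\fM \cong \fM_1 \oplus \fM/\fM_1$ that extends to $\fM' \cong \fN_1 \oplus \fM/\fM_1$, so $\fM'$ is $\fS$-free. The inclusion $\fM \subset \fM'$ is an isogeny with finite cokernel $\fN_1/\fM_1$, and the strict subobject $\fM' \cap \fM[1/p]_1 = \fN_1$ is semi-stable with flat quotient $\fM'/\fN_1 \cong \fM/\fM_1$.

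Since the isogeny class of $\fM'/\fN_1$ has HN-length $r-1$, the inductive hypothesis provides a type HN flat Kisin module $\fM''$ isogenous to $\fM'/\fN_1$. After replacing $\fM''$ with $p^{-k} \fM''$ for large $k$ (which preserves type HN, as the slopes and ranks of the HN-filtration are unchanged), we may arrange the isogeny in the form of an inclusion $\fM'/\fN_1 \hookrightarrow \fM''$ with finite cokernel $C$. Splicing the short exact sequences
\[
0 \to \fN_1 \to \fM' \to \fM'/\fN_1 \to 0 \quad \text{and} \quad 0 \to \fM'/\fN_1 \to \fM'' \to C \to 0
\]
produces a Yoneda extension class in $\mathrm{Ext}^2(C, \fN_1)$; because $C$ is a $u$-torsion-free $p$-power-torsion $\fS$-module (the cokernel of a strict injection between flat Kisin modules), it has $\fS$-projective dimension $\leq 1$, so $\mathrm{Ext}^2_\fS(C, \fN_1) = 0$ and the extension class vanishes. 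This vanishing produces a flat Kisin module $\widetilde{\fM}$ sitting in exact sequences $0 \to \fM' \to \widetilde{\fM} \to C \to 0$ and $0 \to \fN_1 \to \widetilde{\fM} \to \fM'' \to 0$. In particular, $\widetilde{\fM}$ is isogenous to $\fM$, and the filtration $0 \subset \fN_1 \subset \widetilde{\fM}$ combined with the HN-filtration of $\fM''$ expresses $\widetilde{\fM}$ as a type HN Kisin module, since $\mu(\fN_1) = \mu_1$ is strictly less than every slope of $\fM''$.

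The main obstacle is upgrading the $\mathrm{Ext}^2$-vanishing from $\fS$-modules to the category of Kisin modules so that the splicing produces a Frobenius-compatible $\widetilde{\fM}$. This can be handled either by constructing $\widetilde{\fM}$ concretely as an $\fS$-finite sub-$\fS$-module of $\fM'[1/p]$ lifting $\fM''$ under $\fM'[1/p] \twoheadrightarrow (\fM'/\fN_1)[1/p]$ and then checking flatness via the splitting argument used for $\fM'$, or by a direct homological comparison. The rest of the verification -- flatness of the various modules, maintenance of the isogeny relation, and the comparison of slopes -- follows formally from the structural properties of flat Kisin modules established in Proposition \ref{prop:HN_false} and the vanishing $\mathrm{Ext}^1_\fS(\text{free},-) = 0$ over the local ring $\fS$.
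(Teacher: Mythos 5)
Your overall strategy mirrors the paper's: both proceed by induction on the length $r$ of the HN-filtration in the isogeny category, both anchor the base case (and the key inductive ingredient) in Proposition \ref{prop:ss_isog}, and both reduce the inductive step to splicing a semi-stable piece with a type-HN module of shorter HN-length. You induct ``from the bottom'' (replacing the minimal-slope step by a semi-stable lattice, then handling the quotient), while the paper iterates ``from the top'' (replacing the maximal-slope graded factor, then passing to the sub), but this is a cosmetic reorganization.

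The genuine difference, and the place where your argument has a gap you yourself flag, is the splicing step. You invoke the vanishing of $\mathrm{Ext}^2_\fS(C,\fN_1)$ to produce $\widetilde{\fM}$ as an $\fS$-module, and then observe that it is unclear how to equip it with a compatible Frobenius $\phi_{\widetilde{\fM}}$; this is correct to worry about, since $\mathrm{Ext}$-groups in $\fS$-modules do not automatically control extensions in $\Mod_\fS^\phz$. The paper sidesteps this entirely via Lemma \ref{lem:fargues3.1}: it constructs the required extension explicitly as a pushout $\fN := (\fM'_1 \oplus \fN_2)/\fM_2$ in the category of Kisin modules, which carries a Frobenius by construction because all the maps involved are maps of Kisin modules. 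Your step can be repaired in the same spirit: after arranging $\fM'/\fN_1 \subset \fM'' \subset p^{-k}(\fM'/\fN_1)$, take $\widetilde{\fM}$ to be the fiber product of $p^{-k}\fM' \to p^{-k}(\fM'/\fN_1)$ with $\fM'' \hookrightarrow p^{-k}(\fM'/\fN_1)$. This is a pullback of $\phz$-equivariant maps, so $\widetilde{\fM}$ is automatically a Kisin module, it is $\fS$-flat because it is an extension of $\fM''$ by $p^{-k}\fN_1$ (both flat), and the filtration $p^{-k}\fN_1 \subset \widetilde{\fM}$ combined with the HN-filtration of $\fM''$ exhibits $\widetilde{\fM}$ as type HN. The homological detour through $\mathrm{Ext}^2$ is unnecessary once one notices the extension can be realized directly. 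Everything else in your argument (the construction and $\fS$-freeness of $\fM'$ using the strict-subobject splitting, the slope comparison $\mu(\fN_1) < $ all slopes of $\fM''$, the maintenance of the isogeny class) is sound.
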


The theorem will follow from this generalization of the ``descent algorithm'' of \cite[\S3]{fargues2019}.
\begin{lem}
\label{lem:fargues3.1}
Let $(\fM_i)_{i=1}^r$, $(\fM'_i)_{i=1}^{r-1}$, $(\fQ_i)_{i=1}^{r-1}$ be sequences of flat Kisin modules such that $\fM = \fM_1$. For each $i = 1, \dotsc, r-1$, assume that we have an isogeny $\fM'_i \ra \fM_i$ and an exact sequence 
\begin{equation}
\label{eq:isogES}
0 \lra \fM_{i+1} \lra \fM'_i \lra \fQ_i \lra 0.
\end{equation}
Then $\fM$ is isogenous to a flat Kisin module $\fN$ with an increasing filtration such that $\Fil^0 \fN = 0$, $\Fil^r \fN = \fN$, and for $i = 1, \dotsc, r-1$, 
\[
\Fil^i \fN/\Fil^{i+1} \fN \simeq \fQ_i, \quad \text{ and } \Fil^r\fN / \Fil^{r-1} \fN \simeq \fM_r.
\]
\end{lem}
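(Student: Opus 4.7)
The plan is to induct on $r$. For the base case $r = 1$, I take $\fN := \fM_1$ with the trivial filtration $0 = \Fil^0 \fN \subset \Fil^1 \fN = \fN$, giving the single graded piece $\fM_1 = \fM_r$. For $r \geq 2$, I will apply the induction hypothesis to the shifted data $(\fM_i)_{i=2}^{r}$, $(\fM'_i)_{i=2}^{r-1}$, $(\fQ_i)_{i=2}^{r-1}$ to obtain a flat Kisin module $\widetilde{\fN}$ isogenous to $\fM_2$, equipped with the required filtration of length $r-1$. It then remains to produce a flat Kisin module $\fN$ isogenous to $\fM = \fM_1$ that fits into a short exact sequence $0 \to \widetilde{\fN} \to \fN \to \fQ_1 \to 0$; concatenating the inclusion $\widetilde{\fN} \subset \fN$ with the filtration on $\widetilde{\fN}$ from the inductive hypothesis then yields the full filtration on $\fN$.

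To construct $\fN$, I will use a pushout. Realize the isogeny $\widetilde{\fN} \sim \fM_2$ as a strict inclusion $\widetilde{\fN} \hookrightarrow \fM_2$ with finite $p$-power torsion cokernel (replacing $\widetilde{\fN}$ by an isomorphic copy if necessary), and choose $N \geq 1$ large enough that $p^N \fM_2 \subset \widetilde{\fN}$. Let $g' \colon \fM_2 \to \widetilde{\fN}$ denote multiplication by $p^N$, and let $\iota \colon \fM_2 \hookrightarrow \fM'_1$ be the given inclusion. Set
\[
\fN := (\widetilde{\fN} \oplus \fM'_1) \big/ \{(g'(m), -\iota(m)) : m \in \fM_2\}.
\]
Direct diagram chases via the pushout property will show: the natural maps $\widetilde{\fN} \to \fN$ and $\fM'_1 \to \fN$ into the respective factors are injective; $\fN/\widetilde{\fN} \cong \fM'_1/\iota(\fM_2) = \fQ_1$; and $\fN/\fM'_1 \cong \widetilde{\fN}/p^N\fM_2$ is a finite $p$-power torsion Kisin module, so that $\fM'_1 \to \fN$ is an isogeny and $\fN \sim \fM'_1 \sim \fM$.

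The hard part will be to upgrade $\fN$ from a Kisin module of projective dimension at most $1$ over $\fS$ (immediate from the defining exact sequence $0 \to \fM_2 \to \widetilde{\fN} \oplus \fM'_1 \to \fN \to 0$ of the pushout) to a genuinely flat Kisin module. Since $\fS$ is a regular local ring of Krull dimension $2$, flatness of $\fN$ is equivalent to the vanishing of $\mathrm{Tor}_1^{\fS}(\fN, k)$ for $k = \fS/(p,u)$, which I will compute as the kernel of $(g', -\iota) \otimes_\fS k$ acting on $\fM_2 \otimes_\fS k$. The component $g' \otimes_\fS k$ vanishes because $g'$ equals multiplication by $p^N$ with $N \geq 1$, and the component $\iota \otimes_\fS k$ is injective because $\fQ_1$ is itself $\fS$-flat, so that $\mathrm{Tor}_1^{\fS}(\fQ_1, k) = 0$ and the sequence $0 \to \fM_2 \otimes_\fS k \to \fM'_1 \otimes_\fS k \to \fQ_1 \otimes_\fS k \to 0$ is exact. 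Hence $\mathrm{Tor}_1^{\fS}(\fN, k) = 0$, finishing the verification that $\fN$ is flat and completing the construction.
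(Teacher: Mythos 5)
Your proof is correct and follows essentially the same route as the paper: induct on $r$, apply the inductive hypothesis to $\fM_2$ to get $\widetilde{\fN}$, realize the isogeny as a map $\fM_2 \to \widetilde{\fN}$ (via multiplication by $p^N$), form the pushout $\fN$ of $\fM_2 \to \fM'_1$ and $\fM_2 \to \widetilde{\fN}$, and pull back the filtration. The one place you go beyond what the paper writes is the explicit $\mathrm{Tor}_1^\fS(\fN,k)$ computation verifying flatness of $\fN$; the paper leaves this implicit. That check is sound, but note it also follows more quickly once you have established the short exact sequence $0 \to \widetilde{\fN} \to \fN \to \fQ_1 \to 0$: an extension of the flat $\fS$-module $\fQ_1$ by the flat $\fS$-module $\widetilde{\fN}$ is flat, by the long exact Tor sequence.
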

\begin{proof}
Using induction on $r$, and applying the result for $r-1$ to the flat Kisin module $\fM_2$, we have an isogeny $\fN_2 \rinj \fM_2$ and a filtration on $\fN_2$ such that $\Fil^i \fN_2 / \Fil^{i+1} \fN_2 = \fQ_i$ for $i = 2, \dotsc, r$ and $\Fil^1 \fN_2 = 0$. We can now construct $\fN$ lying in a diagram 
\[
\xymatrix{
0 \ar[r] & \fM_2 \ar[r] \ar[d] & \fM'_1 \ar[r] \ar[d] & \fQ_1 \ar[r] \ar[d]^\sim & 0 \\
0 \ar[r] & \fN_2 \ar[r] & \fN \ar[r] & \fQ_1 \ar[r] & 0 
}
\]
by defining it to be the quotient of $\fM'_1 \oplus \fN_2$ by the diagonal image of $\fM_2$. The desired filtration on $\fN$ is the pullback of the filtration on $\fN_2$. 
\end{proof}

\begin{proof}[Proof of Theorem \ref{thm:HN_isog}.]
We will apply Lemma \ref{lem:fargues3.1}, where the $\fQ_i$ will be semi-stable flat Kisin modules and the $\fM_i$ arise from the HN-filtration \eqref{eq:HN_filt_isog} on $\fM$. The maximal slope graded factor $\gr_r \fM = \fM/\fM_{r-1}$ induces a quotient flat Kisin module $\fM \rsurj \gr_r \fM$. By Proposition \ref{prop:ss_isog}, $\gr_r \fM$ is isogenous to a semi-stable Kisin module $\fQ_r$ via $\fQ_r \rinj \gr_r \fM$. Now let $\fM'_r$ be the kernel of $\fM_r \ra \gr_r \fM/\fQ_r$. We see that we have a short exact sequence of the form \eqref{eq:isogES} where $i = r$. 

We iterate this same argument, applying it to $\fM_{i}$ in the place of $\fM = \fM_r$ for $i$ from $r-1$ to $1$. The hypotheses of Lemma \ref{lem:fargues3.1} are verified, so because the $\fQ_i$ are semi-stable flat Kisin modules, we have shown that $\fM$ is isogenous to a flat Kisin module of type HN. 
\end{proof}

This HN-polygon is the limit of the HN-polygons of the $p^n$-torsion quotients of a flat Kisin module. 
\begin{prop}
When $\fM$ is a flat Kisin module, we have an equality $\HN_\fS(\fM[1/p]) = \HN(\fM)$ of polygons.
\end{prop}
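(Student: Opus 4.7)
The plan is to reduce immediately to the case where $\fM$ is of type HN, using Theorem \ref{thm:HN_isog}. Both invariants at hand are isogeny-invariant: $\HN(\fM)$ by Lemma \ref{lem:HNP_isog_invt}, and $\HN_\fS(\fM[1/p])$ because $\rk_\fS$ and $\deg_\fS$ depend only on $\fM[1/p]$ (Proposition \ref{prop:HN_false}(3), along with the observation that the HN-filtration in the isogeny category is itself a filtration of $\fM[1/p]$). So, without loss of generality, I assume $\fM$ is of type HN and fix a filtration $0 = \fM_0 \subset \fM_1 \subset \dotsm \subset \fM_r = \fM$ by strict subobjects with semi-stable graded pieces and strictly increasing slopes $\mu_\fS(\fM_i/\fM_{i-1})$.

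Next I would compute $\HN(\fM)$ in this situation via Lemma \ref{lem:HN_iff_eq}, which gives $\HN(\fM) = \HN(\fM/p)$. The filtration $(\fM_i/p\fM_i)$ of $\fM/p$ consists of strict subobjects in $\Mod^\phz_{\fS,\tor}$, with graded pieces $(\fM_i/\fM_{i-1})/p$. These are semi-stable by the equivalence (1) $\Leftrightarrow$ (2) of Lemma \ref{lem:K_ss}, and their slopes coincide with $\mu_\fS(\fM_i/\fM_{i-1})$ by Proposition \ref{prop:deg_mod}, hence are strictly increasing. By the uniqueness statement of Theorem \ref{thm:HNfilt}, $(\fM_i/p\fM_i)$ is therefore the HN-filtration of $\fM/p$. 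Its breakpoints lie at $(\rk(\fM_i/p),\deg(\fM_i/p))$, which equal $(\rk_\fS \fM_i, \deg_\fS \fM_i)$ by Proposition \ref{prop:deg_mod}.

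The identical computation works on the isogeny side: the filtration $\fM_i[1/p]$ of $\fM[1/p]$ has graded pieces $(\fM_i/\fM_{i-1})[1/p]$, which are semi-stable in $\Mod^\phz_\fS \otimes \bQ_p$ by Proposition \ref{prop:ss_isog} (applied to the semi-stable flat Kisin module $\fM_i/\fM_{i-1}$), and whose slopes are the strictly increasing $\mu_\fS(\fM_i/\fM_{i-1})$. By the uniqueness of the HN-filtration in the isogeny category, this is the HN-filtration that defines $\HN_\fS(\fM[1/p])$. Its breakpoints are $(\rk_\fS \fM_i, \deg_\fS \fM_i)$.

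The two polygons share the same breakpoints, so they coincide. There is no substantive obstacle: the work has already been done, and this proposition is essentially a bookkeeping consequence of Theorem \ref{thm:HN_isog}, Lemma \ref{lem:HN_iff_eq}, and Proposition \ref{prop:deg_mod}, which together align the $\mod p$ picture with the isogeny picture on a type-HN representative. The only mild care required is verifying that the various normalizations of rank and degree agree under reduction mod $p$, which is exactly the content of Proposition \ref{prop:deg_mod}.
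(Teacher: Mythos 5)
Your proof is correct and takes essentially the same route as the paper's: reduce to a type-HN representative via Theorem \ref{thm:HN_isog} (using Lemma \ref{lem:HNP_isog_invt} and isogeny-invariance of $\rk_\fS,\deg_\fS$), then identify both polygons via $\HN(\fM)=\HN(\fM/p)$ from Lemma \ref{lem:HN_iff_eq}. Where the paper compresses the final step into ``the argument of Lemma \ref{lem:HN_iff_eq} also implies $\HN(\fM/p)=\HN_\fS(\fM[1/p])$,'' you have unpacked it by checking directly that the type-HN filtration reduces mod $p$ (resp.\ inverts $p$) to the HN-filtration of $\fM/p$ (resp.\ $\fM[1/p]$) via Lemma \ref{lem:K_ss}, Proposition \ref{prop:deg_mod}, and Proposition \ref{prop:ss_isog}, and that the breakpoints match; this is the intended argument made explicit.
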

\begin{proof}
First we remark that we may replace $\fM$ with an isogenous flat Kisin module of type HN. Such a replacement exists by Theorem \ref{thm:HN_isog}. The polygon $\HN_\fS(\fM[1/p])$ remains the same by Proposition \ref{prop:HN_false}(iii), and $\HN(\fM)$ remains the same by Lemma \ref{lem:HNP_isog_invt}. 

By Lemma \ref{lem:HN_iff_eq}, $\HN(\fM) = \HN(\fM/p)$. The argument of Lemma \ref{lem:HN_iff_eq} also implies that $\HN(\fM/p) = \HN_\fS(\fM[1/p])$. 
\end{proof}

\subsection{Comparison with the Hodge--Tate polygon}

In this section, we work with $p$-adic Galois representations, which we take to be continuous Galois group actions on finite-dimensional as $\Q_p$-vector spaces without further comment. We show that the HN-polygon of a flat Kisin module is a generalization of the Hodge--Tate polygon of a Hodge--Tate $\Gamma_K$-representations. A priori, the Galois representations to which this HN-polygon applies are the $\Gamma_{K_\infty}$-representations of bounded $E$-height, as this category is equivalent to the isogeny category of flat Kisin modules. In the case that a $\Gamma_{K_\infty}$-representation arises from a crystalline $\Gamma_K$-representation $\rho$ (and consequently has finite $E$-height), the HN-polygon lies above the Hodge--Tate polygon of $\rho$, generalizing an equality in the $p$-divisible group case due to Fargues \cite[\S5.3, Cor.\ 2]{fargues2019}. Note that in this crystalline case, $\rho$ is characterized by $\rho\vert_{\Gamma_{K_\infty}}$ \cite[Thm.\ 0.2]{crfc}. 

First we recall fundamental definitions in $p$-adic Hodge theory. Let $\mathbb{C}_p$ be the completion of an algebraic closure of $K$, which admits a natural $K$-linear action of $\Gamma_K$. Let $t$ be a basis vector for $\mathbb{C}_p(1)$, the base change to $\mathbb{C}_p$ of the $\bQ_p$-linear cyclotomic character with the natural semi-linear action of $\Gamma_K$. Let $B_{HT} = \bigoplus_{i \in \bZ}  \mathbb{C}_p(i) \simeq  \mathbb{C}_p(t, 1/t)$ be the $\bZ$-graded Hodge--Tate period ring. 

When $V$ is a $\bQ_p$-linear representation of $\Gamma_K$, $V$ is called \emph{Hodge--Tate} when $\dim_K (V \otimes_{\bQ_p} B_{HT})^{\Gamma_K} = \dim_{\bQ_p} V$. Such representations form an abelian category. We call $i \in \bZ$ a \emph{Hodge--Tate weight} of $V$ with multiplicity $m_i > 0$ when $\dim_K (V \otimes_{\bQ_p}  \mathbb{C}_p(-i))^{\Gamma_K} = m_i$, and we set $\deg_{HT}(V) = \sum_{i \in \bZ} i \cdot m_i$. 

\begin{defn}[{(cf.\ \cite[\S5.2.2]{fargues2019})}]
\label{defn:HTHN}
The \emph{Hodge--Tate polygon} of a Hodge--Tate $p$-adic $\Gamma_K$-representation is the HN-polygon arising from the HN-theory on the category of $\bQ_p$-linear Hodge--Tate $\Gamma_K$-representations induced by setting the degree of $V$ to be $\deg_{HT}(V)$. We write $\HN(HT(V))$ for this polygon. 
\end{defn}

As a result, a Hodge--Tate $\Gamma_K$-representation $V$ has a canonical increasing filtration 
\[
0 = V_0 \subset V_1 \subset \dotsm \subset V_m = V
\]
such that $\mu(V_i/V_{i-1}) < \mu(V_{i+1}/V_i)$. One can also associate to $V$ a decreasingly filtered $K$-vector space
\[
D_{HT}(V) := (V \otimes_{\bQ_p} B_{HT})^{\Gamma_K},
\]
where the filtration arises from the $\bZ$-grading of $B_{HT}$, observing that $\deg(V) = -\deg(D_{HT}(V))$.

Recall that a \emph{filtered $\phz$-module} over the $p$-adic field $K$ refers to a finite dimensional $K_0$-vector space $D$ with a Frobenius semi-linear isomorphism $\phz_D : D \ra D$ (an isocrystal) along with a decreasing filtration $\Fil_D^\bullet$ on $D_K := D \otimes_{K_0} K$.

When $V$ is Hodge--Tate and also has finite $E$-height, then $D_{HT}(V)$ can be recovered in another way. Firstly, we write $\fM = \fM(V)$ for a member of the isogeny class of flat Kisin modules associated to $V\vert_{\Gamma_{K_\infty}}$. There is a filtered isocrystal $D = D(\fM)$ associated to $\fM[1/p]$ following \cite[\S1.2.5]{crfc}. We will use homomorphisms $\fS \ra K_0$ (induced by the quotient $\fS \rsurj W(k)$ modulo $u$) and $\fS \ra K$ (induced by the quotient $\fS \rsurj \cO_K$ modulo $E(u)$) to write down $D(\fM)$. Namely, $D(\fM) := (D, \phz_D, \Fil^\bullet_D)$, where $D = \fM \otimes_{\fS} K_0$, $\phz_D = \phz_\fM \otimes_\fS K_0$, and the decreasing filtration on $D_K$ is constructed in \cite[\S1.2.7]{crfc}. 

We have now produced a filtered $K$-vector space in two ways, having started with a Hodge--Tate and finite $E$-height representation $V$ of $\Gamma_K$: $D_{HT}(V)$ and $D(\fM(V))_K$. When $V$ is, moreover, crystalline, these two filtered vector spaces are canonically isomorphic. 
\begin{prop}
Let $V$ be a crystalline $\Gamma_K$-representation. There is a canonical isomorphism of filtered vector spaces $D_{HT}(V) \simeq D(\fM(V))_K$. 
\end{prop}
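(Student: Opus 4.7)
The plan is to chain together Kisin's comparison theorem for crystalline representations with the standard compatibility between the de Rham and Hodge-Tate period rings.

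First, I would invoke the main theorem of \cite{crfc} (together with its compatibility with filtered isocrystals, \cite[\S1.2.15]{crfc}): for a crystalline $\bQ_p$-representation $V$ of $\Gamma_K$, the associated flat Kisin module $\fM(V)$ (in the isogeny category) satisfies a canonical isomorphism
\[
D(\fM(V)) \simeq D_{cris}(V)
\]
of filtered $\phz$-modules over $K_0$. Base-changing along $K_0 \hookrightarrow K$ yields a canonical isomorphism of filtered $K$-vector spaces $D(\fM(V))_K \simeq D_{dR}(V)$, where the filtration on the right is the usual Hodge (de Rham) filtration.

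Next, I would compare $D_{dR}(V)$ and $D_{HT}(V)$ as filtered $K$-vector spaces using standard $p$-adic Hodge theory. The de Rham comparison isomorphism $V \otimes_{\bQ_p} B_{dR} \cong D_{dR}(V) \otimes_K B_{dR}$ respects filtrations. Applying $\gr^\bullet$ with respect to the $t$-adic filtration on $B_{dR}$ (so that $\gr^\bullet B_{dR} \cong B_{HT}$) and then taking $\Gamma_K$-invariants yields a canonical graded isomorphism $D_{HT}(V) \cong \gr^\bullet D_{dR}(V)$. In particular, the filtration jumps on $D_{dR}(V)$ and the grading degrees of $D_{HT}(V)$ coincide with the same multiplicities. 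To upgrade this to a canonical \emph{filtered} identification, I would use the canonical Hodge-Tate decomposition
\[
V \otimes_{\bQ_p} \mathbb{C}_p \cong \bigoplus_i D_{HT}(V)^i \otimes_K \mathbb{C}_p(-i),
\]
which exhibits $D_{HT}(V)$ as a canonically split filtered vector space (its filtration is determined by its grading). Composing Kisin's comparison with the above identifications yields the desired canonical filtered isomorphism $D_{HT}(V) \simeq D(\fM(V))_K$.

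The main obstacle is the last step: producing a canonical filtered, and not merely graded, isomorphism. In general, equality of associated gradeds produces only a non-canonical splitting. The resolution here relies on the canonical Hodge-Tate decomposition in the crystalline setting, which canonically splits $D_{HT}(V)$, together with Kisin's sharp compatibility with $D_{cris}$, which fixes the identification on the Kisin module side. The analogous statement for $p$-divisible groups is carried out in \cite[\S9.4]{fargues00}, and the same strategy extends directly to general flat Kisin modules via the comparison in the first paragraph above.
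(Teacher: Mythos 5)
Your chain of comparisons is the same as the paper's (and is essentially the content of \cite[Cor.\ 1.3.15]{crfc}): Kisin's theorem identifies $D(\fM(V))$ with $D_\mathrm{cris}(V)$ as filtered isocrystals, base change to $K$ gives $D(\fM(V))_K \simeq D_{dR}(V)$, and one then compares $D_{dR}(V)$ with $D_{HT}(V)$. You are right to flag the canonicity issue in the last step, and the paper's own proof simply asserts that $D_\mathrm{cris}(V)_K$ is ``canonically isomorphic'' to $D_{HT}(V)$ without further justification.

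However, your proposed fix does not close the gap you identified. The standard de Rham/Hodge--Tate compatibility gives a canonical isomorphism of \emph{graded} vector spaces $\gr^\bullet D_{dR}(V) \simeq D_{HT}(V)$. The Hodge--Tate decomposition $V \otimes_{\bQ_p} \mathbb{C}_p \cong \bigoplus_i D_{HT}(V)^i \otimes_K \mathbb{C}_p(-i)$ only restates that $D_{HT}(V)$ is split as a filtered space --- which is automatic, since it is graded by construction. What is missing, and what the decomposition does not supply, is a canonical splitting of the filtration on $D_{dR}(V)$ itself; without that, there is no way to lift the graded identification to a canonical filtered one. So ``Composing Kisin's comparison with the above identifications yields the desired canonical filtered isomorphism'' does not follow.

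The saving grace is that the word ``canonical'' is doing no real work in the proposition's downstream application. What Theorem \ref{thm:HN-HT} actually uses is the equality $\deg_\fS(\fM(W)) = \deg_{HT}(W)$, and more generally that $D_{HT}(V)$ and $D(\fM(V))_K$ have the same filtration jumps with the same multiplicities. That is exactly the content of the graded isomorphism $\gr^\bullet D_{dR}(V) \simeq D_{HT}(V)$, and any two filtered $K$-vector spaces with matching jump data are (non-canonically) isomorphic as filtered vector spaces. So the proposition holds with ``canonical'' read loosely, and the application is unaffected; but the part of your argument that tries to manufacture a genuinely canonical filtered isomorphism should be dropped or replaced by this weaker, sufficient statement.
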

\begin{proof}
Under the assumption that $V$ is crystalline, there exists a weakly admissible filtered isocrystal $D' = D_\mathrm{cris}(V)$. Then $D'_K$ is canonically isomorphic to $D_{HT}(V)$ and $D'$ is also canonically isomorphic to $D(\fM(V))$ \cite[Cor.\ 1.3.15]{crfc}.
\end{proof}

\begin{rem}
\label{rem:HN_motive}
In general, we do not know how to relate the filtered vector spaces $D_{HT}(V)$ and $D(\fM(V))_K$ for arbitrary $V$ that are Hodge--Tate and have finite $E$-height. So we take the approach that $D(\fM(V))_K$ is a replacement of the Hodge filtration for any representation $V$ of $\Gamma_{K_\infty}$ of finite $E$-height. 
\end{rem}

We work in the category of (finite-dimensional $\Q_p$-linear) representations of $\Gamma_{K_\infty}$ of bounded $E$-height, $\Rep_{K_\infty}^\mathrm{bh}$, which is equivalent to $\Mod_\fS^{\phz} \otimes \bQ_p$. Consequently, we may apply the HN-theory on $\Mod_\fS^{\phz} \otimes \bQ_p$ of \eqref{eq:HN_filt_isog} to $\Rep_{K_\infty}^\mathrm{bh}$. 

When one restricts to flat Kisin modules with height in $[a,a+1]$, all of which arise from a crystalline representation, Fargues has shown that $\HN(\fM(V)[1/p]) = \HN(HT(V))$ \cite[\S5.3, Cor.\ 2]{fargues2019}. The following theorem is as much as we can expect along those lines, in general. 
\begin{thm}
\label{thm:HN-HT}
Let $V$ be a crystalline $\Gamma_K$-representation. Then 
\[
\HN(\fM(V)[1/p]) \geq \HN(HT(V)).
\] 
\end{thm}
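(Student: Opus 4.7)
The strategy relies on the convex hull characterization of both polygons (Proposition \ref{chull1}, applied in the isogeny category via Proposition \ref{prop:HN_false}) combined with the weak admissibility of $D_\mathrm{cris}(V)$. By Proposition \ref{chull1}, $\HN(\fM(V)[1/p])$ is the lower convex hull of the points $(\rk(\fN), \deg_\fS(\fN))$ as $\fN$ ranges over strict sub-Kisin modules of $\fM(V)$ in the isogeny category, and $\HN(HT(V))$ is the lower convex hull of the points $(\dim(W), \deg_{HT}(W))$ as $W$ ranges over sub-Hodge-Tate representations of $V$. Since $V$ is crystalline, every sub-$\Gal(\overline{K}/K)$-representation of $V$ is crystalline, hence Hodge-Tate, so these sub-HT-representations are precisely the sub-$\Gal(\overline{K}/K)$-representations of $V$. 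The two polygons share the endpoints $(0,0)$ and $(\dim V, \deg_{HT}(V))$: the right endpoint matches $\deg_\fS(\fM(V))$ because Kisin's construction of the Hodge filtration on $D_\mathrm{cris}(V)\otimes_{K_0} K$ identifies $\deg_\fS(\fM(V))$ with the sum of the Hodge-Tate weights of $V$, generalizing the calculation of Proposition \ref{farguescomp}.

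It then suffices to show that every reference point $(\rk(\fN), \deg_\fS(\fN))$ for $\HN(\fM(V)[1/p])$ lies on or above $\HN(HT(V))$. For sub-Kisin modules $\fN$ arising from sub-$\Gal(\overline{K}/K)$-representations $W \subset V$, the identity $\deg_\fS(\fM(W)) = \deg_{HT}(W)$ (proved as for the endpoints above) shows that the Kisin reference point coincides with the Hodge-Tate reference point $(\dim W, \deg_{HT}(W))$, which lies on or above $\HN(HT(V))$ by definition. The substantive content of the theorem therefore concerns those sub-Kisin modules whose associated sub-$\Gal(\overline{K}/K_\infty)$-representation does not extend to a sub-$\Gal(\overline{K}/K)$-representation of $V$.

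To handle these extra reference points, I would identify $\deg_\fS(\fN)$ with the total Newton slope of the induced sub-isocrystal $D(\fN) := \fN \otimes_\fS K_0 \subset D_\mathrm{cris}(V)$ via a determinant computation in the spirit of Lemma \ref{wedgeslope}, and then invoke weak admissibility of $D_\mathrm{cris}(V)$ in the form of the inequality comparing the total Hodge slope (for the induced Hodge filtration) with the total Newton slope of $D(\fN)$. This immediately yields that $\HN(\fM(V)[1/p])$ dominates the classical Hodge polygon of $D_\mathrm{cris}(V)\otimes_{K_0} K$. The main obstacle—which constitutes the delicate part of the proof—is upgrading this bound to $\HN(HT(V))$, which in general lies strictly above the classical Hodge polygon. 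This requires exploiting the fact that the breakpoints of $\HN(HT(V))$ are realized by the weakly admissible sub-filtered-isocrystals associated with the graded pieces of the Hodge-Tate HN-filtration of $V$, together with a convexity argument generalizing the equality $\HN(\fM(V)[1/p]) = \HN(HT(V))$ proved by Fargues \cite[\S9.4]{fargues00} in the height $[a,a+1]$ case.
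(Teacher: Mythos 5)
Your approach diverges from the paper's almost immediately. The paper's proof is only a few lines: after noting $\deg_\fS(\fM(W)) = \deg_{HT}(W)$ for crystalline $W$ (so the endpoints match), it asserts the result follows from the injection of the lattice of sub-$\Gamma_K$-representations of $V$ into the lattice of strict sub-Kisin modules of $\fM(V)$, via Kisin's full faithfulness theorem. You share this observation about the lattice injection, but then note (correctly, in my reading) that this alone does not immediately force the inequality through the convex hull characterization, and you identify the substantive content as those strict sub-Kisin modules whose associated $\Gamma_{K_\infty}$-stable subspace is not $\Gamma_K$-stable. The weak-admissibility plan is a sensible way to attack them, and the identification $\deg_\fS(\fN) = t_N(D(\fN))$ (total Newton slope of the induced sub-isocrystal) is correct: reducing $\det \phi_\fN$ modulo $u$ turns $E(u)^{\deg_\fS(\fN)}$ into $(pc_0)^{\deg_\fS(\fN)}$ with $c_0$ a unit, so the $p$-adic valuation matches.

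The genuine gap is precisely the one you flag yourself. Weak admissibility of $D_\mathrm{cris}(V)$ gives $\deg_\fS(\fN) = t_N(D(\fN)) \geq t_H(D(\fN)) \geq P_{\mathrm{Hodge}}(\rk \fN)$, i.e.\ a bound only by the classical Hodge polygon of $D_\mathrm{cris}(V)_K$, which is in general strictly below $\HN(HT(V))$. The promised ``upgrade'' from $P_{\mathrm{Hodge}}$ to $\HN(HT(V))$ is not carried out, and it is not apparent from the sketch how it can be: $\HN(HT(V))$ encodes which subspaces of $D_\mathrm{cris}(V)$ underlie $\Gamma_K$-subrepresentations (equivalently, weakly admissible sub-filtered isocrystals), not merely the filtered linear algebra that an argument keyed to sub-isocrystals $D(\fN)$ can access. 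As written, your proposal establishes the weaker comparison with the Hodge polygon but not the stated theorem; the missing step is exactly where the paper's proof instead pivots on full faithfulness, so you should unpack what [Cor.~1.3.15][crfc] gives about subobjects before assuming a weak-admissibility route is necessary.
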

\begin{proof}
We know that $\deg_\fS(\fM(W)) = \deg_{HT}(W)$ for all crystalline $\Gamma_K$-representations $W$. Consequently, the endpoints of the polygons are the same. The theorem will follow from showing that the lattice of sub-$\Gamma_K$-representations of $V$ injects into the lattice of strict subobjects $\fM(V)$. This follows directly from \cite[Cor.\ 1.3.15]{crfc}, which states that $V \mapsto \fM(V)[1/p]$ is a fully faithful functor from crystalline representations to the isogeny category of Kisin modules of bounded height. 
\end{proof}

\begin{rem}
In analogy with Proposition \ref{HNoverHodge}, it is clear that we can prove the comparison of polygons ``HN over Hodge'' for the Hodge--Tate HN-theory of Definition \ref{defn:HTHN} that yields $\HN(HT(V))$. As a result, the inequality of Theorem \ref{thm:HN-HT} can be extended by the comparison $\HN(HT(V)) \geq P_\mu$, where $\mu$ is the Hodge type of $V$. This is a lift to characteristic $0$ of the module $p$ ``HN over Hodge'' result for Kisin modules, which is stated in Proposition \ref{HNoverHodge}. Notice also that both the HN and Hodge polygons increase when transitioning from characteristic $p$ to characteristic $0$: see Theorem \ref{thm:HNP_limit} and Remark \ref{rem:lifting_Hodge}. 
\end{rem}

\acknowledgements{
We owe a debt to Laurent Fargues, as the influence of his work will be clear to the reader. We also appreciate Jay Pottharst's exposition \cite{pottharst} of the formal structure of HN-theory, and thank him for making his manuscript available on arXiv. Likewise, we appreciate Daniel Halpern-Leistner's exposition in \cite{HL2014} of results we needed to cull from geometric invariant theory. We thank Christophe Cornut for a very careful reading of the text and several detailed emails that helped us correct and refine the proofs in \S\ref{sec:tensor}. We thank an anonymous referee for making us aware of the references \cite{kirwan1984, ness1984} on geometric invariant theory. We thank Madhav Nori for helpful conversations. We would like to recognize the mathematics departments at the University of Chicago and Brandeis University for providing excellent working conditions as this project was carried out.
}

\section{Appendix: Examples of HN-polygons} \label{sec:examples}

In this appendix, we give some examples of what the different components constructed in Theorem \ref{thm:HNcomp} correspond to concretely  beyond $\GL_2$.   We will assume for simplicity that $K = \Qp$.

 In the following figures, for different Hodge polygons $\nu$, we draw the set of all possible HN-polygons.
\begin{itemize}[leftmargin=2em]
\item For any $\cM$ of the appropriate dimension, the strata of $X_{\cM}^{\nu}$ will be indexed by this finite set of polygons.  
\item The Hodge polygon $\nu$ appears in black. 
\item We color the polygons the same if the corresponding strata can appear on the same component of $X_{\cM}^{\nu}$ according to Theorem \ref{thm:HNcomp}.
\end{itemize} 

\begin{rem}
For a particular \'etale $\phi$-module $\cM$ or \'etale $\phi$-module with $G$-structure $\cP$, not all strata will appear. In other words, the $X^{\nu}_{\cP, J}$ appearing in $X_{\cP}^{\nu}$ can be empty. However, in the figures, we include all possibilities which could potentially occur.  
\end{rem}

\begin{figure}[h]
\caption{Components for $\GL_3$, $\nu = (0,0,1)$}
\centering
\includegraphics[]{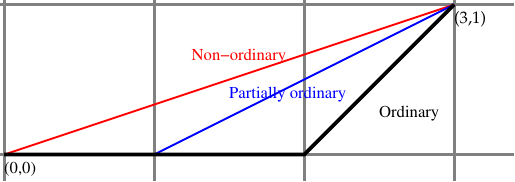}
\end{figure}

\bigskip

\begin{figure}[h]
\caption{Components for $\GL_3$, $\nu = (-1,0,1)$}
\centering
\includegraphics[]{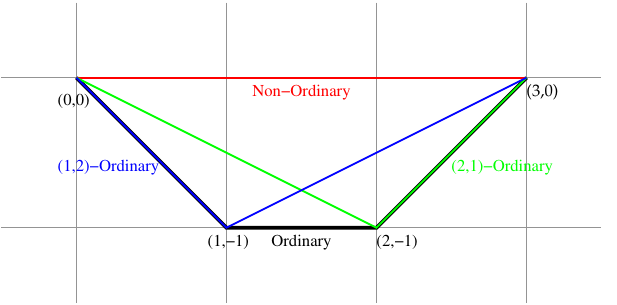}
\end{figure}

\bigskip

\begin{figure}[h]
\caption{Components for $\mathrm{GL}_4$, $\nu = (-1,0,0,1)$}
\centering
\includegraphics[]{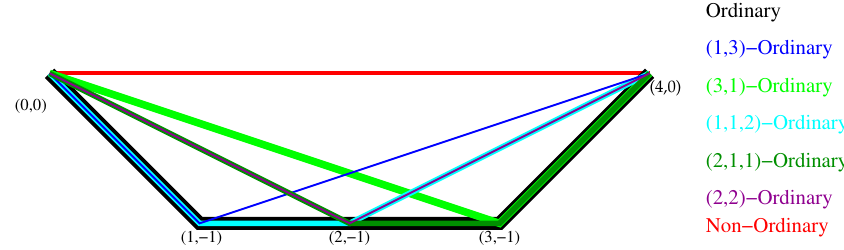}
\end{figure}

\bigskip

\begin{figure}[h]
\caption{Components for $\mathrm{GSp}_4$, $\nu = (-2,-1,1,2)$}
\centering
\includegraphics[]{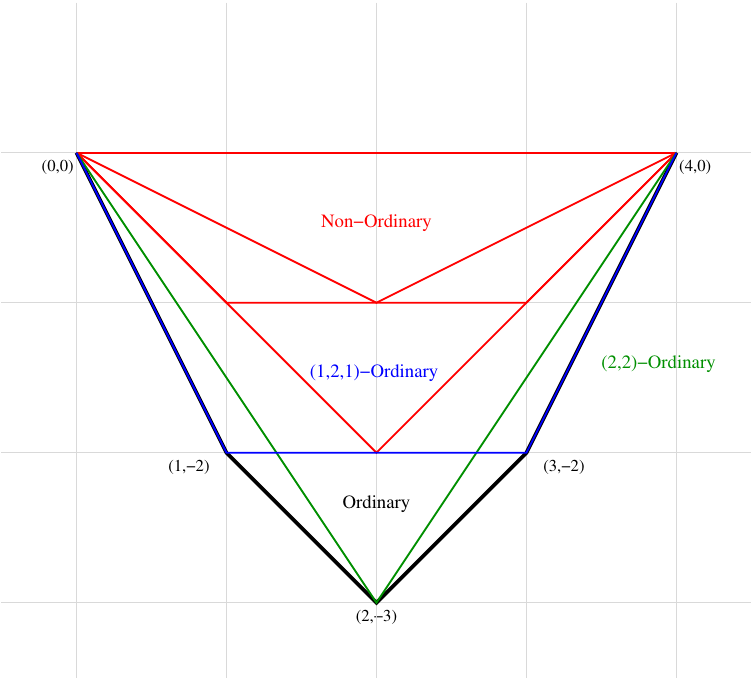}
\end{figure}

\clearpage

\bibliographystyle{amsalpha}
\bibliography{CWEbib-20_01-HN}

\end{document}